\def\Z{\mathbb{Z}}
\newtheorem{prop}{Proposition}[section]
\newtheorem{theorem}[prop]{Theorem}
\newtheorem{lemma}[prop]{Lemma}
\newtheorem{cor}[prop]{Corollary}
\newtheorem{conj}[prop]{Conjecture}
\theoremstyle{definition}
\newtheorem{definition}[prop]{Definition}
\newtheorem{ex}[prop]{Example}
\newtheorem{remark}[prop]{Remark}
\DeclareMathAlphabet{\mathpzc}{OT1}{pzc}{m}{it}
\title[The Exchange Graphs]{The Exchange Graphs of Weakly Separated Collections}
\author[Meena Jagadeesan]{Meena Jagadeesan}
\address{Department of Mathematics, Harvard University, Cambridge, MA 02138}
\email{\href{mailto:mjagadeesan@college.harvard.edu}{{\tt mjagadeesan@college.harvard.edu}}}
\begin{document}
\begin{abstract}
Weakly separated collections arise in the cluster algebra derived from the Pl\"ucker coordinates on the nonnegative Grassmannian. Oh, Postnikov, and Speyer studied weakly separated collections over a general Grassmann necklace $\mathcal{I}$ and proved the connectivity of every exchange graph. Oh and Speyer later introduced a generalization of exchange graphs that we call $\mathcal{C}$-constant graphs. They characterized these graphs in the smallest two cases. We prove an isomorphism between exchange graphs and a certain class of $\mathcal{C}$-constant graphs. We use this to extend Oh and Speyer's characterization of these graphs to the smallest four cases, and we present a conjecture on a bound on the maximal order of these graphs. In addition, we fully characterize certain classes of these graphs in the special cases of cycles and trees.
\end{abstract}
\maketitle
\section{Introduction}
The notion of weak separation was introduced by Leclerc and Zelevinsky \cite{Leclerc} in 1998 in the context of determining combinatorial criterion for the quasicommutativity of two quantum flag minors. The definition of weak separation requires the following definition of a cyclic ordering:
\begin{definition}
For an integer $n \ge 0$, the integers $a_1, a_2, \ldots, a_n$ are said to be \textbf{cyclically ordered} if there exists $1 \le j \le n$ such that $a_j < a_{j+1} < \ldots < a_n < a_1 < \ldots < a_{j-1}.$
\end{definition}
Leclerc and Zelevinsky defined weak separation as follows: 
\begin{definition}
Let $k$ and $n$ be nonnegative integers such that $k < n$. Two $k$-element subsets $I, J \subset [n]=\{1,2,\ldots,n\}$ are called \textbf{weakly separated} if there do not exist $a, b, a', b'$, cyclically ordered, with $a, a' \in I \setminus J$ and $b, b' \in J \setminus I$, where $\setminus$ denotes set-theoretic difference.
\end{definition}
\begin{remark}
This is equivalent to the condition that $I \setminus J = \left\{a_1\ldots,a_r\right\}$ and $J \setminus I = \left\{b_1, b_2,\ldots,b_r \right\}$ are separated by some chord in the circle, i.e. that $a_1, a_2, \ldots, a_r,$ $b_1, b_2, \ldots b_r$ are cyclically ordered.
\end{remark}

Leclerc and Zelevinsky \cite{Leclerc} used the notion of weak separation to define certain collections of $k$-elements subsets of $[n]$ for a fixed $k$ and $n$. Let ${[n] \choose k}$ denote the collection of all $k$-element subsets of $[n]$. For nonnegative integers $k < n$, a subset $\mathcal{W}$ of the collection ${[n] \choose k}$ is called \textbf{weakly separated} if every two $k$-elements subsets in $\mathcal{W}$ are weakly separated. The subset $\mathcal{W}$ is called a \textbf{maximal weakly separated collection} if $\mathcal{W}$ is not contained in any larger weakly separated collection of ${[n] \choose k}$.

Leclerc and Zelevinsky conjectured that any two maximal weakly separated collections of ${[n] \choose k}$ have the same cardinality and are linked by a sequence of cardinality-preserving operations called mutations. A mutation is defined as follows:
\begin{definition}
Consider a set $V \in {[n] \choose k-2}$ and let $a,b,c,d$ be cyclically ordered elements of $[n]\setminus V.$ Suppose that a maximal weakly separated collection $\mathcal{V}_1 \subset {[n] \choose k}$ contains $V \cup \{a,b\}, V \cup \{b,c\}, V \cup \{c,d\}, V \cup \{d,a\}$ and $V \cup \{a,c\}$. Then the collection $\mathcal{V}_2 = (\mathcal{V}_1 \setminus (V \cup \{a,c\})) \cup (V \cup \{b,d\})$ is said to be linked to $\mathcal{V}_1$ by a \textbf{mutation}.
\end{definition}

In 2005, Scott \cite{Scott} proved that any weakly separated collection $\mathcal{W} \subset {[n] \choose k}$ satisfies $|\mathcal{W}| \leq k(n-k) + 1$. This motivated him to make the following refined conjecture regarding the cardinality of maximal weakly separated collections: If $\mathcal{W} \subset {[n] \choose k}$ is a maximal weakly separated collection, then the following is true:
\begin{equation}
\label{scottconj}
|\mathcal{W}| = k(n-k) + 1.
\end{equation}
In 2011, Oh, Postnikov, and Speyer \cite{OhSpPost} proved a generalization of $(\ref{scottconj})$ by linking weak separation to certain graphs called plabic graphs through applying a previous work of Postnikov \cite{Post}.

In 2006, Postnikov \cite{Post} studied total positivity on the Grassmannian. He defined the nonnegative Grassmannian $Gr^{\ge 0} (k,n)$ as the part of the real Grassmannian $Gr(k,n)$ in which all Pl\"ucker coordinates are nonnegative. He described a stratification of $Gr^{\ge 0} (k,n)$ into positroid strata and combinatorially constructed their parametrization using plabic graphs. Namely, he proved that positroid cells can be parameterized through a certain type of planar bicolored graphs (plabic graphs). These type of graphs play an important role in the study of various mathematical objects, for example, appearing also in Kodama and Williams's \cite{Kodama} study on KP-solitons.

Postnikov, Oh, and Speyer \cite{OhSpPost} used plabic graphs to prove Leclerc and Zelevinsky's conjecture that any two maximal weakly separated collections in ${[n] \choose k}$ are linked by a sequence of mutations, from which $(\ref{scottconj})$ followed. The main ingredient of their proof was their bijection between weakly separated collections and reduced plabic graphs. In fact, using this bijection, they were able to prove a generalization of $(\ref{scottconj})$ based on extending the notion of a weakly separated collection to a general positroid. Their results had interesting consequences for cluster algebras. In 2003, Scott \cite{scott2} had proved that the coordinate ring of $Gr^{\ge 0} (k,n)$, in its Pl\"ucker embedding, is a cluster algebra where the Pl\"ucker coordinates are the cluster variables. Postnikov, Oh, and Speyer \cite{OhSpPost} proved that the aforementioned clusters are in bijection with the maximal weakly collections of ${[n] \choose k}$.

Our global aim is to continue the study of the combinatorial properties of the cluster algebra structure on $Gr^{\ge 0} (k,n)$. We specifically study weakly separated collections over a general positroid. In Section 2, we review important existing definitions and known results. In Section 3, we introduce some of our new definitions, state our main results, and outline the rest the paper.

\section{Existing Definitions and Known Results}
We review the relevant existing definitions, notation, and known results involving weakly separated collections. In Section 2.1, we review the definitions from \cite{OhSpPost} related to maximal weakly separated collections over a general positroid. In Section 2.2, we review the technology of plabic graphs from \cite{OhSpPost} and \cite{Post}. In Section 2.3, we review exchange graphs from \cite{OhSpPost} and the subgraphs from \cite{OhSp} that we call $\mathcal{C}$-constant graphs.
\subsection{Weakly Separated Collections over a General Positroid}
We recall the definitions and results from \cite{OhSpPost} relating to Grassmann necklaces, positroids, decorated permutations, and maximal weakly separated collections.

We define a connected Grassmann necklace.
\begin{definition}
A connected \textbf{Grassmann necklace} is a sequence $\mathcal{I} = (I_1,\ldots,I_n, I_{n+1} = I_1)$ of $k$-element subsets of $[n]$ such that, for $1 \le i \le n$, the set $I_{i+1}$ contains $\left\{i+1 \right\}$ and  $I_{i} \setminus \left\{i\right\}$ (where we take the indices modulo $n$).
\end{definition}
\begin{remark}
All Grassmann necklaces that we will work with will be connected, though we will omit the word connected. For the rest of the paper, we assume that $n \ge 3$ and $k \ge 2$.
\end{remark}
We use the following notion of a linear order $<_{i}$ on $[n]$.
\begin{definition}
Consider positive integers $i, r< n$. For $a_1, a_2, \ldots a_r \in [n]$, we say that $a_1 <_i a_2 <_i \ldots <_i a_r$ if $(a_1, a_2, \ldots a_r)$ is a subsequence of $(i, i+1, \ldots n, 1, 2 \ldots i-1).$ For $j, k \le r$, we say that $a_j \le_i a_k$ if and only if $a_j <_i a_k$ or $a_j = a_k$.
\end{definition} 
Let $V$ and $W $ be $k$-element subsets of $[n]$ such that $V= \left\{v_1,\ldots,v_k \right\}$ and $W = \left\{ w_1,\ldots,w_k \right\}$ where $v_1 <_{i} v_2 <_{i} \ldots v_k$ and $w_1 <_{i} w_2 \ldots <_{i} w_k$. Then we define the partial order
$$\displaystyle V \leq_{i} W \text{ if and only if } v_1 \leq_{i} w_1,\ldots, v_k \leq_{i} w_k.$$

We now define the positroid associated to each Grassmann necklace.
\begin{definition}
Given a Grassmann necklace $\mathcal{I} = (I_1,\ldots,I_n)$, we define the \textbf{positroid} $\mathcal{M}_{\mathcal{I}}$ to be
$$\displaystyle \mathcal{M}_{\mathcal{I}} = \left\{ J \in {[n] \choose k} \mid I_i \leq_i J \text{ for all } 1 \le i \le n\right\}.$$
\end{definition}

This allows us to extend the notion of a maximal weakly separated collection to a general Grassmann necklace.
\begin{definition}
For a Grassmann necklace $\mathcal{I}$, a weakly separated collection $\mathcal{W} \subset {[n] \choose k}$ is said be \textbf{over} $\mathcal{I}$ if $\mathcal{W}$ is a subset of $\mathcal{M}_{\mathcal{I}}$. The collection $\mathcal{W}$ is said to be \textbf{maximal} over $\mathcal{I}$ if $\mathcal{W}$ is not contained in any larger weakly separated collection over $\mathcal{I}$.
\end{definition}

Oh, Postnikov, and Speyer \cite{OhSpPost} proved a generalization of $(\ref{scottconj})$ (Scott's conjecture). Namely, they expressed the cardinality of every maximal weakly separated collection over a Grassmann necklace $\mathcal{I}$ as a function of the decorated permutation associated to $\mathcal{I}$. We review the definitions relating to decorated permutations in the case of connected Grassmann necklaces.
\begin{definition}
A \textbf{connected decorated permutation} is a permutation $\pi$ over $[n]$ such that there do not exist two circular intervals $[i,j)$ and $[j,i)$ such that $\pi([i,j)) = [i, j)$ and $\pi([j,i)) = [j, i)$.
\end{definition}
There is a simple relation between connected Grassmann necklaces and connected decorated permutations.
\begin{prop}[Oh, Postnikov, Speyer]
Connected Grassmann necklaces $\mathcal{I} \subset {[n] \choose k}$ are in bijection with connected decorated permutations over $[n]$.
\end{prop}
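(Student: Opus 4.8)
The plan is to exhibit two explicit maps, $\Phi$ from connected Grassmann necklaces to permutations and $\Psi$ in the opposite direction, to verify that they are mutually inverse, and finally to check that the connectivity condition on one side matches the connectivity condition on the other.

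First I would construct $\Phi$. Given a connected Grassmann necklace $\mathcal I=(I_1,\dots,I_n)$, the requirement $i+1\in I_{i+1}$ for every $i$ gives $m\in I_m$ for every $m$, so $|I_i\setminus\{i\}|=k-1$, and there is a unique element, which I call $\pi(i)$, with $I_{i+1}=(I_i\setminus\{i\})\cup\{\pi(i)\}$; set $\Phi(\mathcal I)=\pi$. The substantive point is that $\pi$ is a bijection. I would argue this by a conservation count around the cycle: the transition $I_i\to I_{i+1}$ deletes $i$ and inserts $\pi(i)$, and since each element $a$ is deleted exactly once as $i$ ranges over $[n]$ (namely at $i=a$) while the sequence closes up at $I_{n+1}=I_1$, the element $a$ must also be inserted exactly once; as $a$ is inserted at step $i$ exactly when $\pi(i)=a$, each $a$ has a unique preimage under $\pi$, so $\pi$ is injective and hence a permutation.

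Next I would define $\Psi$ by the closed formula
\[ \Psi(\pi)=(I_1,\dots,I_n), \qquad I_i=\{\,a\in[n] : a\le_i \pi^{-1}(a)\,\}, \]
where $<_i$ is the linear order of the excerpt. The work here is to confirm $\Psi(\pi)$ obeys the necklace axiom. Since $i$ is $<_i$-minimal one checks $i\in I_i$, and a direct comparison of the orders $<_i$ and $<_{i+1}$ shows $\pi(i)\notin I_i$ but $\pi(i)\in I_{i+1}$, while $i\in I_i$ but $i\notin I_{i+1}$. These four facts yield $I_{i+1}=(I_i\setminus\{i\})\cup\{\pi(i)\}$, which simultaneously gives the axiom and the constancy $|I_{i+1}|=|I_i|$, so every $I_i$ has the common size $k$ equal to the number of $a$ with $a\le_1\pi^{-1}(a)$. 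Both composites $\Psi\circ\Phi$ and $\Phi\circ\Psi$ then reproduce the defining relation $I_{i+1}=(I_i\setminus\{i\})\cup\{\pi(i)\}$ and hence equal the identity, so $\Phi$ and $\Psi$ are mutually inverse bijections between the sequences obeying the necklace axiom and the permutations of $[n]$.

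It remains to match connectivity, which I expect to be the main obstacle. From the formula one reads off that $a\in I_i$ exactly when $i$ lies in the cyclic interval $(\pi^{-1}(a),a]$. Using this, I would show that a pair of complementary circular intervals $[i,j)$ and $[j,i)$ satisfies $\pi([i,j))=[i,j)$ and $\pi([j,i))=[j,i)$ if and only if no ``chord'' $a\mapsto\pi(a)$ passes between the two arcs, which is in turn equivalent to $\mathcal I$ decomposing as two independent necklaces supported on the two arcs (so that $I_m\cap[i,j)$ is constant as $m$ runs through one arc, and likewise for the complement). The degenerate case of a singleton arc corresponds precisely to a fixed point of $\pi$, that is, to a coloop of $\mathcal I$. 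Consequently $\pi$ is a connected decorated permutation if and only if $\mathcal I$ admits no such splitting, so $\Phi$ and $\Psi$ restrict to mutually inverse bijections between the connected objects. The care required lies exactly in translating the ``no invariant complementary arcs'' condition through the interval description above; the remainder is the classical correspondence of \cite{Post,OhSpPost}.
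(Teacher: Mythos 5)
Your proposal uses exactly the same pair of maps as the paper --- $\pi(i)=j$ where $I_{i+1}=(I_i\setminus\{i\})\cup\{j\}$ in one direction, and $I_i=\{a\in[n] : a\le_i \pi^{-1}(a)\}$ in the other --- so the approach is the same; in fact the paper's proof consists only of writing down these two formulas, and your verifications (well-definedness, bijectivity of $\pi$ via the conservation count, the mutual-inverse check, and the matching of the connectivity conditions) supply detail the paper omits entirely. The only discrepancy is your $\le_i$ versus the paper's $<_i$ in the second formula, which is immaterial here since connected decorated permutations have no fixed points.
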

\begin{proof}
To go from a Grassmann necklace $\mathcal{I}$ to a decorated permutation $\pi$, we set
\[\pi(i) = j \text{ for each }I_{i+1} = (I_i \setminus \left\{i \right\}) \cup \left\{j \right\}.\]
To go from a decorated permutation $\pi$ to a Grassmann necklace $\mathcal{I}$, we set
\[ I_i =  \left\{j \in [n] \mid j <_i \pi^{-1}(j)  \right\}.\]
\end{proof}
Oh, Postnikov, and Speyer \cite{OhSpPost} defined the function $A: S_n \rightarrow  \mathbb{Z}$ (where $S_n$ is the set of permutations of $[n]$) as follows:
\begin{definition}
For $i, j \in [n]$, the set $\left\{i, j \right\}$ forms an \textbf{alignment} in $\pi$ if $i, \pi(i), \pi(j), j$ are cyclically ordered (and all distinct). Let $A(\pi)$ be the number of alignments in $\pi$. 
\end{definition}
Oh, Postnikov, and Speyer \cite{OhSpPost} used the notion of a decorated permutation and the function $A(\pi)$ to formulate their result involving cardinality. We recall the special case that corresponds to connected Grassmann necklaces:
\begin{theorem}[Oh, Postnikov, Speyer]
\label{card}
Let $\mathcal{W}$ be a maximal weakly separated collection over a connected Grassmann necklace $\mathcal{I} \subset {[n] \choose k}$. Suppose that $\mathcal{I}$ has associated decorated permutation $\pi$. Then the following is true:
\[|\mathcal{W}| =  k(n-k) + 1 - A(\pi).\]
\end{theorem}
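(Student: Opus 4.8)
The plan is to translate the purely combinatorial cardinality statement into a count of faces of plabic graphs, following the strategy of Oh, Postnikov, and Speyer \cite{OhSpPost}. The starting point is their bijection, built from the plabic graph technology of \cite{Post} reviewed in Section 2.2, between maximal weakly separated collections over $\mathcal{I}$ and reduced plabic graphs whose trip (decorated) permutation is the permutation $\pi$ associated to $\mathcal{I}$. Under this bijection the collection $\mathcal{W}$ is recovered as the set of face labels of the corresponding reduced plabic graph $G$, so that $|\mathcal{W}| = F(G)$, the number of faces of $G$. The first task, therefore, is to invoke this bijection together with the face-labeling, which immediately replaces the combinatorial quantity $|\mathcal{W}|$ by the topological quantity $F(G)$.

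The second step is to show that $F(G)$ depends only on $\pi$, not on the particular reduced graph $G$ realizing it. I would argue this by recalling that any two reduced plabic graphs with the same decorated permutation are connected by a sequence of local moves (square moves together with edge contractions and uncontractions), and by checking directly that each such move leaves the number of faces unchanged. This guarantees that $F$ is a well-defined function of $\pi$ and, in particular, that every maximal weakly separated collection over $\mathcal{I}$ has the same size, so that the left-hand side of the asserted identity is meaningful.

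The third and central step is to compute $F$ in terms of $\pi$. Here I would pass through the positroid cell $S_\pi \subseteq Gr^{\ge 0}(k,n)$ that $G$ parametrizes and use Postnikov's result that the face labels of a reduced plabic graph constitute a cluster, so that $F = \dim S_\pi + 1$. It then remains to identify $\dim S_\pi$ with $k(n-k) - A(\pi)$, which I would approach by an Euler-characteristic count on $G$: writing $V$, $E$, $F$ for the numbers of vertices, edges, and faces and combining Euler's relation with the trivalent structure of a reduced plabic graph and the trip decomposition that defines $\pi$, one expresses $F$ through $n$, $k$, and the number of aligned pairs. The cleanest way to let the alignment count enter is to compare against the top cell: the uniform positroid (the full Grassmannian) has decorated permutation $i \mapsto i+k$, which has no alignments, and Scott's theorem \cite{Scott} gives $F = k(n-k)+1$ there; one then shows that each alignment of $\pi$ corresponds to a degeneration dropping the dimension of the cell, and hence the face count, by exactly one.

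The main obstacle I anticipate is precisely this last identification, namely proving that the face count decreases by \emph{exactly} one per alignment. Scott's bound $|\mathcal{W}| \le k(n-k)+1$ is comparatively soft, but pinning down the correction term $A(\pi)$ requires controlling how the alignments of the trip permutation interact with the planar structure of $G$, and in particular ruling out any discrepancy between the combinatorial alignment count and the geometric dimension drop. I expect this to be the technical heart of the argument, to be handled either by invoking Postnikov's explicit dimension formula for positroid cells or by a careful induction that removes one alignment at a time while tracking its effect on a fixed reduced plabic graph.
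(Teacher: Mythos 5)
The paper does not actually prove this statement: it is quoted verbatim from Oh, Postnikov, and Speyer \cite{OhSpPost} as a known result, so there is no internal proof to compare against. Judged on its own, your outline faithfully reconstructs the argument as it appears in the literature: (i) the bijection $\mathcal{W} \leftrightarrow \mathcal{F}(\mathpzc{G})$ for reduced plabic graphs with strand permutation $\pi$ converts $|\mathcal{W}|$ into a face count; (ii) the moves (M1), (M2), (M3) each preserve the number of faces, and by Postnikov's move-equivalence theorem they connect any two reduced plabic graphs with the same $\pi$, so the face count is a well-defined function of $\pi$ alone; (iii) Postnikov's identification of the face count with $\dim S_\pi + 1$ reduces everything to a dimension formula. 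Steps (i) and (ii) are complete and correct as you state them (the appeal to trivalence is harmless, since (M2)/(M3) let you assume it without changing $F$).

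The one genuine soft spot is exactly the one you flag: the claim that each alignment of $\pi$ drops the cell dimension, and hence the face count, by exactly one. As written, the ``degeneration from the top cell'' picture is a heuristic, not an argument --- nothing in your sketch rules out an alignment costing zero or two in dimension, and comparing against the uniform permutation $i \mapsto i+k$ only anchors the formula at a single point. To close this you must either import Postnikov's explicit dimension formula $\dim S_\pi = k(n-k) - A(\pi)$ (equivalently, the crossing/alignment count $A(\pi) + C(\pi) = k(n-k)$ combined with the identification of $\dim S_\pi$ with the number of crossings plus a boundary term), or compute $F$ directly by an Euler-characteristic count on one canonically constructed reduced plabic graph for $\pi$ (e.g.\ the one built from the Le-diagram), which is what \cite{OhSpPost} and \cite{Post} in effect do. Since you explicitly propose invoking that formula, the plan is sound as a proof by reduction to Postnikov's results; it is not self-contained at that step, and any write-up should make the citation load-bearing rather than the degeneration heuristic.
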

\subsection{Plabic Graphs}
We review the technology of plabic graphs discussed in \cite{Post} and \cite{OhSpPost}.

We define a plabic graph.
\begin{definition}
A \textbf{plabic graph} (planar bicolored graph) is a planar undirected graph $\mathpzc{G}$ drawn inside a disk with vertices colored in black or white colors. The vertices on the boundary of the disk are called the \textbf{boundary vertices}. Suppose that there are $n$ boundary vertices. Then the boundary vertices are labeled in clockwise order by $[n]$.
\end{definition}
We define the strands in a plabic graph.
\begin{definition}
A \textbf{strand} in a plabic graph $\mathpzc{G}$ is a directed path that satisfies the rules of the road: At every black vertex, the strand turns right, and at every white vertex, the strand turns left.
\end{definition}
We define the criteria for a plabic graph to be reduced. 
\begin{definition}
A plabic graph $\mathpzc{G}$ is called \textbf{reduced} if the following holds:
\begin{itemize}
  \item A strand cannot form a closed loop in the interior of $\mathpzc{G}$.
  \item Any strand that passes through itself must be a simple loop that starts and ends at some boundary vertex.
  \item For any two strands that have two vertices $u$ and $v$ in common, one strand must be directed from $u$ to $v$, and the other strand must be directed from $v$ to $u$.
\end{itemize}
\end{definition}
Let $\mathpzc{G}$ be a reduced plabic graph. Then any strand in $\mathpzc{G}$ connects two boundary vertices. We associate a \textbf{strand permutation} $\pi_{\mathpzc{G}}$ with $\mathpzc{G}$ defined so that $\pi_{\mathpzc{G}}(j)=i$ if the strand that starts at a boundary vertex $j$ ends at a boundary vertex $i$. We label the strand that ends at boundary vertex $i$ by $i$.

There are three types of moves on a plabic graph:\\

(M1)Pick a square with vertices alternating in colors as in Figure~\ref{move1}. Then we can switch the colors of these $4$ vertices.\\
\begin{figure}[h!]
\centering
\includegraphics[height=0.6in]{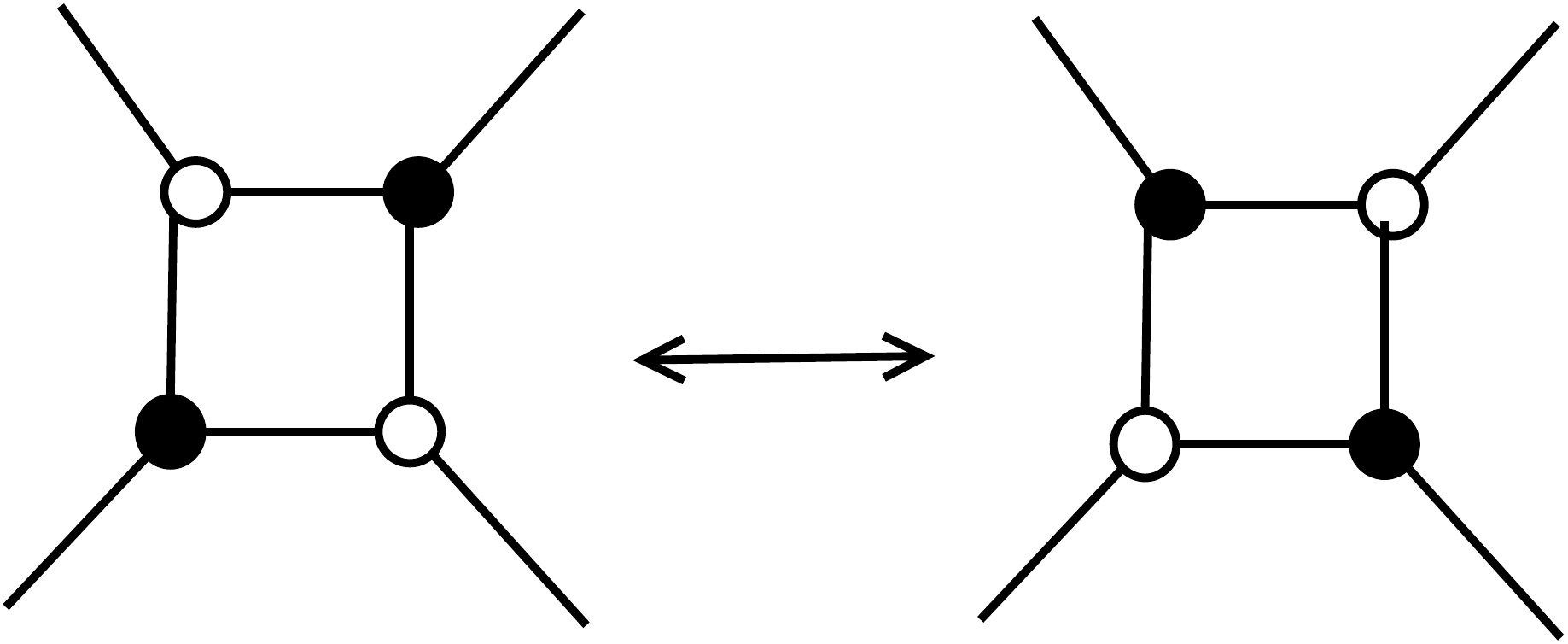}
\caption{(M1) Square move}
\label{move1}
\end{figure}

(M2)Two adjoint vertices of the same color can be contracted into one vertex as in Figure~\ref{move2}.\\
\begin{figure}[h!]
\centering
\includegraphics[height=0.4in]{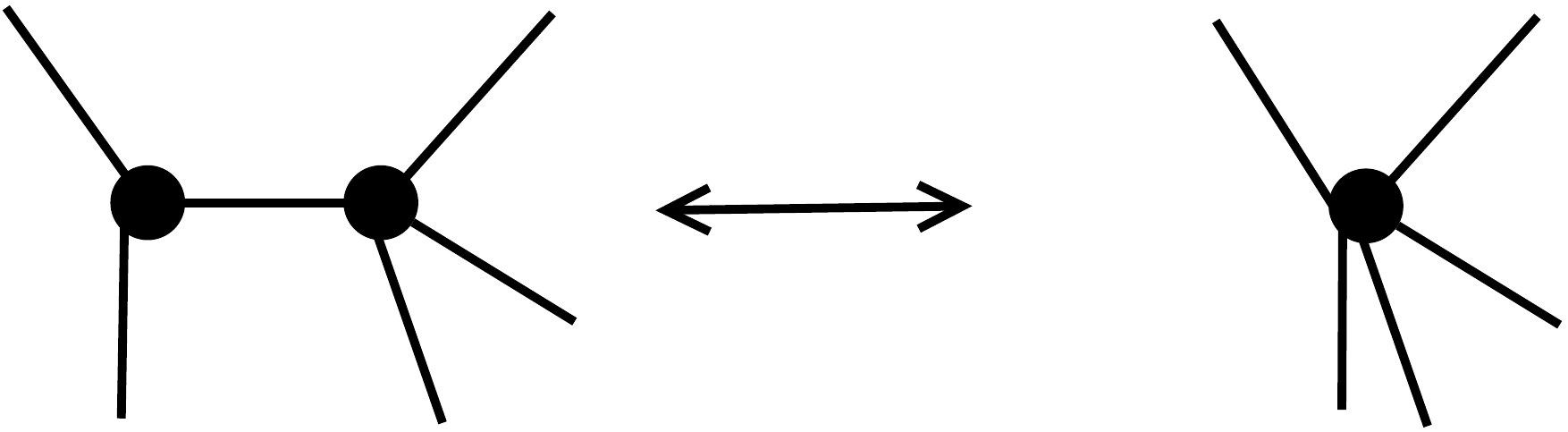}
\caption{(M2)}
\label{move2}
\end{figure}

(M3)We can insert or remove a vertex inside an edge as in Figure~\ref{move3}.
\begin{figure}[h!]
\centering
\includegraphics[height=0.08in]{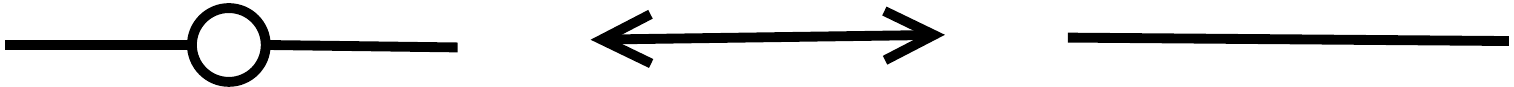}
\caption{(M3)}
\label{move3}
\end{figure}

Postnikov \cite{Post} proved the following relation between strand permutations and the moves (M1), (M2), and (M3). 
\begin{theorem}[Postnikov]
Let $\mathpzc{G}$ and $\mathpzc{G}'$ be two reduced plabic graphs with the same number of boundary vertices. Then $\pi_{\mathpzc{G}}=\pi_{\mathpzc{G}'}$ if and only if $\mathpzc{G}'$ can be obtained from $\mathpzc{G}$ by a sequence of moves (M1), (M2), and (M3).
\end{theorem}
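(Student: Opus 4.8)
The plan is to prove the two implications separately, with essentially all of the difficulty concentrated in the ``only if'' direction.

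For the ``if'' direction (moves preserve the strand permutation), I would argue locally. Since the strand permutation $\pi_{\mathpzc{G}}$ depends only on how the strands enter and leave the boundary, and each of (M1), (M2), (M3) alters $\mathpzc{G}$ only inside a bounded disk, it suffices to check that every strand entering that disk exits it at the same boundary point before and after the move. For (M3), a strand reaching a bivalent vertex has only one outgoing edge, so ``turn left'' and ``turn right'' coincide and the strand passes straight through; inserting or deleting such a vertex therefore leaves all strands unchanged. For (M2), I would treat the two monochromatic cases directly: because the turning rule is identical on both sides of the contracted edge, the merged vertex routes each incoming strand to the same outgoing edge as the two separate vertices did. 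For (M1), the square move, I would draw the four strands crossing the alternating square and verify by inspection that swapping all four colors (so that right-turns become left-turns and vice versa at each corner) permutes the internal crossings but fixes the four entry and exit points on the square's boundary. This last check is the only nontrivial computation in this direction.

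For the ``only if'' direction I would first normalize. Using (M3) I remove all degree-two vertices, and using (M2) together with its inverse I split every vertex of degree at least four into trivalent vertices, arriving at a trivalent normal form; since each of these operations is a move, it suffices to prove the statement for normal forms. To each reduced plabic graph I then associate its strand diagram, an alternating strand diagram drawn in the disk whose strand endpoints on the boundary encode exactly $\pi_{\mathpzc{G}}$. The reducedness hypotheses translate into the statement that this diagram is reduced: no strand crosses itself in the interior, and no two strands cross twice in the same direction. I would then show that a reduced plabic graph can be recovered, up to (M2) and (M3), from its strand diagram, essentially because the graph is dual to the diagram's cell decomposition, so that the problem becomes one purely about strand diagrams with a fixed boundary permutation.

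The combinatorial heart, and the step I expect to be hardest, is proving that any two reduced alternating strand diagrams with the same boundary permutation $\pi$ are connected by the local moves corresponding to (M1). I would set this up as an induction, on $n$ or on the number of crossings, peeling off the strand ending at a fixed boundary vertex and using the reducedness conditions to push crossings into a standard position by square moves, thereby reducing to a smaller diagram. The guiding principle is the analogue of Tits' theorem on reduced words in $S_n$: the square moves play the role of braid moves while (M2) and (M3) absorb the commutation relations, so fixing $\pi$ pins down the relevant equivalence class of reduced decompositions. Making the induction close requires careful bookkeeping of how square moves interact with the reducedness conditions near the boundary, and it is this local-to-global control over strand diagrams that constitutes the real work.
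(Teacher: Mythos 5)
This statement is quoted in the paper as a known theorem of Postnikov (\cite{Post}, Theorem~13.4) and is given no proof there, so there is no in-paper argument to compare yours against; I can only assess your proposal on its own terms. Your ``if'' direction is fine: the local checks for (M1), (M2), (M3) are routine and complete as described. The ``only if'' direction, however, contains a genuine gap. After translating to alternating strand diagrams, the claim that any two reduced strand diagrams with the same boundary permutation are connected by square moves \emph{is} the theorem, merely restated in dual language; your proposal identifies this as ``the combinatorial heart'' and then describes a strategy (peel off the strand ending at a fixed boundary vertex, push crossings into standard position) without establishing any of the steps that make it work. Concretely, you do not show (i) that the strand ending at a chosen boundary vertex can always be brought to a normalized position using only square moves, (ii) that reducedness is preserved at every intermediate stage so that the moves remain available, or (iii) that deleting the normalized strand produces a reduced diagram for a smaller permutation in a way that lets the induction close. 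Each of these is where the actual difficulty lives, and the Tits-theorem analogy, while a correct guiding intuition for the top-cell case (where plabic graphs encode reduced words), does not by itself supply the argument for a general decorated permutation, where some boundary strands may be loops or fixed points and the diagram is not a pseudoline arrangement.

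For comparison, Postnikov's own proof does not run this induction directly on strand diagrams; it constructs a canonical representative for each decorated permutation (the $\Gamma$-graph built from a Le-diagram) and shows that every reduced plabic graph with that strand permutation can be transformed into the canonical one by moves, using the machinery of his boundary-measurement parametrization of positroid cells. Your outline is a legitimate alternative skeleton (closer in spirit to later treatments via triple-crossing/strand-diagram arguments), but as written it reduces the theorem to an unproved statement of essentially equal strength, so it does not yet constitute a proof.
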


We now describe the relation between reduced plabic graphs and weakly separated collections. We label the faces of a reduced plabic graph as follows: Let $\mathpzc{G}$ be a reduced plabic graph. For every $i \in [n]$, place $i$ inside every face $\mathcal{F}$ that appears to the left of the strand labeled $i$. Then the label of $\mathcal{F}$ is the set of all integers placed inside $\mathcal{F}$. Let $\mathcal{F}(\mathpzc{G})$ be the collection of sets of labels that occur on each face of the graph $\mathpzc{G}$. Postnikov \cite{Post} showed that all the sets in $\mathcal{F}(\mathpzc{G})$ have the same number of elements. We denote this number by $k$. Oh, Postnikov, and Speyer \cite{OhSpPost} proved the following:
\begin{theorem}[Oh, Postnikov, Speyer]
For a decorated permutation $\pi$ corresponding to a Grassmann necklace $\mathcal{I}$, a collection $\mathcal{V}$ is a maximal weakly separated collection over $\mathcal{I}$ if and only if it has the form $\mathcal{V}=\mathcal{F}(\mathpzc{G})$ for a reduced plabic graph
$\mathpzc{G}$ with strand permutation $\pi_{\mathpzc{G}} = \pi$.
\end{theorem}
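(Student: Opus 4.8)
The plan is to prove the two implications separately, with the face-labeling map $\mathpzc{G} \mapsto \mathcal{F}(\mathpzc{G})$ serving as the candidate bijection. For the ``if'' direction I would first establish that $\mathcal{F}(\mathpzc{G})$ is weakly separated. The local input is that two faces sharing an edge $e$ have labels of the form $S \cup \{a\}$ and $S \cup \{b\}$: crossing $e$ flips membership of precisely the labels of the two strands running through $e$, and in a reduced graph these contribute a single swap, so adjacent labels differ by exactly one element. For an arbitrary pair of faces $F, F'$ I would walk a path in the dual graph, track how the symmetric difference $F \setminus F'$ versus $F' \setminus F$ is assembled edge by edge, and argue that the ``rules of the road'' forbid the cyclically interleaved configuration $a, b, a', b'$ with $a, a' \in F \setminus F'$ and $b, b' \in F' \setminus F$ that would witness a failure of weak separation. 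This non-crossing control of strands is the heart of the weak-separation claim.

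Next I would verify membership in $\mathcal{M}_{\mathcal{I}}$ and maximality. For the positroid condition, I would identify the face bordering the boundary arc between vertices $i-1$ and $i$ with the necklace element $I_i$ directly from the strand-labeling rule, observe that the $I_i$ are exactly the $\leq_i$-minimal elements of $\mathcal{M}_{\mathcal{I}}$, and then use that face labels increase in the $\leq_i$ order as one moves inward from that arc, giving $I_i \leq_i J$ for every face label $J$. For maximality I would count faces: a reduced plabic graph with strand permutation $\pi$ has exactly $k(n-k)+1 - A(\pi)$ faces, a count derivable from Euler's formula together with the strand/alignment combinatorics. Since $\mathcal{F}(\mathpzc{G})$ is weakly separated over $\mathcal{I}$, it sits inside some maximal collection, which by Theorem~\ref{card} has cardinality $k(n-k)+1 - A(\pi)$; matching cardinalities forces equality, so $\mathcal{F}(\mathpzc{G})$ is already maximal.

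The converse---that every maximal collection $\mathcal{W}$ over $\mathcal{I}$ equals some $\mathcal{F}(\mathpzc{G})$---is where I expect the real difficulty to lie. My plan is to realize $\mathcal{W}$ geometrically as a \emph{plabic tiling}: attach to $\mathcal{W}$ a two-dimensional cell complex whose vertices are the sets of $\mathcal{W}$ and whose $2$-cells record the local mutation configurations $V \cup \{a,b\}, V \cup \{b,c\}, V \cup \{c,d\}, V \cup \{d,a\}$, then pass to the planar dual to recover a bicolored graph $\mathpzc{G}$, and finally check that its strand permutation is $\pi$ and that $\mathcal{F}(\mathpzc{G}) = \mathcal{W}$. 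The step that concentrates the work is showing this complex is genuinely a disk, so that its dual is an honest plabic graph; I would prove this by induction, peeling off a boundary vertex of $[n]$ to reduce the pair $(n,k)$ and using the Grassmann-necklace structure to keep the boundary under control.

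As an alternative route to surjectivity I would instead exploit connectivity: the maximal weakly separated collections over $\mathcal{I}$ are all linked by mutations, and a mutation of collections should correspond to a square move (M1) on plabic graphs, which by Postnikov's theorem preserves both reducedness and the strand permutation $\pi_{\mathpzc{G}}$. Then, once a single reduced plabic graph with permutation $\pi$ is exhibited, every maximal collection is reached by (M1) moves and hence lies in the image of $\mathpzc{G} \mapsto \mathcal{F}(\mathpzc{G})$. The delicate point on this route is establishing the mutation-to-square-move dictionary at the level of face labels \emph{without} already assuming the bijection being proved, and I expect this bookkeeping, rather than anything in the forward direction, to demand the most care.
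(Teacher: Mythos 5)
The paper does not actually prove this statement: it is quoted verbatim as a known theorem of Oh, Postnikov, and Speyer, with the proof deferred entirely to \cite{OhSpPost}. So there is no in-paper argument to compare yours against; what follows assesses your proposal on its own terms against the strategy of the cited source.

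Your forward direction is essentially right and matches the standard argument: adjacent faces differ by a single swap because exactly two strands traverse each edge, weak separation of arbitrary pairs of face labels comes from the non-crossing control on strands in a reduced graph, the boundary faces realize the necklace elements $I_i$, and the positroid inequalities $I_i \leq_i J$ follow by moving inward from the boundary arc. Two caveats. First, your maximality step leans on Theorem~\ref{card} together with the face count $k(n-k)+1-A(\pi)$; in the original development the cardinality theorem is itself deduced from the plabic-graph correspondence, so a self-contained proof must establish the face-count formula for reduced plabic graphs independently (via Euler's formula and the strand/alignment combinatorics, as you indicate) rather than borrowing purity. Second, and more seriously, the converse --- that \emph{every} maximal weakly separated collection over $\mathcal{I}$ arises as $\mathcal{F}(\mathpzc{G})$ --- is the genuine content of the theorem, and your proposal only names the difficulty (showing the plabic-tiling complex attached to $\mathcal{W}$ is a disk, via induction on $(n,k)$) without resolving it. That inductive decomposition, which in \cite{OhSpPost} splits $\mathcal{W}$ according to membership of a fixed boundary element and reglues the resulting tilings, is where all the work lives, and the sketch does not supply it. Your alternative route via mutation-connectedness is, as you suspect, circular: connectedness of the exchange graph is a \emph{consequence} of this theorem plus Postnikov's move-equivalence result, not an available input. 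So the proposal is a correct map of the terrain but not yet a proof; the surjectivity half remains open in your write-up.
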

We also consider a dual of plabic graphs called plabic tilings described in detail in \cite{OhSp}. The faces and vertices are flipped. In plabic tilings, the faces are colored black and white, and the vertices contain the sets of integer labels.
\subsection{Exchange Graphs and $\mathcal{C}$-Constant Graphs}
We review the definitions and known results regarding exchange graphs and $\mathcal{C}$-constant graphs.

For a given Grassmann necklace $\mathcal{I}$, the exchange graph is defined by the maximal weakly separated collections over $\mathcal{I}$ under the mutation operation.
\begin{definition}
Let $\mathcal{I}$ be a Grassmann necklace and $\mathscr{V} = \left\{\mathcal{V}_1, {V}_2,\ldots,{V}_m \right\}$ be the set of maximal weakly separated collections over $\mathcal{I}$. The \textbf{exchange graph} $\mathpzc{G}^{\mathcal{I}}$ is defined as follows:
\begin{enumerate}
\item{The vertices of $\mathpzc{G}^{\mathcal{I}}$ are $\mathscr{V}$.}
\item{The vertices $\mathcal{V}_1$ and $\mathcal{V}_2$ are connected by an edge if and only if $\mathcal{V}_1$ can be mutated into $\mathcal{V}_2$ in one mutation.}
\end{enumerate}
\end{definition}
In 2011, Oh, Postnikov, and Speyer \cite{OhSpPost} proved the following result:
\begin{theorem}[Oh, Postnikov, Speyer]
Any exchange graph is connected.
\end{theorem}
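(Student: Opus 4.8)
The plan is to transport the problem from maximal weakly separated collections to reduced plabic graphs, where Postnikov's move theorem supplies connectivity almost for free, and then to check that the only move which alters the associated face-label collection is a square move, which realizes exactly a mutation. Fix a Grassmann necklace $\mathcal{I}$ with associated decorated permutation $\pi$, and let $\mathcal{V}_1$ and $\mathcal{V}_2$ be any two maximal weakly separated collections over $\mathcal{I}$. By the Oh--Postnikov--Speyer bijection, each $\mathcal{V}_s$ has the form $\mathcal{V}_s = \mathcal{F}(\mathpzc{G}_s)$ for some reduced plabic graph $\mathpzc{G}_s$ whose strand permutation is $\pi_{\mathpzc{G}_s} = \pi$. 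My goal is to produce a walk in the exchange graph $\mathpzc{G}^{\mathcal{I}}$ from $\mathcal{V}_1$ to $\mathcal{V}_2$.

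First I would invoke Postnikov's theorem: since $\mathpzc{G}_1$ and $\mathpzc{G}_2$ have the same number of boundary vertices and the same strand permutation $\pi$, there is a finite sequence of moves (M1), (M2), (M3) carrying $\mathpzc{G}_1$ to $\mathpzc{G}_2$ through reduced plabic graphs $\mathpzc{G}_1 = \mathpzc{H}_0, \mathpzc{H}_1, \ldots, \mathpzc{H}_N = \mathpzc{G}_2$. Each intermediate $\mathpzc{H}_t$ is reduced with strand permutation $\pi$, so by the bijection each label collection $\mathcal{F}(\mathpzc{H}_t)$ is itself a maximal weakly separated collection over $\mathcal{I}$, hence a vertex of $\mathpzc{G}^{\mathcal{I}}$. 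It therefore suffices to understand how a single move changes the collection $\mathcal{F}(\mathpzc{H}_t)$.

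The heart of the argument is the following local analysis, which I would isolate as a lemma. A move of type (M3) inserts or deletes a degree-two vertex inside an edge, and a move of type (M2) contracts or expands a pair of adjacent same-colored vertices; in both cases one checks directly from the rules of the road that the set of faces and the relation of lying to the left of strand $i$ are undisturbed, so $\mathcal{F}(\mathpzc{H}_{t+1}) = \mathcal{F}(\mathpzc{H}_t)$ and the collection is preserved. A move of type (M1), applied to a square with alternately colored vertices, fixes the labels of the four faces surrounding the square while altering only the label of the square face itself. Tracing the strand directions through the flipped square shows that, writing the four neighboring faces as $V \cup \{a,b\}, V \cup \{b,c\}, V \cup \{c,d\}, V \cup \{d,a\}$ with $a,b,c,d$ cyclically ordered and $V \in \binom{[n]}{k-2}$, the square face changes from $V \cup \{a,c\}$ to $V \cup \{b,d\}$ (or vice versa). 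This is precisely the exchange appearing in the definition of a mutation, so $\mathcal{F}(\mathpzc{H}_t)$ and $\mathcal{F}(\mathpzc{H}_{t+1})$ are joined by an edge of $\mathpzc{G}^{\mathcal{I}}$.

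Granting the lemma, the proof concludes quickly: deleting from the sequence $\mathcal{F}(\mathpzc{H}_0), \ldots, \mathcal{F}(\mathpzc{H}_N)$ the repetitions produced by the (M2) and (M3) moves leaves a sequence of maximal weakly separated collections in which consecutive distinct terms differ by a single mutation, i.e. a walk in $\mathpzc{G}^{\mathcal{I}}$ from $\mathcal{V}_1 = \mathcal{F}(\mathpzc{G}_1)$ to $\mathcal{V}_2 = \mathcal{F}(\mathpzc{G}_2)$. Since $\mathcal{V}_1$ and $\mathcal{V}_2$ were arbitrary, the exchange graph is connected. I expect the main obstacle to be the local computation in the lemma: verifying that (M2) leaves every face label intact even though it removes an edge, and pinning down the exact before/after labels of the square face under (M1) with the correct cyclic configuration of $a,b,c,d$. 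Matching the combinatorics of the rules of the road to the purely set-theoretic mutation $V\cup\{a,c\} \leftrightarrow V\cup\{b,d\}$ is where all the care is required.
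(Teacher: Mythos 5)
The paper states this theorem only as a cited result of Oh, Postnikov, and Speyer and supplies no proof of its own, so there is no internal argument to compare against; your proposal --- passing to reduced plabic graphs via the face-label bijection $\mathcal{V}=\mathcal{F}(\mathpzc{G})$, invoking Postnikov's move theorem, and checking that (M2) and (M3) leave $\mathcal{F}(\cdot)$ unchanged while (M1) changes only the central square label $V\cup\{a,c\}\leftrightarrow V\cup\{b,d\}$, i.e.\ performs exactly one mutation --- is the standard argument and essentially the one in the cited source. It is correct as outlined.
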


Given an exchange graph $\mathpzc{G}^{\mathcal{I}}$ and a weakly separated collection $\mathcal{C}$ over $\mathpzc{I}$, we call the $\mathcal{C}$-constant graph  the subgraph defined by maximal weakly separated collections over $\mathcal{I}$ that contain $\mathcal{C}$ with edges defined by mutations of subsets not in $\mathcal{C}$. 
\begin{definition}
Let $\mathcal{I}$ be a Grassmann necklace and $\mathscr{V}_{\mathcal{C}} = \left\{\mathcal{V}_1, \mathcal{V}_2,\ldots,\mathcal{V}_m \right\}$ be the set of maximal weakly separated collections over $\mathcal{I}$ that contain $\mathcal{C}$. We call the $\mathbf{\mathcal{C}}$\textbf{-constant graph} $\mathpzc{G}^{\mathcal{I}} (\mathcal{C})$ the vertex-induced subgraph of $\mathpzc{G}^{\mathcal{I}}$ generated by $\mathscr{V}_C$. The co-dimension of $\mathpzc{G}^{\mathcal{I}}(\mathcal{C})$ is defined to be $|\mathcal{W}| - |C|$ for $\mathcal{W} \in \mathpzc{G}^{\mathcal{I}}.$
\end{definition}
\begin{remark}
Notice that $\mathcal{C}$-constant graphs are a generalization of exchange graphs. In fact, every exchange graph is isomorphic to a $\mathcal{C}$-constant graph: for any Grassmann necklace $\mathcal{I}$, we see that $\mathpzc{G}^{\mathcal{I}}(\mathcal{I})$ is isomorphic to $\mathpzc{G}^{\mathcal{I}}.$
\end{remark}

In 2014, Oh and Speyer \cite{OhSp} proved the following results:
\begin{theorem}[Oh, Speyer]
Any $\mathcal{C}$-constant graph is connected.
\end{theorem}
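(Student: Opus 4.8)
The plan is to reduce the statement to the already-established connectivity of exchange graphs (the Oh--Postnikov--Speyer theorem) by exhibiting a $\mathcal{C}$-constant graph as a Cartesian product of smaller exchange graphs. First I would record what the edges of $\mathpzc{G}^{\mathcal{I}}(\mathcal{C})$ actually are. Since $\mathpzc{G}^{\mathcal{I}}(\mathcal{C})$ is the vertex-induced subgraph on those maximal collections containing $\mathcal{C}$, an edge joins $\mathcal{W}$ and $\mathcal{W}'$ exactly when they differ by a single mutation. The mutated pair $V\cup\{a,c\} \leftrightarrow V\cup\{b,d\}$ then necessarily lies outside $\mathcal{C}$, so every edge of the $\mathcal{C}$-constant graph is a mutation fixing all of $\mathcal{C}$. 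Thus connectivity amounts to the claim that any two maximal weakly separated collections over $\mathcal{I}$ that both contain $\mathcal{C}$ are joined by a chain of mutations, none of which ever disturbs a set of $\mathcal{C}$.

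The key structural step is to show that fixing $\mathcal{C}$ decouples the mutation dynamics into independent regions. Using the plabic-tiling description, given any maximal $\mathcal{W} \supseteq \mathcal{C}$, the sets of $\mathcal{C}$ form a distinguished subconfiguration that cuts the disk into finitely many closed regions $R_1,\dots,R_m$. Each region should carry its own weak-separation problem over a smaller Grassmann necklace $\mathcal{I}^{(i)}$ on fewer boundary indices, with the labels coming from $\mathcal{C}$ playing the role of a frozen boundary, and every maximal collection containing $\mathcal{C}$ should restrict to a maximal weakly separated collection over each $\mathcal{I}^{(i)}$. I would then prove the bijection
\[\mathscr{V}_{\mathcal{C}} \;\cong\; \mathscr{V}^{(1)} \times \cdots \times \mathscr{V}^{(m)},\]
where $\mathscr{V}^{(i)}$ is the set of maximal collections over $\mathcal{I}^{(i)}$: a collection containing $\mathcal{C}$ is determined by, and can be freely reassembled from, its restrictions to the regions, because two sets lying strictly inside distinct regions are automatically weakly separated once $\mathcal{C}$ is present.

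Granting this decomposition, a mutation that fixes $\mathcal{C}$ must occur strictly inside a single region $R_i$, since a square move cannot straddle the $\mathcal{C}$-configuration without altering a set of $\mathcal{C}$; it therefore changes exactly one factor of the product and corresponds to an edge of the exchange graph $\mathpzc{G}^{\mathcal{I}^{(i)}}$. Hence $\mathpzc{G}^{\mathcal{I}}(\mathcal{C})$ is isomorphic to the \emph{Cartesian product} $\mathpzc{G}^{\mathcal{I}^{(1)}} \times \cdots \times \mathpzc{G}^{\mathcal{I}^{(m)}}$. Each factor is connected by the connectivity theorem for exchange graphs, and a Cartesian product of connected graphs is connected, which finishes the argument.

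The main obstacle is the decomposition lemma itself: proving that $\mathcal{C}$ really does partition the disk into independent sub-Grassmannians and that the product bijection $\mathscr{V}_{\mathcal{C}} \cong \prod_i \mathscr{V}^{(i)}$ holds. This needs the geometry of reduced plabic graphs and plabic tilings---identifying the regions cut out by $\mathcal{C}$, checking that each is governed by a genuine Grassmann necklace, and verifying that cross-region pairs are always weakly separated so the restrictions recombine freely. As a fallback that avoids the full decomposition, one could instead induct on the codimension $|\mathcal{W}|-|\mathcal{C}|$: choose $x \in \mathcal{W}\setminus\mathcal{C}$, apply the inductive hypothesis to the lower-codimension graph $\mathpzc{G}^{\mathcal{I}}(\mathcal{C}\cup\{x\})$, and then argue that the subgraphs $\mathpzc{G}^{\mathcal{I}}(\mathcal{C}\cup\{x\})$ overlap enough to glue into a connected whole; the delicate point there is showing that any two vertices of $\mathpzc{G}^{\mathcal{I}}(\mathcal{C})$ can be bridged through a shared extra set.
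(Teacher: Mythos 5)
The paper does not actually prove this statement; it is quoted as a known result of Oh and Speyer \cite{OhSp}, so there is no internal proof to compare against. Judged on its own terms, your proposal has a genuine gap, and it is precisely the gap that forces this paper to introduce the ``applicable'' hypothesis in its Theorem~\ref{link}. Your key step is that the sets of $\mathcal{C}$ cut the disk into regions $R_1,\dots,R_m$, each governed by a \emph{genuine} Grassmann necklace $\mathcal{I}^{(i)}$, so that $\mathpzc{G}^{\mathcal{I}}(\mathcal{C})$ becomes a Cartesian product of exchange graphs and OPS connectivity can be invoked factor by factor. This fails when some $V\in\mathcal{C}$ is not linked to the boundary necklace by a chain of quasi-adjacent sets inside $\mathcal{C}$: the paper's own Example~\ref{appandnot}, with $\mathcal{C}_1=\{\{1,2,6,7\}\}\cup\mathcal{I}$, exhibits exactly this. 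There the extra frozen set floats in the interior, the complementary region is an annulus rather than a disk, and its boundary data is not a Grassmann necklace, so there is no exchange graph $\mathpzc{G}^{\mathcal{I}^{(i)}}$ to which the OPS theorem applies. The paper's whole apparatus of prong-closed curves and modified Grassmann-like necklaces in Section~4 exists to repair this, and even then the isomorphism to exchange graphs is established only for \emph{applicable} $\mathcal{C}$-constant graphs; your decomposition lemma, stated for arbitrary $\mathcal{C}$, is not merely unproven but false in the generality you need. A secondary unproven point is the assertion that sets in distinct regions are automatically weakly separated once $\mathcal{C}$ is fixed; this is true but is itself a purity-type statement about domains inside and outside cyclic patterns (cf.\ the Danilov--Karzanov--Koshevoy results the paper imports) and cannot be waved through.

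Your fallback --- induction on the co-dimension $|\mathcal{W}|-|\mathcal{C}|$, passing to the subgraphs $\mathpzc{G}^{\mathcal{I}}(\mathcal{C}\cup\{x\})$ and gluing --- is the more promising route and is closer in spirit to how such connectivity statements are actually established, but as you note the gluing step is the entire content of the theorem: given two maximal collections $\mathcal{V},\mathcal{V}'\supseteq\mathcal{C}$, one must produce a common set $x\notin\mathcal{C}$ (or a chain of such sets) through which to bridge, and nothing in your sketch produces it. As written, both branches of the proposal terminate at an acknowledged ``main obstacle'' rather than at a proof.
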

\begin{theorem}[Oh, Speyer]
\label{char01}
The only $\mathcal{C}$-constant graph with co-dimension $0$ is a path with $1$ vertex. The only $\mathcal{C}$-constant graphs with co-dimension $1$ are a path with $1$ vertex and a path with $2$ vertices.
\end{theorem}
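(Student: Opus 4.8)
The plan is to dispatch the two co-dimensions separately. For co-dimension $0$, the equation $|\mathcal{W}| - |\mathcal{C}| = 0$ together with $\mathcal{C} \subseteq \mathcal{W}$ forces $\mathcal{C} = \mathcal{W}$, so $\mathcal{C}$ is itself the unique maximal weakly separated collection over $\mathcal{I}$ containing $\mathcal{C}$ and is the only vertex of $\mathpzc{G}^{\mathcal{I}}(\mathcal{C})$. Since no $k$-subset lies outside $\mathcal{C}$, there is nothing to mutate, so the graph has no edges and is a path with $1$ vertex.

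For co-dimension $1$, every vertex has the form $\mathcal{C} \cup \{S\}$ for a single $k$-subset $S \notin \mathcal{C}$, and since $\mathpzc{G}^{\mathcal{I}}(\mathcal{C})$ is connected (Oh and Speyer) it suffices to prove that $\mathcal{C}$ has at most two completions $S$: one completion gives a path with $1$ vertex, and two completions, being connected, must be adjacent and so give a path with $2$ vertices. First I would observe that two distinct completions $S_1, S_2$ cannot be weakly separated from one another, since each is weakly separated from all of $\mathcal{C}$, so if they were weakly separated from each other then $S_2$ could be adjoined to the maximal collection $\mathcal{C}\cup\{S_1\}$, a contradiction. Next I would use the plabic-tiling description of maximal collections to promote this single obstruction to a full square move, producing a $(k-2)$-subset $V$ and cyclically ordered $a,b,c,d \in [n]\setminus V$ with $S_1 = V\cup\{a,c\}$, $S_2 = V\cup\{b,d\}$, and the four sides $V\cup\{a,b\}$, $V\cup\{b,c\}$, $V\cup\{c,d\}$, $V\cup\{d,a\}$ all lying in $\mathcal{C}$.

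The heart of the argument is to forbid a third completion, and I would reduce this to the local claim that any $k$-subset $T$ weakly separated from each of the four sides $V\cup\{a,b\}$, $V\cup\{b,c\}$, $V\cup\{c,d\}$, $V\cup\{d,a\}$ is weakly separated from at least one diagonal $V\cup\{a,c\}$ or $V\cup\{b,d\}$. Granting this claim, any completion $S_3 \notin \{S_1,S_2\}$ is weakly separated from the four sides (which lie in $\mathcal{C}$) and hence from $S_1$ or from $S_2$; being over $\mathcal{I}$ and weakly separated from every element of the maximal collection $\mathcal{C}\cup\{S_1\}$ (respectively $\mathcal{C}\cup\{S_2\}$), it must already belong to that collection, whence $S_3 \in \{S_1,S_2\}$, a contradiction. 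This caps the vertex count at two and finishes the case.

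I expect the local claim to be the principal obstacle. When $k=2$ and $V=\emptyset$ it is exactly the planar fact that a chord crossing both diagonals of the quadrilateral $abcd$ must also cross one of its sides, which is transparent from the noncrossing reformulation of weak separation recorded in the remark after the definition. The general case is harder precisely because $T$ need not contain $V$, so one cannot simply restrict to the induced cyclic order on $[n]\setminus V$; I would handle it by the standard projection technique for weakly separated sets sharing a common core, with the delicate bookkeeping concentrated in the subcase where $T$ meets $V$ only partially.
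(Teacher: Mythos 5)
The paper does not actually prove this statement: it is quoted from Oh and Speyer's work on $\mathcal{C}$-constant graphs and cited without proof, so there is no in-paper argument to compare yours against. Judged on its own, your proposal has the right architecture — the co-dimension $0$ case is correct (you should just note that a vertex exists because every weakly separated collection over $\mathcal{I}$ extends to a maximal one, which then must equal $\mathcal{C}$ by the cardinality theorem), and reducing the co-dimension $1$ case to a local statement about the square-move configuration is a sensible plan. But there are two gaps, one minor and one serious.

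The minor one: from the mere fact that two completions $S_1,S_2$ are not weakly separated from each other you cannot directly ``promote'' this to a square move with all four sides in $\mathcal{C}$; that promotion is not a consequence of the plabic-tiling description alone. The correct route is to use the Oh--Speyer connectivity theorem you already invoke: if there are at least two vertices, connectivity supplies an \emph{edge} of $\mathpzc{G}^{\mathcal{I}}(\mathcal{C})$, an edge is by definition a mutation, and since the mutation fixes $\mathcal{C}$ the four sides $V\cup\{a,b\},\dots$ lie in $(\mathcal{C}\cup\{S_1\})\setminus\{S_1\}=\mathcal{C}$. The serious gap is the local claim itself — that any $T$ weakly separated from the four sides is weakly separated from at least one diagonal. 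This is the entire content of the theorem, and you prove it only when $k=2$ and $V=\emptyset$ (the case $T\supseteq V$ reduces to that by deleting $V$, as you say). The case where $T$ meets $V$ only partially is precisely where the difficulty lives: $T\setminus(V\cup\{a,c\})$ then contains elements of the quadrilateral's ``core,'' the relevant interleavings involve elements of $V\setminus T$, and no projection to $[n]\setminus V$ is available. Deferring this to ``the standard projection technique... with delicate bookkeeping'' is naming the obstacle, not clearing it. The claim is in fact true — it is essentially the combinatorial lemma underlying the Leclerc--Zelevinsky/Oh--Postnikov--Speyer mutation theory — but until you carry out that case analysis (or cite the precise lemma), the proof is incomplete at its central step.
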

\section{Main Results}
In Section 3.1, we present some of our new notions that are critical to understanding our main results. In Section 3.2, we present our main results along with an outline for the rest of the paper.
\subsection{Some New Definitions}
In Section 3.1.1, we present some basic definitions. In Section 3.1.2, we define special classes of exchange graphs and $\mathcal{C}$-constant graphs. In Section 3.1.3, we define equivalence classes of decorated permutations.
\subsubsection{Basic Definitions}
We define the following definitions and notation involving adjacency, interior size, and mutations.

We define two $k$-elements subsets to be quasi-adjacent as follows:
\begin{definition}
We call sets $V_1, V_2 \in {[n] \choose k}$ \textbf{quasi-adjacent} if $|V_1 \cap V_2| = k-1$.
\end{definition}
\begin{remark}
Suppose that $V_1$ and $V_2$ are contained in a maximal weakly separated collection $\mathcal{V}$. This definition is equivalent to the condition that $V_1$ and $V_2$ are on the same face in the plabic tiling of $\mathcal{V}$.
\end{remark}

We now define a stronger notion of adjacency that is dependent on the choice of maximal weakly separated collection:
\begin{definition}
Given a maximal weakly separated collection $\mathcal{V} \in \mathpzc{G}^{\mathcal{I}}$, we call two $k$-element subsets $V_1, V_2 \in \mathcal{V}$ \textbf{adjacent} if and only if in the plabic tiling of $\mathcal{V}$, there exists an edge between $V_1$ and $V_2$ that border a black face and a white face.
\end{definition}
\begin{remark}
Notice that all adjacent subsets are also quasi-adjacent.
\end{remark}

We define the following notion with interior size which is closely related to cardinality:
\begin{definition}
We define the $\mathbf{interior}$ $\mathbf{size}$ of a maximal weakly separated collection $\mathcal{V}$ over a Grassmann Necklace $\mathcal{I}$ to be $|\mathcal{V}| - |\mathcal{I}|.$
\end{definition}
\begin{remark}
Notice that the interior size of $\mathcal{V}$ is the number of sets in the interior of the plabic tiling of $\mathcal{V}$.
\end{remark}
In fact, interior size is a property of the Grassmann necklace $\mathcal{I}$ (and thus the exchange graph $\mathpzc{G}^{\mathcal{I}}$). Let $\pi$ be the decorated permutation associated to $\mathcal{I}$. By Theorem~$\ref{card}$, we know that the interior size of any maximal weakly separated collection in $\mathpzc{G}^{\mathcal{I}}$ is $k(n-k) + 1 - A(\pi) - |\mathcal{I}|$. Thus, we let the \textbf{interior size} of both the Grassmann necklace $\mathcal{I}$ and the exchange graph $\mathpzc{G}^{\mathcal{I}}$ be this value. We denote this value by $i(\mathcal{I})$ or $i(\mathpzc{G}^{\mathcal{I}})$.

We use the following notation and definitions to discuss mutations:
\begin{definition}
Given a maximal weakly separated collection $\mathcal{V}$ over a Grassmann Necklace $\mathcal{I}$, we say that a $k$-element subset $V_1 \in \mathcal{V}$ is $\mathbf{mutatable}$ in $\mathcal{V}$ if in the plabic tiling of $\mathcal{V}$, the subset $V_1$ is surrounded by exactly 2 black faces and 2 white faces.
\end{definition}
This means that we can mutate $\mathcal{V}$ into a maximal weakly separated collection $\mathcal{W}$ over $\mathcal{I}$ that contains $\mathcal{V} \setminus \left\{V_1\right\}$. Suppose that $\left\{V_2 \right\} = \mathcal{W} \setminus \left\{V_1 \right\}$. We then say that $V_1$ can be mutated in $\mathcal{V}$ into $V_2$ in $\mathcal{W}$. If $V_1$ is not surrounded by 2 black faces and 2 white faces, then we say that $V_1$ is $\mathbf{immutatable}$ in $\mathcal{V}$.

\subsubsection{Special Classes of Exchange Graphs and $\mathcal{C}$-Constant Graphs}
We define the applicable, mutation-friendly, and very-mutation-friendly conditions.

Roughly speaking, the applicable condition for a $\mathcal{C}$-constant graph requires that the sets in the weakly separated collection $\mathcal{C}$ are connected to the boundary of the plabic tiling of $\mathcal{V}$ for any maximal weakly separated collection $\mathcal{V} \in \mathpzc{G}^{\mathcal{I}}(\mathcal{C})$. 
\begin{definition}
Consider a $\mathcal{C}$-constant graph $\mathpzc{G}^{\mathcal{I}}(\mathcal{C})$. We say that $\mathpzc{G}^{\mathcal{I}}(\mathcal{C})$ is $\mathbf{applicable}$ if for each set $V \in \mathcal{C}$, there exists an integer $m$ and a weakly separated collection $\mathcal{W} = (V = W_1, W_2,..W_m)$ over $\mathcal{I}$ satisfying the following properties:
\begin{enumerate}
\item{$W_i$ is quasi-adjacent to $W_{i+1}$ for $1 \le i \le m-1$,}
\item{$W_m \in \mathcal{I}$}
\item{and $\mathcal{W} \subset \mathcal{C}$.}
\end{enumerate}
\end{definition}
\begin{figure}
\caption{}
\label{appl}
\includegraphics[scale=0.7]{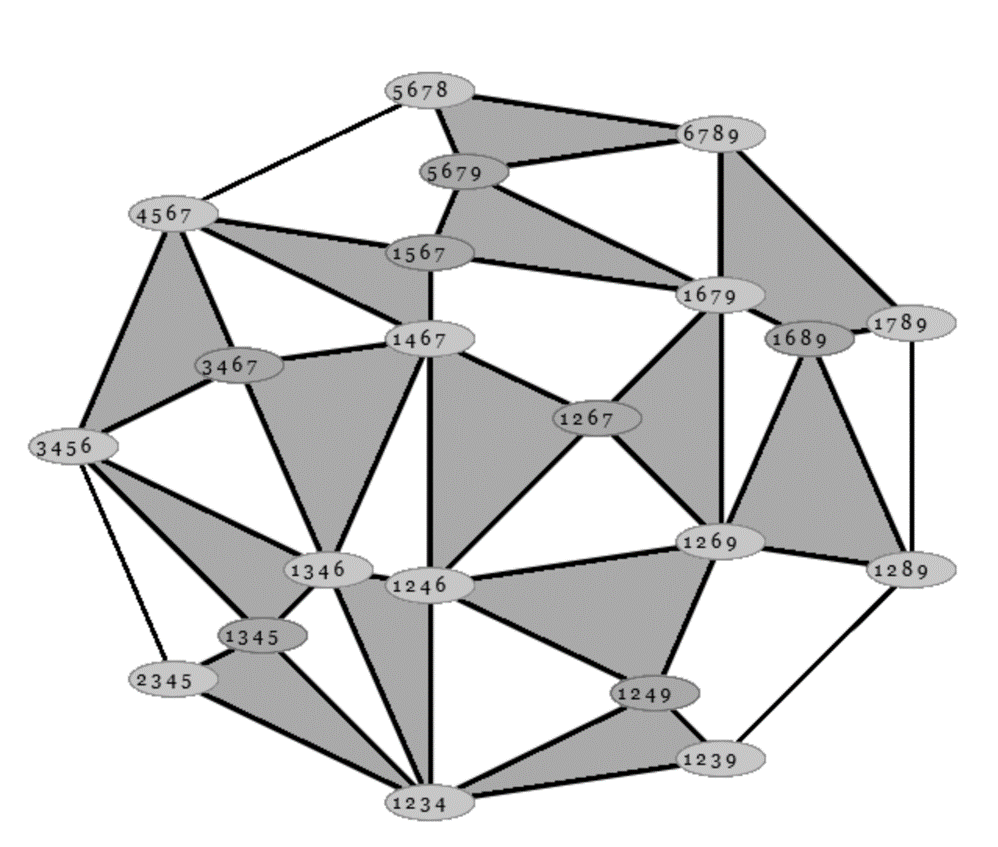}
\end{figure}
We use the following maximal weakly separated collection and Grassmann necklace as an example throughout the paper:
\begin{ex}
\label{mainexth}
Consider the Grassmann necklace
\[\mathcal{I} =
(\left\{1,2,3, 4\right\}, \left\{2, 3, 4, 5\right\},\left\{3,4,5, 6\right\}, \left\{4,5,6, 7\right\}, \left\{5, 6, 7, 8\right\}, \left\{6, 7, 8, 9 \right\}, \left\{1,7, 8, 9\right\}, \]
\[\left\{1,2,8, 9\right\}, \left\{1,2,3, 9\right\}).\]
Consider the maximal weakly separated collection $\mathcal{V}$ that contains $\mathcal{I}$ and the following subsets:
$\left\{5, 6, 7, 9 \right\}$, $\left\{1, 5, 6, 7\right\}$, $\left\{1, 6, 7, 9 \right\}$, $\left\{1, 6, 8, 9 \right\}$, $\left\{1, 2, 6, 9 \right\}$, $\left\{1, 2, 4, 9 \right\}$, $\left\{1,2, 4, 6 \right\}$, $\left\{1, 3, 4, 6 \right\}$, $\left\{1, 3, 4, 5 \right\}$, $\left\{3, 4, 6, 7 \right\}$, $\left\{1, 2, 6, 7 \right\}$, $\left\{1, 4, 6, 7 \right\}$.
Figure~$\ref{appl}$ shows the plabic tiling of $V$.
\end{ex}
\begin{ex}
\label{appandnot}
Consider $\mathcal{I}$ defined as in Example~$\ref{mainexth}$. Then we define the following weakly separated collections over $\mathcal{I}$:
\begin{align*}
\mathcal{C}_1 &= \left\{\left\{1, 2, 6, 7 \right\}\right\} \cup \mathcal{I} \\
\mathcal{C}_2 &= \left\{\left\{1, 6, 7, 9 \right\}, \left\{1, 2, 6, 7 \right\} \right\}\cup \mathcal{I}
\end{align*}
The $\mathcal{C}$-constant graph $\mathpzc{G}^{\mathcal{I}}(\mathcal{C}_1)$ is not applicable and the $\mathcal{C}$-constant graph $\mathpzc{G}^{\mathcal{I}}(\mathcal{C}_2)$ is applicable.
\end{ex}

Roughly speaking, the mutation-friendly condition for a Grassmann necklace $\mathcal{I}$ requires that every subset in $\mathcal{M}_{\mathcal{I}}$ that is present in at least one maximal weakly separated collection in $\mathpzc{G}^{\mathcal{I}}$ is mutatable in some maximal weakly separated collection in $\mathpzc{G}^{\mathcal{I}}$.
\begin{definition}
A Grassmann necklace $\mathcal{I}$ and its associated decorated permutation $\pi$ are $\mathbf{mutation-friendly}$ if the intersection of all of the maximal weakly separated collections in $\mathpzc{G}^{\mathcal{I}}$ is $\mathcal{I}$. We call an exchange graph $\mathpzc{G}^{\mathcal{I}}$ mutation-friendly if and only if the Grassmann necklace $\mathcal{I}$ is mutation-friendly.
\end{definition}
\begin{remark}
It follows from the definition that for a mutation-friendly exchange graph $\mathpzc{G}^{\mathcal{I}}$, we know that
$$\displaystyle |\mathpzc{G}^{\mathcal{I}}| > i(\mathpzc{G}^{\mathcal{I}}).$$
\end{remark}
We extend a similar notion to $\mathcal{C}$-constant graphs:
\begin{definition}
A $\mathcal{C}$-constant graph $\mathpzc{G}^{\mathcal{I}}(\mathcal{C})$ is said to be $\mathbf{mutation-friendly}$ if the intersection of all of the maximal weakly separated collections in $\mathpzc{G}^{\mathcal{I}}(\mathcal{C})$ is $\mathcal{C}$.
\end{definition}

We define a stronger notion of the mutation-friendly condition in the case of exchange graphs. This condition, the very-mutation-friendly condition, requires that a certain sequence of $\mathcal{C}$-constant graphs of the exchange graph satisfy both the applicable and mutation-friendly conditions.
\begin{definition}
\label{vmf}
A mutation-friendly Grassmann necklace $\mathcal{I}$ with interior size $i$ is \\
$\mathbf{very-mutation-friendly}$ if there exists a set of weakly separated collections $\mathscr{W} = (\mathcal{W}_1, \mathcal{W}_2,...\mathcal{W}_{i-1})$ satisfying the following properties:
\begin{enumerate}
\item{$\mathcal{W}_j$ is a weakly separated collection over $\mathcal{I}$ containing $\mathcal{I}$ for $1 \le j \le i-1$,}
\item{$\mathcal{W}_{j+1} \subset \mathcal{W}_{j}$ for $1 \le j \le i-2$,}
\item{$|\mathcal{W}_{j}| = i + |\mathcal{I}|-j$ for $1 \le j \le i-1$,}
\item{and $\mathpzc{G}^{\mathcal{I}}(\mathcal{W}_j)$ is applicable and mutation-friendly for $1 \le j \le i-1.$}
\end{enumerate}
We call $\mathpzc{G}^{\mathcal{I}}$ very-mutation-friendly if and only if $\mathcal{I}$ is very-mutation-friendly.
\end{definition}

\subsubsection{Equivalence Classes}
In order to simplify our discussion of Grassmann necklaces, we construct an equivalence class of decorated permutations that yield the same exchange graph. We show that exchange graphs are invariant under the following four operations on associated decorated permutations.

We first consider the inverse operation.
\begin{prop}
\label{inv}
Let $\mathcal{I}$ be the Grassmann necklace with decorated permutation $\pi$ and $\mathcal{J}$ be the Grassmann necklace with decorated permutation $\pi^{-1}.$ Then the following is true:
\[\mathpzc{G}^{\mathcal{I}} \cong \mathpzc{G}^{\mathcal{J}}. \]
\end{prop}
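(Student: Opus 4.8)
The plan is to realize the isomorphism through the color-swapping operation on reduced plabic graphs. By the theorem of Oh, Postnikov, and Speyer, a collection $\mathcal{V}$ is a maximal weakly separated collection over $\mathcal{I}$ if and only if $\mathcal{V} = \mathcal{F}(\mathpzc{G})$ for a reduced plabic graph $\mathpzc{G}$ with strand permutation $\pi_{\mathpzc{G}} = \pi$. Given such a $\mathpzc{G}$, let $\mathpzc{G}^{*}$ denote the plabic graph obtained by switching the color of every vertex. I would then define the candidate map on vertices by sending $\mathcal{V} = \mathcal{F}(\mathpzc{G})$ to $\mathcal{F}(\mathpzc{G}^{*})$, and show that this is a well-defined graph isomorphism $\mathpzc{G}^{\mathcal{I}} \to \mathpzc{G}^{\mathcal{J}}$.

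First I would establish the two structural facts about color-swapping. Reversing the direction of travel along any path interchanges ``turn left'' and ``turn right,'' so the reverse of a strand of $\mathpzc{G}$ is exactly a strand of $\mathpzc{G}^{*}$; consequently a strand of $\mathpzc{G}$ from boundary vertex $a$ to boundary vertex $b$ becomes a strand of $\mathpzc{G}^{*}$ from $b$ to $a$, giving $\pi_{\mathpzc{G}^{*}} = \pi_{\mathpzc{G}}^{-1} = \pi^{-1}$. Because the reducedness conditions are phrased entirely in terms of strands and are invariant under reversing all strand directions (closed loops remain closed loops, and two strands sharing two vertices still traverse them in opposite orders), $\mathpzc{G}^{*}$ is again reduced. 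Tracking the face-labeling rule through the reversal---a face lies to the left of strand $i$ in $\mathpzc{G}$ exactly when it lies to the right of the corresponding reversed strand in $\mathpzc{G}^{*}$---yields the explicit formula $\mathcal{F}(\mathpzc{G}^{*}) = \left\{\Phi(S) : S \in \mathcal{F}(\mathpzc{G})\right\}$, where $\Phi(S) = \left\{j \in [n] : \pi(j) \notin S\right\}$; in particular each label has size $n-k$, so $\mathcal{J} \subset {[n] \choose n-k}$ and, by the same Oh--Postnikov--Speyer theorem applied to $\pi^{-1}$, the collection $\mathcal{F}(\mathpzc{G}^{*})$ is a maximal weakly separated collection over $\mathcal{J}$.

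It remains to check that color-swapping is compatible with the move structure, which is what transports both well-definedness and the edge relation. Since switching all vertex colors commutes with each of the moves (M1), (M2), and (M3)---for (M1) the swapped square is still an alternating square, and applying the square move and the global swap in either order flips each vertex color the same number of times---color-swapping descends to a bijection between reduced plabic graphs with permutation $\pi$ modulo (M2) and (M3) and those with permutation $\pi^{-1}$ modulo (M2) and (M3). The former are the vertices of $\mathpzc{G}^{\mathcal{I}}$ and the latter the vertices of $\mathpzc{G}^{\mathcal{J}}$, and an edge of an exchange graph corresponds to a single (M1) move between representatives; since color-swapping carries a single (M1) move to a single (M1) move and is inverted by color-swapping on the $\mathcal{J}$-side (as $(\mathpzc{G}^{*})^{*} = \mathpzc{G}$), it is an isomorphism of exchange graphs. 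The main obstacle I anticipate is the bookkeeping in the two preceding steps: verifying rigorously that reversing strand directions sends strands of $\mathpzc{G}$ to strands of $\mathpzc{G}^{*}$ and computing the resulting face labels, together with confirming that two reduced plabic graphs sharing a strand permutation represent the same maximal weakly separated collection if and only if they are connected by moves (M2) and (M3) alone, so that the quotient description of the vertex set and the identification of edges with single (M1) moves are both justified.
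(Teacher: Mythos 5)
Your proposal is correct and takes essentially the same route as the paper: the paper's proof simply declares the map $\phi$ that sends $\mathcal{V}_1$ to the collection obtained by flipping the colors of all vertices in its plabic graph, which is exactly your color-swapping construction. The only difference is that you supply the verifications (strand reversal gives $\pi^{-1}$, preservation of reducedness, the face-label computation, and compatibility with the moves) that the paper leaves implicit.
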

\begin{proof}
We must prove that there is a bijective mapping $\phi$ from the maximal weakly separated collections over $\mathcal{I}$ to the maximal weakly separated collections over $\mathcal{J}$ that preserves the mutation operation. Let $\phi$ take $\mathcal{V}_1 \in \mathpzc{G}^{\mathcal{I}}$ to $\mathcal{V}_2 \in \mathpzc{G}^{\mathcal{J}}$ where $\mathcal{V}_2$ is obtained by flipping the colors of the vertices in the plabic graph of $\mathcal{V}_1$.
\end{proof}
\begin{figure}
\caption{}
\label{first}
\includegraphics[scale=0.5]{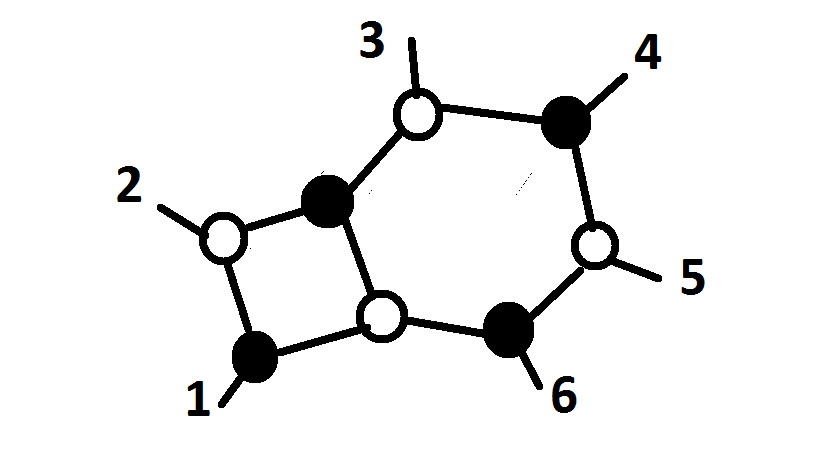}
\end{figure}
\begin{figure}
\caption{}
\label{second}
\includegraphics[scale=0.5]{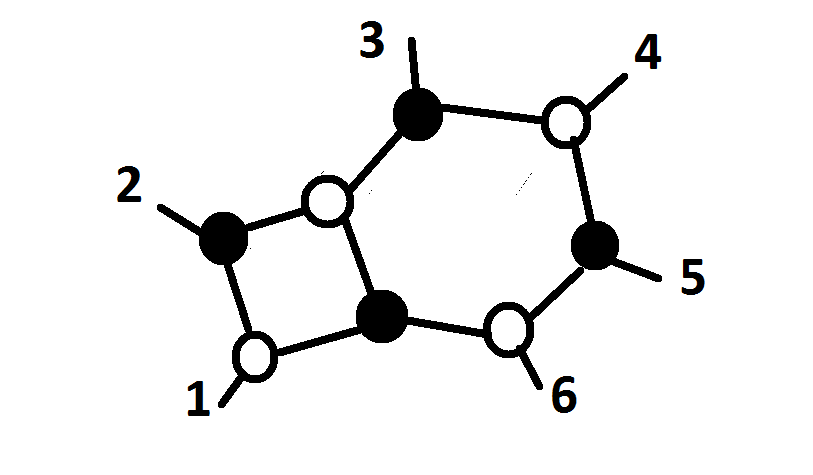}
\end{figure}
\begin{ex}
Let $\pi$ be the permutation $365124$. Then $\pi^{-1} = 451632$. Let $\mathcal{V}_1$ be the maximal weakly separated collection corresponding to the plabic graph in Figure~$\ref{first}$ and $\mathcal{V}_2$ be the maximal weakly separated collection corresponding to the plabic graph in Figure~$\ref{second}$. We see that $\phi$ takes $\mathcal{V}_1$ to $\mathcal{V}_2$ as expected.
\end{ex}

We now consider label-reflection operations.
\begin{definition}
For a positive integer $i$, we define the \textbf{label-reflection operation} $LR^{2i}$ as follows. For an integer $n \ge i$ and a permutation $\pi$ of $[n]$, we let  $LR^{2i}[\pi]$ be the permutation on $[n]$ defined so that $LR^{2i}[\pi](j) = 2i - \pi^{-1}(2i-j)$ for $1 \le j \le n$ (where all numbers are considered modulo $n$).
\end{definition}
\begin{prop}
\label{lr}
Given integers $1 \le i \le n$, let $\mathcal{I}$ be the Grassmann necklace with decorated permutation $\pi$ and $\mathcal{J}$ be the Grassmann necklace with decorated permutation $LR^{2i}[\pi].$ Then the following is true:
\[\mathpzc{G}^{\mathcal{I}} \cong \mathpzc{G}^{\mathcal{J}}. \]
\end{prop}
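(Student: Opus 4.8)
The plan is to realize the passage from $\pi$ to $LR^{2i}[\pi]$ as a geometric \emph{mirror reflection} of reduced plabic graphs, in direct analogy with the color-flip used for Proposition~\ref{inv}. Write $\rho(x) = 2i - x$ (mod $n$) for the reflection of the boundary circle about the diameter through the vertex labeled $i$; note that $\rho$ is an involution that reverses the cyclic order on $[n]$, and that $LR^{2i}[\pi] = \rho \circ \pi^{-1} \circ \rho$. Given $\mathcal V \in \mathpzc{G}^{\mathcal I}$ with reduced plabic graph $\mathpzc{G}$ (so $\pi_{\mathpzc{G}} = \pi$), I would let $\phi(\mathcal V)$ be the maximal weakly separated collection whose plabic graph is the reflection $\mathpzc{G}^{r}$ of $\mathpzc{G}$ across that diameter, with the boundary vertices relabeled clockwise by $\rho$. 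Since the three conditions defining a reduced plabic graph are symmetric under reflection, $\mathpzc{G}^{r}$ is again reduced, so by the Oh--Postnikov--Speyer bijection $\phi(\mathcal V)$ is a maximal weakly separated collection over the necklace determined by $\pi_{\mathpzc{G}^{r}}$.

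The crux is to check that $\pi_{\mathpzc{G}^{r}} = LR^{2i}[\pi]$. A reflection is orientation-reversing, so it interchanges left and right turns; consequently the image of a strand violates the rules of the road, and the strand of $\mathpzc{G}^{r}$ running through the reflected edges is the image strand \emph{traversed backwards}. A strand of $\mathpzc{G}$ from boundary vertex $a$ to $b$ therefore becomes a strand of $\mathpzc{G}^{r}$ from $\rho(b)$ to $\rho(a)$, which yields $\pi_{\mathpzc{G}^{r}} = \rho \circ \pi^{-1} \circ \rho = LR^{2i}[\pi]$, as required. Equivalently, and perhaps more cleanly, one may first reflect \emph{and} swap all colors: this keeps each strand's direction and sends $\pi$ to the conjugate $\rho \circ \pi \circ \rho$, after which a single application of Proposition~\ref{inv} replaces this by its inverse $(\rho \circ \pi \circ \rho)^{-1} = LR^{2i}[\pi]$. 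I expect this direction-tracking---pinning down that the reflection contributes $\pi^{-1}$ rather than $\pi$---to be the main obstacle, since it is exactly where the orientation reversal enters and is easy to get backwards.

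It then remains to verify that $\phi$ is a graph isomorphism. Bijectivity is immediate because $\rho$, and hence the reflection, is an involution (and one checks $LR^{2i}[LR^{2i}[\pi]] = \pi$), so $\phi$ is a bijection whose inverse is the same reflection construction. For the edges, recall that a mutation is the diagonal flip of the square configuration $V\cup\{a,b\}, V\cup\{b,c\}, V\cup\{c,d\}, V\cup\{d,a\}, V\cup\{a,c\}$ in the associated plabic tiling. Because the reflection is a homeomorphism of the disk, it preserves the face/vertex incidences of plabic tilings and carries such a configuration to the configuration on $\rho(V)$ with cyclically ordered (in reversed order) elements $\rho(a), \rho(b), \rho(c), \rho(d)$, and the flip $\{a,c\} \leftrightarrow \{b,d\}$ commutes with $\rho$. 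Hence $\mathcal V_1$ mutates to $\mathcal V_2$ if and only if $\phi(\mathcal V_1)$ mutates to $\phi(\mathcal V_2)$, so $\phi$ is the desired isomorphism $\mathpzc{G}^{\mathcal I} \cong \mathpzc{G}^{\mathcal J}$.
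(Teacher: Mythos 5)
Your proposal is correct and follows essentially the same route as the paper, which defines $\phi$ by reflecting the plabic graph about the vertex labeled $i$ and declares the rest analogous to Proposition~\ref{inv}. You in fact supply more detail than the paper does (the orientation-reversal bookkeeping showing $\pi_{\mathpzc{G}^{r}} = \rho\circ\pi^{-1}\circ\rho = LR^{2i}[\pi]$, and the mutation-preservation check), and that bookkeeping is accurate.
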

The proof is analogous to the proof of Proposition~$\ref{inv}$, except that $\phi$ is defined by reflecting the plabic graph of $\mathcal{V}_1$ about the vertex labeled $i$.
\begin{figure}
\caption{}
\label{third}
\includegraphics[scale=0.5]{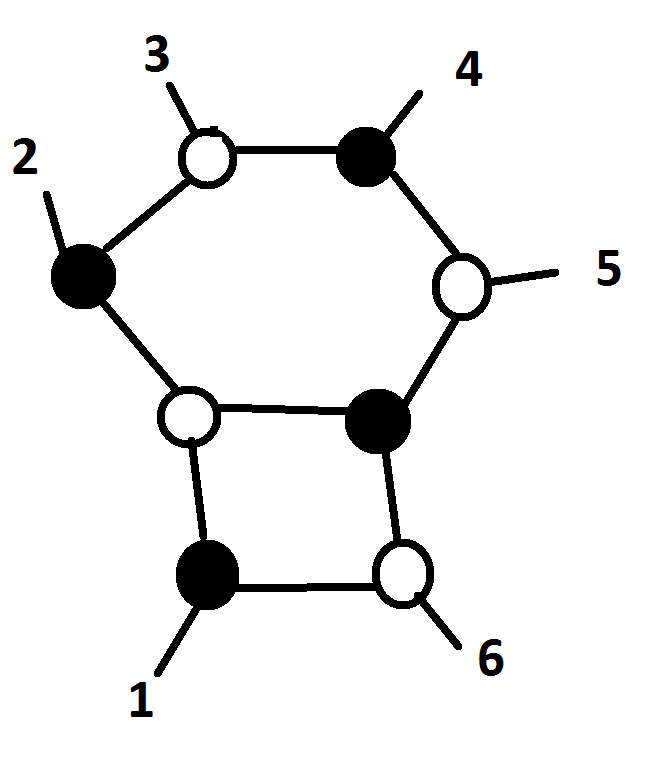}
\end{figure}
\begin{ex}
Let $\pi$ be the permutation $365124$, and let $i = 4$. Then $LR^{8}[\pi] = 465213$. Let $\mathcal{V}_1$ be the maximal weakly separated collection corresponding to the plabic graph in Figure~$\ref{first}$ and $\mathcal{V}_2$ be the maximal weakly separated collection corresponding to the plabic graph in Figure~$\ref{third}$. We see that $\phi$ takes $\mathcal{V}_1$ to $\mathcal{V}_2$ as expected.
\end{ex}

We now consider between-label-reflection operations.
\begin{definition}
For a positive integer $i$, we define the \textbf{between-label-reflection operation} $BLR^{2i-1}$ as follows. For an even integer $n \ge i$ and a permutation $\pi$ of $[n]$, we let  $BLR^{2i-1}[\pi]$ be the permutation on $[n]$ defined so that $BLR^{2i-1}[\pi](j) = 2i-1 - \pi^{-1}(2i-1-j)$ for $1 \le j \le n$ (where all numbers are considered modulo $n$).
\end{definition}
\begin{prop}
\label{blr}
Given integers $1 \le i \le n$ such that $n$ is even, let $\mathcal{I}$ be the Grassmann necklace with decorated permutation $\pi$ and $\mathcal{J}$ be the Grassmann necklace with decorated permutation $BLR^{2i-1}[\pi].$ Then the following is true:
\[\mathpzc{G}^{\mathcal{I}} \cong \mathpzc{G}^{\mathcal{J}}. \]
\end{prop}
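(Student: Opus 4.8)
The plan is to mimic the proof of Proposition~\ref{lr}, replacing the reflection through a boundary vertex by a reflection through an axis that passes \emph{between} two boundary vertices. Concretely, I would define the bijection $\phi$ by reflecting the reduced plabic graph of each $\mathcal{V}_1 \in \mathpzc{G}^{\mathcal{I}}$ about the axis of the disk passing between the boundary vertices labeled $i-1$ and $i$ (that is, through the half-integer positions $i - \tfrac12$ and $i - \tfrac12 + \tfrac{n}{2}$), and letting $\mathcal{V}_2 = \phi(\mathcal{V}_1)$ be the weakly separated collection read off from the reflected graph. The hypothesis that $n$ is even enters precisely here: the diametrically opposite end of this axis lands at a half-integer position (between the vertices labeled $i-1+\tfrac{n}{2}$ and $i+\tfrac{n}{2}$) if and only if $n$ is even, so that the reflection genuinely interchanges the boundary vertices in pairs $j \leftrightarrow (2i-1)-j$ without fixing any of them. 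This is the structural reason $BLR^{2i-1}$ is defined only for even $n$, in contrast to $LR^{2i}$, whose axis passes through the vertices $i$ and $i+\tfrac n2$.

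First I would check that reflection carries reduced plabic graphs to reduced plabic graphs. The three conditions defining \emph{reduced} are stated purely in terms of strands, closed interior loops, and self-intersections, all of which are preserved by an orientation-reversing isometry of the disk (using the same convention as in Proposition~\ref{lr} so that the rules of the road continue to hold after the handedness of the picture is reversed). Hence the reflected graph is again a reduced plabic graph and, by the theorem of Postnikov, determines a maximal weakly separated collection over the Grassmann necklace $\mathcal{J}$ whose decorated permutation is the strand permutation of the reflected graph.

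Next I would compute that strand permutation. Under reflection about the axis at position $i - \tfrac12$, a boundary vertex at position $x$ is carried to position $(2i-1) - x \pmod n$, so that restoring the clockwise labeling convention relabels $j$ as $(2i-1)-j$. Tracking the two endpoints of each strand through this relabeling, and accounting for the orientation reversal exactly as in the proof of Proposition~\ref{lr} (which is what produces $\pi^{-1}$ rather than $\pi$), I would verify that the strand permutation of the reflected graph equals $BLR^{2i-1}[\pi]$, matching the stated formula $BLR^{2i-1}[\pi](j) = (2i-1) - \pi^{-1}((2i-1)-j)$. Thus $\mathcal{V}_2 = \phi(\mathcal{V}_1)$ is indeed a maximal weakly separated collection over $\mathcal{J}$.

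Finally, I would confirm that $\phi$ is a graph isomorphism $\mathpzc{G}^{\mathcal{I}} \cong \mathpzc{G}^{\mathcal{J}}$. It is a bijection because reflecting twice about the same axis is the identity, so $\phi$ is its own inverse. It preserves edges because a mutation is realized by a square move (M1), and a reflection carries a square whose vertices alternate in color to another such square, transporting the exchange of the two labelings of the mutated face to the corresponding exchange on the reflected side; hence $\mathcal{V}_1$ and $\mathcal{V}_1'$ differ by a single mutation if and only if $\phi(\mathcal{V}_1)$ and $\phi(\mathcal{V}_1')$ do. The main obstacle is the bookkeeping in the strand-permutation step: one must pin down the direction-and-color convention for the reflected strands and verify that the relabeling $j \mapsto (2i-1)-j$, together with the induced inversion, reproduces the precise formula for $BLR^{2i-1}$, including the off-by-one in the index $2i-1$ that distinguishes the between-label axis from the through-vertex axis of $LR^{2i}$.
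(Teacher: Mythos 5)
Your proposal is correct and follows essentially the same route as the paper, which defines $\phi$ by reflecting the plabic graph of $\mathcal{V}_1$ about the perpendicular bisector of the edge between the boundary vertices labeled $i-1$ and $i$ and then argues exactly as in Proposition~\ref{inv}. Your additional bookkeeping (checking reducedness is preserved, computing the strand permutation as $j \mapsto (2i-1)-j$ composed with the inversion, and explaining why evenness of $n$ is needed for the axis to pass between vertices at both ends) fills in details the paper leaves implicit but does not change the argument.
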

The proof is analogous to the proof of Proposition~$\ref{inv}$, except that $\phi$ is defined by reflecting the plabic graph of $\mathcal{V}_1$ about the perpendicular bisector of the edge between the vertex labeled $i-1$ begins and the vertex labeled $i$.
\begin{figure}
\caption{}
\label{fourth}
\includegraphics[scale=0.5]{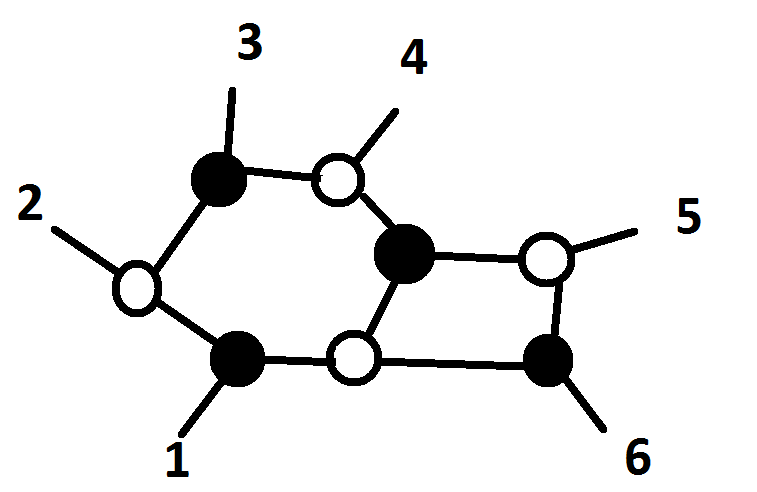}
\end{figure}
\begin{ex}
Let $\pi$ be the permutation $365124$, and let $i = 4$. Then $BLR^{7}[\pi] = 541623$. Let $\mathcal{V}_1$ be the maximal weakly separated collection corresponding to the plabic graph in Figure~$\ref{first}$ and $\mathcal{V}_2$ be the maximal weakly separated collection corresponding to the plabic graph in Figure~$\ref{fourth}$. We see that $\phi$ takes $\mathcal{V}_1$ to $\mathcal{V}_2$ as expected.
\end{ex}

We now consider rotation operations.
\begin{definition}
For a positive integer $i$, we define the \textbf{rotation operation} $R^{i}$ as follows. For an even integer $n \ge i$ and a permutation $\pi$ of $[n]$, we let $R^{i}[\pi]$ be the permutation on $[n]$ defined so that $R^{i}[\pi](j) =  \pi(j-i)+i$ for $1 \le j \le n$ (where all numbers are considered modulo $n$).
\end{definition}
\begin{prop}
\label{rot}
Given integers $1 \le i \le n$, let $\mathcal{I}$ be the Grassmann necklace with decorated permutation $\pi$ and $\mathcal{J}$ be the Grassmann necklace with decorated permutation $R^{i}[\pi].$ Then the following is true:
\[\mathpzc{G}^{\mathcal{I}} \cong \mathpzc{G}^{\mathcal{J}}. \]
\end{prop}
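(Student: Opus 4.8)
The plan is to follow the same strategy as in the proof of Proposition~\ref{inv}: exhibit a bijection $\phi$ from the maximal weakly separated collections over $\mathcal{I}$ to those over $\mathcal{J}$ that carries mutations to mutations, where $\phi$ is induced by a geometric symmetry of the plabic graph. The symmetry appropriate to $R^i$ is rotation. Given $\mathcal{V}_1 \in \mathpzc{G}^{\mathcal{I}}$, I would choose a reduced plabic graph $\mathpzc{G}$ with $\mathcal{F}(\mathpzc{G}) = \mathcal{V}_1$ and $\pi_{\mathpzc{G}} = \pi$, rotate the disk so that each boundary vertex labeled $j$ is relabeled $j + i$ (modulo $n$), and let $\phi(\mathcal{V}_1)$ be the face-label collection of the resulting graph $\mathpzc{G}'$.

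First I would record the effect of this rotation on the combinatorial data. Rotating the boundary labels by $i$ sends the strand ending at $m$ to the strand ending at $m+i$, while leaving the geometric graph unchanged, and hence preserving reducedness and the set of faces lying to the left of each strand. This yields $\pi_{\mathpzc{G}'}(j) = \pi_{\mathpzc{G}}(j-i) + i = R^i[\pi](j)$, so $\pi_{\mathpzc{G}'}$ is exactly the decorated permutation of $\mathcal{J}$, and on face labels $\phi$ acts by the cyclic shift $S \mapsto S + i := \{\, s + i \bmod n : s \in S \,\}$. By the Oh--Postnikov--Speyer bijection between reduced plabic graphs and maximal weakly separated collections, $\mathcal{F}(\mathpzc{G}') = \phi(\mathcal{V}_1)$ is then automatically a maximal weakly separated collection over $\mathcal{J}$, so $\phi$ is well defined as a map $\mathpzc{G}^{\mathcal{I}} \to \mathpzc{G}^{\mathcal{J}}$. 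Its inverse is rotation by $-i$, which gives bijectivity; alternatively, injectivity is immediate since $S \mapsto S + i$ is invertible on collections.

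It then remains to check that $\phi$ preserves edges. A mutation replaces a set $V \cup \{a,c\}$ by $V \cup \{b,d\}$ for cyclically ordered $a,b,c,d \in [n] \setminus V$ in the presence of the four sets $V \cup \{a,b\}$, $V \cup \{b,c\}$, $V \cup \{c,d\}$, $V \cup \{d,a\}$. Since the cyclic shift $x \mapsto x + i$ preserves cyclic order, the shifted data $(V+i) \cup \{a+i,c+i\}$ and $(V+i) \cup \{b+i,d+i\}$, together with the four shifted neighbors, again satisfy the hypotheses of the mutation definition. Hence $\mathcal{V}_1$ and $\mathcal{V}_1'$ are linked by a mutation if and only if $\phi(\mathcal{V}_1)$ and $\phi(\mathcal{V}_1')$ are, so $\phi$ is a graph isomorphism.

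I expect the only genuine subtlety to be the modular bookkeeping in the identity $\pi_{\mathpzc{G}'}(j) = \pi_{\mathpzc{G}}(j-i) + i$: one must confirm that rotating the boundary labels by $+i$ (rather than $-i$) produces precisely the operation $R^i$ as defined, and that the decoration on any fixed point is transported correctly. Everything else---preservation of weak separation, reducedness, and the mutation relation---follows formally from the cyclic invariance of the underlying definitions, exactly as in Propositions~\ref{inv}, \ref{lr}, and \ref{blr}.
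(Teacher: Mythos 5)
Your proposal is correct and follows exactly the paper's approach: the paper also defines $\phi$ by rotating the vertex labels of the plabic graph so that vertex $1$ becomes $i+1$, and appeals to the same argument as in Proposition~\ref{inv}. You have simply filled in the verification (that $\pi_{\mathpzc{G}'} = R^i[\pi]$, that face labels shift by $+i$, and that mutations are preserved) that the paper leaves implicit.
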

The proof is analogous to the proof of Proposition~$\ref{inv}$, except that $\phi$ is defined by rotating the vertex labels on the plabic graph of $\mathcal{V}_1$ so that the vertex $1$ is now labeled $i+1$.
\begin{figure}
\caption{}
\label{fifth}
\includegraphics[scale=0.5]{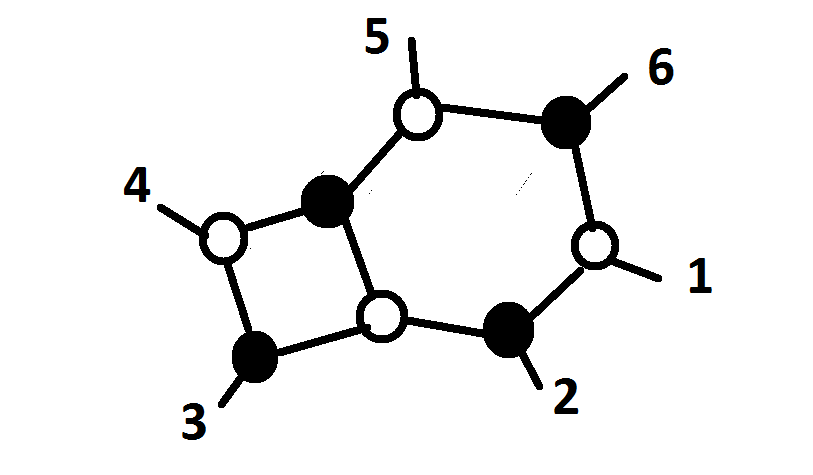}
\end{figure}
\begin{ex}
Let $\pi$ be the permutation $365124$, and let $i = 2$. Then $R^{2}[\pi] = 465213$. Let $\mathcal{V}_1$ be the maximal weakly separated collection corresponding to the plabic graph in Figure~$\ref{first}$ and $\mathcal{V}_2$ be the maximal weakly separated collection corresponding to the plabic graph in Figure~$\ref{fifth}$. We see that $\phi$ takes $\mathcal{V}_1$ to $\mathcal{V}_2$ as expected.
\end{ex}
This motivates the following definition of an equivalence class of decorated permutations:
\begin{definition}
For a given decorated permutation $\pi$, we define the $\mathbf{equivalence}$ $\mathbf{class}$ $\mathscr{C}$ to contain all permutations that can be obtained from one another by a sequence of inverse operations, label-reflection operations, between-label reflection operations, and rotation operations. We also let $\mathscr{C}$ denote the class of corresponding Grassmann necklaces.
\end{definition}
We prove that any two decorated permutations in the same equivalence class yield the same exchange graph.
\begin{lemma}
Let $\mathscr{C}$ be an equivalence class, and consider $\pi_1, \pi_2 \in \mathscr{C}$. Let $\mathcal{I}$ be the Grassmann necklace with decorated permutation $\pi_1$ and $\mathcal{J}$ be the Grassmann necklace with decorated permutation $\pi_2.$ Then the following is true:
\[\mathpzc{G}^{\mathcal{I}} \cong \mathpzc{G}^{\mathcal{J}}. \]
\end{lemma}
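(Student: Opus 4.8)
The plan is to reduce the statement to the single-operation case already handled by Propositions~\ref{inv}, \ref{lr}, \ref{blr}, and \ref{rot}, and then to chain the resulting isomorphisms together using the transitivity of graph isomorphism. The substantive content is entirely carried by those four propositions; this lemma is the assembly step that packages them into a statement about the whole equivalence class.

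First I would invoke the definition of the equivalence class $\mathscr{C}$. Since $\pi_1$ and $\pi_2$ lie in the same class, there is by definition a finite sequence of permutations $\pi_1 = \sigma_0, \sigma_1, \ldots, \sigma_m = \pi_2$ in which each $\sigma_{\ell+1}$ is obtained from $\sigma_\ell$ by a single inverse, label-reflection, between-label-reflection, or rotation operation. Writing $\mathcal{K}_\ell$ for the Grassmann necklace associated to $\sigma_\ell$ (so $\mathcal{K}_0 = \mathcal{I}$ and $\mathcal{K}_m = \mathcal{J}$), I would then apply whichever of the four propositions corresponds to the operation linking $\sigma_\ell$ to $\sigma_{\ell+1}$ to conclude $\mathpzc{G}^{\mathcal{K}_\ell} \cong \mathpzc{G}^{\mathcal{K}_{\ell+1}}$ for each $0 \le \ell \le m-1$.

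The argument then finishes by induction on $m$, equivalently by composing the individual isomorphisms. The identity map handles the base case $m = 0$, where $\pi_1 = \pi_2$. For the inductive step, composing an isomorphism $\mathpzc{G}^{\mathcal{K}_0} \cong \mathpzc{G}^{\mathcal{K}_{m-1}}$ obtained from the inductive hypothesis with the isomorphism $\mathpzc{G}^{\mathcal{K}_{m-1}} \cong \mathpzc{G}^{\mathcal{K}_m}$ furnished by the relevant proposition yields $\mathpzc{G}^{\mathcal{I}} = \mathpzc{G}^{\mathcal{K}_0} \cong \mathpzc{G}^{\mathcal{K}_m} = \mathpzc{G}^{\mathcal{J}}$, since $\cong$ is an equivalence relation on graphs and is therefore transitive.

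The only thing requiring care is the bookkeeping that makes this chain well-defined, and this is where I expect the main (if minor) obstacle to lie. Concretely, I would verify that each intermediate $\sigma_\ell$ is again a connected decorated permutation to which the next operation legitimately applies — in particular checking that the parity hypotheses demanded by the between-label-reflection and rotation operations are preserved as one moves along the sequence, so that each proposition is invoked under its stated hypotheses. Once this closure of the generating operations is confirmed, the transitivity argument is immediate and the lemma follows.
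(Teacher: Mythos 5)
Your proposal is correct and matches the paper's argument, which simply cites Propositions~\ref{inv}, \ref{lr}, \ref{blr}, and \ref{rot} and implicitly chains them by transitivity of isomorphism exactly as you describe. Your added care about verifying that intermediate permutations satisfy the hypotheses of each operation is a reasonable refinement the paper leaves implicit.
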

\begin{proof}
This follows from Proposition~$\ref{inv}$, Proposition~$\ref{lr}$, Proposition~$\ref{blr}$, and Proposition~$\ref{rot}$.
\end{proof}
\subsection{Statement of Main Results}
We continue the study of $\mathcal{C}$-constant graphs and exchange graphs for maximal weakly separated collections over a general positroid with connected Grassmann necklaces.

In Section 4, we prove our main result: an isomorphism between exchange graphs and applicable $\mathcal{C}$-constant graphs.
\begin{theorem}
\label{link}
For any $c > 0$, the set of possible applicable $\mathcal{C}$-constant graphs of co-dimension $\mathcal{C}$ is isomorphic to the set of the possible exchange graphs $\mathpzc{G}^{\mathcal{I}}$ with interior size $c$. An applicable $\mathcal{C}$-constant graph $\mathpzc{G}^{\mathcal{I}}(\mathcal{C})$ of co-dimension $c$ that is mutation-friendly is isomorphic to a mutation-friendly exchange graph with interior size $c$.
\end{theorem}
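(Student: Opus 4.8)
The plan is to establish the two inclusions of isomorphism classes separately: every exchange graph of interior size $c$ occurs as an applicable $\mathcal{C}$-constant graph of co-dimension $c$, and conversely every applicable $\mathcal{C}$-constant graph of co-dimension $c$ is isomorphic to such an exchange graph. The first inclusion is essentially immediate from the remark following the definition of $\mathcal{C}$-constant graphs: given a Grassmann necklace $\mathcal{I}'$ with $i(\mathcal{I}') = c$, we have $\mathpzc{G}^{\mathcal{I}'} \cong \mathpzc{G}^{\mathcal{I}'}(\mathcal{I}')$, and this graph is applicable because for each $V \in \mathcal{I}'$ the one-term sequence $\mathcal{W} = (V)$ satisfies all three conditions of applicability (the quasi-adjacency condition is vacuous, $W_m = V \in \mathcal{I}'$, and $\{V\} \subset \mathcal{I}'$). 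Its co-dimension is $|\mathcal{W}| - |\mathcal{I}'| = i(\mathcal{I}') = c$. So the real content is the reverse inclusion.

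For the reverse inclusion, let $\mathpzc{G}^{\mathcal{I}}(\mathcal{C})$ be applicable of co-dimension $c$. First I would fix a vertex $\mathcal{V} \in \mathpzc{G}^{\mathcal{I}}(\mathcal{C})$ and pass to its plabic tiling, in which the sets of $\mathcal{I}$ lie on the boundary cycle and quasi-adjacency of two sets is exactly an edge of the tiling. Using applicability, each set of $\mathcal{C}$ is joined to the boundary by a quasi-adjacency path inside $\mathcal{C}$, so $\mathcal{C}$ together with $\mathcal{I}$ spans a connected subcomplex reaching the boundary; this subcomplex cuts out a sub-disk $D$ whose interior contains precisely the $c$ sets of $\mathcal{V} \setminus \mathcal{C}$ and whose boundary is a cyclic sequence of sets drawn from $\mathcal{C} \cup \mathcal{I}$. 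Reading off this boundary sequence, after a relabeling of the ground set that records which single-element swaps occur along it, produces a candidate Grassmann necklace $\mathcal{I}'$, and the key bookkeeping step is to verify that $\mathcal{I}'$ is a genuine connected Grassmann necklace with $i(\mathcal{I}') = c$, the interior size matching because the interior of $D$ holds exactly $c$ sets.

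Next I would prove a freezing lemma: every maximal weakly separated collection $\mathcal{W}$ over $\mathcal{I}$ containing $\mathcal{C}$ has all $c$ of its non-$\mathcal{C}$ sets in the interior of $D$ and agrees with $\mathcal{V}$ everywhere outside $D$, so that $\mathcal{W}$ is determined by its restriction to $D$. Granting this, the map $\Phi(\mathcal{W}) = (\mathcal{W} \setminus \mathcal{C})$, reinterpreted inside $D$ together with the boundary $\mathcal{I}'$, is a well-defined bijection from the vertices of $\mathpzc{G}^{\mathcal{I}}(\mathcal{C})$ onto the maximal weakly separated collections over $\mathcal{I}'$; its inverse glues a collection over $\mathcal{I}'$ back into the frozen exterior, and here one must check that the glued-in interior sets remain weakly separated from the exterior sets, which follows because the shared boundary $\mathcal{I}'$ separates the two regions by a chord-type argument. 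Finally, since the edges of $\mathpzc{G}^{\mathcal{I}}(\mathcal{C})$ are by definition mutations of sets not in $\mathcal{C}$, every such mutation is supported in the interior of $D$, and mutatability is the purely local condition of being surrounded by two black and two white faces; this local structure is unchanged by $\Phi$, so $\Phi$ carries edges to edges in both directions, giving the isomorphism $\mathpzc{G}^{\mathcal{I}}(\mathcal{C}) \cong \mathpzc{G}^{\mathcal{I}'}$.

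The hard part will be the construction and verification of $\mathcal{I}'$ together with the freezing lemma: I must show that the region cut out by an applicable $\mathcal{C}$ is independent of the chosen vertex $\mathcal{V}$, that its boundary genuinely assembles into a single connected Grassmann necklace on the relabeled ground set, and that gluing preserves weak separation across the cut. Once the freezing lemma is in place, both the vertex bijection and the correspondence of mutations become routine, since mutatability and the square move are local to the interior of $D$. The mutation-friendly refinement then follows formally: because $\Phi$ sends $\mathcal{C}$ to $\mathcal{I}'$ and restricts to a bijection on the complementary interior sets, the intersection of all vertices of $\mathpzc{G}^{\mathcal{I}}(\mathcal{C})$ equals $\mathcal{C}$ if and only if the intersection of all maximal weakly separated collections over $\mathcal{I}'$ equals $\mathcal{I}'$, so $\mathpzc{G}^{\mathcal{I}}(\mathcal{C})$ is mutation-friendly exactly when $\mathpzc{G}^{\mathcal{I}'}$ is.
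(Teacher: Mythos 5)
Your first direction and your overall strategy for the converse (localize to a region of the plabic tiling of a fixed $\mathcal{V}$, read off a relabeled Grassmann necklace from its boundary, and transfer mutations, which are local square moves) are in the same spirit as the paper's argument. But there is a genuine gap in the step where you assert that $\mathcal{C}\cup\mathcal{I}$ cuts out \emph{a} sub-disk $D$ whose boundary is \emph{a single} cyclic sequence of sets. In general the $c$ sets of $\mathcal{V}\setminus\mathcal{C}$ need not be adjacency-connected: they can lie in several regions of the tiling that are pairwise separated by faces of $\mathcal{C}$, in which case the complement of the subcomplex spanned by $\mathcal{C}\cup\mathcal{I}$ is a disjoint union of disks and its boundary is not a single Grassmann necklace. (Already at co-dimension $2$ one gets the $4$-cycle $B\,\square\,B$, whose two interior sets sit in two separate quadrilateral regions; no single connected necklace arises from reading one boundary walk.) The paper handles exactly this by first decomposing $\mathcal{V}\setminus\mathcal{C}$ into the adjacency grouping $\mathpzc{AC}_{\mathcal{V}}(\mathcal{V}\setminus\mathcal{C})$, showing the $\mathcal{C}$-constant graph is the direct product of the $\mathcal{C}$-constant graphs of the components (Lemma~\ref{diir}), identifying each factor with an exchange graph (Lemma~\ref{isokey}), and then invoking a separate gluing construction on Grassmann necklaces (Proposition~\ref{dpp} and Lemma~\ref{dirproduct}) to realize the direct product of exchange graphs as a single exchange graph with additive interior size. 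Your proposal contains no analogue of this reassembly step, and without it the converse only covers the connected case.

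A second, smaller issue is that even for a single adjacency-connected component the boundary of the cut-out region need not be a simple closed curve: it can be ``prong-closed,'' with a path of faces protruding into the interior, forcing repeated subsets in the boundary sequence (this is why the paper introduces prong-closed curves, modified Grassmann-like necklaces with repeated elements, and the vertex-splitting alteration in the definition of interior-reduced plabic graphs; see Example~\ref{secondex}). Your ``cyclic sequence of sets drawn from $\mathcal{C}\cup\mathcal{I}$'' and the subsequent relabeling would need to accommodate such repetitions before the relabeled necklace is well defined. The freezing argument you outline (all $c$ non-$\mathcal{C}$ sets of any vertex lie inside the region, and the region is independent of the chosen $\mathcal{V}$) is indeed where applicability enters, and corresponds to the paper's Lemma~\ref{adjcluqua}; that part of your plan is on target, but it does not repair the two structural omissions above.
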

A consequence of this result is that all properties of the exchange graphs apply to applicable $\mathcal{C}$-constant graphs and vice versa. For $c < 4$, we show that we can eliminate the applicable condition:
\begin{cor}
\label{stronglink}
For any $0 < c < 4$, the set of possible $\mathcal{C}$ constant graphs of co-dimension $c$ is isomorphic to the set of the possible exchange graphs $\mathpzc{G}^{\mathcal{I}}$ with interior size $c$. A $\mathcal{C}$-constant graph $\mathpzc{G}^{\mathcal{I}}(\mathcal{C})$ of co-dimension $c$ that is mutation-friendly is isomorphic to a mutation-friendly exchange graph with interior size $c$.
\end{cor}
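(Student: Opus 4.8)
The plan is to deduce the corollary directly from Theorem~\ref{link} by showing that, in the stated range of co-dimensions, the applicability hypothesis is automatic. Concretely, I would isolate the following lemma: \emph{every $\mathcal{C}$-constant graph of co-dimension $c < 4$ is applicable.} Granting this, the first assertion is immediate, since for $0 < c < 4$ the family of all $\mathcal{C}$-constant graphs of co-dimension $c$ coincides with the family of applicable ones, and Theorem~\ref{link} already identifies the latter (up to isomorphism) with the exchange graphs $\mathpzc{G}^{\mathcal{I}}$ of interior size $c$. The mutation-friendly refinement then transfers verbatim, because the isomorphism furnished by Theorem~\ref{link} is stated to preserve mutation-friendliness.

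To prove the lemma I would argue by contradiction inside the plabic tiling. Fix a collection $\mathcal{V} \in \mathpzc{G}^{\mathcal{I}}(\mathcal{C})$, call the members of $\mathcal{V} \setminus \mathcal{C}$ the \emph{free} sets (there are exactly $c$ of them, by the definition of co-dimension), and suppose $\mathpzc{G}^{\mathcal{I}}(\mathcal{C})$ is not applicable. Then some $V \in \mathcal{C}$ admits no chain of quasi-adjacent sets lying in $\mathcal{C}$ that reaches $\mathcal{I}$. Let $K$ be the connected component of $V$ in the graph on $\mathcal{C}$ whose edges are quasi-adjacencies; by hypothesis $K \cap \mathcal{I} = \varnothing$. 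Since the boundary sets of the plabic tiling are precisely the members of $\mathcal{I}$, the component $K$ lies entirely in the interior of the disk.

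The heart of the argument is a counting and topology step. The link of $K$ — the cycle of sets bordering $K$ in the tiling — separates $K$ from the boundary, and it consists entirely of free sets. Indeed, any set sharing a face with a member of $K$ is quasi-adjacent to that member, so if it belongs to $\mathcal{C}$ it already lies in $K$; hence every bordering set outside $K$ is free. I would then invoke the fact that the faces around each interior vertex of a plabic tiling alternate in color, which forces every interior set to border an even number of faces and therefore to have link cycle of length at least $4$. Consequently at least four distinct free sets occur on the link of $K$, so $c \ge 4$, contradicting $c < 4$. This yields the lemma, and the corollary drops out of Theorem~\ref{link}.

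The step I expect to be the main obstacle is making the separating-cycle claim fully rigorous: one must confirm that the link of an interior $\mathcal{C}$-component is a genuine simple closed cycle that separates $K$ from the boundary (rather than some degenerate configuration pinched at a single vertex), and that the color-alternation property indeed gives degree at least $4$ at \emph{every} interior set, including the non-mutatable ones that are not surrounded by exactly two black and two white faces. Once this combinatorial-topological structure of the links is pinned down, the inequality $c \ge 4$ follows from a direct count of the free sets on the link.
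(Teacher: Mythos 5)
Your proposal takes essentially the same route as the paper: the paper isolates exactly the lemma you state (Lemma~\ref{app}, that every $\mathcal{C}$-constant graph of co-dimension $c<4$ is applicable) and then derives the corollary by combining it with Theorem~\ref{link}. Your component-and-link argument is in fact a more careful elaboration of the paper's own one-line justification, which negates applicability somewhat loosely (asserting the existence of a $V\in\mathcal{C}$ with no quasi-adjacent set in $\mathcal{C}$ or $\mathcal{I}$ at all) and simply asserts without proof that such a $V$ must have at least four quasi-adjacent sets in $\mathcal{V}\setminus\mathcal{C}$.
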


In Section 5, we characterize all exchange graphs with interior size $\le 4$. We also generalize Theorem~$\ref{char01}$ (Oh and Speyer's characterization result for $\mathcal{C}$-constant graphs) by characterizing all $\mathcal{C}$-constant graphs of co-dimension $\le 3$. These results involve the information in Table~$\ref{mainchare}$ and Table~$\ref{maincharc}$. These tables require the graphs $A, B, \ldots Y, Z_1, Z_2, \ldots Z_6$ which are defined in Table~$\ref{allfigures}$ and the direct product of graphs which is defined as follows:
\begin{definition}
Given graphs $\mathpzc{G}_1$ and $\mathpzc{G}_2$, we define the direct product $\mathpzc{G}_1 \square \mathpzc{G}_2$ as follows:
\begin{enumerate}
\item{The vertices of $\mathpzc{G}_1 \square \mathpzc{G}_2$ are $\left\{(v_1, v_2) \mid v_1 \in \mathpzc{G}_1, v_2 \in \mathpzc{G}_2 \right\}.$}
\item{There is an edge between $(v_1, v_2)$ and $(w_1, w_2)$ if and only if either}
\begin{itemize}
\item{$v_1 = w_1$ and $v_2$ is adjacent to $w_2$ in $\mathpzc{G}_2$,}
\item{or $v_1$ is adjacent to $w_1$ in $\mathpzc{G}_1$ and $v_2 = w_2.$}
\end{itemize}
\end{enumerate}
\end{definition} 
\begin{table}
\caption{}
\label{mainchare}
\begin{tabular}{||c | c| c||}
\hline
Interior Size & Exchange Graph Orders & Exchange Graphs \\ [0.5 ex]
\hline
0 & 1 & A\\
\hline
1 & 1, 2 & A, B\\
\hline
2 & 1, 2, 3, 4, 5 & A, B, C, D, B $\square$ B\\
\hline
3 & 1, 2, 3, 4, 5, 6, 7, 8, 10, 14 & A, B, C, D, E, F, G, H, I,\\
& & B $\square$ C, B $\square$ B $\square$ B, B $\square$ D\\
\hline
4 & 1, 2, 3, 4, 5, 6, 7, 8, 9, & A, B, C, D, E, F, G, H, I, J, K, L, M, N,\\
& 10, 11, 12, 13, 14, 15, 16, 17 & O, P, Q, R, S, T, U, V, W, X, Y, Z1, Z2,\\
& 19, 20, 25, 26, 28, 34, 42  & Z3, Z4, Z5, Z6, B $\square$ C, B $\square$ B $\square$ B, B $\square$ D, \\
& & B $\square$ E, B $\square$ G, B $\square$ F, B $\square$ H, B $\square$ I, \\
& & B $\square$ B $\square$ B $\square$ B, B $\square$ B $\square$ C, B $\square$ B $\square$ D \\
& & C $\square$ C, D $\square$ D \\
\hline
\end{tabular}
\end{table}
We have the following two results:
\begin{theorem}
\label{chare}
The information is Table~$\ref{mainchare}$ is true.
\end{theorem}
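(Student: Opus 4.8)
The plan is to prove the classification by induction on the interior size $c$, using Theorem~\ref{link} and Corollary~\ref{stronglink} as the main engine to pass between exchange graphs of interior size $c$ and $\mathcal{C}$-constant graphs of co-dimension $c$. The base cases $c=0$ and $c=1$ are exactly Oh and Speyer's Theorem~\ref{char01}, which yields rows $0$ and $1$ of Table~\ref{mainchare} directly, and the rows $c=2,3,4$ are then established one at a time. Throughout, since every maximal weakly separated collection over $\mathcal{I}$ contains $\mathcal{I}$, each vertex $\mathcal{V}$ of $\mathpzc{G}^{\mathcal{I}}$ carries exactly $c$ interior sets $\mathcal{V}\setminus\mathcal{I}$, and this is the local bookkeeping I would exploit.

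For the inductive step I fix $\mathpzc{G}^{\mathcal{I}}$ of interior size $c$ and a vertex $\mathcal{V}$, and first record the local structure: the degree of $\mathcal{V}$ is the number of mutatable interior sets, read off from the plabic tiling, and each neighbor comes from one mutation. The key reduction is that holding $c-j$ of the interior sets fixed produces a $\mathcal{C}$-constant subgraph of co-dimension $j$; for $j=1$ this is a path on one or two vertices by Theorem~\ref{char01}, and for $1\le j\le 3$ it is, by Corollary~\ref{stronglink}, one of the already-classified exchange graphs of interior size $j$. These subgraphs cover $\mathpzc{G}^{\mathcal{I}}$, and the combinatorial constraints on how they overlap — together with the connectivity of exchange graphs — pin down the global graph.

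Next I would isolate the product phenomenon accounting for the entries $B\square B$, $B\square C$, $C\square C$, $D\square D$, and the higher products. When the interior sets of $\mathcal{V}$ partition into blocks whose mutations occur in disjoint regions of the plabic tiling, so that mutating a set in one block neither creates nor destroys a mutatable set in another, the mutations commute, and I would show straight from the definition of $\square$ that $\mathpzc{G}^{\mathcal{I}}$ factors as the product of the exchange graphs attached to the blocks. Conversely I would argue that every reducible necklace of interior size $c$ arises this way, so the indecomposable graphs form a short list found by hand. To keep the search finite I would quotient decorated permutations by the equivalence class generated by the inverse, label-reflection, between-label-reflection, and rotation operations (Propositions~\ref{inv}, \ref{lr}, \ref{blr}, \ref{rot}), since equivalent permutations give isomorphic exchange graphs.

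With this machinery the proof of each row splits into two finite tasks: \emph{realizability}, exhibiting for every claimed graph a decorated permutation whose exchange graph is that graph, checked on one representative per equivalence class with plabic tilings drawn explicitly as in Example~\ref{mainexth}; and \emph{completeness}, showing no other graph or vertex-count occurs. I expect completeness at $c=4$ to be the main obstacle, requiring an essentially computer-assisted enumeration of all interior-size-$4$ decorated permutations up to equivalence and a proof that the realized orders are exactly $\{1,\dots,17,19,20,25,26,28,34,42\}$. The delicate part is ruling out the absent orders (such as $18$, $21$--$24$, $27$) rather than merely listing those that occur; I would force this by combining the vertex-degree bounds from the local analysis, the product decomposition to dispose of all reducible cases, and connectivity to eliminate candidates of the wrong size, thereby driving each surviving indecomposable graph into the table.
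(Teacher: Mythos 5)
Your overall architecture (reduce to indecomposable pieces via the direct product, quotient by the symmetry operations, then finish rows $2$--$4$ by a finite, partly computer-assisted check of realizability and completeness) matches the paper's strategy in outline, but there is a genuine gap at the step where you claim the search becomes finite. Quotienting decorated permutations by the inverse, label-reflection, between-label-reflection, and rotation operations only identifies permutations on the \emph{same} ground set $[n]$; it does nothing to bound $n$ itself, and for a fixed interior size there are a priori infinitely many connected decorated permutations as $n$ grows (the interior size is $k(n-k)+1-A(\pi)-|\mathcal{I}|$, which can stay small while $n$ is large, for instance when many interior sets are frozen in every maximal collection). So ``an enumeration of all interior-size-$4$ decorated permutations up to equivalence'' is not a finite task as stated, and the completeness half of your argument --- exactly the part you identify as the main obstacle --- cannot get off the ground.

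The paper closes this gap with two ingredients you do not supply. First, Lemma~\ref{iso} (built on Theorem~\ref{link}) shows that any exchange graph of interior size $i\le 4$ that is not mutation-friendly is isomorphic to a very-mutation-friendly exchange graph of strictly smaller interior size, which disposes of the frozen-set examples; combined with the decomposition into prime factors (Lemmas~\ref{dirproduct} and~\ref{vmfdirproduct}, packaged as Lemma~\ref{yayy}) this reduces everything to prime, very-mutation-friendly necklaces. Second, and crucially, Lemma~\ref{nnn} proves the bound $|\mathcal{I}|\le 2\,i(\mathcal{I})+2$ for such necklaces, which is what actually bounds $n$ and renders the computer enumeration finite. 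Your local analysis of mutatable sets and your product decomposition are consistent with the paper's Lemma~\ref{diir} and Lemma~\ref{dirproduct}, and your use of Theorem~\ref{char01} for the first two rows is fine, but without a quantitative bound of the type in Lemma~\ref{nnn}, and without the reduction of non-mutation-friendly necklaces to smaller interior size, the inductive step has no finite search space and the absent orders ($18$, $21$--$24$, $27$, \dots) cannot be ruled out.
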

\begin{theorem}
\label{charc}
The information in Table~$\ref{maincharc}$ is true.
\end{theorem}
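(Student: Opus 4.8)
The plan is to obtain Theorem~\ref{charc} almost immediately from the results already assembled, treating the co-dimension $0$ case by hand and reducing the co-dimension $1$, $2$, and $3$ cases to the exchange-graph classification of Theorem~\ref{chare}.

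First I would settle co-dimension $0$. By Theorem~\ref{char01}, the only $\mathcal{C}$-constant graph of co-dimension $0$ is the path on a single vertex, namely the graph $A$; this is exactly the content of the co-dimension $0$ row of Table~\ref{maincharc}. For each $c \in \{1, 2, 3\}$ I would instead invoke Corollary~\ref{stronglink}. Because $0 < c < 4$, the corollary identifies the set of possible $\mathcal{C}$-constant graphs of co-dimension $c$ with the set of possible exchange graphs $\mathpzc{G}^{\mathcal{I}}$ of interior size $c$, and it further matches the mutation-friendly $\mathcal{C}$-constant graphs with the mutation-friendly exchange graphs. This removes any need to analyze $\mathcal{C}$-constant graphs directly and transfers the entire classification to the exchange-graph side, where it has already been carried out.

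Finally I would read off Theorem~\ref{chare}: the rows of Table~\ref{mainchare} for interior sizes $1$, $2$, and $3$ list the graphs $A, B$; then $A, B, C, D, B \square B$; and then $A, B, C, D, E, F, G, H, I, B \square C, B \square B \square B, B \square D$, together with their orders. By the set-level isomorphism of Corollary~\ref{stronglink}, these are precisely the graphs that can occur as $\mathcal{C}$-constant graphs of co-dimensions $1$, $2$, and $3$, so they must coincide with the corresponding rows of Table~\ref{maincharc}; verifying this coincidence row by row finishes the proof.

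The genuine difficulty lies entirely upstream. The enumeration powering Theorem~\ref{chare} is where the real work happens, and the step that makes the reduction legitimate is the fact that the applicable hypothesis of Theorem~\ref{link} can be dropped for $c < 4$, which is exactly the strengthening recorded in Corollary~\ref{stronglink}. Once those two inputs are in hand, Theorem~\ref{charc} is essentially bookkeeping: matching the co-dimension $0$ entry against Theorem~\ref{char01} and the co-dimension $1,2,3$ entries against the first four rows of Table~\ref{mainchare}.
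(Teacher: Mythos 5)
Your proposal is correct and follows essentially the same route as the paper: the co-dimension $0$ case is handled directly, and for $0 < c \le 3$ the classification is transferred to exchange graphs of interior size $c$ via Corollary~\ref{stronglink} and then read off from Theorem~\ref{chare}. The only cosmetic difference is that you cite Theorem~\ref{char01} for co-dimension $0$ where the paper simply observes the fact directly.
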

\begin{table}
\caption{}
\label{maincharc}
\begin{tabular}{||c | c | c||}
\hline
Co-dimension & $\mathcal{C}$-Constant Graph Orders & $\mathcal{C}$-Constant Graphs \\ [0.5 ex]
\hline
0 & 1 & A \\
\hline
1 & 1, 2 & A, B \\
\hline
2 & 1, 2, 3, 4, 5 & A, B, C, D, B $\square$ B \\
\hline
3 & 1, 2, 3, 4, 5, 6, 7, 8, 10, 14 & A, B, C, D, E, F, G, H, I, \\
& & B $\square$ C, B $\square$ B $\square$ B, B $\square$ D\\
\hline
\end{tabular}
\end{table}

In Section 6, we present the following conjecture on a bound for the maximal order of an exchange graph with a given interior size:
\begin{conj}
\label{cat}
For any $i \ge 0$, the maximum possible order of an exchange graph with interior size $i$ is the Catalan number $C_{i+1}$ (using the convention that $C_1 = 1$, $C_2 = 2$, and $C_3 = 5$).
\end{conj}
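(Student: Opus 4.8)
The plan is to prove the conjecture by separately establishing that $C_{i+1}$ is attained (the lower bound) and that it is never exceeded (the upper bound); the lower bound is a concrete construction, while the upper bound is the genuine difficulty and is what keeps this a conjecture.

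For the lower bound I would exhibit a single Grassmann necklace of interior size $i$ whose exchange graph has order exactly $C_{i+1}$. The natural candidate is the necklace $\mathcal{I}$ of the top cell of $Gr(2,i+3)$, i.e. the one with $I_j = \{j,j+1\}$ and decorated permutation $\pi(j) = j+2 \pmod{i+3}$. One first checks that $\pi$ is connected, since a shift by $2$ preserves no proper circular interval when $n \ge 3$. The top cell has $A(\pi)=0$, so by Theorem~\ref{card} every maximal weakly separated collection has cardinality $2(n-2)+1 = 2n-3$ with $n=i+3$; subtracting $|\mathcal{I}| = n$ gives interior size exactly $i$. It is classical that for $k=2$ the sets $\{a,b\}$ and $\{c,d\}$ are weakly separated precisely when the corresponding chords of an $(i+3)$-gon do not cross, so maximal weakly separated collections over $\mathcal{I}$ are exactly the triangulations of that polygon, a single mutation is a diagonal flip, and $\mathpzc{G}^{\mathcal{I}}$ is the flip graph (the $1$-skeleton of the associahedron). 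Since an $(i+3)$-gon has $C_{i+1}$ triangulations, this produces an exchange graph of interior size $i$ and order $C_{i+1}$.

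For the upper bound I would argue by induction on $i$, the base cases $i \le 4$ being exactly the content of Table~\ref{mainchare}, whose largest entries are $1,2,5,14,42 = C_1,\dots,C_5$. The target is to reproduce the Catalan convolution $C_{i+1} = \sum_{j=0}^{i} C_j\,C_{i-j}$ at the level of exchange graphs. Guided by the triangulation picture, I would fix a boundary feature of the plabic tiling (a set of $\mathcal{I}$, playing the role of a fixed boundary edge of the polygon) and classify each maximal weakly separated collection by the unique ``first interior configuration'' incident to it. In the extremal case this configuration is a triangle whose apex position splits the tiling into two independent sub-tilings of interior sizes $a$ and $b$; each such sub-tiling is a maximal weakly separated collection over a smaller positroid, and by Theorem~\ref{link} the collections fixing a given interior set form an applicable $\mathcal{C}$-constant graph isomorphic to an exchange graph of strictly smaller interior size, hence bounded by $C_{a+1}$ and $C_{b+1}$ via the inductive hypothesis. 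Summing the products $C_{a+1}C_{b+1}$ over the apex positions (including the two degenerate positions that split off a single sub-problem of interior size $i-1$, contributing the boundary terms $C_0 C_i$) reproduces $\sum_{j=0}^{i} C_j C_{i-j} = C_{i+1}$.

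The hard part will be the upper bound, and precisely the step of showing that an \emph{arbitrary} positroid admits this clean apex decomposition, or at least cannot outperform it. In the $Gr(2,n)$ case a single diagonal genuinely disconnects the tiling into independent regions whose collections multiply as a direct product, and the product graphs appearing in Table~\ref{mainchare} (such as $C \square C$ and $D \square D$) show that such decompositions really occur; but for a general positroid there is no reason for one interior set to decompose the tiling so cleanly, and the obstacle is to rule out configurations that ``reconnect'' the two sides to create additional collections beyond the fully independent triangulation count. I would attack this either by proving a structural monotonicity---that replacing $\mathcal{I}$ by a more connected positroid of the same interior size can only increase the number of maximal weakly separated collections, reducing every case to the $Gr(2,n)$ family---or by constructing an explicit mutation-compatible injection from the maximal weakly separated collections over an arbitrary $\mathcal{I}$ of interior size $i$ into the triangulations of the $(i+3)$-gon. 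Establishing either statement in full generality is the crux that separates the present evidence (the verified cases $i \le 4$ and the attained family) from a complete proof.
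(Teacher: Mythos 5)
The statement you are asked about is a \emph{conjecture}, and the paper does not prove it: its entire support consists of (a) a lower bound $M(i) \ge C_{i+1}$ obtained from the Grassmann necklace $\mathcal{I} = (\{1,2\},\{2,3\},\ldots,\{i+3,1\})$, whose maximal weakly separated collections are the triangulations of an $(i+3)$-gon, and (b) the verification for $i \le 4$ supplied by Theorem~\ref{chare} and Table~\ref{fullchar}. The parts of your proposal that are actually established coincide with this: your lower-bound construction is exactly the paper's Proposition in Section~6 (you add the useful details that $A(\pi)=0$ and that the interior size works out to $i$, which the paper leaves implicit), and your base cases are the paper's Table~\ref{mainchare}. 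So on the provable content you take essentially the same route as the paper.

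Your upper-bound sketch via the Catalan convolution $C_{i+1}=\sum_j C_j C_{i-j}$ goes beyond anything in the paper, and you are right to flag its crux honestly: for a general positroid there is no known analogue of the diagonal that splits a polygon triangulation into two independent subproblems, and the ``apex decomposition'' you need is exactly what fails to exist (or has not been shown to exist) outside the $k=2$ family. Theorem~\ref{link} lets you replace a $\mathcal{C}$-constant graph by an exchange graph of smaller interior size only when the applicability hypothesis holds, and fixing a single interior set need not produce an applicable collection, nor need the resulting pieces interact as a direct product; the product graphs in Table~\ref{mainchare} arise from non-prime necklaces, not from conditioning on one interior face of a prime one. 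The paper's own remark that ``for other Grassmann necklaces the interior size is greater than the number of mutatable sets at some steps'' is precisely the phenomenon your injection or monotonicity argument would have to control. In short: your proposal correctly reproduces all the evidence the paper has, and correctly identifies that the remaining step is open --- it is not a proof, and neither the paper nor your sketch closes the gap.
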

\begin{remark}
Theorem~$\ref{chare}$ proves Conjecture~$\ref{cat}$ for $i \le 4$. 
\end{remark}

In Section 7, we consider very-mutation-friendly exchange graphs in the special cases of cycles and trees. We fully characterize the very-mutation-friendly exchange graphs that are trees. We require the following equivalence classes:
\begin{definition}
We define $\mathscr{C}_i$  to be the equivalence class with the permutation $\pi_i$ which is defined as follows:
\begin{itemize}
\item{For $i =1$, we have $\pi_i = 312$.}
\item{For $i \ge 2$, we let $\pi_i$ be a permutation on $[2i]$ defined as follows:
\[\pi_c(a) = \begin{cases}
3 & \textrm{ if } a = 1 \\
2i+1-j & \textrm{ if } 1 < j \le i \\
1 & \textrm{ if } j =i+1 \\
2 & \textrm{ if } j=i+2 \\
2i+3-j & \textrm{ if } i+2 < j \le 2i.
\end{cases} \]}
\end{itemize}
\end{definition}
\begin{theorem}
\label{tree}
Any very-mutation-friendly exchange graph that is a tree must be a path. For an integer $i \ge 0$, a very-mutation-friendly exchange graph with interior size $i$ is a path if and only if it is a part of the equivalence class $\mathscr{C}_{i+1}$ (path with $i+1$ vertices). 
\end{theorem}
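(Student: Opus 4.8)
The plan is to reduce the whole theorem to the single classification statement that \emph{every very-mutation-friendly exchange graph that is a tree and has interior size $i$ is isomorphic to the exchange graph of $\mathscr{C}_{i+1}$, namely the path $P_{i+1}$ on $i+1$ vertices}. Granting this, all three assertions follow at once: tree-ness forces the graph to be $P_{i+1}$, hence a path (the first sentence); since a path is a tree, the classification supplies the ``only if'' direction of the second sentence; and the ``if'' direction reduces to the separate construction that the necklace with decorated permutation $\pi_{i+1}$ really does give a very-mutation-friendly path with $i+1$ vertices. Throughout, the equivalence-class lemma lets me replace $\pi_{i+1}$ by any convenient representative of $\mathscr{C}_{i+1}$.

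For the construction I would work directly with the Grassmann necklace attached to $\pi_{i+1}$, picking the representative of $\mathscr{C}_{i+1}$ whose plabic tilings are easiest to read off (the zigzag/staircase shape). I would enumerate the maximal weakly separated collections over it, show there are exactly $i+1$ of them, and check that each has at most two mutatable sets---exactly one at the two extreme collections and exactly two at each intermediate one---so that the mutation graph is $P_{i+1}$. The interior size is computed to be $i$ via Theorem~\ref{card} by evaluating $A(\pi_{i+1})$. Finally I would exhibit the nested collections $\mathcal{W}_1\supset\cdots\supset\mathcal{W}_{i-1}$ demanded by Definition~\ref{vmf}, taking $\mathcal{W}_j$ to be the common intersection of an appropriate length-$(i+1-j)$ subpath of the vertices, and verify that each $\mathpzc{G}^{\mathcal{I}}(\mathcal{W}_j)$ is applicable and mutation-friendly; for the staircase tilings this is a finite, explicit check.

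For the classification I would induct on $i$, the cases $i\le 1$ being immediate: interior size $0$ forces a single vertex $=\mathscr{C}_1$, and a mutation-friendly graph of interior size $1$ has more than one vertex, hence is $B=P_2=\mathscr{C}_2$. For the step, let $\mathpzc{G}^{\mathcal{I}}$ be very-mutation-friendly, a tree, of interior size $i\ge 2$, with witnessing chain $\mathcal{W}_1\supset\cdots\supset\mathcal{W}_{i-1}$, and write $\mathcal{W}_{i-1}=\mathcal{I}\cup\{S\}$. The subgraph $\mathpzc{G}^{\mathcal{I}}(\mathcal{W}_{i-1})$ is applicable and mutation-friendly of co-dimension $i-1$, so Theorem~\ref{link} identifies it with a mutation-friendly exchange graph $\mathpzc{H}$ of interior size $i-1$; since $\mathcal{C}$-constant graphs are connected and a connected induced subgraph of a tree is a tree, $\mathpzc{H}$ is a tree, and the truncated chain $\mathcal{W}_1\supset\cdots\supset\mathcal{W}_{i-2}$ transports, under the isomorphism of Theorem~\ref{link}, to a very-mutation-friendly structure on $\mathpzc{H}$. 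By the inductive hypothesis $\mathpzc{H}\cong\mathscr{C}_i=P_i$. It then remains to reattach the vertices of $\mathpzc{G}^{\mathcal{I}}$ that omit $S$: these form the link of $S$, the edges joining them to $\mathpzc{G}^{\mathcal{I}}(\mathcal{W}_{i-1})=P_i$ are precisely the mutations of $S$, and one must argue that tree-ness together with the mutation-friendly hypothesis forces this attachment to extend the path at an endpoint, yielding $P_{i+1}$, and pins the necklace into $\mathscr{C}_{i+1}$.

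The main obstacle is exactly this extension step. The structural principle I would lean on is that two \emph{independent} mutatable sets in a common collection produce a commuting square, i.e.\ a $4$-cycle in the exchange graph---this is what makes the Cartesian products $\square$ appear in Tables~\ref{mainchare}--\ref{maincharc}---so in a very-mutation-friendly tree, which has no cycles, every pair of mutatable sets of a collection must interact, and I must further rule out a collection with three pairwise-interacting mutatable sets, since that would create a degree-$3$ branch point forbidden in a path. Converting this ``no independent mutations, no triple branching'' principle into the precise statement that the link of $S$ attaches linearly, and that the resulting Grassmann necklace is forced into the class $\mathscr{C}_{i+1}$, is where the real work lies; I expect it to require a careful local analysis of the plabic tilings near $S$, using the applicability supplied by the very-mutation-friendly chain to connect $S$ to the boundary and thereby constrain which mutations are available.
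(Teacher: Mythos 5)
Your overall architecture matches the paper's: dispose of the non-prime case via the forced $4$-cycle, induct on interior size, use the very-mutation-friendly chain to pass to the co-dimension-$(i-1)$ constant graph $\mathpzc{G}^{\mathcal{I}}(\mathcal{W}_{i-1})$, identify it with a smaller very-mutation-friendly exchange graph (the paper's Lemma~\ref{whee}, which itself needs Proposition~\ref{vmfproperty} to transport the witnessing chain --- a step you assert but do not prove), apply the inductive hypothesis to land in $\mathscr{C}_i$, and then argue that the new set attaches at an end. However, the step you yourself flag as ``where the real work lies'' is precisely the content of the paper's Lemma~\ref{cyc45}, and it is a genuine gap, not a routine verification. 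Your guiding principle --- independent mutatable sets commute and create a $4$-cycle, so in a tree every pair must interact --- is necessary but not sufficient. Two \emph{interacting} mutatable sets can still close up into a $5$-cycle (the pentagon configuration, the exchange graph of the prime necklace $34512$), which is equally forbidden in a tree; the paper has to rule this out explicitly by showing $|\mathcal{A}^{\mathcal{V}}(\{V,W\})| \in \{7,8\}$ and excluding the value $7$, since that value forces a $\mathcal{C}$-constant subgraph isomorphic to the $5$-cycle. Your ``no independent mutations, no triple branching'' dichotomy misses this third obstruction entirely.

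Concretely, the paper's Lemma~\ref{cyc45} carries out the local analysis in three stages: (i) the new set $V$ cannot sit next to an interior vertex $\mathpzc{PT}_{\mathcal{J}}[j]$ with $1<j<i$ of the zigzag, because it would have to be adjacent to both mutatable sets there and there is no room in the tiling; (ii) $V$ must be adjacent to the end set $W$, or else $\mathpzc{G}^{\mathcal{I}}(\mathcal{V}\setminus\{V,W\})$ is a $4$-cycle; (iii) among the positions adjacent to $W$, only the one with $|\mathcal{A}^{\mathcal{V}}(\{V,W\})| = 8$ survives ($=7$ gives the $5$-cycle, and any other boundary position contradicts $V$ being mutatable), and that position forces the relabeled necklace into $\mathscr{C}_{i+1}$. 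Also note that the reverse-very-connectedness of $\mathcal{W}_{i-1}$, which Lemma~\ref{cyc45} takes as a hypothesis, is itself extracted from tree-ness via Lemma~\ref{nrvc}. Until you supply this case analysis (or an equivalent), the classification half of the theorem is not proved; the construction half (that $\mathscr{C}_{i+1}$ actually yields a very-mutation-friendly path) is plausible as you outline it but also remains an explicit finite check you have not performed.
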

We also fully characterize the equivalence classes of very-mutation-friendly exchange graph with interior size $i$ that are single cycles.
\begin{theorem}
\label{cycle}
If a very-mutation-friendly exchange graph $\mathpzc{G}^{\mathcal{I}}$ is a single cycle, then it must have $1$, $2$, $4$, or $5$ vertices. If $\mathpzc{G}^{\mathcal{I}}$ is prime and very-mutation friendly, then $\mathcal{I}$ must be part of the equivalence class with $312$ (cycle with $1$ vertex), with $3412$ (cycle with $2$ vertices), or with $34512$ (cycle with $5$ vertices). If $\mathpzc{G}^{\mathcal{I}}$ is very-mutation-friendly and not prime, then $\mathcal{I}$ must be a part of the equivalence class with $351624$ (cycle with $4$ vertices).
\end{theorem}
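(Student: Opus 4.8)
The plan is to peel the cycle down one interior-size level at a time, using the nested family of $\mathcal{C}$-constant subgraphs that very-mutation-friendliness provides together with the isomorphism of Theorem~\ref{link}. Suppose $\mathpzc{G}^{\mathcal{I}}$ is a very-mutation-friendly single cycle $C_m$ of interior size $i$. Definition~\ref{vmf} furnishes a nested chain $\mathcal{W}_1 \supset \mathcal{W}_2 \supset \cdots \supset \mathcal{W}_{i-1}$ with each $\mathpzc{G}^{\mathcal{I}}(\mathcal{W}_j)$ applicable and mutation-friendly; since $|\mathcal{W}_j| = i + |\mathcal{I}| - j$, the graph $\mathpzc{G}^{\mathcal{I}}(\mathcal{W}_j)$ has co-dimension $j$, so by Theorem~\ref{link} it is isomorphic to a mutation-friendly exchange graph of interior size $j$. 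Each such graph is a connected vertex-induced subgraph of $C_m$, hence is either a path or all of $C_m$. A first, entirely structural, observation I would record is that every vertex of a cycle has degree $2$, so in \emph{every} maximal weakly separated collection over $\mathcal{I}$ exactly two of the $i$ interior sets are mutatable; this rigidity is the constraint I expect to drive the whole argument.

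First I would pin down the largest proper member $\mathpzc{G}^{\mathcal{I}}(\mathcal{W}_{i-1})$, which has co-dimension $i-1$ and satisfies $\mathcal{W}_{i-1} = \mathcal{I} \cup \{X\}$ for a single set $X$. It cannot be all of $C_m$: if every maximal collection contained $X$, then $X$ would lie in the intersection of all vertices of $\mathpzc{G}^{\mathcal{I}}$, contradicting mutation-friendliness of $\mathpzc{G}^{\mathcal{I}}$. Hence $\mathpzc{G}^{\mathcal{I}}(\mathcal{W}_{i-1})$ is a path. I would then show this path is itself very-mutation-friendly, by transporting the truncated chain $\mathcal{W}_1 \supset \cdots \supset \mathcal{W}_{i-2}$ (all of whose terms contain $\mathcal{W}_{i-1}$) across the isomorphism of Theorem~\ref{link} to a valid very-mutation-friendly witness for the interior-size-$(i-1)$ exchange graph. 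Theorem~\ref{tree} then identifies it with $\mathscr{C}_i$, the path on exactly $i$ vertices. This is the key reduction: a very-mutation-friendly cycle of interior size $i$ lies exactly one mutation of $X$ above the path $\mathscr{C}_i$.

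Next I would settle the base cases $i \le 4$ directly from Theorem~\ref{chare}. Scanning the graph column of Table~\ref{mainchare}, the only single cycles appearing are $A = C_1$ at interior size $0$, $B = C_2$ at interior size $1$, and, at interior size $2$, the square $B\,\square\,B = C_4$ and the pentagon $C_5$; no entry at interior size $3$ or $4$ is a single cycle. Computing small representatives via Proposition~\ref{inv}, Proposition~\ref{lr}, Proposition~\ref{blr}, and Proposition~\ref{rot} matches these to the equivalence classes of $312$, $3412$, $351624$, and $34512$ respectively. The product decomposition $C_4 = B\,\square\,B$ is precisely what makes $C_4$ non-prime, while $C_1$, $C_2$, and $C_5$ are prime, which is how I would justify the prime/non-prime split in the statement.

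Finally, for $i \ge 5$ I would argue that no cycle survives. By the reduction, $C_m$ must arise by adjoining $X$ to a configuration whose $X$-constant slice is the long path $\mathscr{C}_i$, and the degree-$2$ constraint says each of the $m$ maximal collections has exactly two mutatable interior sets. Tracking, via the plabic-tiling description of adjacency and mutatability, how the at-most-two $X$-mutations available along $\mathscr{C}_i$ can glue its endpoints, I expect to bound the number of newly created vertices and edges so tightly that the result closes up into a $2$-regular graph only when $i \le 2$. The hard part will be exactly this last reconstruction step: proving purely combinatorially that for $i \ge 3$ the glued graph either fails to be $2$-regular or acquires a chord, and so is never a single cycle. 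I would try to make this uniform by feeding the already-proven instances of Conjecture~\ref{cat} and the full classification of Theorem~\ref{chare} (valid for $i \le 4$) into the inductive descent to $\mathscr{C}_i$, so that the only cycles that can ever occur are those found at interior size at most $2$.
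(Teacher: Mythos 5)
Your overall strategy matches the paper's: pass to the co-dimension-$(i-1)$ constant graph $\mathpzc{G}^{\mathcal{I}}(\mathcal{W}_{i-1})$, note it is a proper connected induced subgraph of the cycle (hence a path, by mutation-friendliness of $\mathpzc{G}^{\mathcal{I}}$ and Oh--Speyer connectivity), transport very-mutation-friendliness down to identify it with $\mathscr{C}_i$ via Theorem~\ref{tree}, read the small cases off the classification tables, and then argue that the one-set extension of the path configuration cannot close up into a cycle for larger interior size. The paper implements the downward step with Lemma~\ref{whee} and Proposition~\ref{vmfproperty}, uses Lemma~\ref{nrvc} to get reverse-very-connectedness, and handles the non-prime case separately via Lemma~\ref{cyc4} and Lemma~\ref{cycnotprime} (a non-prime single cycle must be the $4$-cycle $B\,\square\,B$, forcing $351624$); your reading of $C_4 = B\,\square\,B$ from Table~\ref{mainchare} reaches the same conclusion for small $i$.

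However, there is a genuine gap at exactly the point you flag as ``the hard part.'' The entire content of the theorem for $i \ge 3$ in the prime case rests on showing that a very-mutation-friendly exchange graph sitting one level above the path $\mathscr{C}_i$ is itself forced to be the path $\mathscr{C}_{i+1}$ (and hence not a cycle). In the paper this is Lemma~\ref{cyc45}, whose proof is a nontrivial plabic-tiling case analysis: one shows the new set $V$ cannot be adjacent to an interior mutatable face $\mathpzc{PT}_{\mathcal{J}}[j]$ with $1 < j < i$ (else a $4$-cycle constant subgraph appears), must be adjacent to an end face with $|\mathcal{A}^{\mathcal{V}}(\{V,W\})| = 8$ rather than $7$ (else a $5$-cycle constant subgraph appears, by the classification of prime very-mutation-friendly necklaces with $n=5$ and interior size $2$), and must occupy the position $\mathpzc{PT}_{\mathcal{J}}[j,\mathrm{END}]$, which pins $\mathcal{I}$ to $\mathscr{C}_{i+1}$. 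Your proposal only says you ``expect'' a counting of vertices and edges to rule out $2$-regularity for $i \ge 3$, and suggests feeding in Conjecture~\ref{cat} and Theorem~\ref{chare} (the latter valid only for $i \le 4$) to make the descent uniform; neither supplies the required argument, and a crude vertex/edge count does not obviously close: one must actually control where $V$ attaches in the tiling. As written, your proof establishes the result only for $i \le 4$ (where the tables decide everything) and leaves the general case unproven.
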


\section{Proof of Theorem~$\ref{link}$ and Corollary~$\ref{stronglink}$}
Theorem~$\ref{link}$ is an isomorphism between applicable $\mathcal{C}$-constant graphs and exchange graphs. Corollary~$\ref{stronglink}$ is a special case of Theorem~$\ref{link}$ in the case of $c < 4$ where we show that the applicable condition is no longer necessary.

One direction of the proof of Theorem~$\ref{link}$ is easy. It is clear that an exchange graph $\mathpzc{G}^{\mathcal{I}}$ with interior size $c$ is isomorphic to the applicable $\mathcal{C}$-constant graph $\mathpzc{G}^{\mathcal{I}}(\mathcal{I})$ which has co-dimension $c$. This proves that the set of possible applicable $\mathcal{C}$-constant graphs of co-dimension $c$ contains an isomorphic copy of the set of possible exchange graphs with interior size $c$. In this section, we prove the other direction of Theorem~$\ref{link}$: namely, that every applicable $\mathcal{C}$-constant graph is isomorphic to an exchange graph with appropriate interior size. We also prove Corollary~$\ref{stronglink}$.

In Section 4.1, we introduce with some modifications the approaches of Oh, Postnikov, and Speyer \cite[Section 9]{OhSpPost} as well as Danilov, Karzanov, and Koshevoy \cite{DKK14} involving domains inside special weakly separated collections. In Section 4.2, we construct the machinery of interior-reduced plabic graphs and plabic tilings. In Section 4.3, we construct a decomposition set of a Grassmann necklace. In Section 4.4, we construct adjacency graphs and clusters. In Section 4.5, we prove a special case of Theorem~$\ref{link}$. In Section 4.6, we use this to prove Theorem~$\ref{link}$ in the general case. In Section 4.7, we show that all $\mathcal{C}$-constant graphs with co-dimension $<$ 4 are applicable, which when applied to Theorem~$\ref{link}$, proves Corollary~$\ref{stronglink}$.

\subsection{Domains inside and outside of cyclic patterns}
We introduce with some modifications the approaches in \cite{OhSpPost} and \cite{DKK14} regarding domains inside non self-intersecting closed curves. Fix $n$ unit vectors $\xi_1,\xi_2, \ldots, \xi_n$ in the upper half-plane, so that $\xi_1,\xi_2,\ldots,\xi_n$ go in this order clockwise around the origin $(0, 0)$ and are $\Z$-independent. Define:
$$\mathcal{Z}=\{\lambda_1\xi_1+\ldots+\lambda_n\xi_n \mid 0 \leq \lambda_i \leq 1, i=1,2,\ldots,n\}.$$
A subset $I \subset [n]$ is identified with the point $\sum_{i \in I} \xi_i$ in $\mathcal{Z}$.

Now, suppose we have a weakly separated collection $\mathcal{S} = (S_1, S_2,...,S_n, S_{n+1} = S_0)$ of subsets of equal size such that $|S_i \setminus S_{i+1}| = 1$ (where indices are considered modulo $n$). Then we call $\mathcal{S}$ a \textbf{sequence}. Note that $\mathcal{S}$ might have repeated subsets. We can associate with $\mathcal{S}$ a clockwise-oriented piecewise linear closed curve $\zeta_{\mathcal{S}}$ obtained by concatenating the line-segments connecting consecutive points $S_{i}$ and $S_{i+1}$ for $i=1,2\ldots,n$.

We consider sequences $\mathcal{S}$ with the following additional constraint on $\zeta_{\mathcal{S}}$:
\begin{definition}
\label{prongclosed}
We call a closed curve $\zeta$ $\textbf{prong-closed}$ if $\zeta$ is the concatenation of curves $\zeta^{F1}$, $\zeta^{P}$, and $\zeta^{F2}$ that satisfy the following conditions:
\begin{enumerate}
\item{The curve $\zeta^{F1} \cup \zeta^{F2}$ is a clockwise-oriented non self-intersecting closed curve.}
\item{The intersection $\zeta^{F1} \cap \zeta^{F2}$ is a single point $P$.}
\item{$\zeta^{P}$ satisfies one of the following:}
\begin{itemize}
\item{$\zeta^{P} = \emptyset$}
\item{or the following conditions hold:}
\begin{enumerate}
\item{$\zeta^{P}$ is a non self-intersecting open curve,}
\item{$\zeta^{P}$ has an endpoint at $P$,}
\item{$\zeta^{P}$ has an endpoint at a point $Q$ in the interior of $\zeta^{F1} \cup \zeta^{F2}$,}
\item{and $(\zeta^{F1} \cup \zeta^{F2}) \cap \zeta^{P} = P$.}
\end{enumerate}
\end{itemize}
\end{enumerate}
\end{definition}

We also impose the following constraints on the sequence $\mathcal{S}$. Sequences that satsify certain constraints involving weak separation are called generalized cyclic patterns \cite{DKK14}. We consider the following variant of a generalized cyclic pattern:
\begin{definition}
A \textbf{quasi-generalized cyclic pattern} is a sequence $\mathcal{S}=(S_1,S_2,\ldots,S_n, S_{n+1}=S_1)$ of subsets of $[m]$ where $m \ge n$ such that
\begin{enumerate}
 \item $\mathcal{S}$ is weakly separated,
 \item the sets in $\mathcal{S}$ all have the same size,
 \item and $|S_i \setminus S_{i+1}| = 1.$
\end{enumerate}
\end{definition}
\begin{figure}
\caption{}
\label{firstexample}
\includegraphics[scale=0.7]{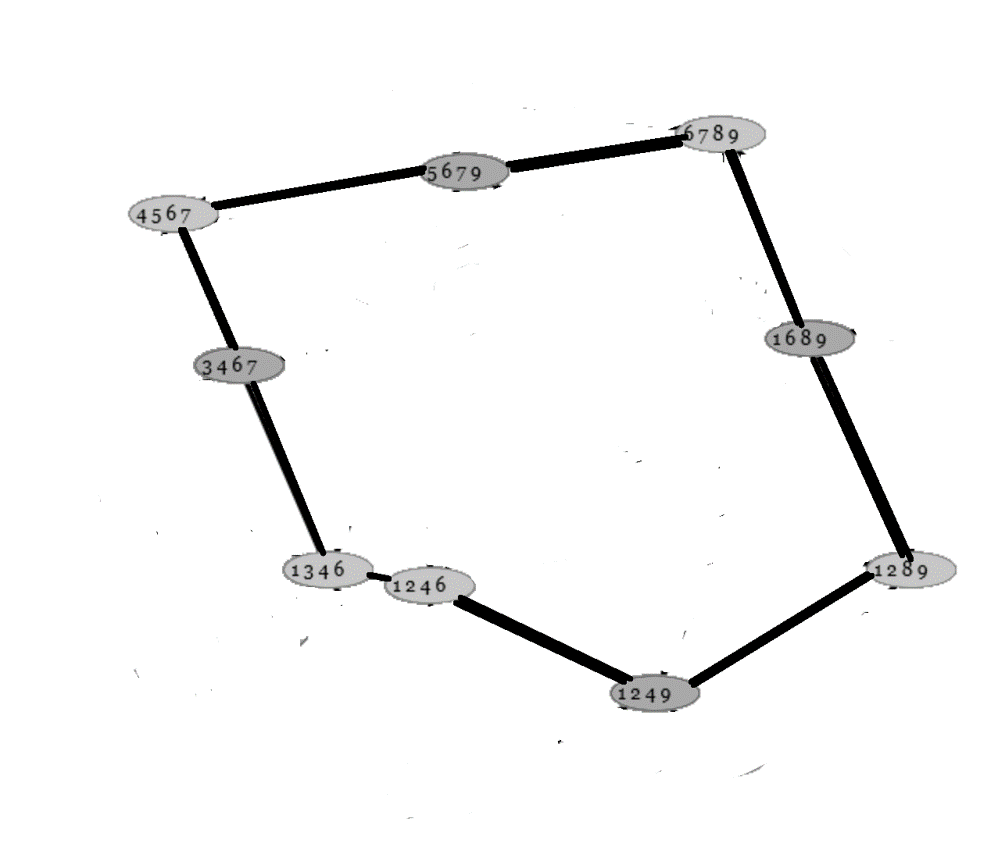}
\end{figure}
\begin{figure}
\caption{}
\label{secondexample}
\includegraphics[scale=0.7]{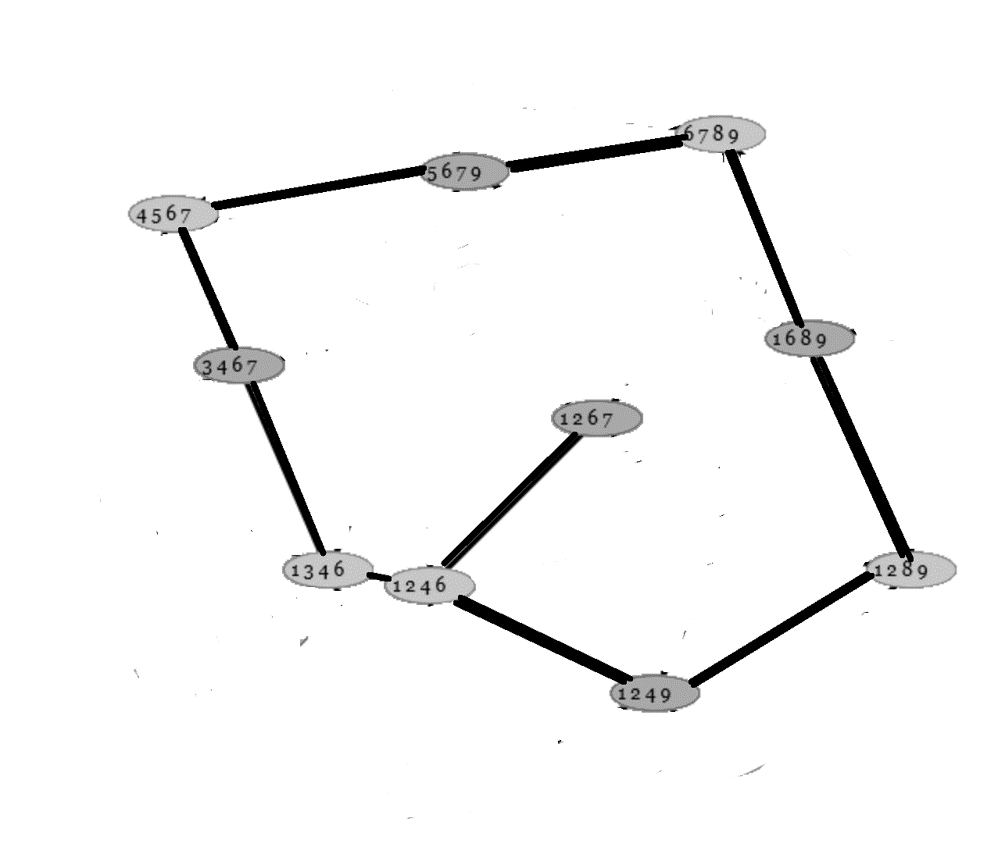}
\end{figure}
We show two examples of quasi-generalized cyclic patterns $S$ such that $\zeta_{\mathcal{S}}$ is prong-closed.
\begin{ex}
\label{firstex}
Suppose that
\[\mathcal{S} = (\left\{1, 3, 4, 6\right\}, \left\{3, 4, 6, 7 \right\}, \left\{4, 5, 6, 7 \right\}, \left\{5, 6, 7, 9 \right\}, \left\{6, 7, 8 ,9 \right\}, \left\{1, 6, 8 ,9 \right\}, \left\{1, 2, 8 ,9 \right\},\]
\[ \left\{1, 2, 4, 9 \right\}, \left\{1, 2, 4, 6 \right\}).\]
Then we have that $\zeta_{\mathcal{S}}$, shown in Figure~$\ref{firstexample}$, is prong-closed.
\end{ex}
\begin{ex}
\label{secondex}
\[\mathcal{S} = (\left\{1, 3, 4, 6\right\}, \left\{3, 4, 6, 7 \right\}, \left\{4, 5, 6, 7 \right\}, \left\{5, 6, 7, 9 \right\}, \left\{6, 7, 8 ,9 \right\}, \left\{1, 6, 8 ,9 \right\}, \left\{1, 2, 8 ,9 \right\}, \left\{1, 2, 4, 9 \right\},\]
\[\left\{1, 2, 4, 6 \right\}, \left\{1, 2, 6, 7 \right\}, \left\{1, 2, 4, 6 \right\}).\]
Then we have that $\zeta_{\mathcal{S}}$, shown in Figure~$\ref{secondexample}$, is prong-closed.
\end{ex}

For an  quasi-generalized cyclic pattern $S$ such that the curve $\zeta_{\mathcal{S}}$ is prong-closed, let
\[\mathcal{D}_{\mathcal{S}}=\left\{X \subset [n] \mid X \text{ is weakly separated from } \mathcal{S}\right\} \setminus \mathcal{S}.\] 
\begin{remark}
Our definition of $\mathcal{D}_{\mathcal{S}}$ differs slightly from the definitions in \cite{DKK14}. The definition in \cite{DKK14} includes $\mathcal{S}$ in $\mathcal{D}_{\mathcal{S}}$, while we do not include $\mathcal{S}$.
\end{remark}
We describe how to decompose $\mathcal{D}_{\mathcal{S}}$ into two pure domains: $\mathcal{D}_{\mathcal{S}}^{in}$ and $\mathcal{D}_{\mathcal{S}}^{out}$. Let $\zeta^{F}_{\mathcal{S}}$ be the curve obtained by concatenating $\zeta^{F1}_{\mathcal{S}}$ and $\zeta^{F2}_{\mathcal{S}}$. Then, by definition, we know that $\zeta^{F}_{\mathcal{S}}$ subdivides $\mathbb{Z}_n$ into two closed regions $\mathcal{R}_{\mathcal{S}}^{in}$ and $\mathcal{R}_{\mathcal{S}}^{out}$ such that
\[\mathcal{R}_{\mathcal{S}}^{in} \cap \mathcal{R}_{\mathcal{S}}^{out}= \zeta_\mathcal{S} \text{ and } \mathcal{R}_{\mathcal{S}}^{in} \cup \mathcal{R}_{\mathcal{S}}^{out}= \mathbb{Z}_n.\]
Then we let
\[\mathcal{D}_{\mathcal{S}}^{in}=\mathcal{D}_{\mathcal{S}} \cap \mathcal{R}_{\mathcal{S}}^{in}\]
and
\[\mathcal{D}_{\mathcal{S}}^{out}=\mathcal{D}_{\mathcal{S}} \cap \mathcal{R}_{\mathcal{S}}^{out}.\]

\subsection{Interior-Reduced Plabic Graphs}
We construct the interior-reduced plabic tiling machinery that is helpful in proving Theorem~$\ref{link}$.

Consider a plabic tiling of a maximal weakly separated collection $\mathcal{V}$. Take a sequence of subsets $\mathcal{I}^{*} = (I_1, I_2, I_3,\ldots,I_i, I_{i+1} = I_1)$. We call $\zeta_{\mathcal{I}^{*}}$ the \textbf{boundary curve}. Consider the sub-plabic tiling consisting of the sets and faces in the plabic tiling of $\mathcal{V}$ on and within $\zeta^F_{\mathcal{I}^{*}}$. Suppose that 
\begin{enumerate}
\item The boundary curve $\zeta_{\mathcal{I}^{*}}$ is prong-closed,
\item and there do not exist $i, j$ with $|i-j| > 1$ such that there is a face containing both $I_i$ and $I_j$ in the aformentioned sub-plabic tiling.
\end{enumerate}
Then we call the sub-plabic tiling together with the boundary curve $\zeta_{\mathcal{I}^{*}}$ an \textbf{interior-reduced plabic tiling}.

Suppose that there is an interior-reduced plabic tiling that consists of sets of the weakly separated multi-collection $\mathcal{W} \subset \mathcal{V}$ (that may have repeated subsets as permitted by $\mathcal{I}^{*}$). We omit the phrase ``multi'' for the remainder of the paper. Then, we denote the interior-reduced plabic tiling of $\mathcal{W}$ as $\mathpzc{PT}_{\mathcal{V}}(\mathcal{W})$. Let the sequence of subsets on the boundary be $\mathcal{I}^{*}$. We let $\mathpzc{PT}_{\mathcal{V}}^{*}(\mathcal{W})$ be $\mathpzc{PT}_{\mathcal{V}}(\mathcal{W})$ with the set labels removed.

We now define the \textbf{interior-reduced plabic graph} $\mathpzc{P}_{\mathcal{V}}^{*}(\mathcal{W})$ to be the dual plabic graph (without strand labels) of $\mathpzc{PT}_{\mathcal{V}}^{*}(\mathcal{W})$ with the following alteration: Suppose that $\mathcal{I}^{*}$ has repeated elements so that $\zeta^{P}_{\mathcal{I}^{*}}$ is nonempty. For every two quasi-adjacent sets $W_1$ and $W_2$ connected by $\zeta^{P}_{\mathcal{I}^{*}}$, consider the faces in $\mathpzc{PT}_{\mathcal{V}}^{*}(\mathcal{W})$ that contain both $W_1$ and $W_2$. Consider the dual vertices of these faces in the dual plabic graph. If there is only one such vertex, then we split this vertex into two vertices by the move (M2). Now, we let the two vertices be $u_1$ and $u_2$. We make $u_1$ and $u_2$ into boundary vertices as follows: For $i \neq j$, consider the strand that goes from $u_i$ to $u_j$. We break this strand into two strands so that the first strand ends at $u_i$ and the second strand begins at $u_j$. 

In the case that $V=W$, we omit the subscripts.

\begin{figure}
\caption{}
\label{PT1}
\includegraphics[scale=0.6]{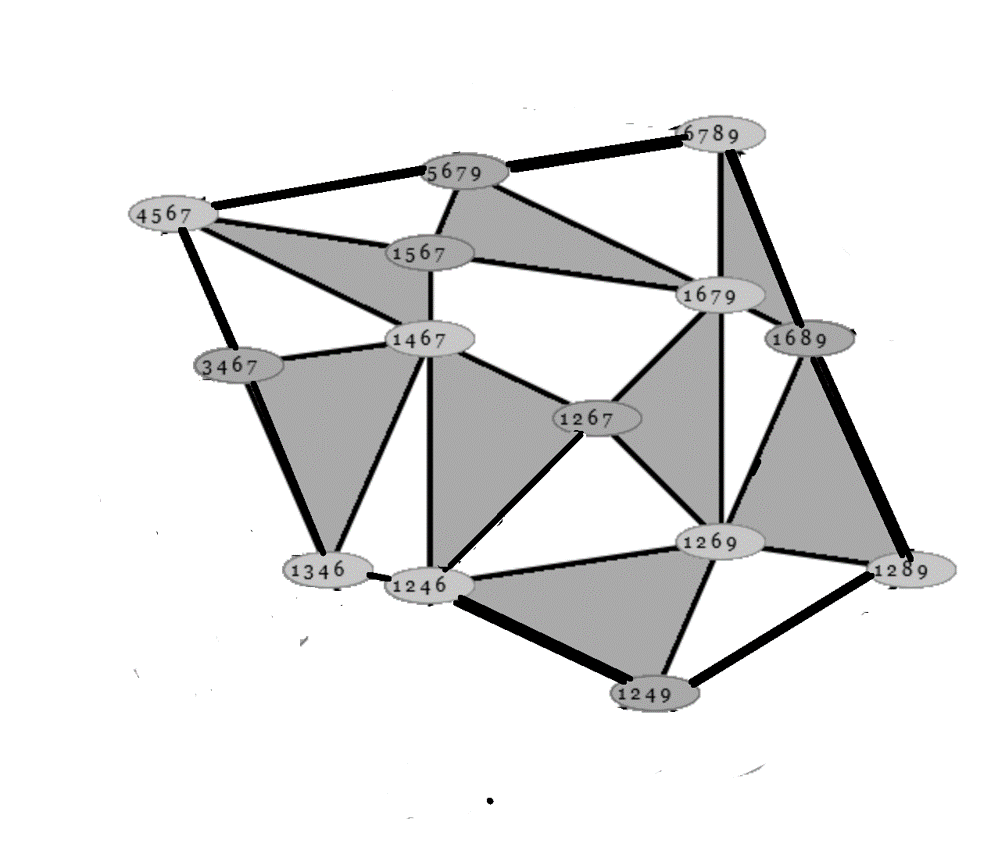}
\includegraphics[scale=0.4]{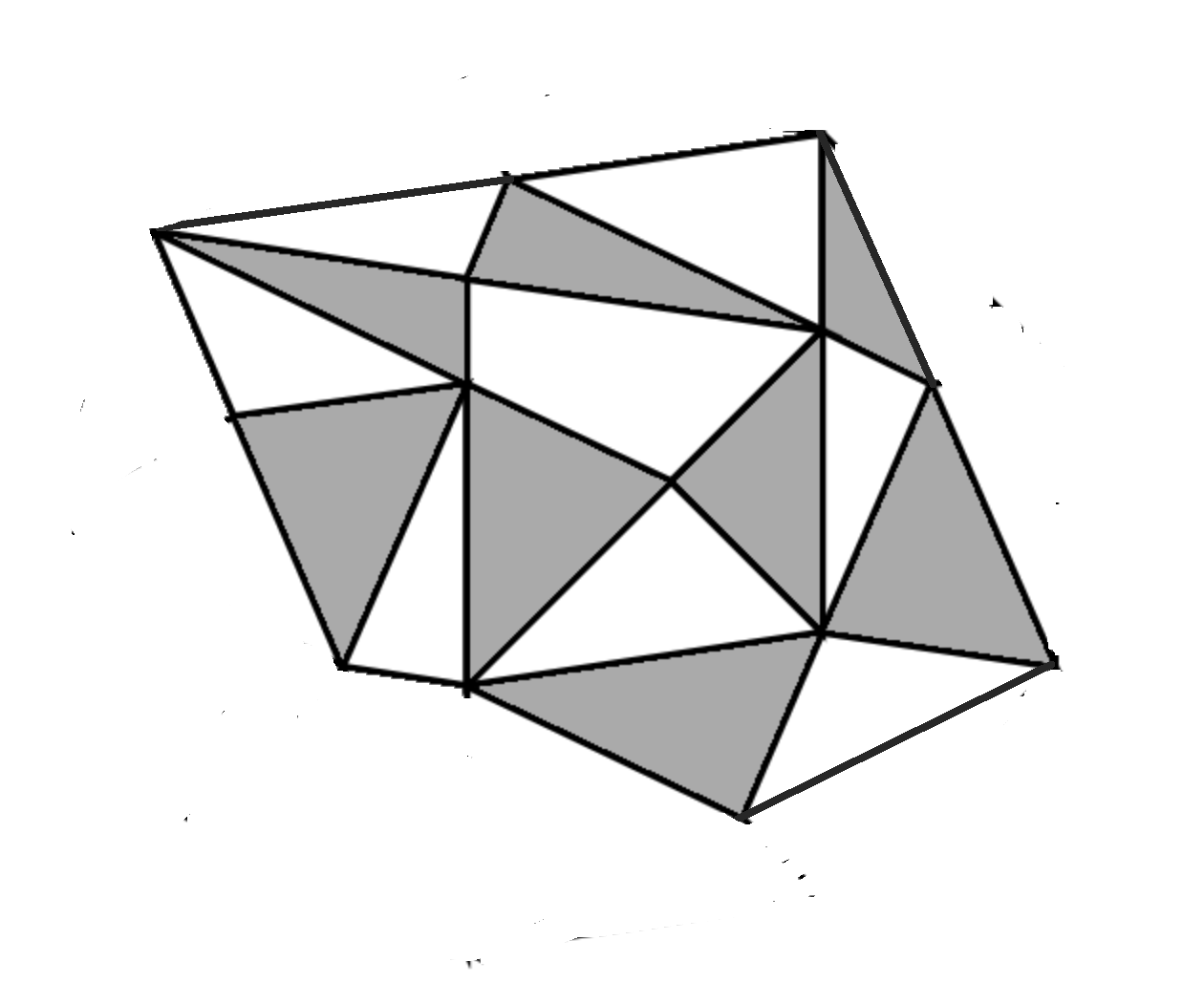}
\includegraphics[scale=0.4]{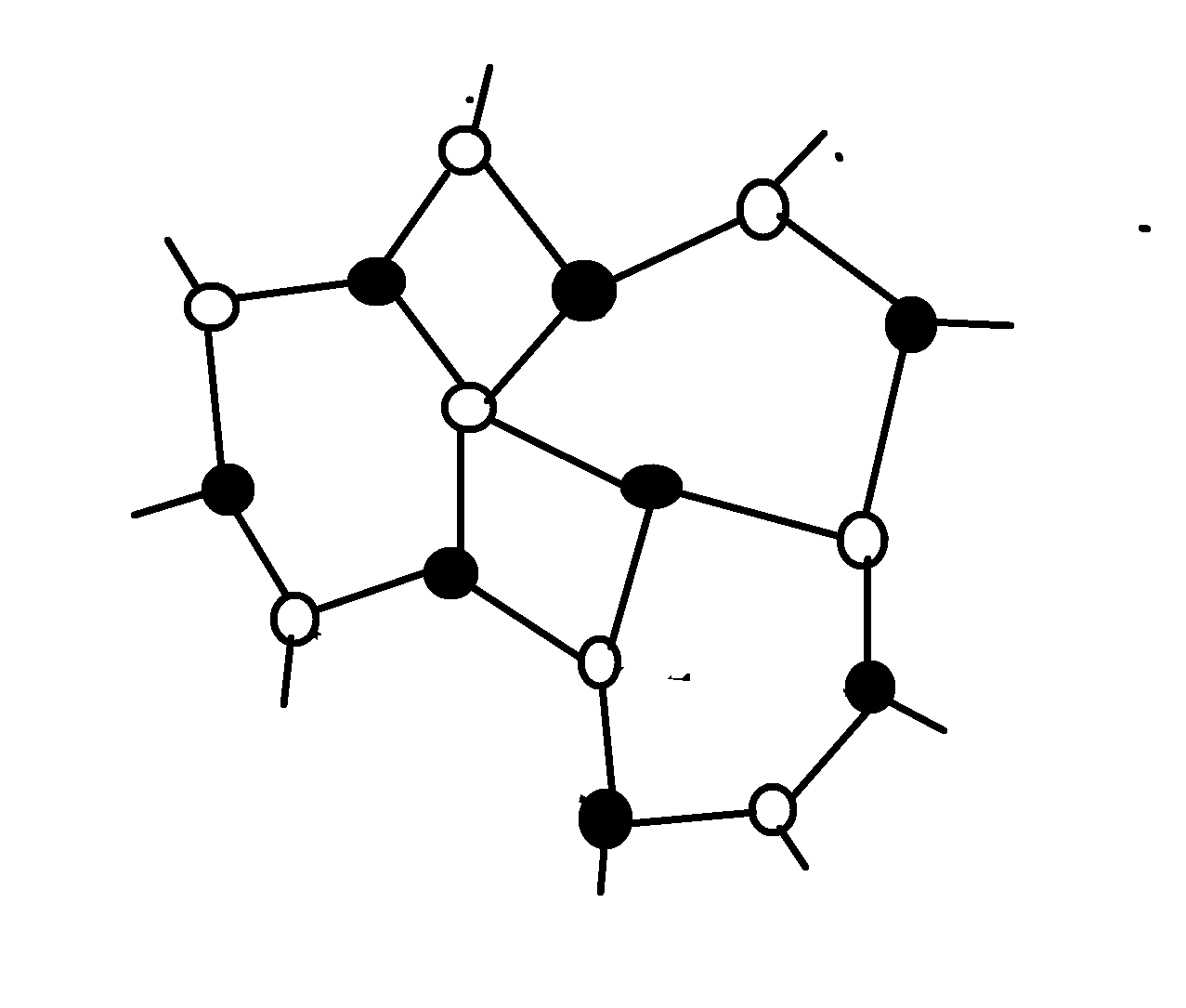}
\end{figure}
\begin{figure}
\caption{}
\label{PT2}
\includegraphics[scale=0.6]{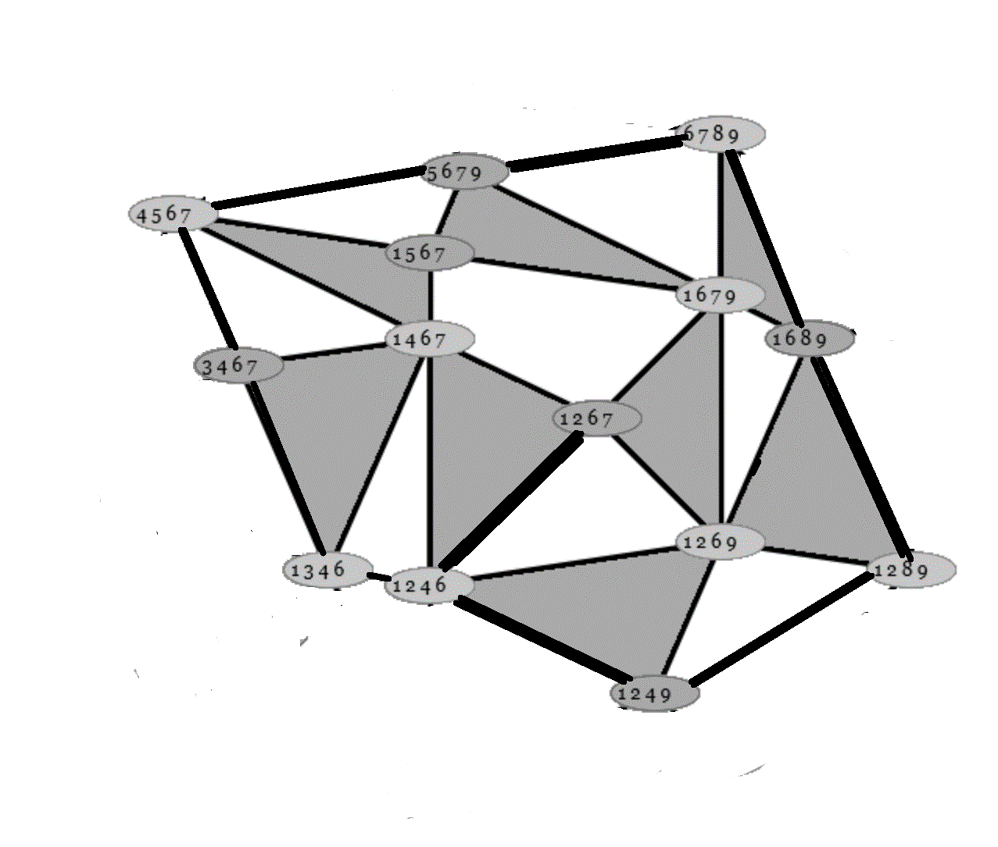}
\includegraphics[scale=0.4]{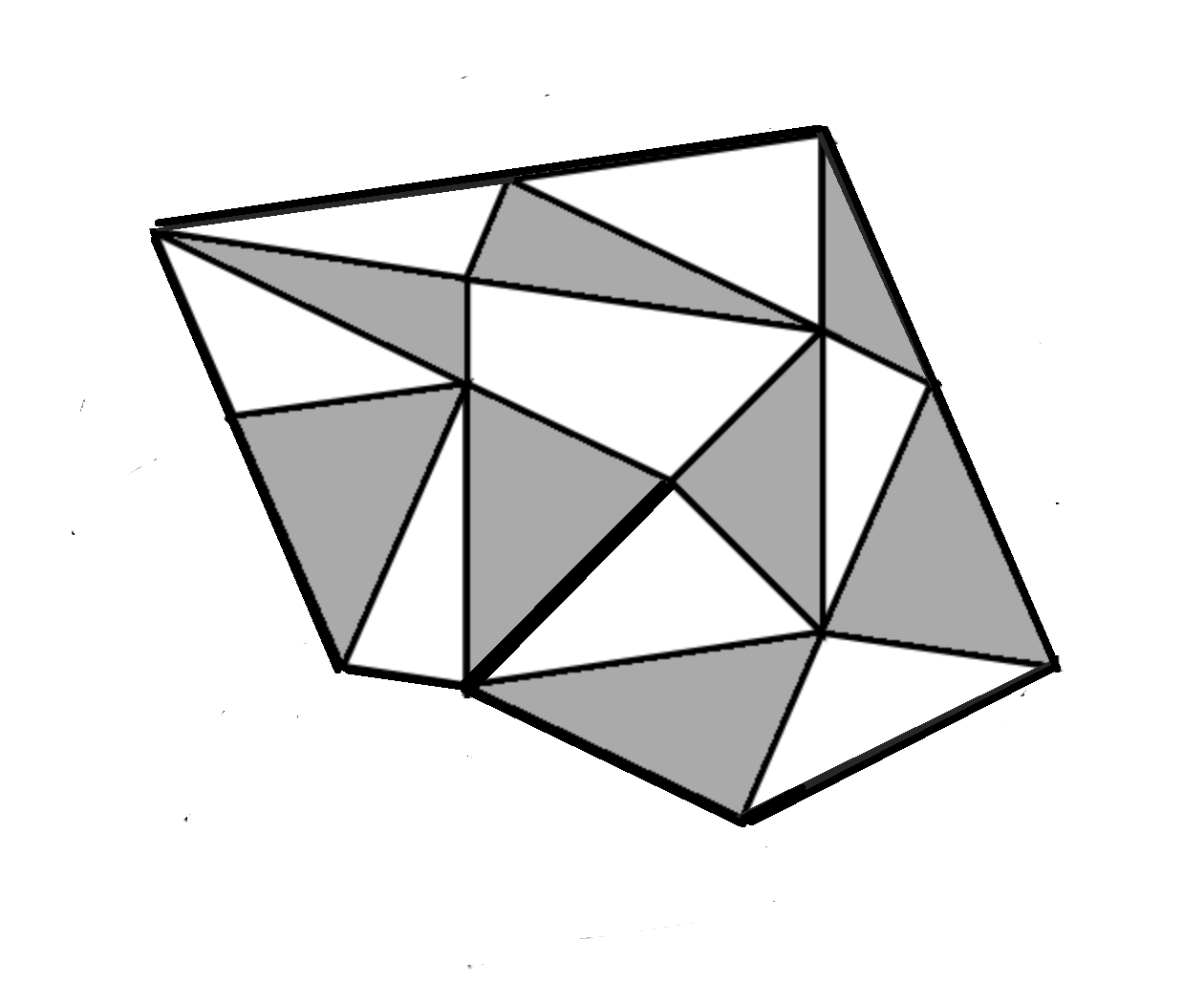}
\includegraphics[scale=0.4]{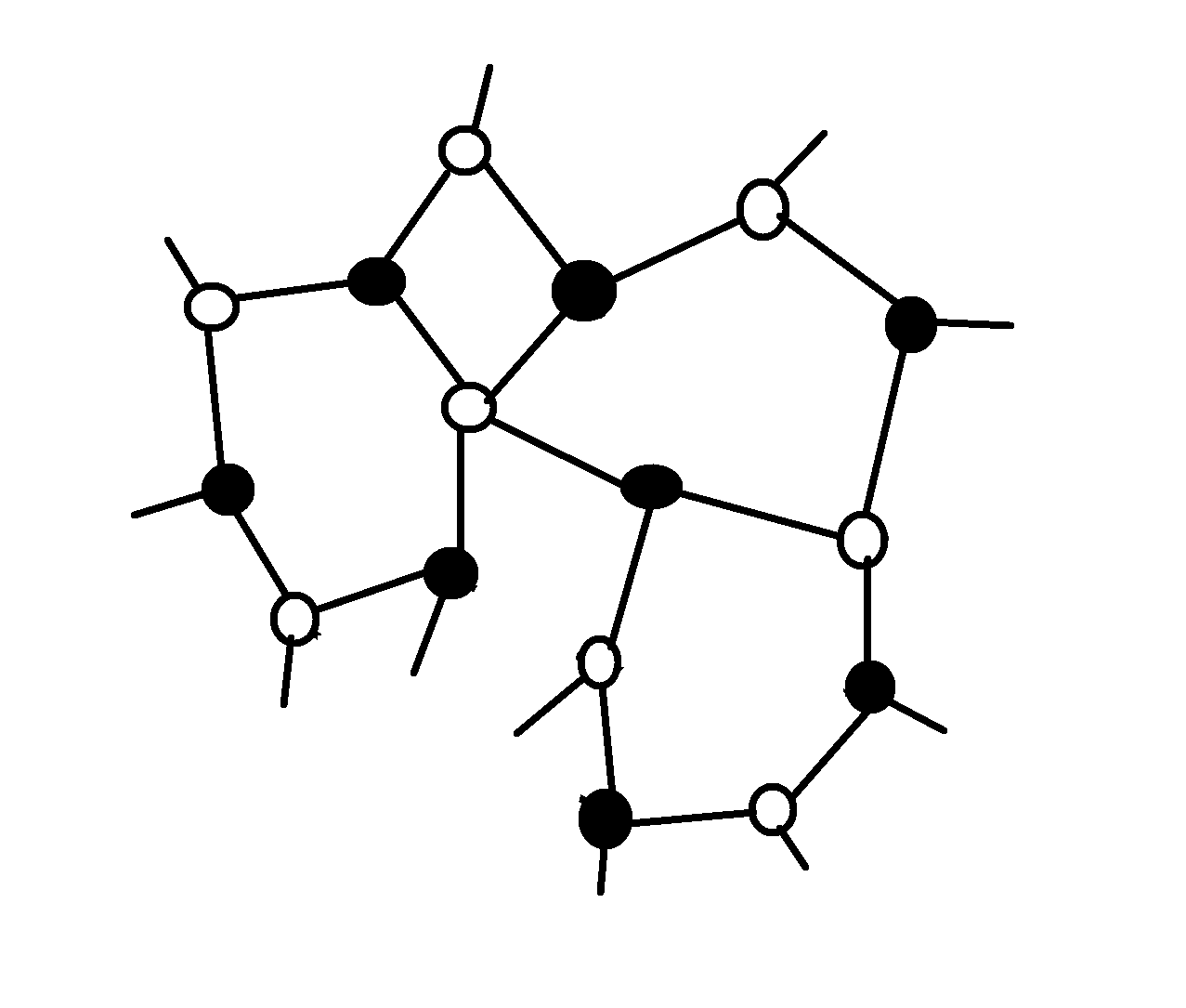}
\end{figure}
We consider the following examples:
\begin{ex}
\label{intred1}
Consider $\mathcal{V}$ as described in Example~$\ref{mainexth}$.
Consider the weakly separated collection $\mathcal{W} \subset \mathcal{V}$ that contains $\left\{1, 3, 4, 6\right\}$, $\left\{3, 4, 6, 7 \right\}$, $\left\{4, 5, 6, 7 \right\}$, $\left\{5, 6, 7, 9 \right\}$, $\left\{6, 7, 8 ,9 \right\}$, $\left\{1, 6, 8 ,9 \right\}$, $\left\{1, 2, 8 ,9 \right\}$, $\left\{1, 2, 4, 9 \right\}$, $\left\{1, 2, 4, 6 \right\}$, $\left\{1, 5, 6, 7 \right\}$, $\left\{1, 4, 6, 7 \right\}$, $\left\{1, 6, 7 ,9 \right\}$, $\left\{1, 2, 6, 7 \right\}$, and $\left\{1, 2, 6, 9\right\}$.
Notice that $\mathpzc{PT}_{\mathcal{V}}(\mathcal{W})$ is an interior-reduced plabic tiling. Figure~$\ref{PT1}$ shows ${\mathpzc{PT}}_{\mathcal{V}}(\mathcal{W})$, $\mathpzc{PT}^{*}_{\mathcal{V}}(\mathcal{W})$, and $\mathpzc{P}_{\mathcal{V}}^{*}(\mathcal{W})$.
\end{ex}
\begin{ex}
\label{intred2}
Consider $\mathcal{V}$ as described in Example~$\ref{mainexth}$.
Consider the weakly separated (multi-)collection $\mathcal{W} \subset \mathcal{V}$ that contains $\left\{1, 3, 4, 6\right\}$, $\left\{3, 4, 6, 7 \right\}$, $\left\{4, 5, 6, 7 \right\}$, $\left\{5, 6, 7, 9 \right\}$, $\left\{6, 7, 8 ,9 \right\}$, $\left\{1, 6, 8 ,9 \right\}$, $\left\{1, 2, 8 ,9 \right\}$, $\left\{1, 2, 4, 9 \right\}$, $\left\{1, 2, 4, 6 \right\}$, $\left\{1, 2, 6, 7 \right\}$, $\left\{1, 2, 4, 6 \right\}$, $\left\{1, 5, 6, 7 \right\}$, $\left\{1, 4, 6, 7 \right\}$, $\left\{1, 6, 7 ,9 \right\}$, and $\left\{1, 2, 6, 9\right\}$. The difference between this $\mathcal{W}$ and the $\mathcal{W}$ in Example~$\ref{intred1}$ in the additional copy of $\left\{1, 2, 4, 6 \right\}$. Notice that $\mathpzc{PT}_{\mathcal{V}}(\mathcal{W})$ is an interior-reduced plabic tiling. Figure~$\ref{PT1}$ shows $\mathpzc{PT}_{\mathcal{V}}(\mathcal{W})$, $\mathpzc{PT}^{*}_{\mathcal{V}}(\mathcal{W})$, and $\mathpzc{P}_{\mathcal{V}}^{*}(\mathcal{W})$.
\end{ex}

We prove the following property of interior-reduced plabic graphs: 
\begin{prop}
Every interior-reduced plabic graph is a reduced plabic graph without strand labels.
\end{prop}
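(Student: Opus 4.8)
The plan is to realize the interior-reduced plabic graph as the result of cutting a genuine reduced plabic graph along the boundary curve, so that reducedness is inherited rather than re-proved from scratch. Because $\mathcal{V}$ is a maximal weakly separated collection, the theorem of Oh, Postnikov, and Speyer supplies a reduced plabic graph $\mathpzc{G}_{\mathcal{V}}$ with $\mathcal{F}(\mathpzc{G}_{\mathcal{V}}) = \mathcal{V}$, and by construction the plabic tiling of $\mathcal{V}$ is the planar dual of $\mathpzc{G}_{\mathcal{V}}$: the labeled vertices of the tiling are the faces of $\mathpzc{G}_{\mathcal{V}}$, the black and white faces of the tiling are the black and white vertices of $\mathpzc{G}_{\mathcal{V}}$, and tiling edges are dual to edges of $\mathpzc{G}_{\mathcal{V}}$. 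First I would make this dictionary precise and check that keeping exactly the faces of the tiling on and within $\zeta^{F}_{\mathcal{I}^{*}}$ corresponds to keeping the part of $\mathpzc{G}_{\mathcal{V}}$ on the inner side of the closed curve $\gamma$ that, under duality, runs along the tiling edges $I_{j}I_{j+1}$ through the dual faces $I_{1},\ldots,I_{i}$ and transversally severs the dual edges they cross.

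Next I would verify that this inner part is a plabic graph in a disk. Its vertices are the black and white faces of the sub-tiling, so they inherit their colors from $\mathpzc{G}_{\mathcal{V}}$, and its interior edges are edges of $\mathpzc{G}_{\mathcal{V}}$; hence planarity and bicoloring are immediate. The disk boundary is produced by the cut: each dual edge crossed by $\gamma$ is severed, and the inner severed end becomes a new boundary vertex. Here the hypothesis that no face of the sub-tiling contains two of the $I_{j}$ with $|i-j|>1$ is precisely what guarantees that $\gamma$ meets each retained dual face in a single arc, so the cut is clean and the inner region is a topological disk. The prong $\zeta^{P}_{\mathcal{I}^{*}}$ is treated by the prescribed alteration: when $\mathcal{I}^{*}$ repeats a subset, one first applies the move (M2) to split the relevant dual vertex and then breaks the strand through the two resulting vertices to make them boundary vertices. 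Since (M2) is one of Postnikov's moves it preserves both the strand permutation and reducedness, and breaking a strand at a boundary vertex only shortens strands.

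The heart of the argument is that every strand of the cut graph is a maximal sub-arc of a strand of $\mathpzc{G}_{\mathcal{V}}$ that remains on the inner side of $\gamma$. This holds because the rules of the road are local: at each retained interior vertex the incident edges and the color agree with those in $\mathpzc{G}_{\mathcal{V}}$, so a strand turns identically until it reaches a severed edge, where it terminates at the corresponding new boundary vertex. With this established, the three reducedness conditions are inherited from $\mathpzc{G}_{\mathcal{V}}$. A closed interior loop in the cut graph never meets $\gamma$, so it would be a closed interior loop of $\mathpzc{G}_{\mathcal{V}}$, which is forbidden; a self-passing strand is a sub-arc of a self-passing strand of $\mathpzc{G}_{\mathcal{V}}$, which by reducedness is a simple loop based at a boundary vertex, a property preserved under restriction; and two strands sharing vertices $u$ and $v$ share them already in $\mathpzc{G}_{\mathcal{V}}$, where reducedness forces the opposite orientations $u \to v$ and $v \to u$, a relation cutting cannot alter.

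I expect the main obstacle to be the topological bookkeeping of the cut, and especially the prong case: confirming that $\gamma$ together with the prong adjustment genuinely bounds a disk and that the strand-breaking step yields boundary vertices consistent with a plabic graph. The prong-closed condition on $\zeta_{\mathcal{I}^{*}}$ and the no-shared-face condition on the $I_{j}$ are exactly the tools that make this precise, and matching them carefully to the cleanness of the cut is where the effort lies; once the cut graph is known to be a plabic graph whose strands are restrictions of those of $\mathpzc{G}_{\mathcal{V}}$, reducedness follows formally.
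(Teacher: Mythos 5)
Your proposal is correct and follows essentially the same route as the paper: both realize the interior-reduced plabic graph as the piece of the reduced plabic graph of $\mathcal{V}$ cut out by the boundary curve, use condition (2) to ensure the cut is clean (each boundary vertex gets a single notch), and inherit the three reducedness conditions from the fact that every strand of the cut graph is a sub-arc of a strand of the ambient reduced plabic graph, up to breaking and relabeling. Your write-up simply spells out the topological bookkeeping and the prong/(M2) adjustment in more detail than the paper does.
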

\begin{proof}
We consider the interior-reduced plabic graph $\mathpzc{P}_{\mathcal{V}}^{*}(\mathcal{W})$. First, notice that the condition (2) in the definition of an interior-reduced plabic tiling guarantees that the each boundary vertex of $\mathpzc{P}_{\mathcal{V}}^{*}(\mathcal{W})$ will only have one notch coming out of it on the outside. This means that $\mathpzc{P}^{*}_V$ is a plabic graph. Suppose that we label the strands of the $\mathpzc{P}_{\mathcal{V}}^{*}(\mathcal{W})$ in clockwise order. It suffices to prove that these strands satisfy the properties of a reduced plabic graph. We know that this is true, because the strands of $\mathpzc{P}_{\mathcal{V}}^{*}(\mathcal{W})$ are contained in the strands of the reduced plabic graph of $\mathcal{V}$ (up to breaking and relabeling). 
\end{proof}

Notice that the sequence of sets $\mathcal{I}^{*}$ on the boundary curve $\zeta_{\mathcal{I}^{*}}$ of an interior-reduced plabic tiling form a modified Grassmann-like necklace which is defined as follows:

\begin{definition}
\label{modgras}
Let $\mathcal{I}^{*} = (I^{*}_1, I^{*}_2,\ldots,I^{*}_n, I^{*}_{n+1} = I^{*}_1)$ be a quasi-generalized cyclic pattern. We call $\mathcal{I}^{*}$ a $\textbf{modified Grassmann-like necklace}$ if the following properties are satisfied:
\begin{enumerate}
\item{The curve $\zeta_{\mathcal{I}^{*}}$ is prong-closed.}
\item{There is no $i$ such that $|I_{i+2} \setminus I_{i}| = 1$ (where indices are modulo $n$) and the integers $I_{i} \setminus I_{i+1}, I_{i+1} \setminus I_{i+2}, I_{i+2} \setminus I_i$ are cyclically ordered.}
\end{enumerate}
\end{definition}
\begin{remark}
It follows from the definition that
\[|\cup_{i=1}^{n} I^{*}_i| \geq n.\]
\end{remark}

We associate modified Grassmann-like necklaces with weakly separated multi-collections.
\begin{definition}
Let $\mathcal{I}^{*}$ be a modified Grassmann-like necklace contained in a maximal weakly separated collection $\mathcal{V}$. Then, we define the $\mathbf{\mathcal{I}^{*}}$\textbf{-enclosed collection} of $\mathcal{V}$ to be
$$\displaystyle {\mathcal{V}}^{\mathcal{I}^{*}} = (\mathcal{D}_{\mathcal{I}^{*}}^{in} \cap {\mathcal{V}}) \cup \mathcal{I}^{*},$$
where repeated subsets in $\mathcal{I}^{*}$ continue to be repeated subsets in ${\mathcal{V}}^{\mathcal{I}^{*}}$.
\end{definition}
\begin{remark}
Notice that $\mathpzc{PT}_{\mathcal{V}}({\mathcal{V}}^{\mathcal{I}^{*}})$ is always an interior-reduced plabic tiling. We thus know that for a fixed maximal weakly separated collection $\mathcal{V}$, the $\mathcal{I}^{*}$-enclosed collections $\mathcal{V}^{\mathcal{I}^{*}}$ are in bijection with the interior-reduced plabic tilings/graphs of $V$.
\end{remark}

We show two examples of $\mathcal{I}^{*}$-enclosed collections.
\begin{ex}
\label{de1}
Let $\mathcal{S}$ be as defined in Example~$\ref{firstex}$. Then $\mathcal{I}^{*} = \mathcal{S}$ is a modified-like Grassmann necklace. Consider $\mathcal{V}$ and $\mathcal{W}$ as defined in Example~$\ref{intred1}$. Then ${\mathcal{V}}^{\mathcal{I}^{*}} = \mathcal{W}$.
\end{ex}
\begin{ex}
\label{de2}
Let $\mathcal{S}$ be as defined in Example~$\ref{secondex}$. Then $\mathcal{I}^{*} = \mathcal{S}$ is a modified-like Grassmann necklace. Consider $\mathcal{V}$ and $\mathcal{W}$ as defined in Example~$\ref{intred2}$. Then ${\mathcal{V}}^{\mathcal{I}^{*}} = \mathcal{W}$.
\end{ex}

Now, consider a modified Grassmann-like necklace $\mathcal{I}^{*}$. Suppose that $\mathcal{I}^{*}$ is contained in a maximal weakly separated collection $\mathcal{V}$. We map $\mathcal{I}^{*}$ to a Grassmann necklace $\mathcal{I}$ in such a way that is independent of choice of $\mathcal{V}$. We call $\mathcal{I}$ the $\mathbf{relabeled}$ $\mathbf{Grassmann}$ $\mathbf{necklace}$ of $\mathcal{I}^{*}$. We define the decorated permutation $\pi$ associated to $\mathcal{I}$ as follows. Suppose that $\mathcal{I}^{*} = \left\{I^{*}_1, I^{*}_2,..., I^{*}_{n+1} = I^{*}_1 \right\}$. Suppose that $\mathpzc{PT}_{\mathcal{V}}(V^{\mathcal{I}^{*}})$ has $n$-strands $\mathscr{S} = (S^1, S^2,...S^n)$ such that $S^i$ starts at the face directly clockwise from the subset $I^{*}_i$. If $S^i$ ends at the face directly clockwise from the subset $I^{*}_j$, then we set $\pi(i) = j$.
Notice that $\mathscr{S}$ (and thus the values $\pi(i)$) are independent of the choice of maximal weakly separated collection $\mathcal{V}$. 

We show two examples:
\begin{ex}
\label{grasss1}
Suppose that $\mathcal{I}^{*}$ is defined as in Example~$\ref{de1}$. Then, the relabeled Grassmann necklace is in the equivalence class containing the decorated permutation $365982147$.
\end{ex}
\begin{ex}
\label{grasss2}
If $\mathcal{I}^{*}$ is defined as in Example~$\ref{de2}$, then the relabeled Grassmann necklace is in the equivalence class containing the decorated permutation $5416(10)932(11)87$.
\end{ex}

Now, we define the bijective $\textbf{mapping function}$ $f$ that sends each $\mathcal{I}^{*}$-enclosed collection $\mathcal{V}^{\mathcal{I}^{*}}$ to a maximal weakly separated collection $\mathcal{W}$ in $\mathpzc{G}^{\mathcal{I}}$. In $\mathpzc{P}^{*}_\mathcal{V}(\mathcal{V}^{\mathcal{I}^{*}})$, for each $1 \le i \le n$, we label the strand $S^i$ in $\mathpzc{PT}_{\mathcal{V}}(V^{\mathcal{I}^{*}})$ with the number $\pi(i)$. Now, we label the sets using these new strand labels. Let this maximal weakly separated collection in $\mathpzc{G}^{\mathcal{I}}$ be $\mathcal{W}$. The function $f$ takes $\mathcal{V}^{\mathcal{I}^{*}}$ to $\mathcal{W}$.

We show how to obtain $\mathcal{V}^{\mathcal{I}^{*}}$ from $W$. We relabel the strands of $\mathpzc{P}^{*}(W)$, replacing the label of the strand that is labeled $i$ in $\mathpzc{PT}(W)$ with the label of the strand $S^{\pi^{-1}(i)}$ in $\mathpzc{PT}_{\mathcal{V}}(V^{\mathcal{I}^{*}})$ for $1 \le i \le n$. We label the sets and to each set, add the elements of $\cap^{n}_{j = 1} I^{*}_j$ (the elements common to all of the sets in the modified Grassmann-like necklace). This recovers the $\mathcal{I}^{*}$-enclosed collection $V^{\mathcal{I}^{*}}$.

We say that $\mathpzc{P}_{\mathcal{V}_1}^{*}(\mathcal{W}_1) \cong \mathpzc{P}_{\mathcal{V}_2}^{*}(\mathcal{W}_2)$ and $\mathpzc{PT}_{\mathcal{V}_1}^{*}(\mathcal{W}_1) \cong \mathpzc{PT}_{\mathcal{V}_2}^{*}(\mathcal{W}_2)$ if the relabeled Grassmann necklace of the modified-like Grassmann necklace that forms the boundary of $\mathpzc{PT}_{\mathcal{V}_1}(\mathcal{W}_1)$ is in the same equivalence class as the relabeled Grassmann necklace of the modified-like Grassmann necklace that forms the boundary of $\mathpzc{PT}_{\mathcal{V}_2}(\mathcal{W}_2)$. Notice that $\mathpzc{P}_{\mathcal{V}_1}^{*}(\mathcal{W}_1) \cong \mathpzc{P}_{\mathcal{V}_2}^{*}(\mathcal{W}_2)$ and $\mathpzc{PT}_{\mathcal{V}_1}^{*}(\mathcal{W}_1) \cong \mathpzc{PT}_{\mathcal{V}_2}^{*}(\mathcal{W}_2)$ if $\mathpzc{P}_{\mathcal{V}_1}^{*}(\mathcal{W}_1)$ can be transformed into $\mathpzc{P}_{\mathcal{V}_2}^{*}(\mathcal{W}_2)$ through a sequence of rotations, reflections, and color flipping operations.

We now relate the plabic tilings of $\mathcal{I}^{*}$-enclosed collections with plabic tilings of maximal weakly separated collections over $\mathcal{I}$ (the relabeled Grassmann necklace of $\mathcal{I}^{*}$). This gives us an isomorphism between certain $\mathcal{C}$-constant graphs and certain exchange graphs.
\begin{lemma}
\label{quasiexch}
Consider a maximal weakly separated collection $\mathcal{V}$ over a Grassmann necklace $\mathcal{J}$. Consider a modified-like Grassmann necklace $\mathcal{I}^{*} \subset V$ over the relabeled Grassmann necklace $\mathcal{I}$ and the $\mathcal{I}^{*}$-enclosed collection $\mathcal{V}^{\mathcal{I}^{*}}$. Then, we have that
\[
\left\{\mathpzc{PT}^{*}_{\mathcal{W}}(\mathcal{W}^{\mathcal{I}^{*}}) \mid \mathcal{W} \in \mathpzc{G}^{\mathcal{J}}((\mathcal{V} \setminus \mathcal{V}^{\mathcal{I}^{*}}) \cup \mathcal{I}^{*}) \right\} \cong \left\{\mathpzc{PT}^{*}(\mathcal{W}) \mid \mathcal{W} \in \mathpzc{G}^{\mathcal{I}} \right\} \]
and
\[
\mathpzc{G}^{\mathcal{J}}((\mathcal{V} \setminus \mathcal{V}^{\mathcal{I}^{*}}) \cup \mathcal{I}^{*}) \cong \mathpzc{G}^{\mathcal{I}}\]
where repeated subsets in $(\mathcal{V} \setminus \mathcal{V}^{\mathcal{I}^{*}}) \cup \mathcal{I}^{*}$ are deleted so that $(\mathcal{V} \setminus \mathcal{V}^{\mathcal{I}^{*}}) \cup \mathcal{I}^{*}$ contains at most one of each subset.
\end{lemma}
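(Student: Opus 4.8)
The plan is to show that the mapping function $f$ constructed just above the lemma simultaneously yields both the bijection of plabic tilings and the graph isomorphism. The guiding principle is that a collection $\mathcal{W} \in \mathpzc{G}^{\mathcal{J}}((\mathcal{V} \setminus \mathcal{V}^{\mathcal{I}^{*}}) \cup \mathcal{I}^{*})$ is determined by two independent pieces of data: the fixed exterior-plus-boundary part $(\mathcal{V} \setminus \mathcal{V}^{\mathcal{I}^{*}}) \cup \mathcal{I}^{*}$, and the variable interior part $\mathcal{W}^{\mathcal{I}^{*}}$ recorded by $\mathpzc{PT}^{*}_{\mathcal{W}}(\mathcal{W}^{\mathcal{I}^{*}})$. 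First I would make this decomposition precise using the domains of Section 4.1. The curve $\zeta^{F}_{\mathcal{I}^{*}}$ partitions the sets weakly separated from $\mathcal{I}^{*}$ into $\mathcal{D}_{\mathcal{I}^{*}}^{in}$ and $\mathcal{D}_{\mathcal{I}^{*}}^{out}$, and the purity property inherited from \cite{DKK14} and \cite{OhSpPost} guarantees that every set in $\mathcal{D}_{\mathcal{I}^{*}}^{in}$ is weakly separated from every set in $\mathcal{D}_{\mathcal{I}^{*}}^{out}$. Since the exterior part $\mathcal{V} \setminus \mathcal{V}^{\mathcal{I}^{*}}$ together with $\mathcal{I}^{*}$ is inherited from the maximal collection $\mathcal{V}$, it already forms a maximal tiling of the exterior region; hence any $\mathcal{W}$ containing $\mathcal{C} = (\mathcal{V} \setminus \mathcal{V}^{\mathcal{I}^{*}}) \cup \mathcal{I}^{*}$ must agree with $\mathcal{V}$ outside $\zeta_{\mathcal{I}^{*}}$ and can vary only in $\mathcal{D}_{\mathcal{I}^{*}}^{in}$. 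Consequently $\mathcal{W} \mapsto \mathcal{W}^{\mathcal{I}^{*}}$ is a bijection from the vertex set of $\mathpzc{G}^{\mathcal{J}}(\mathcal{C})$ onto the set of $\mathcal{I}^{*}$-enclosed collections.

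Next I would verify that $f$ carries this vertex set onto the vertices of $\mathpzc{G}^{\mathcal{I}}$. By the proposition that every interior-reduced plabic graph is reduced, each $\mathpzc{P}^{*}_{\mathcal{W}}(\mathcal{W}^{\mathcal{I}^{*}})$ is a reduced plabic graph, and relabeling its strands by $\pi$ produces a reduced plabic graph whose strand permutation is exactly the decorated permutation of the relabeled Grassmann necklace $\mathcal{I}$. Oh, Postnikov, and Speyer's bijection between reduced plabic graphs and maximal weakly separated collections then certifies that $f(\mathcal{W}^{\mathcal{I}^{*}})$ is a maximal weakly separated collection over $\mathcal{I}$. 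The explicit inverse already described (relabel strands by $\pi^{-1}$ and reintroduce the common elements $\cap_{j=1}^{n} I^{*}_j$) shows $f$ is a bijection, and since two $\mathcal{I}^{*}$-enclosed collections give the same relabeled tiling precisely when their images agree, the induced correspondence of tilings respects the equivalence $\cong$. This establishes the first displayed isomorphism of sets of plabic tilings.

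Finally I would check that $f$ sends edges to edges, upgrading the vertex bijection to a graph isomorphism. An edge of $\mathpzc{G}^{\mathcal{J}}(\mathcal{C})$ is a mutation of a set not in $\mathcal{C}$, hence of a set lying strictly inside $\zeta_{\mathcal{I}^{*}}$, i.e.\ interior to the relabeled necklace $\mathcal{I}$. Mutatability is the purely local condition of being surrounded by exactly two black and two white faces, and this condition is unchanged by strand relabeling and is identical in $\mathpzc{PT}_{\mathcal{W}}(\mathcal{W}^{\mathcal{I}^{*}})$ and in $\mathpzc{PT}(f(\mathcal{W}^{\mathcal{I}^{*}}))$ because relabeling alters only strand labels, not the combinatorial structure of the tiling. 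Thus each interior mutation over $\mathcal{J}$ corresponds to a mutation over $\mathcal{I}$ and conversely, so $f$ is an isomorphism of graphs, giving the second displayed isomorphism.

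The hard part will be the purity step in the first paragraph: one must confirm that the interior and exterior domains are genuinely separated, so that the exterior of $\mathcal{W}$ is forced while the interior is free. In particular I expect the prong $\zeta^{P}_{\mathcal{I}^{*}}$, which arises when $\mathcal{I}^{*}$ has repeated subsets, to require care. Near the prong the interior-reduced plabic graph is modified by splitting a vertex via (M2) and breaking strands into boundary vertices, and I would need to check that this modification preserves the local face structure governing mutatability, so that sets adjacent to the prong are counted consistently on both sides of the correspondence and no spurious edges are created or destroyed.
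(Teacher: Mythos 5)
Your proposal is correct and follows essentially the same route as the paper's own (two-sentence) proof: both rest on the bijective mapping function $f$ between $\mathcal{I}^{*}$-enclosed collections and maximal weakly separated collections over $\mathcal{I}$, plus the observation that square moves act identically on the corresponding tilings up to relabeling. Your expansion of the vertex bijection via the purity of $\mathcal{D}_{\mathcal{I}^{*}}^{in}$ and $\mathcal{D}_{\mathcal{I}^{*}}^{out}$ and your attention to the prong case simply make explicit what the paper leaves implicit.
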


\begin{proof}
Notice that first equation follows from the bijective mapping function between $\mathcal{I}^{*}$-enclosed collections and maximal weakly separated collections over a Grassmann necklace $\mathcal{I}$ and the definition of interior-reduced plabic tilings. Since square moves will have the same effect on the two plabic tilings, up to relabeling, it also follows that the second equation holds.
\end{proof}
\subsection{Direct Product}
We define and prove some results regarding the decomposition set and direct product of Grassmann necklaces.

We define the following notion of decomposition. Let $\mathcal{I} = (I_1, I_2, \ldots I_n)$ be a Grassmann necklace and $\zeta_{\mathcal{I}}$ be the associated prong-closed curve. We construct the following complex: begin with $\zeta_{\mathcal{I}}$ and add an edge between the embeddings of $I_i$ and $I_j$ if and only if $I_i$ and $I_j$ are quasi-adjacent. Then, this complex will look like a graph with many cycles (some complete graphs) glued together and each cycle (or complete graph) will itself be a modified-like Grassmann necklace. Suppose there are $i$ such cycles (or complete graphs). We define the  \textbf{decomposition set} to be 
\[\mathcal{D*}^{\mathcal{I}} = \left\{\mathcal{D*}^{\mathcal{I}}(j) \mid 1 \le j \le i\right\},\] where each $\mathcal{D*}^{\mathcal{I}}(j)$ is one of the aforementioned modified-like Grassmann necklaces. We define the \textbf{relabeled decomposition set} to be 
\[\mathcal{D}^{\mathcal{I}} = \left\{\mathcal{D}^{\mathcal{I}}(j) \mid 1 \le j \le i\right\}\] where $\mathcal{D}^{\mathcal{I}}(j)$ is the relabeled Grassmann necklace of $\mathcal{D*}^{\mathcal{I}}(j)$.

\begin{remark}
Notice that
$$\displaystyle \sum_{j=1}^{|\mathcal{D}^{\mathcal{I}}|} i(\mathcal{D}^{\mathcal{I}}(j)) = i(\mathcal{I}).$$
\end{remark}

\begin{figure}
\caption{}
\label{dec}
\includegraphics[scale=0.75]{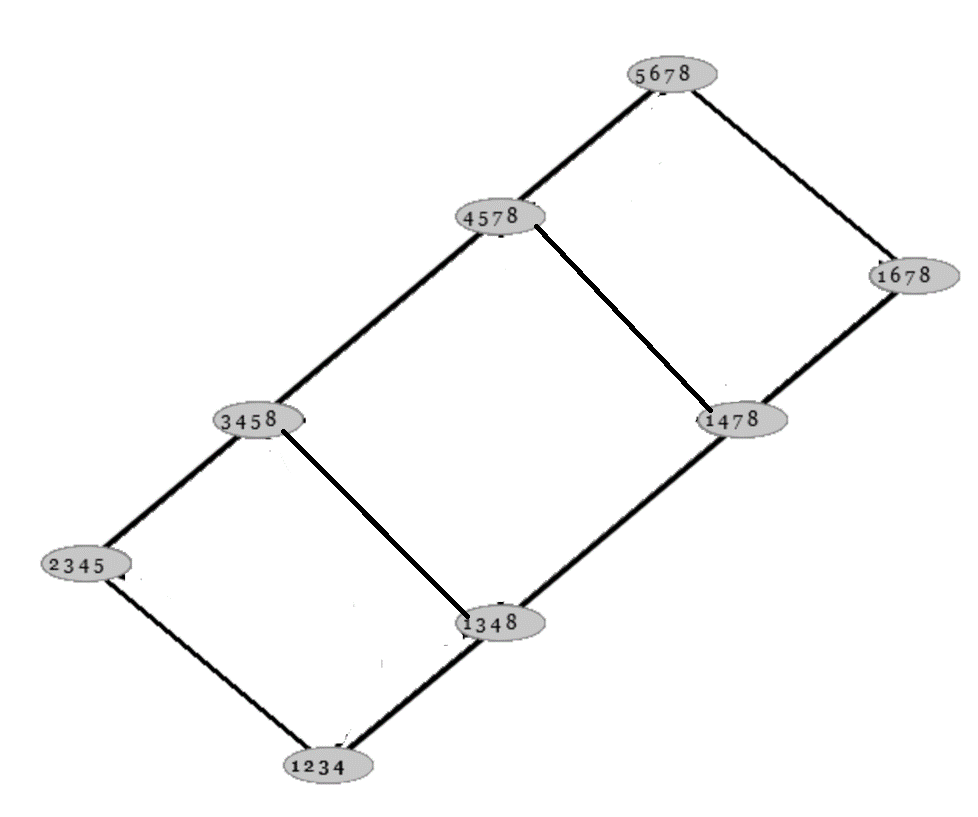}
\end{figure}
\begin{ex}
Consider the Grassmann necklace
$$\mathcal{I}= \left\{ \left\{1, 2, 3, 4 \right\},  \left\{2, 3, 4, 5 \right\},  \left\{3, 4, 5, 8 \right\},  \left\{4, 5, 7, 8 \right\}, \left\{5, 6, 7, 8 \right\},  \left\{1, 6, 7, 8 \right\}\right\} \cup$$
$$\left\{\left\{1, 4, 7, 8 \right\},  \left\{1, 3, 4, 8 \right\} \right\}.$$.
The decomposition complex of $\mathcal{I}$ is shown in Figure~$\ref{dec}$. Notice that $\mathcal{D*}^{\mathcal{I}}$ contains the following three modified-like Grassmann necklaces:
\begin{align*}
\mathcal{D*}^{\mathcal{I}}(1) &= (\left\{1, 2, 3, 4 \right\},  \left\{2, 3, 4, 5 \right\},  \left\{3, 4, 5, 8 \right\},  \left\{1, 3, 4, 8 \right\}),\\
\mathcal{D*}^{\mathcal{I}}(2) &= (\left\{1, 3, 4, 8  \right\},  \left\{3, 4, 5, 8 \right\},  \left\{4, 5, 7 ,8 \right\},  \left\{1, 4, 7, 8 \right\}), \\
\mathcal{D*}^{\mathcal{I}}(3) &= (\left\{1, 4, 7, 8 \right\},  \left\{4, 5, 7, 8 \right\},  \left\{5, 6, 7, 8 \right\},  \left\{1, 6, 7, 8 \right\}).
\end{align*}
\end{ex}

We now define notions related to the direct product of Grassmann necklaces. 
\begin{definition}
Given a Grassmann necklace $\mathcal{I}$ with associated permutation $\pi_1$ on $[n]$ and $\mathcal{J}$ with associated permutation $\pi_2$ on $[m]$, we let the \textbf{glued Grassmann necklace} $\mathcal{I} \square \mathcal{J}$ have decorated permutation $\pi_3$ on $[n+m-2]$ defined as follows:
\[\begin{cases}
\pi_3(a) = \pi_1(a) & 1 \leq a < n, \pi_1(a) \neq n \\
\pi_3(a) = \pi_2(1)+n-2 & \pi_1(a) = n \\
\pi_3(n+a-2) = \pi_2(a)+ n-2 & 2<a\leq m, \pi_2(a) \neq 1\\
\pi_3(n+a-2) = \pi_1(n) & \pi_2(a) = 1.
\end{cases}\]
\end{definition} 
\begin{ex}
\label{dpex}
Let $\mathcal{I}$ have decorated permutation $34512$ and $\mathcal{J}$ have decorated permutation $3412$. Then $\mathcal{I}\square\mathcal{J}$ has decorated permutation $3461725$.
\end{ex}

The glued Grassmann necklace $\mathcal{I} \square \mathcal{J}$ can be obtained by gluing together the plabic tilings of $\mathcal{I}$ and $\mathcal{J}$ along the edge between $I_n$ and $I_1$, and $J_1$ and $J_2$. In fact, this idea can be extended to the maximal weakly separated collections over these Grassmann necklaces. Let $\mathcal{V}_1$ be in $\mathpzc{G}^{\mathcal{I}}$ and let $\mathcal{V}_2$ be in $\mathpzc{G}^{\mathcal{J}}$. Then let $\square(\mathcal{V}_1, \mathcal{V}_2)$ be the maximal weakly separated collection in  $\mathpzc{G}^{\mathcal{I} \square \mathcal{J}}$ obtained by gluing $\mathcal{V}_1$ and $\mathcal{V}_2$ along that same edge. 

\begin{ex}
Let $\mathcal{I}$ and $\mathcal{J}$ be defined as in Example~$\ref{dpex}$. Let $\mathcal{V}_1$ be a maximal weakly separated collection over a Grassmann necklace $\mathpzc{I}$ and let $\mathcal{V}_2$ be a maximal weakly separated collection over a Grassmann necklace $\mathpzc{J}$. Figure~$\ref{glu1}$ shows $\mathpzc{PT}^{*}(\mathcal{V}_1)$,$\mathpzc{PT}^{*}(\mathcal{V}_2)$, and $\mathpzc{PT}^{*}(\square(\mathcal{V}_1, \mathcal{V}_2))$. 
\end{ex}
\begin{figure}
\caption{}
\label{glu1}
\includegraphics[scale=0.3]{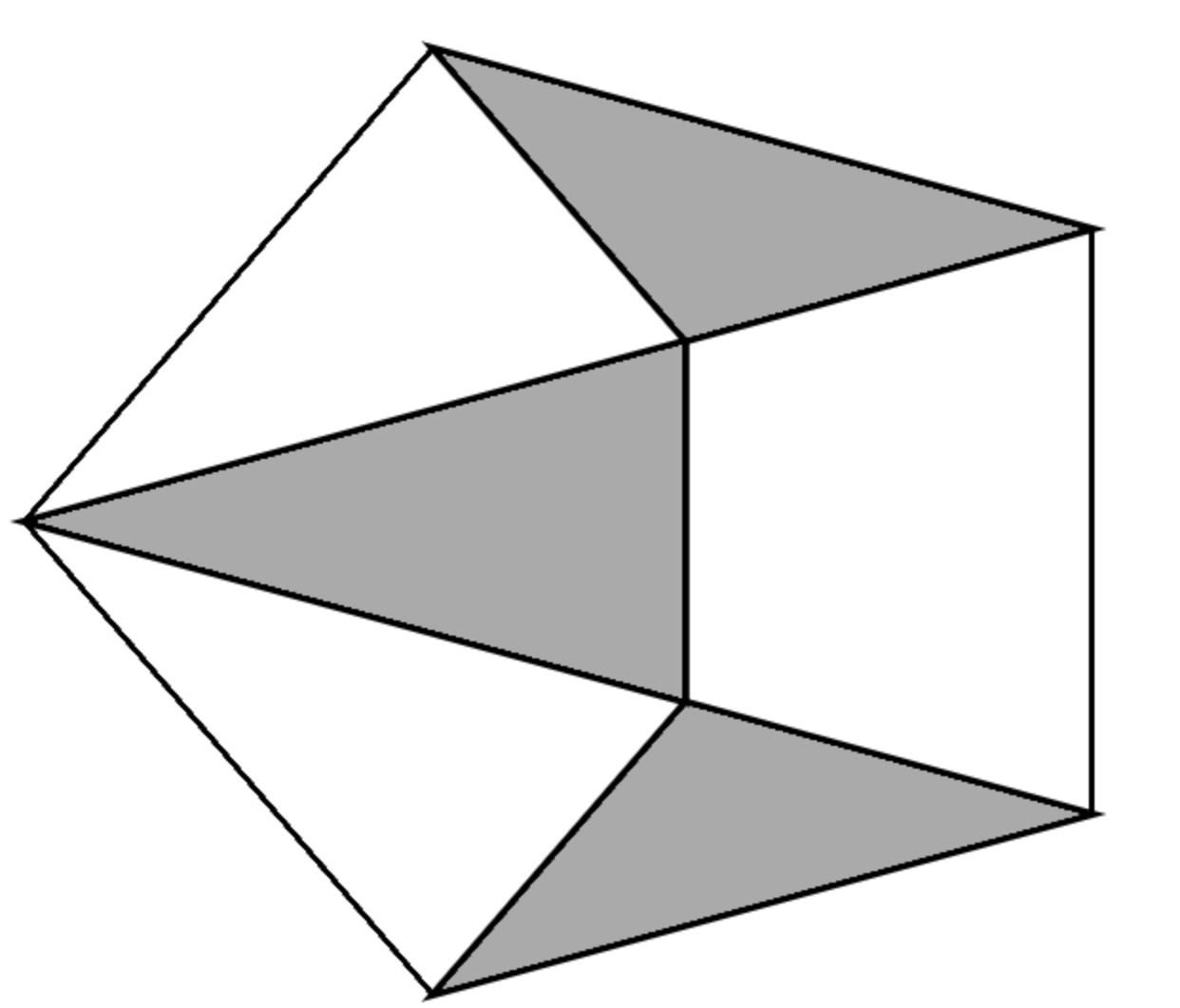}
\includegraphics[scale=0.3]{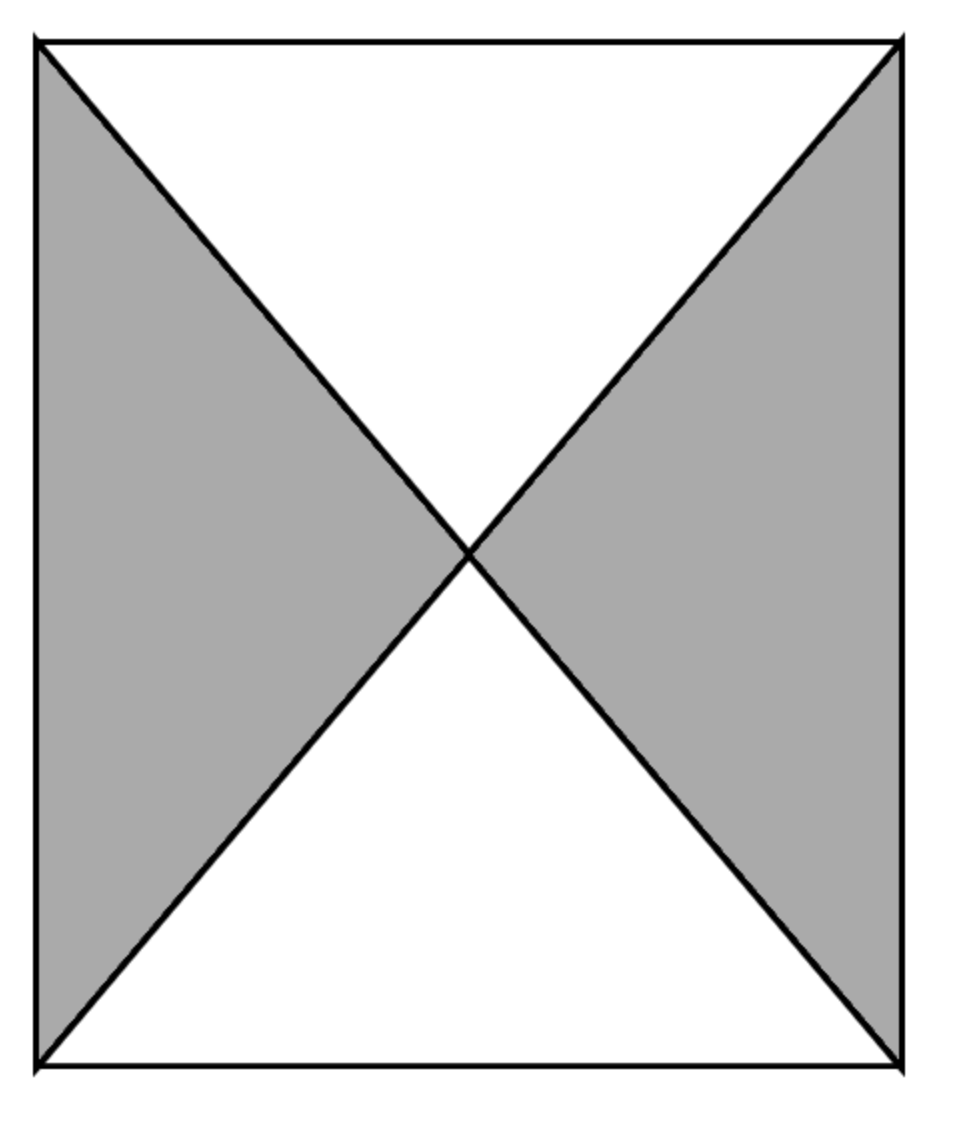}
\includegraphics[scale=0.5]{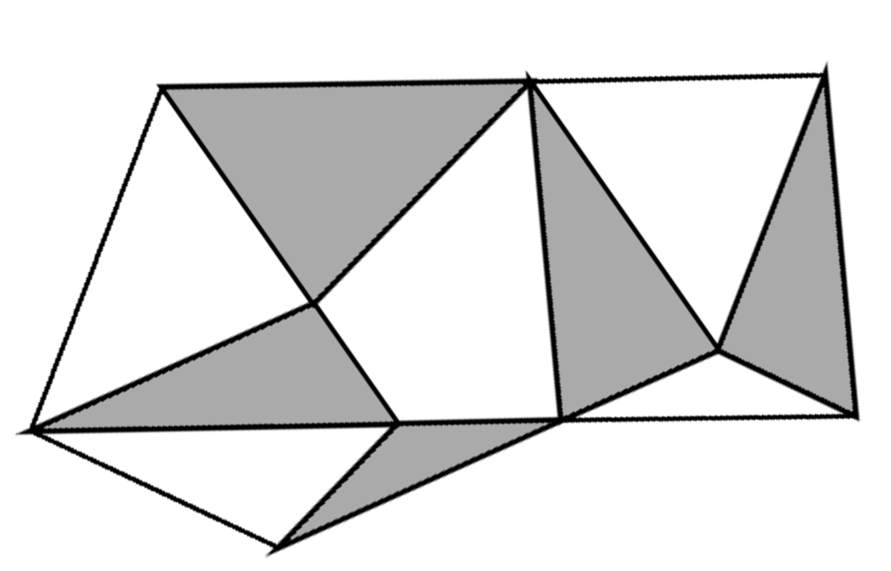}
\end{figure}
We now prove the following proposition about glued Grassmann necklaces:
\begin{prop}
\label{dpp}
Given a Grassmann necklace $\mathcal{I}$ with associated permutation $\pi_1$ on $[n]$ and a Grassmann necklace $\mathcal{J}$ with associated permutation $\pi_2$, we have that
$$\displaystyle \mathpzc{G}^{\mathcal{I} \square \mathcal{J}} \cong \mathpzc{G}^{\mathcal{I}} \square \mathpzc{G}^{\mathcal{J}}$$
and 
\[i(\mathcal{I} \square \mathcal{J}) = i(\mathcal{I}) + i(\mathcal{J}).\]
\end{prop}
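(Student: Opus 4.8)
The plan is to realize the claimed isomorphism through the gluing map $\square(\mathcal{V}_1,\mathcal{V}_2)$ already introduced, and then to obtain additivity of interior size as a byproduct of the set count that the gluing provides. First I would define the vertex map $\Phi\colon \mathpzc{G}^{\mathcal{I}}\square\mathpzc{G}^{\mathcal{J}}\to \mathpzc{G}^{\mathcal{I}\square\mathcal{J}}$ by $(\mathcal{V}_1,\mathcal{V}_2)\mapsto\square(\mathcal{V}_1,\mathcal{V}_2)$ and verify it is well defined. The gluing identifies the necklace edge between $I_n$ and $I_1$ with the edge between $J_1$ and $J_2$, and after the relabeling encoded by $\pi_3$ the two shared sets become quasi-adjacent necklace sets of $\mathcal{I}\square\mathcal{J}$ spanning a chord of the boundary circle. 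The only substantive point is that weak separation survives the seam: a set inherited from $\mathcal{V}_1$ and a set inherited from $\mathcal{V}_2$ are weakly separated after relabeling because their symmetric differences lie on opposite sides of this separating chord. Maximality of $\square(\mathcal{V}_1,\mathcal{V}_2)$ I would get by gluing the reduced plabic graphs of $\mathcal{V}_1$ and $\mathcal{V}_2$ along the seam into a single reduced plabic graph with strand permutation $\pi_3$, which by the Oh--Postnikov--Speyer correspondence between reduced plabic graphs and maximal weakly separated collections is exactly a maximal weakly separated collection over $\mathcal{I}\square\mathcal{J}$.

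Next I would show $\Phi$ is a bijection by exhibiting its inverse, the operation of cutting along the seam. The key lemma is that the chord joining the two shared (frozen) necklace sets appears in the plabic tiling of \emph{every} maximal weakly separated collection over $\mathcal{I}\square\mathcal{J}$. Granting this, cutting any $\mathcal{W}\in\mathpzc{G}^{\mathcal{I}\square\mathcal{J}}$ along the chord and relabeling each half back recovers a unique pair $(\mathcal{V}_1,\mathcal{V}_2)$; formally this is Lemma~\ref{quasiexch} applied to each of the two regions, viewed as $\mathcal{I}^{*}$-enclosed collections whose relabeled necklaces are $\mathcal{I}$ and $\mathcal{J}$. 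I expect the presence-of-the-chord statement to be the main obstacle: it amounts to showing that the positroid $\mathcal{M}_{\mathcal{I}\square\mathcal{J}}$ contains no set straddling the chord. I would derive this from the explicit form of $\pi_3$, whose strands are precisely the strands of $\pi_1$ and of $\pi_2$ with the two seam strands spliced together, so that no strand crosses the seam; consequently the chord is forced in every reduced plabic graph with strand permutation $\pi_3$.

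Finally I would check that $\Phi$ is a graph isomorphism and establish additivity. A mutation is a local square move, and since the separating chord carries no mutatable set, every mutation square lies entirely inside the $\mathcal{I}$-region or entirely inside the $\mathcal{J}$-region. Hence a single mutation of $\square(\mathcal{V}_1,\mathcal{V}_2)$ changes exactly one factor by a single mutation while fixing the other, which is exactly the adjacency rule defining $\mathpzc{G}^{\mathcal{I}}\square\mathpzc{G}^{\mathcal{J}}$; combined with the vertex bijection this yields the isomorphism. For the interior size, the gluing identifies the two shared sets, so $|\square(\mathcal{V}_1,\mathcal{V}_2)|=|\mathcal{V}_1|+|\mathcal{V}_2|-2$, while the boundary necklace satisfies $|\mathcal{I}\square\mathcal{J}|=n+m-2=|\mathcal{I}|+|\mathcal{J}|-2$. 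Subtracting gives $i(\mathcal{I}\square\mathcal{J})=i(\mathcal{I})+i(\mathcal{J})$. This is also immediate from the decomposition-set remark, since the decomposition set of $\mathcal{I}\square\mathcal{J}$ is $\{\mathcal{I},\mathcal{J}\}$ up to relabeling.
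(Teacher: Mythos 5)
Your proposal is correct and follows essentially the same route as the paper: a vertex bijection given by gluing along the seam (with the inverse given by cutting, justified via the forced chord and the enclosed-collection machinery of Lemma~\ref{quasiexch}), the observation that each mutation is local to one side of the seam, and the interior-size count. The paper's own proof simply asserts the set equality $\{\square(\mathcal{V},\mathcal{W})\} = \mathpzc{G}^{\mathcal{I}\square\mathcal{J}}$ and the mutation correspondence without argument, so your write-up supplies the details it leaves implicit rather than taking a different approach.
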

\begin{proof}
Notice that
\[\left\{\square(\mathcal{V}, \mathcal{W}) \mid \mathcal{V}\in \mathpzc{G}^{\mathcal{I}}, \mathcal{W} \in \mathpzc{G}^{\mathcal{J}} \right\} = \left\{\mathcal{V} \mid \mathcal{V} \in \mathpzc{G}^{\mathcal{I} \square \mathcal{J}}\right\}. \]
It follows that 
\[i(\mathcal{I} \square \mathcal{J}) = i(\mathcal{I}) + i(\mathcal{J}).\]
Furthermore, the maximal weakly separated collection $\square(\mathcal{V}_1,\mathcal{W}_1)$ can be mutated into $\square(\mathcal{V}_2,\mathcal{W}_2)$ if and only if $\mathcal{V}_1$ can be mutated into $\mathcal{V}_2$ in $\mathpzc{G}^{\mathcal{I}}$ or $\mathcal{W}_1$ can be mutated into $\mathcal{W}_2$ $\mathpzc{G}^{\mathcal{J}}$. This means that $\mathpzc{G}^{\mathcal{I} \square \mathcal{J}} \cong \mathpzc{G}^{\mathcal{I}} \square \mathpzc{G}^{\mathcal{J}}.$
\end{proof}

We prove the following lemma involving direct products and decomposition sets.
\begin{lemma}
\label{dirproduct}
For a Grassmann necklace $\mathcal{I}$, the following is true:
$$\displaystyle \mathpzc{G}^{\mathcal{I}} \cong \square_{j=1}^{|\mathcal{D}^{\mathcal{I}}|} \mathpzc{G}^{\mathcal{D}^{\mathcal{I}}(j)}.$$ If $\mathpzc{G}^{\mathcal{I}}$ is mutation-friendly, then each $\mathpzc{G}^{\mathcal{D}^{\mathcal{I}}(j)}$ is mutation-friendly for $1 \le j \le |\mathcal{D}^{\mathcal{I}}|$.
For any collection of exchange graphs $\mathpzc{G}^{\mathcal{I}_1}$, $\mathpzc{G}^{\mathcal{I}_2}$,$\ldots$,$\mathpzc{G}^{\mathcal{I}_{|\mathcal{D}^{\mathcal{I}}|}}$, there exists a Grassmann necklace $\mathcal{K}$ such that
$$\displaystyle \mathpzc{G}^{\mathcal{K}} \cong \square_{j=0}^{|\mathcal{D}^{\mathcal{I}}|} \mathpzc{G}^{\mathcal{I}_j}$$
and
$$\displaystyle i(\mathpzc{G}^{\mathcal{K}}) = \sum_{j=0}^{|\mathcal{D}^{\mathcal{I}}|} i(\mathpzc{G}^{\mathcal{I}_j}).$$
If all of the $\mathpzc{G}^{\mathcal{I}_j}$ are mutation-friendly, then there exists such a $\mathcal{K}$ that is mutation-friendly.
\end{lemma}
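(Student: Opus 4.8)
The plan is to reduce everything to two facts already in hand: Proposition~\ref{dpp}, which says that gluing two Grassmann necklaces along the distinguished edge realizes the direct product of their exchange graphs and adds interior sizes, and Lemma~\ref{quasiexch}, which identifies the exchange graph of a relabeled Grassmann necklace $\mathcal{D}^{\mathcal{I}}(j)$ with the corresponding piece of the decomposition. The displayed remark $\sum_{j} i(\mathcal{D}^{\mathcal{I}}(j)) = i(\mathcal{I})$ will take care of all interior-size bookkeeping.

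First I would analyze the decomposition complex of $\mathcal{I}$. By its construction it is the union of the cycles (or complete graphs) $\mathcal{D*}^{\mathcal{I}}(1), \ldots, \mathcal{D*}^{\mathcal{I}}(|\mathcal{D}^{\mathcal{I}}|)$, each a modified-like Grassmann necklace, and any two distinct pieces meet in at most a single shared quasi-adjacent edge. I would argue that rebuilding $\mathcal{I}$ from its pieces is exactly an iterated application of the glued-Grassmann-necklace operation $\square$ along these shared edges: each time two pieces are joined along their common edge, the local picture is precisely the gluing of plabic tilings described just before Proposition~\ref{dpp}. Applying Lemma~\ref{quasiexch} to replace each $\mathcal{D*}^{\mathcal{I}}(j)$ by its relabeled Grassmann necklace $\mathcal{D}^{\mathcal{I}}(j)$, and then inducting on the number of pieces using Proposition~\ref{dpp}, yields $\mathpzc{G}^{\mathcal{I}} \cong \square_{j=1}^{|\mathcal{D}^{\mathcal{I}}|} \mathpzc{G}^{\mathcal{D}^{\mathcal{I}}(j)}$, with the interior sizes adding up by the additivity clause of Proposition~\ref{dpp}.

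Next I would handle the mutation-friendly implications, using the fact that in a direct product $\mathpzc{G}_1 \square \mathpzc{G}_2$ a set survives in every vertex exactly when, under the gluing, it restricts to a set common to every vertex of the factor in which it lives. For the forward direction, suppose $\mathpzc{G}^{\mathcal{I}}$ is mutation-friendly, i.e. the only sets common to all of its maximal weakly separated collections are those of $\mathcal{I}$. If some factor $\mathpzc{G}^{\mathcal{D}^{\mathcal{I}}(j)}$ failed to be mutation-friendly, it would contain a set outside its relabeled necklace appearing in all of its collections; transporting this set back through Lemma~\ref{quasiexch} and the gluing would produce a set outside $\mathcal{I}$ present in every collection of $\mathpzc{G}^{\mathcal{I}}$, a contradiction. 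For the constructive statement, given arbitrary exchange graphs $\mathpzc{G}^{\mathcal{I}_0}, \ldots, \mathpzc{G}^{\mathcal{I}_{|\mathcal{D}^{\mathcal{I}}|}}$ I would take $\mathcal{K}$ to be their iterated glued Grassmann necklace $\mathcal{I}_0 \square \cdots \square \mathcal{I}_{|\mathcal{D}^{\mathcal{I}}|}$; Proposition~\ref{dpp} then gives both $\mathpzc{G}^{\mathcal{K}} \cong \square_{j} \mathpzc{G}^{\mathcal{I}_j}$ and the additivity of interior sizes. If each $\mathcal{I}_j$ is mutation-friendly, the same direct-product criterion shows that the only sets common to all collections of $\mathpzc{G}^{\mathcal{K}}$ are those forced by the individual necklaces together with the glued boundary edges, which are exactly the sets of $\mathcal{K}$, so $\mathcal{K}$ is mutation-friendly.

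The main obstacle I expect lies in the first step: justifying rigorously that the decomposition complex is literally an iterated $\square$-gluing, i.e. that each shared edge between two pieces coincides with the distinguished gluing edge of the glued-necklace construction and that no three pieces share a common edge, so that the induction is well-founded. Once the combinatorial geometry of the decomposition complex is pinned down, the isomorphism as well as the interior-size and mutation-friendliness claims all follow formally from Proposition~\ref{dpp} and Lemma~\ref{quasiexch}, the mutation-friendly arguments amounting to careful tracking of which sets can be common to every vertex of a direct product.
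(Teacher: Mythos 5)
Your proposal is correct in outline but routes the first isomorphism differently from the paper. You reduce $\mathpzc{G}^{\mathcal{I}} \cong \square_{j}\, \mathpzc{G}^{\mathcal{D}^{\mathcal{I}}(j)}$ to Proposition~\ref{dpp} by first claiming that $\mathcal{I}$ is, up to equivalence, the iterated glued necklace of its relabeled decomposition pieces; the paper never establishes (or needs) that structural claim. Instead it works directly at the level of collections: using the mapping functions $f^j$ of the pieces $\mathcal{D*}^{\mathcal{I}}(j)$, it defines $\mathcal{M}(\mathcal{V}_1,\ldots,\mathcal{V}_{|\mathcal{D}^{\mathcal{I}}|})$ as the collection with $f^j(\mathcal{V}_j)=\mathcal{V}^{\mathcal{D*}^{\mathcal{I}}(j)}$ for all $j$, checks this is a bijection onto the vertices of $\mathpzc{G}^{\mathcal{I}}$, and observes that square moves act coordinatewise; Proposition~\ref{dpp} enters only in the second half (existence of $\mathcal{K}$), exactly as in your plan. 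The obstacle you flag in your route is real: the $\square$ operation glues only along the distinguished edge between $I_n$, $I_1$ and $J_1$, $J_2$, so realizing the decomposition complex as an iterated gluing requires pre-composing with rotation operations $R^i$ to move each shared chord into standard position, together with the observation that the dual graph of the pieces is a tree and no chord lies on three pieces (which holds because the pieces are the faces of a planar subdivision of the region bounded by $\zeta_{\mathcal{I}}$). If you supply that, your route buys a cleaner logical structure in which one gluing proposition does all the work; the paper's route buys brevity by never having to identify $\mathcal{I}$ itself as a glued necklace. Your treatment of interior-size additivity (via the remark $\sum_j i(\mathcal{D}^{\mathcal{I}}(j)) = i(\mathcal{I})$) and of the two mutation-friendliness claims matches the paper's, which is equally terse on those points.
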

\begin{proof}
For $1 \le j \le |\mathcal{D}^{\mathcal{I}}|$, we let $f^j$ be the mapping function for $\mathcal{D*}^{\mathcal{I}}(j)$ and $\mathcal{V}_j$ be in $\mathpzc{G}^{\mathcal{D}^{\mathcal{I}(j)}}$. Then we define $\mathcal{M}(\mathcal{V}_1, \mathcal{V}_2,\ldots,\mathcal{V}_{|\mathcal{D}^{\mathcal{I}}|})$ to be the maximal weakly separated collection in $\mathpzc{G}^{\mathcal{I}}$ such that $f^j(\mathcal{V}_j) = \mathcal{V}^{\mathcal{D*}^{\mathcal{I}}(j)}$ for $1 \le j \le |\mathcal{D}^{\mathcal{I}}|$.
Notice that 
\[\left\{\mathcal{M}(\mathcal{V}_1, \mathcal{V}_2,\ldots,\mathcal{V}_{|\mathcal{D}^{\mathcal{I}}|}) \mid \mathcal{V}_j \in \mathpzc{G}^{\mathcal{D}^{\mathcal{I}(j)}} \text{ for } 1 \le j \le |\mathcal{D}^{\mathcal{I}}| \right\} = \left\{\mathcal{V} \in \mathpzc{G}^{\mathcal{I}} \right\} .\]
It is clear that $\mathcal{M}(\mathcal{V}_1, \mathcal{V}_2,\ldots,\mathcal{V}_{|\mathcal{D}^{\mathcal{I}}|})$ can be mutated into $\mathcal{M}(\mathcal{W}_1, \mathcal{W}_2,\ldots,\mathcal{W}_{|\mathcal{D}^{\mathcal{I}}|})$ in one square move if and only if ${\mathcal{V}}_{j}$ can be mutated into ${\mathcal{W}}_j$ for some $1 \le j \le |\mathcal{D}^{\mathcal{I}}|$ and $\mathcal{V}_j = \mathcal{W}_j$ for all $j \neq |\mathcal{D}^{\mathcal{I}}|.$ This shows that $\mathpzc{G}^{\mathcal{I}} \cong \square_{j=1}^{|\mathcal{D}^{\mathcal{I}}|} \mathpzc{G}^{\mathcal{D}^{\mathcal{I}}(j)}.$

Given a collection of Grassmann necklaces $\mathcal{I}_1,\ldots,\mathcal{I}_i$, a repeated application of Proposition~$\ref{dpp}$ shows that there exists an isomorphic exchange graph with the appropriate interior size.

Notice that the mutation-friendly conditions hold since
\begin{enumerate}
\item{The decomposition set of a mutation-friendly Grassmann necklace consists of modified-like Grassmann necklaces over relabeled Grassmann necklaces that are mutation-friendly.}
\item{The glued Grassmann necklace of two mutation-friendly Grassmann necklaces is also mutation-friendly.}
\end{enumerate}
\end{proof}

This motivates the definition of a prime Grassmann necklace:
\begin{definition}
We call a Grassmann necklace \textbf{prime} if its decomposition set has only one element. We also call the corresponding exchange graph prime.
\end{definition}
The definition of the $\square$ operation on decorated permutations allows us to characterize the decorated permutations of prime Grassmann necklaces:
\begin{cor}
If a Grassmann necklace has decorated permutation $\pi$, it is not prime if there exists a cyclically considered interval $[i, j]$ such that $i \neq j$ and $|\pi([i,j]) \setminus [i,j]| = 1$.
\end{cor}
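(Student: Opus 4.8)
The plan is to show that the hypothesis forces a genuine quasi-adjacency chord into the decomposition complex of $\mathcal{I}$, and hence that its decomposition set has at least two elements, so $\mathcal{I}$ is not prime. Write $A = [i,j]$ for the cyclic interval and $B = \pi(A)$. Since $\pi$ is a bijection and $|A| = |B|$, the hypothesis $|B \setminus A| = 1$ is equivalent to $|A \setminus B| = 1$; set $A \setminus B = \{x\}$ and $B \setminus A = \{y\}$, and note $x \in A$, $y \notin A$, so in particular $x \neq y$. I may assume the interval is \emph{proper}, i.e. $2 \le |A| \le n-2$ (equivalently both $A$ and its complement have at least two elements): the condition $i \neq j$ gives $|A| \ge 2$, and the remaining case $|A| = n-1$ is degenerate, since there the induced quasi-adjacency only reproduces an existing boundary edge of $\zeta_{\mathcal{I}}$ and therefore signals nothing about primality.

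The key step is to compare the two necklace elements $I_i$ and $I_{j+1}$ bracketing the interval $A$ and to show they are quasi-adjacent. First I would use the formula $I_a = \{v : v <_a \pi^{-1}(v)\}$ coming from the bijection between Grassmann necklaces and decorated permutations. Passing from the order $<_i$ to the order $<_{j+1}$ moves exactly the block $A = \{i,\dots,j\}$ from the front to the back of the linear order, so for a pair $(v,\pi^{-1}(v))$ the relative order is reversed precisely when exactly one of $v,\pi^{-1}(v)$ lies in $A$. Tracking the two cross cases, $v \in A\setminus B$ gives $v \in I_i \setminus I_{j+1}$ and $v \in B \setminus A$ gives $v \in I_{j+1}\setminus I_i$, while all other $v$ keep their membership; hence
\[ I_{j+1} = (I_i \setminus \{x\}) \cup \{y\}, \qquad |I_i \cap I_{j+1}| = k-1. \]
Thus $I_i$ and $I_{j+1}$ are quasi-adjacent. (The same conclusion follows from the recurrence $I_{a+1} = (I_a \setminus \{a\}) \cup \{\pi(a)\}$, whose net effect over $a \in A$ is to delete $A\setminus B$ and insert $B\setminus A$; the order-based computation is cleaner because it avoids the bookkeeping of elements that are deleted and reinserted mid-process.)

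Next, since $2 \le |A| \le n-2$, the indices $i$ and $j+1$ differ by at least $2$ in both cyclic directions, so $I_i$ and $I_{j+1}$ are non-consecutive necklace elements (and distinct, as $x \neq y$). Therefore the quasi-adjacency edge joining them is a genuine chord of the prong-closed polygon $\zeta_{\mathcal{I}}$ in the decomposition complex: it connects two non-adjacent boundary vertices and so subdivides the single $n$-cycle into at least two cells. Consequently the decomposition set $\mathcal{D*}^{\mathcal{I}}$ has at least two elements, and by definition $\mathcal{I}$ is not prime. Making the splitting explicit, the two arcs of $\zeta_{\mathcal{I}}$ cut off by the chord close up to modified Grassmann-like necklaces in the sense of Definition~\ref{modgras}, exhibiting $\mathcal{I}$ as a nontrivial gluing $\mathcal{I}_1 \square \mathcal{I}_2$ in the sense of Proposition~\ref{dpp} and Lemma~\ref{dirproduct}.

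The step I expect to demand the most care is the passage from ``there is a quasi-adjacency chord'' to ``the decomposition set has at least two elements.'' This rests on the planar structure of the decomposition complex — that the quasi-adjacency edges embed as non-crossing chords of $\zeta_{\mathcal{I}}$ and that a chord between non-adjacent vertices genuinely separates the polygon — together with checking that each of the two resulting arcs satisfies the prong-closed and no-short-chord conditions of Definition~\ref{modgras}, so that it is counted as a cell. The degenerate interval of size $n-1$ must be excluded explicitly, since otherwise the literal statement would (incorrectly) declare prime necklaces such as the one with decorated permutation $34512$ to be non-prime.
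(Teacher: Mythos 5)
The paper offers no argument for this corollary at all: it is stated bare, with only the preceding remark that it ``follows from'' the definition of the glued permutation $\square$. Your proposal therefore cannot be matched against an official proof, but on its own terms it is correct and supplies exactly the missing content. The central computation is right: since $v \in I_a$ precisely when $v <_a \pi^{-1}(v)$, and passing from $<_i$ to $<_{j+1}$ reverses the relative order of a pair exactly when one member lies in $A=[i,j]$ and the other does not, one gets $I_i \setminus I_{j+1} = A\setminus\pi(A)$ and $I_{j+1}\setminus I_i = \pi(A)\setminus A$, so the hypothesis forces $|I_i \cap I_{j+1}| = k-1$ and hence a quasi-adjacency chord between the non-consecutive boundary vertices $I_i$ and $I_{j+1}$ of the decomposition complex; by the definition of the decomposition set this splits $\zeta_{\mathcal{I}}$ into at least two cells, which is exactly the (non-)primality criterion. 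This is a genuinely different route from the one the paper gestures at — the paper's intended argument is presumably to invert the displayed formula for the decorated permutation of $\mathcal{I}\square\mathcal{J}$ and recognize $\pi$ as $\pi_1 \square \pi_2$, whereas you work directly on the necklace elements. Your route has the advantage of producing the separating chord explicitly and of not requiring one to check that the two blocks of $\pi$ individually define connected decorated permutations.

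Your flagged caveat is not pedantry but a genuine correction: as literally stated the corollary is false. For $\pi = 34512$ the interval $[1,4]$ satisfies $i\neq j$ and $|\pi([1,4])\setminus[1,4]| = |\{5\}| = 1$, yet Table~\ref{fullchar} lists $34512$ among the prime Grassmann necklaces; the point, as you say, is that when $|[i,j]| = n-1$ the pair $(I_i, I_{j+1})$ is a consecutive pair of necklace elements, whose quasi-adjacency is automatic and yields only a boundary edge of $\zeta_{\mathcal{I}}$, not a chord. So the hypothesis must be strengthened to $2 \le |[i,j]| \le n-2$ (equivalently, both the interval and its complement contain at least two elements), which is what your proof actually establishes. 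The only step you lean on without full justification is that a single chord between non-consecutive boundary vertices forces the decomposition set to have at least two elements; this does depend on the paper's implicit claim that the quasi-adjacency edges embed as non-crossing chords so that the complex is planar, but you identify that dependence explicitly, and it is no more than the paper itself assumes in defining $\mathcal{D*}^{\mathcal{I}}$.
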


\subsection{Adjacency Graphs and Clusters}
We define and prove results relating to adjacency.

We define the adjacency graph as follows:
\begin{definition}
Let $\mathcal{V}$ be a maximal weakly separated collection over a Grassmann necklace $\mathcal{I}$. Let $\mathcal{W}$ be a weakly separated collection contained in $\mathcal{V}$ such that $|\mathcal{W} \cap \mathcal{I}| = 0$. Then, we define the adjacency graph $\mathpzc{AG}^{\mathcal{V}}(\mathcal{W})$ as follows:
\begin{itemize}
\item{The vertex set of $\mathpzc{AG}^{\mathcal{V}}(\mathcal{W})$ is $\mathcal{W}$.}
\item{The vertices $V_1$ and $V_2$ are connected by an edge if and only if they are adjacent in $\mathcal{V}$.}
\end{itemize}
\end{definition}
\begin{figure}
\caption{}
\label{agex}
\includegraphics[scale=1]{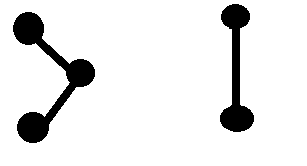}
\end{figure}
\begin{ex}
\label{agextext}
Let $\mathcal{V}$ be defined as in Example~$\ref{mainexth}$. We define the weakly separated collection $\mathcal{W} \subset \mathcal{V}$ to be
\[\mathcal{W} = \left\{\left\{1, 3, 4, 5\right\}, \left\{1, 3, 4, 6\right\}, \left\{3, 4, 6, 7\right\}, \left\{1, 6, 7, 9\right\},\left\{1, 6, 8, 9\right\} \right\}. \]
Then $\mathpzc{AG}^{\mathcal{V}}(\mathcal{W})$ is shown in Figure~$\ref{agex}$.
\end{ex}
In the case that $\mathcal{W} = \mathcal{V} \setminus \mathcal{I}$, we denote the adjacency graph as $\mathpzc{AG}^{\mathcal{V}}$.

We prove the following proposition regarding the adjacency graphs of prime Grassmann necklaces.
\begin{prop}
For a given Grassmann necklace $\mathcal{I}$, the adjacency graph $\mathpzc{AG}^{\mathcal{V}}$ is either connected for all $\mathcal{V} \in \mathpzc{G}^{\mathcal{I}}$ or not connected for any $\mathcal{V} \in \mathpzc{G}^{\mathcal{I}}$. In particular, $\mathpzc{AG}^{\mathcal{V}}$ is connected if and only if $\mathcal{I}$ is a prime Grassmann necklace.
\end{prop}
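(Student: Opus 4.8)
The plan is to reduce the whole statement to the case of a prime Grassmann necklace by exploiting the block decomposition $\mathcal{D*}^{\mathcal{I}}$ together with Lemma~\ref{dirproduct}, and then to handle the prime case directly. First I would record how $\mathpzc{AG}^{\mathcal{V}}$ interacts with this decomposition. By Lemma~\ref{dirproduct} the isomorphism $\mathpzc{G}^{\mathcal{I}} \cong \square_{j=1}^{|\mathcal{D}^{\mathcal{I}}|} \mathpzc{G}^{\mathcal{D}^{\mathcal{I}}(j)}$ is realized by gluing: each $\mathcal{V} \in \mathpzc{G}^{\mathcal{I}}$ is assembled from its enclosed pieces $\mathcal{V}_1, \ldots, \mathcal{V}_{|\mathcal{D}^{\mathcal{I}}|}$, where $\mathcal{V}_j$ is the collection enclosed by the block $\mathcal{D*}^{\mathcal{I}}(j)$, and the pieces are joined only along the chords of the decomposition complex, all of which are edges between Grassmann-necklace sets. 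The key observation is that every interior set of $\mathcal{V}$ lies strictly inside exactly one block region, so any edge of the plabic tiling joining two interior sets stays within a single block. Hence the strong-adjacency edges split blockwise and, up to the relabeling performed by the mapping function, $\mathpzc{AG}^{\mathcal{V}} = \bigsqcup_{j} \mathpzc{AG}^{\mathcal{V}_j}$. Since $\mathcal{D*}^{\mathcal{I}}$ depends only on $\mathcal{I}$, this decomposition is uniform in $\mathcal{V}$, so once each factor is shown connected the connectivity of $\mathpzc{AG}^{\mathcal{V}}$ will depend only on the number of blocks; this yields the first assertion.

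Next I would rule out degenerate factors. A block contributes no vertices to $\mathpzc{AG}^{\mathcal{V}}$ only if its region is a single black or white clique face, i.e. interior size $0$. I would exclude this using the defining conditions of a modified Grassmann-like necklace: condition (2) of Definition~\ref{modgras} forbids three consecutive boundary sets whose successive differences are single cyclically ordered elements, which is exactly what a clique face forces, so every block has interior size at least $1$ and supplies at least one vertex. Consequently, if $\mathcal{I}$ is not prime, then $|\mathcal{D}^{\mathcal{I}}| \ge 2$ and $\mathpzc{AG}^{\mathcal{V}}$ is a disjoint union of at least two nonempty graphs, hence disconnected. This proves the ``only if'' direction for every $\mathcal{V}$.

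It remains to prove that $\mathpzc{AG}^{\mathcal{V}}$ is connected when $\mathcal{I}$ is prime, and I expect this to be the main obstacle. My primary approach is a separation argument: assuming the interior sets of $\mathcal{V}$ partition into nonempty classes $A$ and $B$ with no strong-adjacency edge between them, I would trace a separating path in the $1$-skeleton of the plabic tiling, built from edges that do not border one black and one white face, and show that it meets the boundary curve $\zeta_{\mathcal{I}}$ so that its two ends are non-consecutive Grassmann-necklace sets that are quasi-adjacent. Such a pair is a chord of the decomposition complex separating $A$ from $B$, forcing $|\mathcal{D}^{\mathcal{I}}| \ge 2$ and contradicting primality. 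If the bookkeeping of this separating path proves too delicate, I would instead induct on the interior size using the connectivity of $\mathpzc{G}^{\mathcal{I}}$ (Oh, Postnikov, and Speyer): the base case of interior size $1$ is a single vertex, and I would check that one mutation---which swaps the two diagonals of a mutatable set and toggles the colors of its four surrounding faces---preserves connectivity of $\mathpzc{AG}^{\mathcal{V}}$, since the four neighbors of the mutated set remain mutually linked through the unchanged surrounding tiling. Combining the prime case with the blockwise decomposition gives the equivalence ``$\mathpzc{AG}^{\mathcal{V}}$ connected $\iff \mathcal{I}$ prime'' and, with it, the independence of the answer from the choice of $\mathcal{V}$.
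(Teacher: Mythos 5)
Your proposal takes essentially the same route as the paper: split $\mathpzc{AG}^{\mathcal{V}}$ into the pieces carved out by the blocks $\mathcal{D*}^{\mathcal{I}}(j)$ of the decomposition set, check that these pieces are vertex-disjoint and that no strong-adjacency edge joins two different blocks, and conclude that $\mathpzc{AG}^{\mathcal{V}}$ is connected exactly when $|\mathcal{D}^{\mathcal{I}}| = 1$, i.e.\ when $\mathcal{I}$ is prime; your extra check that every block contributes at least one interior vertex (via condition (2) of Definition~\ref{modgras}) is a point the paper leaves implicit but needs for the ``only if'' direction. Where you diverge is that you try to actually establish the per-block connectivity, which the paper simply asserts (``we know that $\mathpzc{G}_j$ is connected''). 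Your primary separation argument is plausible but only sketched, and your fallback is flawed as stated: mutations preserve interior size, so an ``induction on interior size using the connectivity of $\mathpzc{G}^{\mathcal{I}}$'' does not parse --- showing that one mutation preserves connectivity of $\mathpzc{AG}^{\mathcal{V}}$ would only prove that connectivity is independent of the choice of $\mathcal{V}$ (the first assertion of the proposition), and you would still need one explicit base collection $\mathcal{V}$ with $\mathpzc{AG}^{\mathcal{V}}$ connected. Note also that a mutation flips the colors of the four faces around the mutated set, so strong adjacency between two of its four neighbors can be destroyed; they stay linked only through the central interior vertex, not ``through the unchanged surrounding tiling,'' and any path of $\mathpzc{AG}^{\mathcal{V}}$ using such an edge must be rerouted through that hub.
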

\begin{proof}
For $1 \le j \le |\mathcal{D}^{\mathcal{I}}|$, we let 
\[\mathpzc{G}_j = \mathpzc{AG}^{\mathcal{V}}(\mathcal{V}^{\mathcal{D*}^{\mathcal{I}(j)}} \setminus \mathcal{D*}^{\mathcal{I}}(j)).\]

We will show that 
\[\mathpzc{AG}^{\mathpzc{V}} = \cup_{j=1}^{|\mathcal{D}^{\mathcal{I}}|} \mathpzc{G}_j.\]
First, notice that for $j_1 \neq j_2$, we have that 
\[(\mathcal{V}^{\mathcal{D*}^{\mathcal{I}(j_1)}} \setminus \mathcal{D*}^{\mathcal{I}}(j_1)) \cap(\mathcal{V}^{\mathcal{D*}^{\mathcal{I}(j_2)}} \setminus \mathcal{D*}^{\mathcal{I}}(j_2)) = \emptyset,\] so $\mathpzc{G}_{j_1}$ and $\mathpzc{G}_{j_2}$ do not share any of the same vertices. Also, notice that $\mathpzc{AG}^V$ and $\cup_{j=1}^{|\mathcal{D}^{\mathcal{I}}|} \mathpzc{G}_j$ both consist of the sets in the weakly separated collection $\mathpzc{V} \setminus \mathcal{I}$. Furthermore, notice that given sets $V_1 \in {\mathcal{V}}^{\mathcal{D*}^{\mathcal{I}}(j_1)} \setminus \mathcal{D*}^{\mathcal{I}}(j_1)$ and $V_2 \in {\mathcal{V}}^{\mathcal{D*}^{\mathcal{I}}(j_2)} \setminus \mathcal{D*}^{\mathcal{I}}(j_2)$ where $j_1 \neq j_2$, we have that $V_1$ is not adjacent to $V_2$ in $V$, so there are no edges between vertices in $\mathpzc{G}_{j_1}$ and $\mathpzc{G}_{j_2}$ for $j_1 \neq j_2$.

Also, we know that $\mathpzc{G}_j$ is connected for all $1 \le j \le |\mathcal{D}^{\mathcal{I}}|.$ This means that $\mathpzc{AG}^V$ is connected if and only if $|\mathcal{D}^{\mathcal{I}}| = 1$, a condition independent of $\mathcal{V}$ and equivalent to the prime condition on $\mathpzc{I}$.
\end{proof}

We define an adjacency grouping using the notion of an adjacency graph.
\begin{definition}
Let $\mathcal{V}$ be a maximal weakly separated collection over a Grassmann necklace $\mathcal{I}$. Let $\mathcal{W}$ be a weakly separated collection contained in $\mathcal{V}$ such that $|\mathcal{W} \cap \mathcal{I}| = 0$. Then, we define the $\mathbf{adjacency}$ $\mathbf{grouping}$ ${\mathpzc{AC}}_{\mathcal{V}}(W)$ to be the partition of $W$ generated by the connected components of $\mathpzc{AG}^V(W)$.
\end{definition}
Suppose that ${\mathpzc{AC}}_{\mathcal{V}}(W)$ is a set of $i$ weakly separated collections. We denote these collections by ${\mathpzc{AC}}_{\mathcal{V}}(\mathcal{W})(j)$ for $1 \le j \le i$.
\begin{ex}
We define $\mathcal{V}$ and $\mathcal{W}$ as in Example~$\ref{agextext}.$ Then we have the following:
\begin{align*}
{\mathpzc{AC}}_{\mathcal{V}}(\mathcal{W})(1) &= \left\{\left\{2, 3, 4, 6\right\}, \left\{2, 4, 5, 6\right\}, \left\{2, 3, 4, 7\right\}\right\} \\
{\mathpzc{AC}}_{\mathcal{V}}(\mathcal{W})(2) &= \left\{\left\{2, 5, 6, 7\right\},\left\{1, 5, 6, 7\right\} \right\}
\end{align*}
\end{ex}

We define the notion of a weakly separated collection being connected in a maximal weakly separated collection.
\begin{definition}
Let $\mathcal{V}$ be a maximal weakly separated collection over a Grassmann necklace $\mathcal{I}$. Let $\mathcal{W}$ be a weakly separated collection contained in $\mathcal{V}$ such that $|\mathcal{W} \cap \mathcal{I}| = 0$. Suppose that ${\mathpzc{AC}}_{\mathcal{V}}(\mathcal{W})$ has only one element. Then, we call the weakly separated collection $\mathcal{W}$ $\mathbf{connected}$ $\mathbf{in}$ $\mathcal{V}$. Suppose also that $\mathpzc{G}^{\mathcal{I}}(\mathcal{V} \setminus \mathcal{W})$ is applicable. Then, we call the weakly separated collection $\mathcal{W}$ $\mathbf{very-connected}$ $\mathbf{in}$ $\mathcal{V}$.
\end{definition}
Let $\mathcal{W}$ be a weakly separated collection over $\mathcal{I}$ and let $\mathcal{V}_1, \mathcal{V}_2$ be in $\mathpzc{G}^{\mathcal{I}}(\mathcal{W})$. Notice that if $\mathcal{V}_1 \setminus \mathcal{W}$ is very-connected in $\mathcal{V}_1$, then $\mathcal{V}_2 \setminus \mathcal{W}$ must be very-connected in $\mathcal{V}_2$. We say that $\mathcal{W}$ is \textbf{reverse-very-connected} over $\mathcal{I}$ if $\mathcal{V} \setminus {W}$ is very-connected for any $\mathcal{V} \in \mathpzc{G}^{\mathcal{I}}(\mathcal{W})$.
 
We define an adjacency collection:
\begin{definition}
Let $\mathcal{V}$ be a maximal weakly separated collection over a Grassmann necklace $\mathcal{I}$. Let $V_1$ be a set in $\mathcal{V}$.
We call the \textbf{adjacency collection} ${\mathcal{A}}^{\mathcal{V}}(V_1)$ be the collection of all elements in $\mathcal{V}$ adjacent to $V_1$.
\end{definition}
We use the notion of an adjacency collection to define an adjacency cluster:
\begin{definition}
Let $V$ be a maximal weakly separated collection over a Grassmann necklace $\mathcal{I}$. Let $\mathcal{W}$ be a weakly separated collection contained in $\mathcal{V}$ such that $|\mathcal{W} \cap \mathcal{I}| = 0$. Then, we define the $\mathbf{adjacency}$ $\mathbf{cluster}$ of $\mathcal{W}$ to be
\[\mathcal{A}^{\mathcal{V}} (\mathcal{W}) = \mathcal{W} \cup_{W \in \mathcal{W}} \mathcal{A}^{\mathcal{V}}(W). \]
\end{definition}
\begin{ex}
\label{adc}
We define $\mathcal{V}$ as in Example~$\ref{mainexth}$. We consider the following two weakly separated collections contained in $\mathcal{V}$: 
\begin{align*}
\mathcal{W}_1 &= \left\{\left\{1, 5, 6, 7 \right\}, \left\{1, 6, 7, 9 \right\}, \left\{1, 2, 6, 9 \right\}, \left\{1, 4, 6, 7 \right\} \right\} \\
\mathcal{W}_2 &= \left\{\left\{1, 5, 6, 7 \right\}, \left\{1, 6, 7, 9 \right\}, \left\{1, 2, 6, 9 \right\}, \left\{1, 4, 6, 7 \right\}, \left\{1, 2, 6, 7 \right\} \right\}. \\
\end{align*} 
Then $\mathcal{A}^{\mathcal{V}}(\mathcal{W}_1)$ is equivalent to the weakly separated collection $\mathcal{W}$ in Example~$\ref{intred2}$ (with repeated subsets deleted so that $\mathcal{A}^{\mathcal{V}}(\mathcal{W}_1)$ contains at most one of each set) and $\mathcal{A}^{\mathcal{V}}(\mathcal{W}_2)$ is equivalent to the weakly separated collection $\mathcal{W}$ in Example~$\ref{intred1}$. 
\end{ex}

\subsection{Isomorphism in the case of Very-Connected Weakly Separated Collections}
We consider Theorem~$\ref{link}$ in the case where $\mathcal{C}$ is reverse-very-connected over $\mathcal{I}$. We start with the following lemma:
\begin{lemma}
\label{adjcluqua}
Let $\mathcal{V}$ be a maximal weakly separated collection over a Grassmann necklace $\mathcal{I}$. Let $\mathcal{W}$ be a very-connected weakly separated collection in $\mathcal{V}$. Then there exists a unique interior-reduced plabic tiling consisting exactly of the sets in $\mathcal{A}^{\mathcal{V}}(\mathcal{W})$ (though there might be repeated sets on the boundary curve) and with interior sets exactly equal to $\mathcal{W}$. In fact, this interior-reduced plabic tiling must be $\mathpzc{PT}_{\mathcal{V}}(\mathcal{A}^{\mathcal{V}}(\mathcal{W})).$
\end{lemma}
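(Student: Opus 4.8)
The plan is to realize the desired tiling as the sub-plabic-tiling of $\mathcal{V}$ cut out by a natural boundary curve around $\mathcal{W}$, and then to verify the two defining conditions of an interior-reduced plabic tiling. Concretely, I would take the boundary sequence $\mathcal{I}^{*}$ to consist of the sets of $\mathcal{A}^{\mathcal{V}}(\mathcal{W}) \setminus \mathcal{W}$, listed in the cyclic order in which they occur as one travels around the connected region $\mathcal{R}$ occupied by $\mathcal{W}$ in the plabic tiling of $\mathcal{V}$, so that consecutive sets are quasi-adjacent and the resulting polyline $\zeta_{\mathcal{I}^{*}}$ encircles $\mathcal{W}$; the interior sets are then declared to be $\mathcal{W}$. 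I would first check that this is self-consistent. Each $W \in \mathcal{W}$ lies strictly inside $\zeta_{\mathcal{I}^{*}}$ because every adjacency-neighbor of $W$ belongs to $\mathcal{A}^{\mathcal{V}}(\mathcal{W})$ and hence lies on or inside the curve, so $W$ is an interior vertex; conversely, a vertex that is only quasi-adjacent to $\mathcal{W}$ but not adjacent to it (for instance a far vertex of a clique-face meeting $\mathcal{W}$) is not in $\mathcal{A}^{\mathcal{V}}(\mathcal{W})$ and is separated to the outside as the curve cuts across that clique. Thus the sets lying on and within $\zeta_{\mathcal{I}^{*}}$ are exactly $\mathcal{A}^{\mathcal{V}}(\mathcal{W})$, with interior exactly $\mathcal{W}$.

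The crux, and the step I expect to be the main obstacle, is showing that $\zeta_{\mathcal{I}^{*}}$ is prong-closed in the sense of Definition~\ref{prongclosed}. I would argue as follows. Since $\mathcal{W}$ is connected in $\mathcal{V}$, i.e. $\mathpzc{AG}^{\mathcal{V}}(\mathcal{W})$ is connected, the region $\mathcal{R}$ is connected and $\zeta_{\mathcal{I}^{*}} = \partial\mathcal{R}$ is a single closed curve. The only way it can fail to be simple is by revisiting a boundary set $P$, and I would show that each such self-touching produces exactly the prong $\zeta^{P}$ allowed by Definition~\ref{prongclosed}: the two arcs of $\partial\mathcal{R}$ through $P$ assemble into the simple closed curve $\zeta^{F1} \cup \zeta^{F2}$, while the pinched-off portion is a non-self-intersecting arc from $P$ to an interior set $Q$ (this is precisely the repeated boundary set anticipated in the statement). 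The applicability of $\mathpzc{G}^{\mathcal{I}}(\mathcal{V} \setminus \mathcal{W})$ is essential here: it guarantees that every set of $\mathcal{V} \setminus \mathcal{W}$ is joined through $\mathcal{V} \setminus \mathcal{W}$ by quasi-adjacencies to the Grassmann necklace $\mathcal{I}$, forcing the complement of $\mathcal{R}$ to be connected and to reach the outer boundary. This prevents $\zeta_{\mathcal{I}^{*}}$ from enclosing stray sets of $\mathcal{V} \setminus \mathcal{W}$ and rules out two independent pinch points, so the curve is genuinely the boundary of a disk carrying at most one prong.

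With prong-closedness established, I would verify the remaining condition: no face of the restricted tiling contains two boundary sets $I^{*}_i, I^{*}_j$ with $|i-j| > 1$. If such a face existed, the two sets would be quasi-adjacent, yielding a chord across the interior of $\mathcal{R}$ that separates $\mathcal{W}$; this contradicts either the connectivity of $\mathpzc{AG}^{\mathcal{V}}(\mathcal{W})$ or the fact that $I^{*}_i$ and $I^{*}_j$ are nonconsecutive on the outermost link $\zeta_{\mathcal{I}^{*}}$. Together with the previous step, this shows the restriction is a bona fide interior-reduced plabic tiling, which by construction is $\mathpzc{PT}_{\mathcal{V}}(\mathcal{A}^{\mathcal{V}}(\mathcal{W}))$ with interior exactly $\mathcal{W}$.

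Finally, for uniqueness, I would observe that an interior-reduced plabic tiling is completely determined, as a subcomplex of the plabic tiling of $\mathcal{V}$, by its constituent sets together with the partition into boundary and interior sets. Any interior-reduced plabic tiling whose set of sets is exactly $\mathcal{A}^{\mathcal{V}}(\mathcal{W})$ and whose interior is exactly $\mathcal{W}$ must therefore have boundary sequence $\mathcal{A}^{\mathcal{V}}(\mathcal{W}) \setminus \mathcal{W}$ in the cyclic order induced by $\mathcal{V}$, and hence coincides with the tiling constructed above, namely $\mathpzc{PT}_{\mathcal{V}}(\mathcal{A}^{\mathcal{V}}(\mathcal{W}))$.
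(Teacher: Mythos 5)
Your proposal is correct in outline but takes a genuinely different route from the paper's. The paper proves the lemma by induction on $|\mathcal{W}|$: it introduces a \emph{reduction} $\mathcal{R}(\mathcal{W})$ (a connected subcollection of $\mathcal{W}$ with one fewer element), assumes the result for $\mathcal{A}^{\mathcal{V}}(\mathcal{R}(\mathcal{W}))$, and then argues that adjoining the single new set $W = \mathcal{W}\setminus\mathcal{R}(\mathcal{W})$ together with $\mathcal{A}^{\mathcal{V}}(W)$, while moving $W$ into the interior, preserves the interior-reduced structure; the applicable condition is invoked exactly once per step to rule out ``entrapping'' a boundary set (Remark~\ref{appcondreq}). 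You instead build the entire boundary sequence at once, as $\mathcal{A}^{\mathcal{V}}(\mathcal{W})\setminus\mathcal{W}$ in the cyclic order induced by the tiling of $\mathcal{V}$, and verify the two defining conditions globally. Both arguments rest on the same two pillars --- connectivity of $\mathpzc{AG}^{\mathcal{V}}(\mathcal{W})$ and applicability of $\mathpzc{G}^{\mathcal{I}}(\mathcal{V}\setminus\mathcal{W})$ --- and your identification of where each is needed matches the paper's. The trade-off is this: your global argument is shorter and avoids the auxiliary notion of a reduction, but the prong-closedness verification (which you rightly flag as the crux) is asserted rather than controlled; in particular, the claim that applicability excludes two independent pinch points, so that at most one prong occurs, is stated without a mechanism. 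The paper's one-set-at-a-time induction localizes precisely this difficulty: at each step one only has to check that a single local attachment of $\mathcal{A}^{\mathcal{V}}(W)$ does not disconnect any boundary set other than $W$ from the rest of the boundary. Neither write-up fully derives the ``at most one prong'' statement, but the inductive framing gives a cleaner handle on it, and if you pursue your global version you should supply that argument explicitly. Your uniqueness argument is essentially the paper's, which dismisses it as easy.
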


We use the following definition in the proof of Lemma~$\ref{adjcluqua}$.
\begin{definition}
Let $\mathcal{V}$ be a maximal weakly separated collection, and let $\mathcal{W} \subset \mathcal{V}$ be very-connected in $\mathcal{V}$. Consider the adjacency graph $\mathpzc{AG}^{\mathcal{V}}(\mathcal{W})$. A \textbf{reduction} $\mathcal{R}(\mathcal{W})$ is a connected weakly separated collection of $\mathcal{V}$ such that $\mathcal{R}(\mathcal{W}) \subset \mathcal{W}$ and $|\mathcal{R}(\mathcal{W})| = |\mathcal{W}| - 1.$
\end{definition}
\begin{remark}
Notice that a reduction always exists.
\end{remark}
\begin{figure}
\caption{}
\label{nicex}
\includegraphics[scale=0.7]{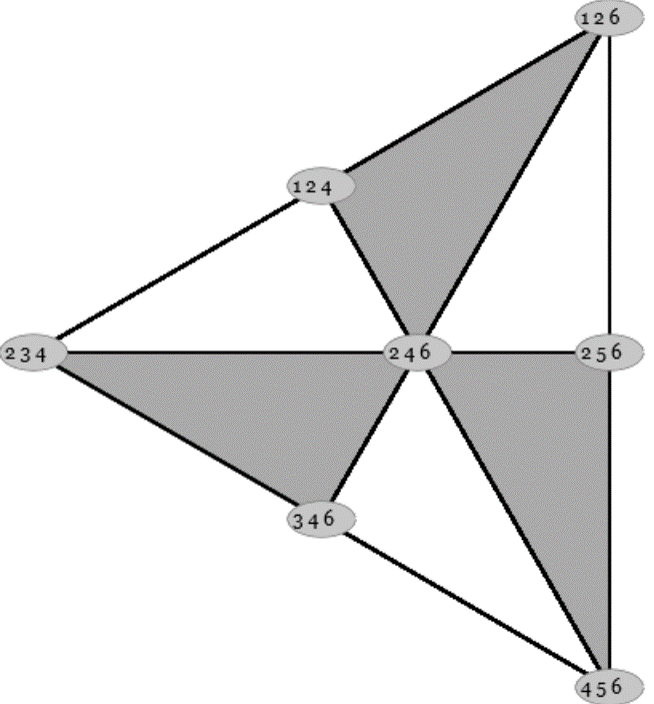}
\end{figure}

We use the notion of a reduction to prove Lemma~$\ref{adjcluqua}$.
\begin{proof}[Proof of Lemma~$\ref{adjcluqua}$]
The uniqueness of such an interior-reduced plabic tiling is easy to see. To prove that $\mathpzc{PT}_{\mathcal{V}}(\mathcal{A}^{\mathcal{V}}(\mathcal{W}))$ is an interior-reduced plabic tiling and has interior sets equal to $\mathcal{W}$, we induct on the size of $|\mathcal{W}|$. The base case is $|\mathcal{W}| = 1$. Notice that $\mathpzc{PT}_{\mathcal{V}}(\mathcal{A}^{\mathcal{V}}(\mathcal{W}))$ looks like a polygon with $2a$ boundary sets and one set in the interior for some $a \ge 2$.
This is an interior-reduced plabic tiling over a relabeled Grassmann necklace with associated permutation in the equivalence class of the permutation below:
$\pi(i) = i+2$ for even $i$ and $\pi(i) = i-2$ for odd $i$, shifted mod $a$ as necessary.

We consider an example.
\begin{ex}
Let $a=3$. Then Figure~$\ref{nicex}$ shows the plabic tiling of the relabeled Grassmann necklace (with associated decorated permutation $\pi = 365412.$
\end{ex} 

For all $\mathcal{A}^{\mathcal{V}}(\mathcal{W})$ such that $|\mathcal{W}| = i$ and $\mathcal{W}$ is any very-connected weakly separated collection in a maximal weakly separated collection $V$, we assume that $\mathpzc{PT}_{\mathcal{V}}(\mathcal{A}^{\mathcal{V}}(\mathcal{W}))$ is an interior-reduced plabic tiling and has interior sets equal to $\mathcal{W}$. We will show that for every such $\mathcal{A}^{\mathcal{V}}(\mathcal{W})$ for $|\mathcal{W}| = i+1$, we have that $\mathpzc{PT}_{\mathcal{V}}(\mathcal{A}^{\mathcal{V}}(\mathcal{W}))$ is an interior-reduced plabic tiling and has interior sets equal to $\mathcal{W}$. By the induction hypothesis, we know that $\mathpzc{PT}_{\mathcal{V}}(\mathcal{A}^{\mathcal{V}}(\mathcal{R}(\mathcal{W})))$ is an interior-reduced plabic tiling and has interior sets equal to $\mathcal{R}(\mathcal{W})$. We know that the set $W = \mathcal{W} \setminus \mathcal{R}(\mathcal{W})$ must be one of the sets on the boundary curve of $\mathpzc{PT}_{\mathcal{V}}(\mathcal{A}^{\mathcal{V}}(\mathcal{R}(\mathcal{W})))$. Notice that
\[\mathcal{A}^{\mathcal{V}}(\mathcal{W}) = \mathcal{A}^{\mathcal{V}}(\mathcal{R}(\mathcal{W})) \cup \mathcal{A}^{\mathcal{V}}(W). \]
We add the sets in $\mathcal{A}^{\mathcal{V}}(W)$ to $\mathpzc{PT}_{\mathcal{V}}(\mathcal{A}^{\mathcal{V}}(\mathcal{R}(W)))$ and add the set $V$ to the interior of $\mathpzc{PT}_{\mathcal{V}}(\mathcal{A}^{\mathcal{V}}(\mathcal{R}(W)))$ to obtain $\mathpzc{PT}_{\mathcal{V}}(\mathcal{A}^{\mathcal{V}}(\mathcal{W}))$. 

\begin{figure}
\caption{}
\label{proof1}
\includegraphics[scale=0.7]{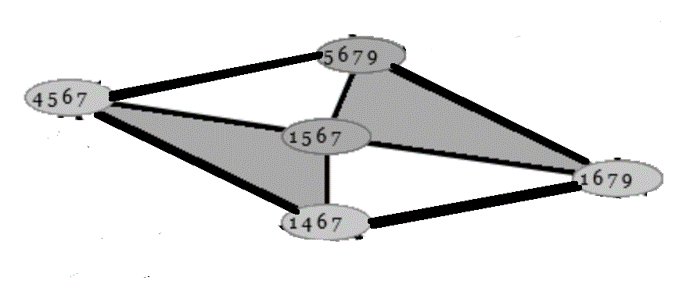}
\end{figure}
\begin{figure}
\caption{}
\label{proof2}
\includegraphics[scale=0.7]{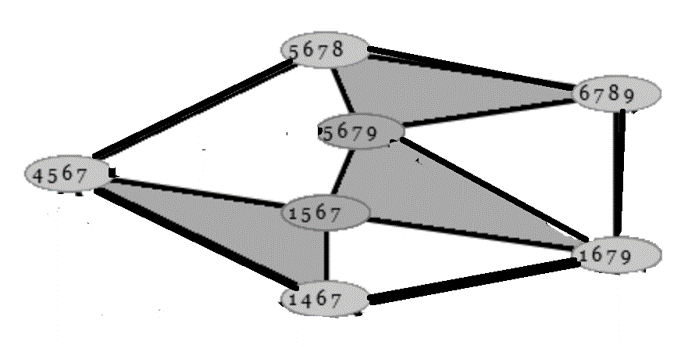}
\end{figure}
We consider two examples of this. 
\begin{ex}
Let $\mathcal{V}$ be defined as in Example~$\ref{adc}$. The plabic tiling of $\mathcal{V}$ is shown in Figure~$\ref{appl}$. Consider $\mathcal{R}(\mathcal{W}) = \left\{\left\{1, 5, 6, 7\right\}\right\}$ and $V = \left\{5, 6, 7, 9\right\}$. Notice that $$\mathcal{A}^{\mathcal{V}}(\mathcal{R}(\mathcal{W}) = \left\{\left\{1, 5, 6, 7\right\}, \left\{1, 5, 6, 7\right\}, \left\{1, 5, 6, 7\right\}, \left\{1, 5, 6, 7\right\}, \left\{1, 5, 6, 7\right\} \right\}.$$ Figure~$\ref{proof1}$ shows the interior-reduced plabic tiling $\mathpzc{PT}_{\mathcal{V}}(\mathcal{A}^{\mathcal{V}}(\mathcal{R}(\mathcal{W})))$. Notice that
$$\mathcal{A}^{\mathcal{V}}(V) = \left\{\left\{5, 6, 7, 8\right\}, \left\{1, 5, 6, 7\right\}, \left\{6, 7, 8, 9\right\}, \left\{1, 6, 7, 9\right\}, \left\{1, 5, 6, 7\right\} \right\}.$$ Adding the sets in $\mathcal{A}^{\mathcal{V}}(V)$ results in the desired interior-reduced plabic tiling $\mathpzc{PT}_{\mathcal{V}}(\mathcal{A}^{\mathcal{V}}(\mathcal{W}))$ as shown in Figure~$\ref{proof2}$.
\end{ex}

\begin{ex}
Let $\mathcal{V}$, $\mathcal{W}_1$, $\mathcal{W}_2$, $\mathcal{A}^{\mathcal{V}}(\mathcal{W}_1)$, and $\mathcal{A}^{\mathcal{V}}(\mathcal{W}_2)$ be defined as in Example~$\ref{adc}$. It is easy to see that $\mathpzc{PT}_{\mathcal{V}}(\mathcal{A}^{\mathcal{V}}(\mathcal{W}_1))$ (shown in Example~$\ref{intred2}$) is an interior-reduced plabic tiling and satisfies the desired properties. Notice that $\mathcal{W}_1 = \mathcal{R}(\mathcal{W}_2)$.  the associated interior-reduced plabic tiling is shown there. Notice that $\mathcal{W}_2 \setminus \mathcal{W}_1 = \left\{\left\{1, 2, 6, 7 \right\}\right\}.$ The adjacency cluster of $\mathcal{A}^{\mathcal{V}}(\left\{1, 2, 6, 7\right\})$ contains $\left\{1, 4, 6, 7 \right\}$, $\left\{1, 6, 7,9 \right\}$, $\left\{ 1, 2, 6, 9\right\}$, and $\left\{1, 2, 4, 6 \right\}$. All of these sets are contained in $\mathcal{A}^{\mathcal{V}}(\left\{1, 2, 6, 7\right\})$. Thus the set $\left\{1, 2, 6, 7 \right\}$ simply needs to be added to the interior. It is easy to see that $\mathpzc{PT}_{\mathcal{V}}(\mathcal{A}^{\mathcal{V}}(\mathcal{W}_2))$ (shown in Example~$\ref{intred1}$) is an interior-reduced plabic tiling and satisfies the desired properties.  
\end{ex}

The applicable constraint ensures that adding the sets of $\mathcal{A}^{\mathcal{V}}(V)$ to $\mathpzc{PT}_{\mathcal{V}}(\mathcal{A}^{\mathcal{V}}(\mathcal{R}(\mathcal{W})))$ results in a valid interior-reduced plabic tiling. Specifically, the boundary of $\mathcal{A}^{\mathcal{V}}(\mathcal{W})$ continues to be a prong-closed curve with no disconnected sets in the interior after the addition of the sets in $\mathcal{A}^{\mathcal{V}}(V)$ (see Remark~$\ref{appcondreq}$ for discussion). The fact that only sets adjacent to $V$ were added ensures that condition (2) of Definition~$\ref{modgras}$ continues to be satisfied. It is thus clear that the resulting process results in the interior-reduced plabic tiling $\mathpzc{PT}_{\mathcal{V}}(\mathcal{A}^{\mathcal{V}}(\mathcal{W}))$ as desired. 
\end{proof}
\begin{figure}
\caption{}
\label{almostbad}
\includegraphics[scale=0.5]{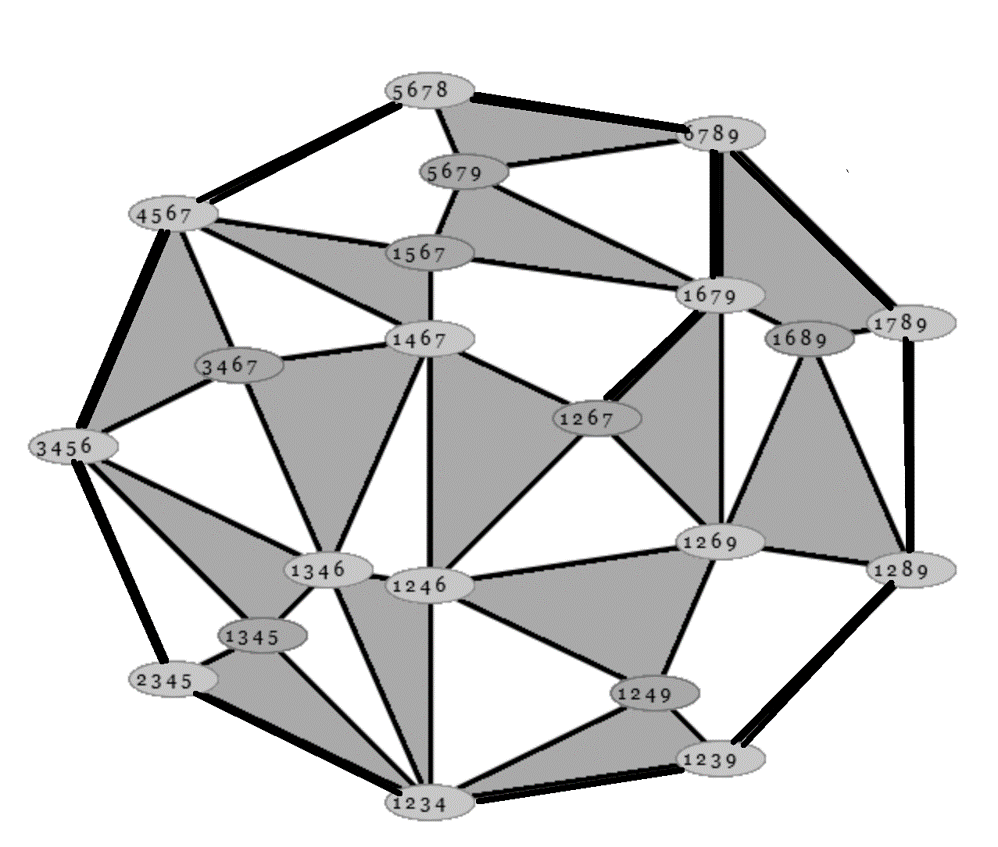}
\end{figure}
\begin{remark}
\label{appcondreq}
Roughly speaking, the applicable constraint prevents the addition of the faces in $\mathcal{A}^{\mathcal{V}}(V)$ from ``entrapping" any additional subsets on the boundary curve of $\mathpzc{PT}_{\mathcal{V}} (\mathcal{A}^{\mathcal{V}}(\mathcal{R}(\mathcal{W}))$. That is, no subsets on the boundary curve of $\mathpzc{PT}_{\mathcal{V}} (\mathcal{A}^{\mathcal{V}}(\mathcal{R}(\mathcal{W}))$ (besides $V$) will become disconnected from the rest of the boundary in the plabic tiling of $\mathcal{A}^{\mathcal{V}}(\mathcal{W})$.
\end{remark}
This is illustrated in the following example:
\begin{ex}
Consider $\mathcal{I}$, $\mathcal{V}$, $\mathcal{C}_1$, and $\mathcal{C}_2$ as in Example~$\ref{appandnot}$.
Consider the following weakly separated collections:
\begin{align*}
\mathcal{W}_1 &= \mathcal{V} \setminus \mathcal{C}_1 \\
\mathcal{W}_2 &= \mathcal{V} \setminus \mathcal{C}_2 \\
\end{align*}
Notice that $\mathcal{W}_2 = \mathcal{R}(\mathcal{W}_1)$. Figure~$\ref{almostbad}$ shows that $\mathcal{A}^{\mathcal{V}}(\mathcal{W}_2)$ is an interior-reduced plabic graph with boundary that is a prong-closed curve $\zeta$ as expected. Notice that the addition of $\mathcal{A}^{\mathcal{V}}(\left\{1, 6, 7, 9\right\})$ is not possible, because it would disconnect the set $\left\{1, 2, 6, 7 \right\}$ from the rest of the boundary. As a result, the boundary would no longer be a prong-closed curve as required.
\end{ex}
 
For a reverse-very-connected weakly separated collection $\mathcal{W}$ over a Grassmann necklace $\mathcal{I}$, we define the following:
\[\mathscr{PT}^{*}(\mathpzc{G}^{\mathcal{I}}(\mathcal{W})) = \left\{\mathpzc{PT}_{\mathcal{V}}^{*}(\mathcal{A}^{\mathcal{V}}(\mathcal{V} \setminus \mathcal{W})) \mid \mathcal{V} \in \mathpzc{G}^{\mathcal{I}}(\mathcal{W}) \right\}\]
In the case $\mathcal{W} = \mathcal{I}$, we call this set $\mathscr{PT}^{*}(\mathpzc{G}^{\mathcal{I}})$.

\begin{lemma}
\label{isokey}
Consider a reverse-very-connected weakly separated collection $\mathcal{C}$ over a Grassmann necklace $\mathcal{I}$. Let $\mathcal{V}$ be in $\mathpzc{G}^{\mathcal{I}}(\mathcal{C})$. Let $\mathcal{J}$ be the relabeled Grassmann necklace of the interior-reduced plabic graph of $\mathcal{A}^{\mathcal{V}}(\mathcal{V} \setminus \mathcal{C})$. Then, we have the following:
\[
\mathscr{PT}^{*}(\mathpzc{G}^{\mathcal{I}}(\mathcal{C})) \cong \mathscr{PT}^{*}(\mathpzc{G}^{\mathcal{J}}),
\]
\[\mathpzc{G}^{\mathcal{I}}(\mathcal{C}) \cong \mathpzc{G}^{\mathcal{J}}.\]
If $\mathpzc{G}^{\mathcal{I}}(\mathcal{C})$ is mutation-friendly, then $\mathpzc{G}^{\mathcal{J}}$ is mutation-friendly.
\end{lemma}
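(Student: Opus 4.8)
The plan is to realize $\mathpzc{G}^{\mathcal{I}}(\mathcal{C})$ as one of the $\mathcal{C}$-constant graphs appearing in Lemma~\ref{quasiexch} and then read off both isomorphisms from that lemma. Write $\mathcal{W} = \mathcal{V} \setminus \mathcal{C}$. Since $\mathcal{C}$ is reverse-very-connected over $\mathcal{I}$, the collection $\mathcal{W}$ is very-connected in $\mathcal{V}$, so Lemma~\ref{adjcluqua} applies: $\mathpzc{PT}_{\mathcal{V}}(\mathcal{A}^{\mathcal{V}}(\mathcal{W}))$ is an interior-reduced plabic tiling whose interior sets are exactly $\mathcal{W}$. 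Let $\mathcal{I}^{*}$ denote the modified Grassmann-like necklace on its boundary curve; by construction $\mathcal{A}^{\mathcal{V}}(\mathcal{W}) = \mathcal{V}^{\mathcal{I}^{*}}$, and $\mathcal{J}$ is, by definition, the relabeled Grassmann necklace of $\mathcal{I}^{*}$.

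First I would verify the set identity $(\mathcal{V} \setminus \mathcal{V}^{\mathcal{I}^{*}}) \cup \mathcal{I}^{*} = \mathcal{C}$ (deleting repeated boundary sets). Indeed, the interior of the tiling is $\mathcal{W} = \mathcal{V} \setminus \mathcal{C}$, so every boundary set lies in $\mathcal{C}$, giving $\mathcal{I}^{*} \subset \mathcal{C}$; moreover $\mathcal{V} \setminus \mathcal{V}^{\mathcal{I}^{*}} \subset \mathcal{V} \setminus \mathcal{W} = \mathcal{C}$, so the left-hand side is contained in $\mathcal{C}$. Conversely, any $c \in \mathcal{C}$ either lies outside the enclosed collection, in which case $c \in \mathcal{V} \setminus \mathcal{V}^{\mathcal{I}^{*}}$, or lies in $\mathcal{A}^{\mathcal{V}}(\mathcal{W})$, in which case it is a boundary set and so $c \in \mathcal{I}^{*}$. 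With this identity in hand, Lemma~\ref{quasiexch} applied with ambient necklace $\mathcal{I}$, modified necklace $\mathcal{I}^{*}$, and relabeled necklace $\mathcal{J}$ yields directly $\mathpzc{G}^{\mathcal{I}}(\mathcal{C}) \cong \mathpzc{G}^{\mathcal{J}}$ together with the corresponding isomorphism of sets of interior-reduced plabic tilings. To match the latter with $\mathscr{PT}^{*}(\mathpzc{G}^{\mathcal{J}})$ exactly as written, I would note that $\mathcal{W}$ very-connected forces $\mathpzc{AG}^{\mathcal{V}}(\mathcal{W})$ connected, hence by the proposition on adjacency graphs of prime necklaces $\mathcal{J}$ is prime; thus for every $\mathcal{U} \in \mathpzc{G}^{\mathcal{J}}$ the cluster $\mathcal{A}^{\mathcal{U}}(\mathcal{U} \setminus \mathcal{J})$ is all of $\mathcal{U}$ and $\mathpzc{PT}_{\mathcal{U}}^{*}(\mathcal{A}^{\mathcal{U}}(\mathcal{U} \setminus \mathcal{J})) = \mathpzc{PT}^{*}(\mathcal{U})$, so the right-hand set of Lemma~\ref{quasiexch} coincides with $\mathscr{PT}^{*}(\mathpzc{G}^{\mathcal{J}})$.

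For the mutation-friendly claim, suppose the intersection of all collections in $\mathpzc{G}^{\mathcal{I}}(\mathcal{C})$ equals $\mathcal{C}$. Since the interior of each tiling is disjoint from $\mathcal{C}$, we have $\mathcal{C} \cap \mathcal{R}_{\mathcal{I}^{*}}^{in} = \mathcal{I}^{*}$, so the sets of $\mathcal{C}$ lying inside the enclosed region are exactly $\mathcal{I}^{*}$, while the outside sets $\mathcal{V} \setminus \mathcal{V}^{\mathcal{I}^{*}}$ are constant across $\mathpzc{G}^{\mathcal{I}}(\mathcal{C})$. The mapping function $f$ carries $\mathcal{I}^{*}$ to $\mathcal{J}$ and interior sets to interior sets, and is a bijection preserving the enclosed-collection structure; hence the intersection over $\mathpzc{G}^{\mathcal{J}}$ is the image of $\mathcal{I}^{*}$, namely $\mathcal{J}$, so $\mathpzc{G}^{\mathcal{J}}$ is mutation-friendly.

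The main obstacle I anticipate is not any single computation but rather justifying that the boundary necklace $\mathcal{I}^{*}$ is the same for every $\mathcal{V}' \in \mathpzc{G}^{\mathcal{I}}(\mathcal{C})$, so that $\mathcal{J}$ is well-defined and a single instance of Lemma~\ref{quasiexch} governs the whole graph. This is exactly where reverse-very-connectedness does the real work: it guarantees that Lemma~\ref{adjcluqua} applies to each $\mathcal{V}'$, producing an interior-reduced plabic tiling with interior $\mathcal{V}' \setminus \mathcal{C}$ and with boundary sets drawn from the fixed collection $\mathcal{C}$; since mutations within $\mathpzc{G}^{\mathcal{I}}(\mathcal{C})$ never alter $\mathcal{C}$, the cyclic boundary sequence $\mathcal{I}^{*}$ is preserved. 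I would spell out this invariance carefully, as it is the one place where the geometric hypotheses are indispensable; the remainder is bookkeeping layered on top of Lemmas~\ref{adjcluqua} and~\ref{quasiexch}.
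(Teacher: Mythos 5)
Your proposal follows essentially the same route as the paper: apply Lemma~\ref{adjcluqua} to realize $\mathcal{A}^{\mathcal{V}}(\mathcal{V}\setminus\mathcal{C})$ as an interior-reduced plabic tiling with interior exactly $\mathcal{V}\setminus\mathcal{C}$, then invoke Lemma~\ref{quasiexch} to transfer both the tiling-set congruence and the graph isomorphism, with mutations corresponding under the relabeling. Your version is in fact more careful than the paper's, since you explicitly verify the identity $(\mathcal{V}\setminus\mathcal{V}^{\mathcal{I}^{*}})\cup\mathcal{I}^{*}=\mathcal{C}$ and the well-definedness of the boundary necklace across $\mathpzc{G}^{\mathcal{I}}(\mathcal{C})$, both of which the paper leaves implicit.
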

\begin{proof}
We first prove the first equation. By Lemma~$\ref{adjcluqua}$, we know that $\mathcal{A}^{\mathcal{\mathcal{V}}}(\mathcal{V} \setminus \mathcal{C})$ forms an interior-reduced plabic graph with interior sets of $\mathpzc{PT}_{\mathcal{V}}(\mathcal{A}^{\mathcal{V}}(\mathcal{V} \setminus \mathcal{C}))$ exactly equal to the sets in $\mathcal{V} \setminus \mathcal{C}$. Notice that $$\left\{\mathpzc{PT}^{*}_{\mathcal{V}}(\mathcal{A}^{\mathcal{V}}(\mathcal{V} \setminus \mathcal{C})) \mid \mathcal{V} \in \mathpzc{G}^{\mathcal{I}}(\mathcal{C}) \right\}$$
is equal to $\mathscr{PT}^{*}(\mathpzc{G}^{\mathcal{I}}(\mathcal{C}))$ and thus congruent to $\mathscr{PT}^{*}(\mathpzc{G}^{\mathcal{J}})$ by Lemma~$\ref{quasiexch}$.

The second equation follows from the fact that square moves have the same effect on corresponding plabic tilings. This also implies that the mutation-friendly condition holds.
\end{proof}

\subsection{Proof of Theorem~$\ref{link}$}
We first prove the following lemma:
\begin{lemma}
Given an applicable $\mathcal{C}$-constant graph $\mathpzc{G}^{\mathcal{I}}(\mathcal{C})$ and maximal weakly separated collection $\mathcal{V} \in \mathpzc{G}^{\mathcal{I}}(\mathcal{C})$, we have
\label{diir}
$$\displaystyle \mathpzc{G}^{\mathcal{I}}(\mathcal{C}) \cong \square_{\mathcal{W} \in {\mathpzc{AC}}_{\mathcal{V}}(\mathcal{V} \setminus C)}\mathpzc{G}^{\mathcal{I}}(\mathcal{V} \setminus \mathcal{W}).$$
If $\mathpzc{G}^{\mathcal{I}}(\mathcal{C})$ is mutation-friendly, then each $\mathpzc{G}^{\mathcal{I}}(\mathcal{V} \setminus \mathcal{W})$ is mutation-friendly.
\end{lemma}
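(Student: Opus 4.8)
The plan is to mirror the proof of Lemma~\ref{dirproduct}: I set ${\mathpzc{AC}}_{\mathcal{V}}(\mathcal{V}\setminus\mathcal{C}) = \{\mathcal{W}_1,\ldots,\mathcal{W}_i\}$ and exhibit a merge function realizing the direct product. The geometric fact driving everything is that, since the $\mathcal{W}_j$ are precisely the connected components of $\mathpzc{AG}^{\mathcal{V}}(\mathcal{V}\setminus\mathcal{C})$, no set of $\mathcal{W}_{j_1}$ is adjacent to any set of $\mathcal{W}_{j_2}$ for $j_1\neq j_2$. Hence every set of the adjacency cluster $\mathcal{A}^{\mathcal{V}}(\mathcal{W}_j)$ that is not already in $\mathcal{W}_j$ must lie in $\mathcal{C}$, so the boundary of $\mathcal{A}^{\mathcal{V}}(\mathcal{W}_j)$ sits entirely inside $\mathcal{C}$. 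First I would verify that each $\mathcal{W}_j$ is very-connected in $\mathcal{V}$: connectedness is immediate because $\mathcal{W}_j$ is a single block, and for applicability of $\mathpzc{G}^{\mathcal{I}}(\mathcal{V}\setminus\mathcal{W}_j)$ a set of $\mathcal{C}$ reuses the $\mathcal{C}$-path guaranteed by applicability of $\mathpzc{G}^{\mathcal{I}}(\mathcal{C})$, while a set of another block $\mathcal{W}_{j'}$ travels by quasi-adjacent steps inside the connected tiling $\mathpzc{PT}_{\mathcal{V}}(\mathcal{A}^{\mathcal{V}}(\mathcal{W}_{j'}))$ to its $\mathcal{C}$-boundary and then follows a $\mathcal{C}$-path to $\mathcal{I}$, never meeting $\mathcal{W}_j$. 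Lemma~\ref{adjcluqua} then applies to each block, producing the interior-reduced plabic tilings $\mathpzc{PT}_{\mathcal{V}}(\mathcal{A}^{\mathcal{V}}(\mathcal{W}_j))$ whose boundaries lie in $\mathcal{C}$.

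Next I would define the merge. For $\mathcal{V}_j\in\mathpzc{G}^{\mathcal{I}}(\mathcal{V}\setminus\mathcal{W}_j)$, write $\mathcal{V}_j=(\mathcal{V}\setminus\mathcal{W}_j)\cup\mathcal{W}_j'$ where $\mathcal{W}_j'=\mathcal{V}_j\setminus(\mathcal{V}\setminus\mathcal{W}_j)$ is the mutated replacement of the block $\mathcal{W}_j$, and set
\[\mathcal{M}(\mathcal{V}_1,\ldots,\mathcal{V}_i) = \mathcal{C}\cup\bigcup_{j=1}^{i}\mathcal{W}_j'.\]
Because each factor only rearranges the interior of its own $\mathcal{C}$-bounded tiling, and these tilings meet only along $\mathcal{C}$, the regions stay combinatorially disjoint and disjoint from $\mathcal{C}$, so $\mathcal{M}(\mathcal{V}_1,\ldots,\mathcal{V}_i)$ is again a maximal weakly separated collection over $\mathcal{I}$ containing $\mathcal{C}$. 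Conversely any $\mathcal{U}\in\mathpzc{G}^{\mathcal{I}}(\mathcal{C})$, being reachable from $\mathcal{V}$ by mutations (connectivity of $\mathcal{C}$-constant graphs), restricts block-by-block to a preimage, giving
\[\left\{\mathcal{M}(\mathcal{V}_1,\ldots,\mathcal{V}_i)\mid \mathcal{V}_j\in\mathpzc{G}^{\mathcal{I}}(\mathcal{V}\setminus\mathcal{W}_j)\right\} = \left\{\mathcal{U}\mid\mathcal{U}\in\mathpzc{G}^{\mathcal{I}}(\mathcal{C})\right\}.\]

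To finish the isomorphism I would observe that a single mutation of $\mathcal{M}(\mathcal{V}_1,\ldots,\mathcal{V}_i)$ occurs at a mutatable interior set of exactly one tiling $\mathpzc{PT}_{\mathcal{V}}(\mathcal{A}^{\mathcal{V}}(\mathcal{W}_j))$, since the two black and two white faces surrounding it lie in that region; such a mutation changes only the $j$-th coordinate and leaves every other block fixed. This is exactly the edge relation in the definition of $\square$, so $\mathpzc{G}^{\mathcal{I}}(\mathcal{C})\cong\square_{j}\mathpzc{G}^{\mathcal{I}}(\mathcal{V}\setminus\mathcal{W}_j)$. For the mutation-friendly claim I would use the product description directly: writing each vertex as $\mathcal{C}\cup\bigcup_j\mathcal{W}_j'$ with the blocks ranging independently over disjoint regions disjoint from $\mathcal{C}$, the intersection over all vertices is $\mathcal{C}\cup\bigcup_j(\bigcap\mathcal{W}_j')$. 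If $\mathpzc{G}^{\mathcal{I}}(\mathcal{C})$ is mutation-friendly this equals $\mathcal{C}$, forcing $\bigcap\mathcal{W}_j'=\emptyset$ for each $j$; then the intersection over $\mathpzc{G}^{\mathcal{I}}(\mathcal{V}\setminus\mathcal{W}_j)$ is $(\mathcal{V}\setminus\mathcal{W}_j)\cup(\bigcap\mathcal{W}_j')=\mathcal{V}\setminus\mathcal{W}_j$, so each factor is mutation-friendly.

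The hard part will be the well-definedness and bijectivity of $\mathcal{M}$, that is, proving rigorously that distinct blocks interact only through the fixed sets of $\mathcal{C}$ and that their mutated interiors can never collide upon merging. This is precisely the content of the observation that each $\mathcal{A}^{\mathcal{V}}(\mathcal{W}_j)$ has its boundary in $\mathcal{C}$, combined with Lemma~\ref{adjcluqua}; I expect the rest of the argument to be routine bookkeeping once that geometric separation is established.
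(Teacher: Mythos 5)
Your proposal is correct and follows essentially the same route as the paper: both define a merge map $\mathcal{M}$ sending a tuple of vertices of the factor graphs to $\mathcal{C}$ together with the mutated interiors of the blocks, verify it is a bijection onto the vertices of $\mathpzc{G}^{\mathcal{I}}(\mathcal{C})$, and observe that a single mutation changes exactly one coordinate, which is the edge relation of $\square$. The extra care you take (checking each block is very-connected, invoking Lemma~\ref{adjcluqua} so that each block's adjacency cluster has boundary in $\mathcal{C}$, and the explicit intersection argument for the mutation-friendly claim) only fills in details the paper leaves implicit.
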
 
\begin{proof}
Let $i$ be $|{\mathpzc{AC}}_{\mathcal{V}}(\mathcal{V} \setminus \mathcal{C})|$. Let $\mathcal{W}_j$ be in $\mathpzc{G}^{\mathcal{I}}(\mathcal{V} \setminus {\mathpzc{AC}}_{\mathcal{V}}(\mathcal{V} \setminus C)(j))$ for $1 \le j \le i$. We define
\[\mathcal{M}(\mathcal{W}_1,\ldots,\mathcal{W}_i) = \mathcal{C} \cup_{j=1}^{i} (\mathcal{W}_j \cap {\mathpzc{AC}}_{\mathcal{V}}(\mathcal{V} \setminus C)(j))\]
Notice that 
\[\left\{\mathcal{M}(\mathcal{W}_1,\ldots,\mathcal{W}_i) \mid\mathcal{W}_j \in \mathpzc{G}^{\mathcal{I}}(\mathcal{V} \setminus {\mathpzc{AC}}_{\mathcal{V}}(\mathcal{V} \setminus C)(i)) \text{ for } 1 \le j \le i  \right\} = \left\{\mathcal{V} \mid \mathcal{V} \in \mathpzc{G}^{\mathcal{I}}(\mathcal{V} \setminus \mathcal{C}) \right\}.\]
Furthermore, the weakly separated collection $\mathcal{M}(\mathcal{V}_1,\ldots,\mathcal{V}_i)$ can be mutated into $\mathcal{M}(\mathcal{W}_1,\ldots,\mathcal{W}_i)$ if and only if $\mathcal{V}_i$ can be mutated into $\mathcal{W}_i$ for some $i$ and $\mathcal{V}_j = \mathcal{W}_j$ for $j \neq i$. This means that $$\displaystyle \mathpzc{G}^{\mathcal{I}}(\mathcal{C}) \cong \square_{\mathcal{W} \in {\mathpzc{AC}}_{\mathcal{V}}(\mathcal{V} \setminus C)}\mathpzc{G}^{\mathcal{I}}(\mathcal{V} \setminus \mathcal{W})$$ and that the mutation-friendly condition holds.
\end{proof}

\begin{proof}[Proof of Theorem~$\ref{link}$]
Consider an applicable $\mathcal{C}$-constant graph $\mathpzc{G}^{\mathcal{I}}(\mathcal{C})$ and a maximal weakly separated collection $\mathcal{V} \in \mathpzc{G}^{\mathcal{I}}(\mathcal{C})$. We prove that $\mathpzc{G}^{\mathcal{I}}(\mathcal{C})$ is isomorphic to an exchange graph with interior size $|\mathcal{V} \setminus \mathcal{C}|.$ By Lemma~$\ref{diir}$, we know that $\mathpzc{G}^{\mathcal{I}}(\mathcal{C})$ is isomorphic to the direct product of the $\mathcal{C}$-constant graphs $\mathpzc{G}^{\mathcal{I}}(\mathcal{V} \setminus \mathcal{W})$ for $\mathcal{W} \in {\mathpzc{AC}}_{\mathcal{V}}(\mathcal{V} \setminus C)$ (where the mutation-friendly condition holds). Notice that ${\mathpzc{AC}}_{\mathcal{V}}(\mathcal{V} \setminus C)(i)$ is a very-connected weakly separated collection in $\mathcal{V}$ for $1 \le j \le |\mathcal{AC}_{\mathcal{V}}(\mathcal{V} \setminus \mathcal{C})$. By Lemma~$\ref{isokey}$, we know that each $\mathpzc{G}^{\mathcal{I}}(\mathcal{V} \setminus \mathcal{W})$ is isomorphic to an exchange graph with interior size $|\mathcal{W}|$ (where the mutation-friendly condition holds). Let this exchange graph be $\mathpzc{G}_{\mathcal{W}}$. Then, we know that
$$\displaystyle \mathpzc{G}^{\mathcal{I}}(\mathcal{C}) \cong \square_{\mathcal{W}\in {\mathpzc{AC}}_{\mathcal{V}}(\mathcal{V} \setminus C)} \mathpzc{G}_{\mathcal{W}}.$$
By Lemma~$\ref{dirproduct}$, we know that $\square_{W \in {\mathpzc{AC}}_{\mathcal{V}}(\mathcal{V} \setminus \mathcal{C})} \mathpzc{G}_{W}$ is isomorphic to an exchange graph with interior size
$$\displaystyle \sum_{\mathcal{W} \in {\mathpzc{AC}}_{\mathcal{V}}(\mathcal{V} \setminus C)} i(\mathpzc{G}_{\mathcal{W}}) = \sum_{\mathcal{W} \in {\mathpzc{AC}}_{\mathcal{V}}(\mathcal{V} \setminus \mathcal{C})} |\mathcal{W}| = |\mathcal{V} \setminus \mathcal{C}|$$ where the mutation-friendly condition also holds.
\end{proof}
\subsection{Proof of Corollary~$\ref{stronglink}$}
We show show Corollary~$\ref{stronglink}$ follows from Theorem~$\ref{link}$. In order to do so, we first prove that the following for $\mathcal{C}$-constant graphs with small co-dimension:
\begin{lemma}
\label{app}
Every $\mathcal{C}$-constant graph of co-dimension $c <4$ is applicable.
\end{lemma}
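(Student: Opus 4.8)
The plan is to prove the contrapositive: I will show that if a $\mathcal{C}$-constant graph $\mathpzc{G}^{\mathcal{I}}(\mathcal{C})$ fails to be applicable, then its co-dimension is at least $4$. Fix any $\mathcal{V} \in \mathpzc{G}^{\mathcal{I}}(\mathcal{C})$ and work inside the plabic tiling of $\mathcal{V}$, recalling that $\mathcal{C} \subseteq \mathcal{V}$, that the boundary sets of the tiling are exactly the necklace $\mathcal{I}$, and that the co-dimension equals $|\mathcal{V} \setminus \mathcal{C}|$, the number of sets I will think of as removed. Two sets are quasi-adjacent precisely when they lie on a common (black or white) face of the tiling, so the applicable condition asks that every $V \in \mathcal{C}$ be joined, through a path of sets of $\mathcal{C}$ lying on successive common faces, to some necklace set that is itself in $\mathcal{C}$. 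Note that a necklace set in $\mathcal{C}$ satisfies this trivially via the length-one path, so only an interior set of the tiling can witness a failure.

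Suppose applicability fails, witnessed by a set $V \in \mathcal{C}$ that cannot be connected to $\mathcal{I}$ within $\mathcal{C}$. Let $\mathcal{K}$ be the connected component of $V$ in the graph on $\mathcal{C}$ whose edges are quasi-adjacencies. Then $\mathcal{K}$ contains no necklace set (such a set would give the forbidden connection), so every element of $\mathcal{K}$ is an interior set of the tiling, and, by maximality of the component, every set quasi-adjacent to $\mathcal{K}$ but outside $\mathcal{K}$ lies in $\mathcal{V} \setminus \mathcal{C}$. Geometrically $\mathcal{K}$ occupies a connected region in the interior of the disk, and these removed neighbors form a closed barrier that separates $\mathcal{K}$ from the boundary circle; I will bound the size of this barrier from below.

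The heart of the argument is the claim that any such barrier contains at least $4$ removed sets. First, every interior vertex of a reduced plabic tiling is incident to at least two black and at least two white faces, since the faces alternate in color around it and a vertex incident to fewer than four faces would correspond to a bigon in the dual plabic graph, which is impossible in a reduced graph; consequently each interior vertex has at least four quasi-adjacent neighbors, namely the four distinct edge-neighbors shared by consecutive faces in its link. If $\mathcal{K} = \{V\}$ is a single vertex, all of these $\ge 4$ neighbors must be removed, so $|\mathcal{V} \setminus \mathcal{C}| \ge 4$; and if $\mathcal{K}$ has more than one vertex, the outer boundary of the region it occupies is an even longer cycle of removed sets, so the bound only improves. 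In either case $|\mathcal{V} \setminus \mathcal{C}| \ge 4$, i.e. the co-dimension is at least $4$, which is the desired contrapositive.

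The main obstacle is the geometric separation claim, and concretely it amounts to ruling out enclosing barriers of sizes $1$, $2$, and $3$; I would handle this by a short case analysis on $c \in \{1,2,3\}$, in each case using planarity of the tiling together with the fact above that interior vertices have at least four quasi-adjacent neighbors to conclude that a set of at most three removed sets can never topologically separate a nonempty interior cluster from the boundary. A secondary point to treat with care is that some removed neighbors of $\mathcal{K}$ may be necklace sets rather than interior sets: these still count toward $\mathcal{V} \setminus \mathcal{C}$, since a necklace set adjacent to $\mathcal{K}$ must be absent from $\mathcal{C}$ (otherwise $\mathcal{K}$ would reach the boundary), so they are legitimately included in the co-dimension count and the bound is unaffected.
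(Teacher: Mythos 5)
Your proposal follows essentially the same route as the paper: argue the contrapositive and observe that an interior set of the plabic tiling that cannot be joined to $\mathcal{I}$ through $\mathcal{C}$ forces at least $4$ quasi-adjacent sets lying in $\mathcal{V}\setminus\mathcal{C}$, so the co-dimension is at least $4$. You are in fact more careful than the paper's own four-line proof, which jumps straight to a $V\in\mathcal{C}$ with \emph{no} quasi-adjacent set in $\mathcal{C}$ or $\mathcal{I}$ and thereby silently skips the case, handled in your argument via the component $\mathcal{K}$ and its surrounding barrier, where the offending set still has quasi-adjacent partners inside $\mathcal{C}$.
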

\begin{proof}
Consider a $\mathcal{C}$-constant graph $\mathpzc{G}^{\mathcal{I}}(\mathcal{C})$ that is not applicable. This means there exists $V \in \mathcal{C}$ such that no set in $\mathcal{C}$ or $\mathcal{I}$ is quasi-adjacent to $V$. First, this means that $V$ is not in $\mathcal{I}$. Now, consider a maximal weakly separated collection $\mathcal{V} \in \mathpzc{G}^{\mathcal{I}}$. Notice that there must be at least $4$ sets in $\mathcal{V} \setminus \mathcal{C}$ that are quasi-adjacent to $V$. Hence, we know that the co-dimension of $\mathpzc{G}^{\mathcal{I}}(\mathcal{C}) \ge 4$.
\end{proof}
\begin{proof}[Proof of Corollary~$\ref{stronglink}$]
This follows from Theorem~$\ref{link}$ and Lemma~$\ref{app}.$
\end{proof}

\section{Proof of Theorem~$\ref{mainchare}$ and Theorem~$\ref{maincharc}$}
We now characterize exchange graphs with interior size $0, 1, 2, 3, 4$ and their associated decorated permutations. We also provide a characterization of all $\mathcal{C}$-constant graphs of co-dimension $0, 1, 2, 3$. In Section 5.1, we prove properties of mutation-friendly Grassmann necklaces that we require to prove Theorem~$\ref{mainchare}$ and Theorem~$\ref{maincharc}$. In Section 5.2, we prove Theorem~$\ref{mainchare}$. In Section 5.3, we prove Theorem~$\ref{maincharc}$.

\subsection{Mutation-Friendly Grassmann Necklaces}
We first prove the following fact of $\mathcal{C}$-constant graphs of very-mutation-friendly graphs which we use to prove our later properties. 
\begin{prop}
\label{vmfproperty}
Let $\mathcal{I}$ be a very-mutation-friendly Grassmann necklace with interior size $i$. Consider the weakly separated collections
\[\mathcal{W}_1 \supset \mathcal{W}_2 \supset \ldots \supset \mathcal{W}_{i-1} \supset W_{i} = \mathcal{I}  \]
using the notation in Definition~$\ref{vmf}$ applied to $\mathcal{I}$ and adding the weakly separated collection $\mathcal{W}_i$. Let $\mathcal{V}$ be a maximal weakly separated collection in $\mathpzc{G}^{\mathcal{I}}(\mathcal{W}_1)$. Let $j$ be an integer such that $1 \le j \le i$. Let $\mathcal{W}$ be a weakly separated collection in ${\mathpzc{AC}}_{\mathcal{V}} (\mathcal{V} \setminus W_j)$. Suppose that $\mathpzc{PT}_{\mathcal{V}}(\mathcal{A}^{\mathcal{V}}(\mathcal{W}))$ has boundary curve $\mathcal{J}^{*}$ with relabeled Grassmann necklace $\mathcal{J}$. Then $\mathcal{J}$ is very-mutation-friendly. 
\end{prop}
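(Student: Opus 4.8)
The plan is to verify the two halves of Definition~\ref{vmf} for $\mathcal{J}$ in turn: first that $\mathcal{J}$ is mutation-friendly, and then that it admits a descending chain of applicable, mutation-friendly $\mathcal{C}$-constant graphs of the prescribed sizes. Throughout I would transport structure back and forth along the isomorphism supplied by Lemma~\ref{isokey}.

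First I would show that $\mathcal{W}$ is very-connected in $\mathcal{V}$. Since $\mathcal{W}$ is one block of the adjacency grouping ${\mathpzc{AC}}_{\mathcal{V}}(\mathcal{V}\setminus\mathcal{W}_j)$, it is connected in $\mathcal{V}$ by definition. Because $\mathcal{I}$ is very-mutation-friendly, $\mathpzc{G}^{\mathcal{I}}(\mathcal{W}_j)$ is applicable (for $j=i$ this is $\mathpzc{G}^{\mathcal{I}}(\mathcal{I})\cong\mathpzc{G}^{\mathcal{I}}$, which is applicable and mutation-friendly since $\mathcal{I}$ is), so the argument used in the proof of Theorem~\ref{link} shows that each block of ${\mathpzc{AC}}_{\mathcal{V}}(\mathcal{V}\setminus\mathcal{W}_j)$, and in particular $\mathcal{W}$, is very-connected in $\mathcal{V}$; thus $\mathpzc{G}^{\mathcal{I}}(\mathcal{V}\setminus\mathcal{W})$ is applicable and $\mathcal{V}\setminus\mathcal{W}$ is reverse-very-connected over $\mathcal{I}$. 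Applying Lemma~\ref{diir} to the applicable, mutation-friendly graph $\mathpzc{G}^{\mathcal{I}}(\mathcal{W}_j)$ also shows that the factor $\mathpzc{G}^{\mathcal{I}}(\mathcal{V}\setminus\mathcal{W})$ is mutation-friendly. Lemma~\ref{isokey}, applied with $\mathcal{C}=\mathcal{V}\setminus\mathcal{W}$, then yields $\mathpzc{G}^{\mathcal{J}}\cong\mathpzc{G}^{\mathcal{I}}(\mathcal{V}\setminus\mathcal{W})$ together with the mutation-friendliness of $\mathpzc{G}^{\mathcal{J}}$, and by Lemma~\ref{adjcluqua} the interior sets of $\mathpzc{PT}_{\mathcal{V}}(\mathcal{A}^{\mathcal{V}}(\mathcal{W}))$ are exactly $\mathcal{W}$, so $i(\mathcal{J})=|\mathcal{W}|=:m$.

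It remains to produce the chain for $\mathcal{J}$. Going up the chain $\mathcal{I}=\mathcal{W}_i\subset\mathcal{W}_{i-1}\subset\cdots\subset\mathcal{W}_1$ one interior set of $\mathcal{V}$ is added at each step, and since every set of $\mathcal{W}$ is an interior set of $\mathcal{V}$ that has appeared by the time we reach $\mathcal{W}_1$, recording the order in which the sets of $\mathcal{W}$ appear gives an enumeration $w_1,\dots,w_m$ of $\mathcal{W}$. I would then define, for $1\le\ell\le m$, the collection $\mathcal{U}_\ell$ over $\mathcal{J}$ to be the image under the mapping function $f$ of the enclosed collection with interior $\{w_1,\dots,w_{m-\ell}\}$ and boundary the necklace $\mathcal{J}^{*}$; equivalently, $\mathcal{U}_\ell$ corresponds under the isomorphism of Lemma~\ref{isokey} to $(\mathcal{V}\setminus\mathcal{W})\cup\{w_1,\dots,w_{m-\ell}\}$. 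By construction $\mathcal{U}_1\supset\mathcal{U}_2\supset\cdots\supset\mathcal{U}_{m-1}\supset\mathcal{U}_m=\mathcal{J}$, each step removes exactly one set, and $|\mathcal{U}_\ell|=m+|\mathcal{J}|-\ell$, matching conditions (1)--(3) of Definition~\ref{vmf}.

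The last and hardest step is condition (4): that each $\mathpzc{G}^{\mathcal{J}}(\mathcal{U}_\ell)$ is applicable and mutation-friendly. Because the mapping function $f$ is a relabeling that preserves quasi-adjacency, adjacency, and the distinction between the interior sets and the boundary necklace, both properties can be read off from the corresponding $\mathcal{C}$-constant graph in the enclosed picture, and mutation-friendliness transfers exactly as in the proof of Lemma~\ref{isokey}. The genuine obstacle is applicability over $\mathcal{J}$: one must check that each initial segment $\{w_1,\dots,w_t\}$ of the inherited building order keeps every $w_s$ connected by a quasi-adjacent path to the boundary curve $\mathcal{J}^{*}$ lying entirely inside the enclosed region. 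I expect this to follow from the geometric addition step in the proof of Lemma~\ref{adjcluqua}, where the sets of an adjacency cluster are added one block at a time without disturbing the prong-closed boundary or entrapping an interior set; concretely, at the stage $\mathcal{W}_\ell$ of the $\mathcal{I}$-chain at which $\{w_1,\dots,w_t\}$ has just been completed, the applicability of $\mathpzc{G}^{\mathcal{I}}(\mathcal{W}_\ell)$ should restrict to applicability of $\mathpzc{G}^{\mathcal{J}}(\mathcal{U}_{m-t})$. Verifying that the quasi-adjacent paths witnessing applicability over $\mathcal{I}$ can be chosen to stay inside the region and inside the already-added sets of $\mathcal{W}$ is where the main care is required.
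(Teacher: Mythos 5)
Your proposal is correct and follows essentially the same route as the paper: the chain $\mathcal{U}_1\supset\cdots\supset\mathcal{U}_m=\mathcal{J}$ you build from the order in which the sets of $\mathcal{W}$ appear along $\mathcal{W}_i\subset\cdots\subset\mathcal{W}_1$ is exactly the paper's chain $f(\mathcal{W}_l\cap\mathcal{V}^{\mathcal{J}^*})$ with repeats deleted, and your direct specification of interior and boundary even absorbs the paper's separate Case~2 treatment of a nonempty prong $\zeta^{P}_{\mathcal{J}^*}$ (where it must adjoin an extra copy of the set at $P$ before intersecting). The only difference is one of candor: the transfer of applicability and mutation-friendliness from $\mathpzc{G}^{\mathcal{I}}(\mathcal{W}_l)$ to $\mathpzc{G}^{\mathcal{J}}(\mathcal{U}_\ell)$, which you rightly flag as the step requiring real care, is asserted by the paper to follow ``directly'' with no further argument, so your sketch is no less complete than the published proof at that point.
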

\begin{proof}
 We split the proof into two cases: $\zeta^{P}_{\mathpzc{J}^{*}} = \emptyset$ and $\zeta^{P}_{\mathpzc{J}^{*}} \neq \emptyset$.

\textbf{Case 1:} $\zeta^{P}_{\mathpzc{J}^{*}} = \emptyset$.
Consider the weakly separated collections 
\begin{equation}
\label{setone}
(f(\mathcal{W}_1 \cap \mathcal{V}^{\mathcal{J}^{*}}), f(\mathcal{W}_2 \cap \mathcal{V}^{\mathcal{J}^{*}}), \ldots, f(\mathcal{W}_{j} \cap \mathcal{V}^{\mathcal{J}^{*}})).
\end{equation}
First, notice that $f(\mathcal{W}_{j} \cap \mathcal{V}^{\mathcal{J}^{*}}) = \mathcal{J}$. For $1 \le l \le j$, we want to show that $\mathpzc{G}^{\mathcal{J}}(f(\mathcal{W}_{l} \cap \mathcal{V}^{\mathcal{J}^{*}}))$ is mutation-friendly and applicable. This follows directly from the fact that $\mathpzc{G}^{\mathcal{I}}(\mathcal{W}_l)$ is applicable and mutation-friendly. We know that deleting repeated weakly separated collections in $(\ref{setone})$ gives us the desired requirement for Definition~$\ref{vmf}$. 

\textbf{Case 2:} $\zeta^{P}_{\mathpzc{J}^{*}} \neq \emptyset$.
Let $V_1$ be the set at the point $P$ in $\zeta^{P}_{\mathcal{J}^{*}} $ using the notation in Definition~$\ref{prongclosed}.$ For $1 \le l \le j$, let $\mathcal{X}_l$ be the weakly separated multi-collection consisting of $\mathcal{W}_l$ and an additional copy of the set $\left\{V_1 \right\}$. Consider the resulting weakly separated collections
\begin{equation}
\label{settwo}
f(\mathcal{X}_1 \cap \mathcal{V}^{\mathcal{J}^{*}}), f(\mathcal{X}_2 \cap \mathcal{V}^{\mathcal{J}^{*}}), \ldots, f(\mathcal{X}_j \cap \mathcal{V}^{\mathcal{J}^{*}})).
\end{equation}
First, notice that $f(\mathcal{X}_{j} \cap \mathcal{V}^{\mathcal{J}^{*}}) = \mathcal{J}$. For $1 \le l \le j$, we want to show that $\mathpzc{G}^{\mathcal{J}}(f(\mathcal{X}_{l} \cap \mathcal{V}^{\mathcal{J}^{*}}))$ is mutation-friendly and applicable.  This follows directly from the fact that $\mathpzc{G}^{\mathcal{I}}(\mathcal{W}_l)$ is applicable and mutation-friendly. We know that deleting repeated weakly separated collections in $(\ref{settwo})$ gives us the desired requirement for Definition~$\ref{vmf}$. 
\end{proof}
We prove the following bound on the number of sets in a very-mutation-friendly Grassmann necklaces as a function with interior size:
\begin{lemma}
\label{nnn}
Let $\mathcal{I}$ be a very-mutation-friendly, prime Grassmann necklace. If $i(\mathcal{I}) > 0$, then $|\mathcal{I}| \le 2 \cdot i(\mathcal{I}) +2$.
\end{lemma}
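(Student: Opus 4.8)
The plan is to induct on the interior size $i=i(\mathcal{I})$, peeling off a single interior set at each step in a way that keeps the necklace prime and very-mutation-friendly while shrinking $|\mathcal{I}|$ by at most $2$. For the base case $i=1$, let $Z$ be the unique interior set of any $\mathcal{V}\in\mathpzc{G}^{\mathcal{I}}$. By the base case of Lemma~\ref{adjcluqua}, the adjacency cluster $\mathcal{A}^{\mathcal{V}}(\{Z\})$ is a $2a$-gon with $a\ge 2$; primality forbids any boundary sets outside this polygon (such ``ears'' would be extra cycles in the decomposition complex), so $|\mathcal{I}|=2a$. Mutation-friendliness forces $Z$ to be mutatable: otherwise $Z$ is immutatable, $\mathpzc{G}^{\mathcal{I}}$ is a single vertex, and the intersection of all maximal collections contains $Z\notin\mathcal{I}$, contradicting the definition. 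A mutatable set is surrounded by exactly $4$ faces, so $2a=4$ and $|\mathcal{I}|=4=2\cdot 1+2$.

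For the inductive step, assume $i\ge 2$ and the bound for all prime, very-mutation-friendly necklaces of positive interior size below $i$. Using the chain $\mathcal{W}_1\supset\cdots\supset\mathcal{W}_{i-1}\supset\mathcal{W}_i=\mathcal{I}$ from Definition~\ref{vmf} and Proposition~\ref{vmfproperty}, I would fix $\mathcal{V}\in\mathpzc{G}^{\mathcal{I}}(\mathcal{W}_1)$ and an interior set $Z$ that is mutatable in $\mathcal{V}$ and whose removal leaves the remaining interior sets $\mathcal{W}:=(\mathcal{V}\setminus\mathcal{I})\setminus\{Z\}$ connected in the adjacency graph $\mathpzc{AG}^{\mathcal{V}}$. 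Then $\mathcal{W}$ is very-connected in $\mathcal{V}$, so by Lemma~\ref{adjcluqua} the tiling $\mathpzc{PT}_{\mathcal{V}}(\mathcal{A}^{\mathcal{V}}(\mathcal{W}))$ is interior-reduced with interior sets exactly $\mathcal{W}$. Let $\mathcal{J}$ be its relabeled Grassmann necklace. By Lemma~\ref{isokey} and Proposition~\ref{vmfproperty}, $\mathcal{J}$ is very-mutation-friendly with $i(\mathcal{J})=|\mathcal{W}|=i-1$, and it is prime because its interior adjacency graph is exactly $\mathpzc{AG}^{\mathcal{V}}(\mathcal{W})$, which is connected by the choice of $Z$.

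It then remains to compare $|\mathcal{J}|$ with $|\mathcal{I}|$. Writing $n=|\mathcal{I}|$, every boundary set of $\mathcal{I}$ is adjacent to some interior set (primality again rules out frozen boundary regions), and the boundary of $\mathcal{J}$ is $\mathcal{A}^{\mathcal{V}}(\mathcal{W})\setminus\mathcal{W}=\{Z\}\cup\{\,b\in\mathcal{I}: b \text{ adjacent to some set of } \mathcal{W}\,\}$. If $d$ denotes the number of boundary sets of $\mathcal{I}$ adjacent to $Z$ but to no set of $\mathcal{W}$, this gives $|\mathcal{J}|=(n-d)+1$. Since $Z$ is mutatable it has exactly $4$ neighbors, and since $\mathcal{W}$ is nonempty and connected to $Z$ at least one of those neighbors lies in $\mathcal{W}$; hence $d\le 3$ and $|\mathcal{J}|\ge n-2$. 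Applying the inductive hypothesis to the prime, very-mutation-friendly necklace $\mathcal{J}$ of interior size $i-1>0$ yields $|\mathcal{J}|\le 2(i-1)+2=2i$, and therefore $n\le |\mathcal{J}|+2\le 2i+2$, as desired.

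The main obstacle is the joint choice in the inductive step: guaranteeing a single collection $\mathcal{V}$ in which the peeled set $Z$ is simultaneously mutatable and a non-cut vertex of the interior adjacency graph. Mutation-friendliness supplies, for each interior set, some collection in which it is mutatable, while primality (via the proposition identifying prime necklaces with connected adjacency graphs) supplies non-cut vertices; reconciling these two facts inside one $\mathcal{V}$ is the delicate point, and I expect to handle it by exploiting the chain of Definition~\ref{vmf}, since the mutation-friendliness of $\mathpzc{G}^{\mathcal{I}}(\mathcal{W}_1)$ (a two-vertex path by Theorem~\ref{char01}) pins down a concrete mutatable set to peel off. A secondary technical point is verifying that $\mathcal{W}$ is genuinely very-connected (connected \emph{and} with $\mathpzc{G}^{\mathcal{I}}(\mathcal{V}\setminus\mathcal{W})$ applicable), which is needed to invoke Lemma~\ref{adjcluqua} and Lemma~\ref{isokey}.
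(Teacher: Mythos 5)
Your overall skeleton (induction on $i(\mathcal{I})$, the base case forcing a square, and the use of Proposition~\ref{vmfproperty} to transfer very-mutation-friendliness to a smaller necklace) matches the paper, and your base case argument is sound. But the inductive step has a genuine gap at exactly the point you flag as ``the delicate point'': you need a single interior set $Z$ that is simultaneously (a) mutatable in some $\mathcal{V}$, (b) a non-cut vertex of $\mathpzc{AG}^{\mathcal{V}}$, and (c) such that $\mathcal{I}\cup\{Z\}$ sits in the chain of Definition~\ref{vmf}, since Proposition~\ref{vmfproperty} only yields very-mutation-friendliness of the relabeled necklace $\mathcal{J}$ when $\mathcal{W}$ is an adjacency component of $\mathcal{V}\setminus\mathcal{W}_j$ for some $\mathcal{W}_j$ in that chain. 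Condition (c) pins $Z$ down to the specific set $V$ with $\{V\}=\mathcal{W}_i\setminus\mathcal{I}$, so you have no freedom left to arrange (b); and there is no reason $V$ should be a non-cut vertex (in the path classes $\mathscr{C}_i$ the interior adjacency graph is a path, and an interior vertex of it is a cut vertex). You cannot simply re-choose the chain to make $V$ an endpoint, and without (b) your $\mathcal{W}$ is not very-connected, so Lemmas~\ref{adjcluqua} and \ref{isokey} do not apply to it as a single block and the inductive hypothesis cannot be invoked for one prime necklace $\mathcal{J}$.

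The paper's proof shows this obstacle is not overcome but circumvented: it removes exactly the forced set $V$, accepts that the remaining interior may fall into $l$ adjacency components (each giving a prime, very-mutation-friendly necklace $\mathcal{I}^m$ by Proposition~\ref{vmfproperty}), and recovers $|\mathcal{I}|$ by a component-wise count using that $V$ lies on the boundary of every $\mathcal{I}^{*m}$, that $|\mathcal{A}^{\mathcal{V}}(V)|=4$ forces $l\le 4$ and contributes at most $4-l$ boundary sets not already accounted for, and that $\sum_m i(\mathcal{I}^m)=i$; a separate case handles the situation where some boundary curve acquires a nonempty prong $\zeta^{P}$ (where your count $|\mathcal{J}|=(n-d)+1$ would also need adjusting for repeated boundary sets, though only in the direction favorable to your inequality). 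Your single-component count is essentially the paper's Case 2 with $l=1$; to complete your argument you would either have to prove the joint existence claim (which appears false in general) or redo the estimate allowing $l>1$, at which point you have reproduced the paper's proof.
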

\begin{remark}
This lemma gives a necessary but not sufficient bound on the size of a very-mutation-friendly Grassmann necklace.
\end{remark}
\begin{ex}
Consider the permutation $\pi = 38762145$. Let $\mathpzc{I}$ be the Grassmann necklace which corresponds to $\pi$. It can verified that $\mathcal{I}$ is prime. Notice that $i(\mathcal{I}) = 3$ and $|\mathcal{I}| = 8$. This means that $\mathcal{I}$ satisfies the inequality in Lemma~$\ref{nnn}$. However, every maximal weakly separated collection in $\mathpzc{G}^{\mathcal{I}}$ contains the set $\left\{2, 4, 5, 8 \right\}$ which is not in $\mathcal{I}$. Thus, $\mathcal{I}$ is not mutation-friendly.
\end{ex}

\begin{proof}[Proof of Lemma~$\ref{nnn}$]
Let $i = i(\mathcal{I})$. We proceed by induction on $i$.

For the base case, we consider very-mutation-friendly Grassmann necklaces $\mathcal{I}$ such that $i=1$. Notice that $\mathcal{I}$ must be part of the equivalence class of $\left\{\{1,2\}, \{2, 3\}, \{3, 4\}, \{4, 1\} \right\}.$ This means that $|\mathcal{I}| = 4 \le 2 \cdot 1 +2$ as desired.

Assume that for all very-mutation-friendly, prime Grassmann necklaces $\mathcal{J}$ with interior size $j$ such that $j \le i$, we have that $|\mathcal{J}| \le 2 \cdot i(\mathcal{J}) +2$. Now, consider a very-mutation-friendly, prime Grassmann necklace $\mathcal{I}$ with interior size $i+1$. Consider the weakly separated collections: 
\[\mathcal{W}_1 \supset \mathcal{W}_2 \supset \ldots \supset \mathcal{W}_{i} \supset \mathcal{I} \] using the notation in Definition~$\ref{vmf}$ applied to $\mathcal{I}$.
We define the set $V$ as follows: 
\[\left\{V \right\} = \mathcal{W}_i \setminus \mathcal{I}.\]
By Definition~$\ref{vmf}$, we know that $\mathpzc{G}^{\mathpzc{I}}(\mathcal{W}_i)$ is applicable and mutation-friendly. Consider $\mathcal{V} \in \mathpzc{G}^{I}(\mathcal{W}_1)$ such that $V$ is mutatable in $\mathcal{V}$. Suppose that the adjacency grouping ${\mathpzc{AC}}_{\mathcal{V}}(\mathcal{V} \setminus \mathcal{W}_i)$  is a partition containing $l$ weakly separated collections that we call $(\mathcal{V}_1, \mathcal{V}_2,\ldots, \mathcal{V}_l).$ 
For $1 \le m \le l$, suppose that $\mathpzc{PT}_{\mathcal{V}}(\mathcal{A}^{\mathcal{V}}({\mathcal{W}}_m))$ has boundary curve $\mathcal{I}^{*m}$ with relabeled Grassmann necklace $\mathcal{I}^m$.

By Proposition~$\ref{vmfproperty}$, we have that $\mathcal{I}^m$ is very-mutation-friendly. Thus by the induction hypothesis, we have that 
\begin{equation}
\label{indhyp}
|\mathcal{I}^{m}| \le 2 \cdot i(\mathcal{I}^m)+2.
\end{equation} 
Also, notice that for all $1 \le m \le l$, we have that 
\begin{equation}
\label{inside}
V \in \mathcal{I}^{*j}. 
\end{equation}
We also have that
\begin{equation}
\label{sumfact}
\sum_{m=1}^{l} i(\mathcal{I}^m) = i. 
\end{equation}
For any $1 \le j \le i$, we have that 
\begin{equation}
\label{sense} 
\mathcal{I}^{*m} \subset \mathcal{W}_j \subset \mathcal{I} \cup \left\{V \right\}
\end{equation}
We can obtain $\mathcal{I}$ from the set $V$ and from $\mathcal{I}^{*m}$ and $\mathcal{V}^{\mathcal{I}^{*m}}$ for $1 \le m \le l$ as follows: 
\begin{equation}
\label{union}
\mathcal{I} = \cup_{m=1}^{l} (\mathcal{I}^{*m} \setminus \left\{V \right\}) \cup ((\mathcal{A}^{\mathcal{V}}(V) \setminus \cup_{m=1}^{l} ({\mathcal{V}}^{\mathcal{I}^{*m}} \setminus \mathcal{I}^{*m}))
\end{equation}
where in the case of multi-collections, duplicate copies of all sets are deleted so that $\mathcal{I}$ does not have any repeated sets. 

Consider the curve $\zeta_{\mathcal{I}^{*m}}$ corresponding to $\mathpzc{I}^{*m}$. We split the remainder our proof into two cases: $\zeta^{P}_{\mathpzc{I}^{*m}} \neq \emptyset$ for some $1 \le m \le l$ and $\zeta^{P}_{\mathpzc{I}^{*m}} = \emptyset$ for all $1 \le m \le l$.

\textbf{Case 1:} $\zeta^{P}_{\mathpzc{I}^{*m}} \neq \emptyset$ for some $m$.
By $(\ref{sense})$, since no set in $\mathcal{I}$ can be on $\zeta^{P}_{\mathcal{I}^{*m}} \setminus P$, we know that $V$ must be on $\zeta^{P}_{\mathcal{I}^{*m}} \setminus P$. This means that $V$ is in the interior of the closed curve $\zeta^{F1}_{\mathcal{I}^{*m}} \cup \zeta^{F2}_{\mathcal{I}^{*m}}$. This implies that $l=1$. By the $(\ref{sumfact})$, we have that $i(\mathcal{I}^{1}) = i$. We also know that 
\[\mathcal{A}^{\mathcal{V}}(V) \subset \mathcal{V}^{\mathcal{I}^{*1}}.\]
Hence we have that 
\[\mathcal{A}^{\mathcal{V}}(V) \setminus (\mathcal{V}^{\mathcal{I}^{*1}} \setminus \mathcal{I}^{*1}) \subset \mathcal{I}^{*1}.\]
Applying ($\ref{union}$) and $(\ref{indhyp})$ thus gives us that
\[|\mathcal{I}| \le 2i+2 - 1\le 2i + 1\] 
as desired.

\textbf{Case 2:} $\zeta^{P}_{\mathpzc{I}^{*m}} = \emptyset$ for all $1 \le m \le l$.
Notice that $|\mathcal{A}^{\mathcal{V}}(V) = 4|$ since $V$ is mutatable.
By ($\ref{inside}$) and since $\mathcal{I}^{*m}$ is a modified-like Grassmann necklace, we know that $V$ must be adjacent to one of the sets in $V^{\mathcal{I}^{*m}} \setminus \mathcal{I}^{*m}$ for each $1 \le m \le l$. Thus $l \le 4$. We have that  
\[|\mathcal{A}^{\mathcal{V}}(V) \setminus \cup_{m=1}^{l} (V^{\mathcal{I}^{*1}} \setminus \mathcal{I}^{*})| \le 4-l.\] By ($\ref{union}$), we obtain the following
\[\mathcal{I} \le \sum_{m=1}^{l} |\mathcal{I}^{*1}| - l + 4 - l.\]
By $(\ref{indhyp})$, we have that 
\[\mathcal{I} \le \sum_{m=1}^{l} 2\cdot i(\mathcal{I}) + l + 4 - l.\]
By $(\ref{sumfact})$, we have that 
\[\mathcal{I} \le 2i + 4 = 2(i+1) + 2\]
as desired.
\end{proof}
We prove the following lemma involving the decomposition set of a very-mutation-friendly Grassmann necklace.
\begin{lemma}
\label{vmfdirproduct}
Given a very-mutation-friendly Grassmann necklace $\mathcal{I}$, the relabeled Grassmann necklace of every Grassmann necklace in the decomposition set $\mathcal{D}^{\mathcal{I}}$ is very-mutation-friendly.
\end{lemma}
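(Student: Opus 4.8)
The plan is to obtain the statement as an essentially immediate consequence of Proposition~\ref{vmfproperty}, applied at the outermost layer $W_i = \mathcal{I}$ of the very-mutation-friendly flag. The only real work is to recognize the relabeled decomposition pieces $\mathcal{D}^{\mathcal{I}}(m)$ as precisely the relabeled Grassmann necklaces attached to the adjacency clusters of the blocks of $\mathpzc{AC}_{\mathcal{V}}(\mathcal{V} \setminus \mathcal{I})$.

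First I would unpack the hypothesis. Writing $i = i(\mathcal{I})$, Definition~\ref{vmf} furnishes a nested chain $\mathcal{W}_1 \supset \mathcal{W}_2 \supset \cdots \supset \mathcal{W}_{i-1}$ of weakly separated collections over $\mathcal{I}$, each containing $\mathcal{I}$, with every $\mathpzc{G}^{\mathcal{I}}(\mathcal{W}_l)$ applicable and mutation-friendly; I append $W_i = \mathcal{I}$ exactly as in the setup of Proposition~\ref{vmfproperty}. I then choose any maximal weakly separated collection $\mathcal{V} \in \mathpzc{G}^{\mathcal{I}}(\mathcal{W}_1)$, as in that proposition.

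Next I would identify the decomposition components. Since $\mathcal{V} \setminus \mathcal{I}$ satisfies $|(\mathcal{V} \setminus \mathcal{I}) \cap \mathcal{I}| = 0$, its adjacency grouping $\mathpzc{AC}_{\mathcal{V}}(\mathcal{V} \setminus \mathcal{I})$ is defined. As established in the proof of the proposition on connectivity of adjacency graphs (and reused in Lemma~\ref{dirproduct}), the blocks of this partition are exactly the interiors $\mathcal{V}^{\mathcal{D*}^{\mathcal{I}}(m)} \setminus \mathcal{D*}^{\mathcal{I}}(m)$ for $1 \le m \le |\mathcal{D}^{\mathcal{I}}|$, a description independent of the choice of $\mathcal{V}$. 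Taking the $m$-th block $\mathcal{W} := \mathpzc{AC}_{\mathcal{V}}(\mathcal{V} \setminus \mathcal{I})(m)$, its adjacency cluster is $\mathcal{A}^{\mathcal{V}}(\mathcal{W}) = \mathcal{V}^{\mathcal{D*}^{\mathcal{I}}(m)}$, and by Lemma~\ref{adjcluqua} the interior-reduced plabic tiling $\mathpzc{PT}_{\mathcal{V}}(\mathcal{A}^{\mathcal{V}}(\mathcal{W}))$ has boundary curve the modified Grassmann-like necklace $\mathcal{D*}^{\mathcal{I}}(m)$, whose relabeled Grassmann necklace is by definition $\mathcal{D}^{\mathcal{I}}(m)$.

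Finally, I would invoke Proposition~\ref{vmfproperty} with $j = i$ (so that $W_j = W_i = \mathcal{I}$) and with $\mathcal{W}$ running over the blocks $\mathpzc{AC}_{\mathcal{V}}(\mathcal{V} \setminus \mathcal{I})(m)$: the proposition asserts that the relabeled Grassmann necklace of the boundary curve of $\mathpzc{PT}_{\mathcal{V}}(\mathcal{A}^{\mathcal{V}}(\mathcal{W}))$ is very-mutation-friendly, and by the previous step this relabeled necklace is precisely $\mathcal{D}^{\mathcal{I}}(m)$. Letting $m$ range over $1 \le m \le |\mathcal{D}^{\mathcal{I}}|$ yields the claim. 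The main obstacle is the identification in the preceding paragraph — that the adjacency grouping of $\mathcal{V} \setminus \mathcal{I}$ reproduces the decomposition set $\mathcal{D*}^{\mathcal{I}}$ independently of the chosen $\mathcal{V}$ — but this is exactly what the adjacency-graph connectivity argument and Lemma~\ref{adjcluqua} already provide, so no new difficulty arises.
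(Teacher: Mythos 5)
Your proposal is correct and follows essentially the same route as the paper: fix $\mathcal{V}\in\mathpzc{G}^{\mathcal{I}}(\mathcal{W}_1)$, identify the blocks of $\mathpzc{AC}_{\mathcal{V}}(\mathcal{V}\setminus\mathcal{I})$ with the interiors of the decomposition pieces $\mathcal{D*}^{\mathcal{I}}(m)$, and apply Proposition~\ref{vmfproperty} at the level $\mathcal{W}_i=\mathcal{I}$. You spell out the identification of the boundary curves with the $\mathcal{D*}^{\mathcal{I}}(m)$ (via Lemma~\ref{adjcluqua}) somewhat more explicitly than the paper does, but the argument is the same.
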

\begin{proof}
We use the notation from Proposition~$\ref{vmfproperty}.$ Let $\mathcal{V}$ be a maximal weakly separated collection in $\mathpzc{G}^{\mathcal{I}}(\mathcal{W}_1)$. Notice that
\[\left\{\mathcal{V}^{\mathcal{D*}^{\mathcal{I}}(i)} \mid  1 \le l \le |\mathcal{D*}^{\mathcal{I}}|\right\} = \left\{\mathcal{AC}_{\mathcal{V}}(\mathcal{V} \setminus \mathcal{W}_i)(l) \mid 1 \le l \le |\mathcal{AC}_{\mathcal{V}}(\mathcal{V} \setminus \mathcal{W}_i)| \right\}. \] Thus, the desired result follows from Proposition~$\ref{vmfproperty}$.
\end{proof}
We also prove the following lemma involving very-mutation-friendly exchange graphs of small interior size. 
\begin{lemma}
\label{iso}
Any exchange graph with interior size $i \le 4$ is isomorphic to a very-mutation-friendly exchange graph with interior size $j \le i$.
\end{lemma}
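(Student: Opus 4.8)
The plan is to reduce to the mutation-friendly case and then induct on the interior size, at each step either splitting off a direct-product factor or peeling off a single interior set so as to lengthen the chain of Definition~\ref{vmf}. First I would dispose of necklaces that are not mutation-friendly. Given $\mathcal{I}$ with $i(\mathcal{I}) = i \le 4$, let $\mathcal{C}$ be the intersection of all maximal weakly separated collections over $\mathcal{I}$. Since every vertex of $\mathpzc{G}^{\mathcal{I}}$ contains $\mathcal{C}$, we have $\mathpzc{G}^{\mathcal{I}} = \mathpzc{G}^{\mathcal{I}}(\mathcal{C})$, which is mutation-friendly by definition and has co-dimension $c = i - (|\mathcal{C}| - |\mathcal{I}|) \le i$. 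If $c < 4$, Corollary~\ref{stronglink} produces an isomorphic mutation-friendly exchange graph of interior size $c \le i$; if $c = 4$ then $i = 4$ forces $\mathcal{C} = \mathcal{I}$, so $\mathcal{I}$ is already mutation-friendly. Hence we may assume throughout that $\mathcal{I}$ is mutation-friendly with $i \le 4$.

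I would then induct on $i$, proving that every mutation-friendly exchange graph of interior size $i \le 4$ is isomorphic to a very-mutation-friendly one. When $i \le 1$ the chain $(\mathcal{W}_1, \ldots, \mathcal{W}_{i-1})$ of Definition~\ref{vmf} is empty, so the necklace is vacuously very-mutation-friendly. For the inductive step I first treat the case where $\mathcal{I}$ is not prime. By Lemma~\ref{dirproduct}, $\mathpzc{G}^{\mathcal{I}} \cong \square_{j} \mathpzc{G}^{\mathcal{D}^{\mathcal{I}}(j)}$ with each factor prime and mutation-friendly; discarding the single-vertex factors leaves factors of positive interior size summing to $i$. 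If at least two remain, each has interior size $< i$, so the inductive hypothesis applies, and a chain for the glued necklace is built by exhausting the interior of one factor before moving to the next: each intermediate $\mathcal{C}$-constant graph is a direct product of applicable, mutation-friendly $\mathcal{C}$-constant graphs of the factors and is therefore applicable and mutation-friendly. This forward analogue of Lemma~\ref{vmfdirproduct}, combined with the product construction of Lemma~\ref{dirproduct}, shows the product is isomorphic to a very-mutation-friendly exchange graph of interior size $\le i$. If only one nontrivial factor remains, then $\mathpzc{G}^{\mathcal{I}}$ is isomorphic to a prime mutation-friendly exchange graph of interior size $i$, handled next.

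It remains to treat a prime mutation-friendly $\mathcal{I}$ of interior size $2 \le i \le 4$. The crucial simplification is that every $\mathcal{C}$-constant graph occurring in the chain has co-dimension at most $i - 1 \le 3$, so by Lemma~\ref{app} it is automatically applicable and only mutation-friendliness must be secured. Fixing $\mathcal{V} \in \mathpzc{G}^{\mathcal{I}}$, whose adjacency graph $\mathpzc{AG}^{\mathcal{V}}$ is connected because $\mathcal{I}$ is prime, I would select a mutatable set $V \in \mathcal{V} \setminus \mathcal{I}$ such that $\mathcal{W}_{i-1} := \mathcal{I} \cup \{V\}$ is weakly separated and $\mathpzc{G}^{\mathcal{I}}(\mathcal{W}_{i-1})$ is mutation-friendly. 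Then $\mathpzc{G}^{\mathcal{I}}(\mathcal{W}_{i-1})$ is an applicable, mutation-friendly $\mathcal{C}$-constant graph of co-dimension $i - 1$, hence by Theorem~\ref{link} (via Lemma~\ref{isokey}) isomorphic to a mutation-friendly exchange graph of interior size $i - 1$. Applying the inductive hypothesis to this smaller graph yields a defining chain, which the isomorphism pulls back to a nested family $\mathcal{W}_1 \supset \cdots \supset \mathcal{W}_{i-2}$ of applicable, mutation-friendly $\mathcal{C}$-constant graphs of $\mathcal{I}$, each containing $\mathcal{W}_{i-1}$; appending $\mathcal{W}_{i-1}$ completes a chain as in Definition~\ref{vmf}, so a representative of $\mathcal{I}$ is very-mutation-friendly.

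The hard part is guaranteeing the peeling set $V$ in the prime step: one must produce a mutatable $V \in \mathcal{V} \setminus \mathcal{I}$ for which $\mathpzc{G}^{\mathcal{I}}(\mathcal{I} \cup \{V\})$ remains mutation-friendly, and one must verify that the chain obtained from the smaller graph pulls back compatibly, possibly after replacing $\mathcal{I}$ by an isomorphic mutation-friendly representative, which is permissible since the statement asserts only isomorphism. This is exactly where the hypothesis $i \le 4$ enters: the necessary bound $|\mathcal{I}| \le 2i + 2$ of Lemma~\ref{nnn}, together with the connectivity of the prime adjacency graph, restricts the possible boundary configurations to finitely many, in each of which a suitable $V$ can be exhibited and the pull-back checked directly.
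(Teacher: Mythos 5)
Your reduction to the mutation-friendly case is exactly the paper's: take $\mathcal{C}$ to be the intersection of all maximal collections, observe $\mathpzc{G}^{\mathcal{I}}\cong\mathpzc{G}^{\mathcal{I}}(\mathcal{C})$ with co-dimension at most $i-1\le 3$ when $\mathcal{I}$ is not mutation-friendly, and apply Corollary~\ref{stronglink}. Where you diverge is the remaining claim that every mutation-friendly exchange graph of interior size $\le 4$ is very-mutation-friendly (Proposition~\ref{sm} in the paper). The paper does not induct on interior size or split into prime and non-prime cases; it builds the chain of Definition~\ref{vmf} directly on $\mathcal{I}$ by iterating Proposition~\ref{mutfri}, whose proof is a greedy choice: fix $\mathcal{V}$, let $m(V)$ be the shortest mutation distance in $\mathpzc{G}^{\mathcal{I}}(\mathcal{C})$ from $\mathcal{V}$ to a collection not containing $V$, and adjoin to $\mathcal{C}$ a set $V$ maximizing $m(V)$; a shortest path losing any other set $W$ cannot mutate $V$ first, so mutation-friendliness is preserved. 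Applicability of every link in the chain is then free from Lemma~\ref{app} since all co-dimensions are $\le 3$. This one-step extension is what makes the whole argument go, and it needs neither primality nor any bound on $|\mathcal{I}|$.

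The genuine gap in your proposal is precisely at the point you flag as "the hard part": producing a mutatable $V\in\mathcal{V}\setminus\mathcal{I}$ with $\mathpzc{G}^{\mathcal{I}}(\mathcal{I}\cup\{V\})$ mutation-friendly. Your plan to reduce this to finitely many boundary configurations via the bound $|\mathcal{I}|\le 2\cdot i(\mathcal{I})+2$ is circular: Lemma~\ref{nnn} is stated (and proved, via Proposition~\ref{vmfproperty}) only for \emph{very-mutation-friendly} prime necklaces, which is the property you are in the middle of establishing for $\mathcal{I}$. Without that bound your case analysis is not finite, and no other mechanism in the proposal guarantees the peeling set exists. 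A secondary, lesser issue is the pull-back of the chain through the isomorphism of Theorem~\ref{link}: transferring applicability and mutation-friendliness of the intermediate $\mathcal{C}$-constant graphs back to $\mathcal{I}$ is plausible (it mirrors Proposition~\ref{vmfproperty} run in reverse) but is asserted rather than argued, and it is avoidable entirely — once one has the greedy extension step, the chain can be built on $\mathcal{I}$ itself with no detour through a smaller exchange graph and no prime decomposition.
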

We use the following propositions in our proof:
\begin{prop}
\label{mutfri}
For every mutation-friendly $\mathcal{C}$-constant graph $\mathpzc{G}^{\mathcal{I}}(\mathcal{C})$ of co-dimension $c > 1$, there exists a weakly separated collection $\mathcal{D}$ such that $\mathcal{I} \subset \mathcal{D} \subset \mathcal{C}$ and $\mathpzc{G}^{\mathcal{I}}(\mathcal{D})$ is mutation friendly with co-dimension $c-1$.
\end{prop}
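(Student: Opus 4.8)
The plan is to produce $\mathcal{D}$ by adjoining a single set to $\mathcal{C}$; since adding one set drops the co-dimension by exactly $1$ (and preserves weak separation, because the new set lies in some common maximal collection), all that must be arranged is mutation-friendliness. Write $\mathscr{V}_{\mathcal{C}}$ for the vertex set of $\mathpzc{G}^{\mathcal{I}}(\mathcal{C})$ and let $U = \bigcup_{\mathcal{V} \in \mathscr{V}_{\mathcal{C}}} \mathcal{V}$. For $S \in U \setminus \mathcal{C}$ I would say that $S$ \emph{forces} a set $T$ if every $\mathcal{V} \in \mathscr{V}_{\mathcal{C}}$ with $S \in \mathcal{V}$ also satisfies $T \in \mathcal{V}$. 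Unwinding the definition of mutation-friendliness, $\mathcal{D} = \mathcal{C} \cup \{S\}$ gives a mutation-friendly $\mathcal{C}$-constant graph precisely when $S$ forces no set outside $\mathcal{C} \cup \{S\}$, since the intersection of the collections in $\mathscr{V}_{\mathcal{D}}$ is exactly $\mathcal{C} \cup \{S\}$ together with everything $S$ forces. So the goal becomes: find $S \in U \setminus \mathcal{C}$ forcing nothing new. Note $U \setminus \mathcal{C} \neq \emptyset$ since $c \geq 2$.

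First I would record a geometric fact: every $S \in U \setminus \mathcal{C}$ is mutatable in at least one collection of $\mathscr{V}_{\mathcal{C}}$ that contains it. Indeed, $S \in U$ gives some $\mathcal{V}' \in \mathscr{V}_{\mathcal{C}}$ with $S \in \mathcal{V}'$, while mutation-friendliness of $\mathcal{C}$ (so $\bigcap_{\mathcal{V}} \mathcal{V} = \mathcal{C} \not\ni S$) gives some $\mathcal{V}'' \in \mathscr{V}_{\mathcal{C}}$ with $S \notin \mathcal{V}''$. Since every $\mathcal{C}$-constant graph is connected (Oh, Speyer), there is a path of mutations from $\mathcal{V}'$ to $\mathcal{V}''$ inside $\mathscr{V}_{\mathcal{C}}$; along it $S$ must be dropped at some edge, and because a single mutation changes exactly one set, $S$ is the mutated set there, hence $S$ is mutatable in the collection at the tail of that edge.

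The heart of the argument is then a finiteness-and-contradiction step. Suppose, for contradiction, that \emph{every} $S \in U \setminus \mathcal{C}$ forces some set outside $\mathcal{C} \cup \{S\}$. Form the directed graph $\Gamma$ on vertex set $U \setminus \mathcal{C}$ with an arc $S \to T$ whenever $S$ forces $T$ and $T \neq S$; by assumption every vertex has out-degree at least $1$, so $\Gamma$ contains a directed cycle $S_1 \to S_2 \to \cdots \to S_r \to S_1$ with $r \geq 2$. Forcing is transitive (if every collection containing $S$ contains $T$, and every one containing $T$ contains $R$, then every one containing $S$ contains $R$), so all the $S_i$ force one another: any collection in $\mathscr{V}_{\mathcal{C}}$ containing one of $S_1, \ldots, S_r$ contains all of them. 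But by the previous paragraph $S_1$ is mutatable in some $\mathcal{U} \in \mathscr{V}_{\mathcal{C}}$ with $S_1 \in \mathcal{U}$; then $\{S_1, \ldots, S_r\} \subseteq \mathcal{U}$, and mutating $S_1$ (a legitimate edge of $\mathpzc{G}^{\mathcal{I}}(\mathcal{C})$, as $S_1 \notin \mathcal{C}$) produces $\mathcal{U}' \in \mathscr{V}_{\mathcal{C}}$ with $S_1 \notin \mathcal{U}'$ but $S_2 \in \mathcal{U}'$, since the mutation alters only $S_1$. This contradicts the fact that $S_2$ forces $S_1$. Hence some $S$ forces nothing new, and $\mathcal{D} = \mathcal{C} \cup \{S\}$ is the required mutation-friendly $\mathcal{C}$-constant graph of co-dimension $c-1$.

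The main obstacle, and the step I would be most careful about, is ruling out a stable ``atom'' of mutually-forcing sets; this is exactly where weak-separation geometry enters, through the fact that a set which is mutatable in a collection can be exchanged out by a mutation that leaves every other set (in particular every set of $\mathcal{C}$ and every other atom member) in place. Everything else is bookkeeping: checking that adjoining one set keeps the collection weakly separated and lowers the co-dimension by one, and the standard observation that a finite directed graph in which every vertex has positive out-degree must contain a directed cycle.
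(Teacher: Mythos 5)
Your proof is correct, but it establishes the key point by a different mechanism than the paper. Both arguments come down to finding a single set $S\notin\mathcal{C}$ (lying in some collection of $\mathpzc{G}^{\mathcal{I}}(\mathcal{C})$) that, in your terminology, forces nothing outside $\mathcal{C}\cup\{S\}$, and both ultimately rest on the same two facts: connectivity of $\mathcal{C}$-constant graphs and the fact that a mutation alters exactly one set. The paper's proof is an extremal argument: fix $\mathcal{V}\in\mathpzc{G}^{\mathcal{I}}(\mathcal{C})$, let $m(V)$ be the minimum number of mutations inside $\mathpzc{G}^{\mathcal{I}}(\mathcal{C})$ needed to reach a collection not containing $V$, and choose $V\in\mathcal{V}\setminus\mathcal{C}$ with $m(V)$ maximal. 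If this $V$ forced some $W\notin\mathcal{C}\cup\{V\}$, then every collection lacking $W$ also lacks $V$, and on a shortest path from $\mathcal{V}$ to a collection lacking $W$ the penultimate collection already lacks $V$ (else the final mutation, which changes only $W$, would produce a collection containing $V$ but not $W$); hence $m(W)\ge m(V)+1$, contradicting maximality. The paper records only the construction and leaves this verification implicit; it also contains two evident slips --- ``$\mathcal{D}\subset\mathcal{C}$'' where $\mathcal{C}\subset\mathcal{D}$ is meant, and ``$\mathcal{D}=\mathcal{I}\cup\{V\}$'' where $\mathcal{D}=\mathcal{C}\cup\{V\}$ is meant --- which you have correctly read through, since the co-dimension can only decrease by enlarging $\mathcal{C}$. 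Your route instead supposes every candidate forces something new, extracts a directed cycle of mutually forcing sets, and breaks it by mutating one member out of a collection while the next member stays put. Both are sound; the paper's distance function avoids your separate mutatability step and the digraph detour, while your version makes the forcing relation explicit and actually supplies the justification the paper omits.
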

\begin{proof}
We obtain $\mathcal{D}$ as follows: Fix a maximal weakly separated collection $\mathcal{V} \in \mathpzc{G}^{\mathcal{I}}(\mathcal{C})$. For each set $V \in \mathcal{V} \setminus \mathcal{C}$, let $m(V)$ be the minimum path length in $\mathpzc{G}^{\mathcal{I}}(\mathcal{C})$ from $\mathcal{V}$ to reach a $\mathcal{C} \in \mathpzc{G}^{\mathcal{I}}(\mathcal{C})$ that does not contain $V$. Consider $V \in \mathcal{V}$ such that $m(V)$ is maximal. Then, let $\mathcal{D} = \mathcal{I} \cup \left\{V \right\}$.
\end{proof}
\begin{prop}
\label{sm}
All mutation-friendly exchange graphs with interior size $\le 4$ are very-mutation-friendly.
\end{prop}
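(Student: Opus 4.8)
The plan is to construct, for a mutation-friendly Grassmann necklace $\mathcal{I}$ of interior size $i \le 4$, the nested sequence $\mathcal{W}_1 \supset \mathcal{W}_2 \supset \cdots \supset \mathcal{W}_{i-1}$ demanded by Definition~\ref{vmf}. First I would clear away the degenerate cases $i \le 1$: here the sequence $\mathscr{W}$ is empty, so the very-mutation-friendly condition holds vacuously. For $i \in \{2,3,4\}$ the co-dimensions $1, 2, \ldots, i-1$ that can arise are all strictly less than $4$, so by Lemma~\ref{app} every $\mathcal{C}$-constant graph of such co-dimension is automatically applicable. This is precisely where the hypothesis $i \le 4$ does its work: it lets me discharge the applicable clause of Definition~\ref{vmf} for free and focus solely on producing a nested chain whose members are mutation-friendly.

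To produce that chain I would iterate Proposition~\ref{mutfri}. The exchange graph $\mathpzc{G}^{\mathcal{I}}$ is the $\mathcal{C}$-constant graph $\mathpzc{G}^{\mathcal{I}}(\mathcal{I})$, which is mutation-friendly by hypothesis (the intersection of all its maximal collections is exactly $\mathcal{I}$) and has co-dimension $i$. Applying Proposition~\ref{mutfri} enlarges the fixed collection by a single set while preserving mutation-friendliness and lowering the co-dimension by one, yielding a collection $\mathcal{C}_1 \supset \mathcal{I}$ with $\mathpzc{G}^{\mathcal{I}}(\mathcal{C}_1)$ mutation-friendly of co-dimension $i-1$. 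Since $i-1 > 1$ whenever $i > 2$, I can feed $\mathcal{C}_1$ back into Proposition~\ref{mutfri} to obtain $\mathcal{C}_2 \supset \mathcal{C}_1$ mutation-friendly of co-dimension $i-2$, and repeat until I reach a mutation-friendly collection of co-dimension $1$ (the final application uses a co-dimension-$2$ graph, for which the hypothesis $c>1$ is satisfied). This builds an increasing chain $\mathcal{I} = \mathcal{C}_0 \subset \mathcal{C}_1 \subset \cdots \subset \mathcal{C}_{i-1}$ in which each $\mathpzc{G}^{\mathcal{I}}(\mathcal{C}_t)$ is mutation-friendly of co-dimension $i-t$.

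I would then set $\mathcal{W}_j = \mathcal{C}_{i-j}$ for $1 \le j \le i-1$ and verify the four clauses of Definition~\ref{vmf}. Each $\mathcal{W}_j$ contains $\mathcal{I}$, giving clause (1); the chain $\mathcal{C}_t$ being increasing makes $\mathcal{W}_1 \supset \mathcal{W}_2 \supset \cdots \supset \mathcal{W}_{i-1}$, giving clause (2); since any maximal $\mathcal{V} \in \mathpzc{G}^{\mathcal{I}}$ has $|\mathcal{V}| = i + |\mathcal{I}|$, the co-dimension-$j$ property of $\mathcal{W}_j$ gives $|\mathcal{W}_j| = i + |\mathcal{I}| - j$, which is clause (3); and each $\mathpzc{G}^{\mathcal{I}}(\mathcal{W}_j)$ is mutation-friendly by construction and applicable by Lemma~\ref{app} (as $j \le i-1 \le 3 < 4$), which is clause (4). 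Hence $\mathcal{I}$ is very-mutation-friendly.

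The genuine content is entirely absorbed into Proposition~\ref{mutfri}, so the main obstacle is simply to confirm that its hypotheses hold at every step of the iteration: that the collection passed to it is truly mutation-friendly and of co-dimension exceeding $1$. The co-dimension bookkeeping is automatic, and mutation-friendliness is propagated by the proposition itself, so the induction closes. The one point I would be careful to record is that the recursion halts exactly at co-dimension $1$, which is the smallest co-dimension appearing in $\mathscr{W}$, and that the bound $i \le 4$ is what keeps all co-dimensions below the threshold of Lemma~\ref{app} so that applicability never has to be checked by hand.
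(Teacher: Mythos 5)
Your proposal is correct and follows essentially the same route as the paper: iterate Proposition~\ref{mutfri} starting from the co-dimension-$i$ graph $\mathpzc{G}^{\mathcal{I}}(\mathcal{I})$ to build the nested mutation-friendly chain, then invoke Lemma~\ref{app} for applicability since every co-dimension appearing in the chain is at most $i-1 \le 3$. You merely make explicit the bookkeeping, the verification of the clauses of Definition~\ref{vmf}, and the degenerate cases $i \le 1$ that the paper leaves implicit.
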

\begin{proof}
Consider an exchange graph $\mathpzc{G}^{\mathcal{I}}$ with interior size $i \le 4$. Notice that $\mathpzc{G}^{\mathcal{I}}$ is isomorphic to the $\mathcal{C}$-constant graph $\mathpzc{G}^{\mathcal{I}}(\mathcal{I})$ of co-dimension $i$. A repeated application of Proposition~$\ref{mutfri}$ results in a chain of mutation-friendly $\mathcal{C}$-constant graphs which we know are applicable by Lemma~$\ref{app}$.
\end{proof}
We now prove Lemma~$\ref{iso}$.
\begin{proof}[Proof of Lemma~$\ref{iso}$]
Suppose that $\mathpzc{G}^{\mathpzc{I}}$ is mutation-friendly with interior size $i \le 4$. Then by Proposition~$\ref{sm}$, we know that $\mathpzc{G}^{\mathpzc{I}}$ is very-mutation-friendly as desired.

Suppose that $\mathpzc{G}^{\mathpzc{I}}$ with interior size $i \le 4$ is not mutation-friendly and $\cup_{\mathcal{V} \in \mathpzc{G}^{\mathpzc{I}}} = \mathcal{C}$. By definition, we know that $|\mathcal{C}| \ge |\mathcal{I}|$ and $\mathpzc{G}^{\mathpzc{I}} \cong \mathpzc{G}^{\mathpzc{I}}(\mathcal{C})$. We also know that $\mathpzc{G}^{\mathpzc{I}}(\mathcal{C})$ is mutation-friendly. By Theorem~$\ref{iso}$, we know that $\mathpzc{G}^{\mathpzc{I}}(\mathcal{C})$ is isomorphic to a mutation-friendly exchange graph $\mathpzc{G}^{\mathpzc{J}}$ with interior size $\le i-1$. By Proposition~$\ref{sm}$, we know that $\mathpzc{G}^{\mathcal{J}}$ is very-mutation-friendly with interior size $\le i-1$ as desired.
\end{proof}
\subsection{Proof of Theorem~$\ref{chare}$}
We can decompose the set of exchange graphs for interior size $i$ for $i \le 4$ into sets of prime, mutation-friendly exchange graphs as follows:
\begin{lemma}
\label{yayy}
For any nonnegative integer $i \le 4$, the set of exchange graphs with interior size $i$ is exactly:
$$\displaystyle \left\{\square_{j=0}^{k} \mathpzc{G}^{\mathcal{I}_j} \mid \sum_{j=0}^{k} i(\mathpzc{G}^{\mathcal{I}_j}) \le i \right\},$$
where each $\mathpzc{G}^{\mathcal{I}_j}$ is a prime, very-mutation-friendly exchange graph.
\end{lemma}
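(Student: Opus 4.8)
The plan is to prove the two inclusions separately, combining the prime decomposition of Lemma~\ref{dirproduct} with the reduction to the very-mutation-friendly case supplied by Lemma~\ref{iso}. Throughout I read ``the set of exchange graphs with interior size $i$'' as the set of isomorphism classes of graphs realized by some Grassmann necklace $\mathcal{I}$ with $i(\mathcal{I})=i$.

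For the inclusion of this set into the displayed set, I would start with an arbitrary exchange graph $\mathpzc{G}^{\mathcal{I}}$ with $i(\mathcal{I})=i\le 4$. By Lemma~\ref{iso} it is isomorphic to a very-mutation-friendly exchange graph $\mathpzc{G}^{\mathcal{J}}$ with $i(\mathcal{J})=j\le i$. I would then apply Lemma~\ref{dirproduct} to $\mathcal{J}$, writing $\mathpzc{G}^{\mathcal{J}}\cong\square_{m=1}^{|\mathcal{D}^{\mathcal{J}}|}\mathpzc{G}^{\mathcal{D}^{\mathcal{J}}(m)}$ as a direct product over its decomposition set, in which every factor is prime by the definition of the decomposition set. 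By Lemma~\ref{vmfdirproduct} each relabeled factor $\mathcal{D}^{\mathcal{J}}(m)$ is very-mutation-friendly, and by the additivity of interior size over the decomposition set we have $\sum_m i(\mathcal{D}^{\mathcal{J}}(m))=i(\mathcal{J})=j\le i$. Hence $\mathpzc{G}^{\mathcal{I}}$ is a direct product of prime very-mutation-friendly exchange graphs whose interior sizes sum to at most $i$, placing it in the displayed set.

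For the reverse inclusion I would take a product $\square_{j=0}^{k}\mathpzc{G}^{\mathcal{I}_j}$ of prime very-mutation-friendly exchange graphs with $s=\sum_j i(\mathpzc{G}^{\mathcal{I}_j})\le i$ and must exhibit a Grassmann necklace of interior size exactly $i$ realizing this graph. The converse half of Lemma~\ref{dirproduct} first produces a necklace $\mathcal{K}$ with $\mathpzc{G}^{\mathcal{K}}\cong\square_j\mathpzc{G}^{\mathcal{I}_j}$ and $i(\mathcal{K})=s$. To raise the interior size from $s$ to $i$ without altering the graph, I would glue on a rigid padding necklace $\mathcal{L}$ whose exchange graph is a single vertex (graph $A$) and whose interior size is $i-s\le 4$; such an $\mathcal{L}$ exists by direct construction of a necklace forcing a unique maximal weakly separated collection with the prescribed number of interior sets. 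Then Proposition~\ref{dpp} gives $\mathpzc{G}^{\mathcal{K}\square\mathcal{L}}\cong\mathpzc{G}^{\mathcal{K}}\square\mathpzc{G}^{\mathcal{L}}$ and $i(\mathcal{K}\square\mathcal{L})=s+(i-s)=i$, and since $\mathpzc{G}^{\mathcal{L}}$ is a single vertex we have $\mathpzc{G}^{\mathcal{K}}\square\mathpzc{G}^{\mathcal{L}}\cong\mathpzc{G}^{\mathcal{K}}$, so $\mathcal{K}\square\mathcal{L}$ is an exchange graph of interior size exactly $i$ isomorphic to the given product.

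The main obstacle I expect is the bookkeeping in the reverse direction: matching the interior size exactly to $i$ rather than merely realizing the graph at interior size $s$. This is where the rigid padding necklaces enter, and verifying that a single-vertex exchange graph occurs at each interior size $0\le i-s\le 4$ is the one point that must be checked rather than quoted. The restriction $i\le 4$ is essential both here and in the forward direction, since Lemma~\ref{iso} --- the reduction to the very-mutation-friendly case --- is available only for interior size at most $4$. The remaining verifications (primality of the decomposition factors, additivity of interior size, and the identity $\mathpzc{G}\square A\cong\mathpzc{G}$) are routine and follow directly from the cited results.
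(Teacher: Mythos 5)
Your proposal follows the paper's proof almost exactly in the forward direction: Lemma~\ref{iso} to pass to a very-mutation-friendly necklace of interior size $j\le i$, Lemma~\ref{dirproduct} to decompose over the decomposition set, Lemma~\ref{vmfdirproduct} for the very-mutation-friendly condition on the factors, and additivity of interior size. Where you genuinely add something is the reverse inclusion: the paper disposes of it in one sentence by citing Lemma~\ref{dirproduct}, which only realizes the product at interior size $s=\sum_j i(\mathpzc{G}^{\mathcal{I}_j})$, and is silent about the case $s<i$ even though the displayed set is defined with $\sum_j i(\mathpzc{G}^{\mathcal{I}_j})\le i$ and the target set consists of graphs realized at interior size exactly $i$. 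Your padding construction --- gluing on a necklace whose exchange graph is a single vertex and whose interior size is $i-s$ --- closes this gap, and such necklaces do exist (e.g.\ the $2a$-gon with one immutatable interior set appearing in the base case of Lemma~\ref{adjcluqua} has interior size $1$ and a one-vertex exchange graph, and direct products of copies of it supply interior sizes $2$ through $4$ while keeping the exchange graph a single vertex by Proposition~\ref{dpp}). So your argument is correct and, on this one point, more complete than the paper's.
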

\begin{proof}
Consider an exchange graph $\mathpzc{G}^{\mathcal{I}}$ with interior size $i$ where $i \le 4$.
By Lemma~$\ref{iso}$, we know that $\mathpzc{G}^{\mathcal{I}}$ is isomorphic to a very-mutation-friendly exchange graph $\mathpzc{G}^{\mathcal{J}}$ where $\mathcal{J}$ is a Grassmann necklace with interior size $j \le i$. By Lemma~$\ref{dirproduct}$ and Lemma~$\ref{vmfdirproduct}$, we know that $\mathpzc{G}^{\mathcal{J}}$ is isomorphic to a direct product in the above form.

Conversely, suppose that we have a direct product in the above form. By Lemma~$\ref{dirproduct}$, we know that there exists a Grassmann necklace $\mathcal{I}$ with exchange graph $\mathpzc{G}^{\mathcal{I}}$ with the desired interior size isomorphic to the desired direct product.
\end{proof}

Lemma~$\ref{yayy}$ allows us to prove Theorem~$\ref{chare}$.
\begin{proof}[Proof of Theorem~$\ref{chare}$]
By Theorem~$\ref{yayy}$, the set of exchange graphs with interior size $i$ for $i \le 4$ is reduced to computing all prime mutation-friendly exchange graphs with interior size $\le 4$. Let $\mathcal{I}$ be the Grassmann necklace of any prime, very-mutation-friendly exchange graph with interior size $j \le 4$. If $j > 0$, by Theorem~$\ref{nnn}$, we know that $|\mathcal{I}| \le 2j + 2$. If $j = 0$, it is easy to see that $\mathcal{I}$ corresponds to the decorated permutation $312$. This reduces proving the results in Table~$\ref{mainchare}$ to a finite computation, which we executed using a computer program. The results of this computer program are shown in Table~$\ref{fullchar}$, which shows all equivalence classes of prime, mutation-friendly Grassmann necklaces and their corresponding exchange graphs.
\end{proof}

\subsection{Proof of Theorem~$\ref{charc}$}
We now use Theorem~$\ref{chare}$ to prove Theorem~$\ref{charc}$.
\begin{proof}[Proof of Theorem~$\ref{charc}$]
By Corollary~$\ref{stronglink}$, we know that the set of $\mathcal{C}$-constant graphs of co-dimension $c$ for $0 < c \le 3$ is equivalent to the set of exchange graphs with interior size $c$. In the case that $c =0$, it is easy to see that the only possible $\mathcal{C}$-constant graph is a path with one vertex. The results in Table~$\ref{maincharc}$ follow from Theorem~$\ref{chare}$.
\end{proof}

\section{Explanation of Conjecture~$\ref{cat}$}
Conjecture~$\ref{cat}$ involves the maximum order of an exchange graph of a given interior size. 
We denote by $M(i)$ the maximum possible order of an exchange graph with interior size $i$. Conjecture~$\ref{cat}$ is partially motivated by the following:
\begin{prop}
For any $i \ge 0$, a lower bound on $M(i)$ is the Catalan number $C_{i+1}$ (where $C_1 = 1, C_2 = 2, C_3 = 5$).
\end{prop}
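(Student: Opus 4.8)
The plan is to exhibit, for each $i \ge 0$, a single Grassmann necklace whose exchange graph has interior size $i$ and order exactly $C_{i+1}$; since $M(i)$ is a maximum over all exchange graphs of interior size $i$, producing one such family suffices. The natural candidate is the top cell of the Grassmannian $Gr(2,m)$. Concretely, for $m \ge 3$ I would let $\mathcal{I}_m$ be the Grassmann necklace on ${[m] \choose 2}$ whose associated decorated permutation is the shift $\pi(a) = a+2$ (indices modulo $m$); equivalently $I_j = \{j, j+1\}$ for each $j$. This is the positroid cell with $\mathcal{M}_{\mathcal{I}_m} = {[m] \choose 2}$, so the maximal weakly separated collections over $\mathcal{I}_m$ are exactly the ordinary maximal weakly separated collections of ${[m] \choose 2}$. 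I would then set $m = i+3$ and show that $|\mathpzc{G}^{\mathcal{I}_{i+3}}| = C_{i+1}$ while $i(\mathcal{I}_{i+3}) = i$.

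First I would identify these collections with triangulations of a convex $m$-gon. Labelling the vertices of the polygon $1, \ldots, m$ clockwise, a two-element subset $\{a,b\}$ corresponds to the chord joining vertices $a$ and $b$. For two-element sets the definition of weak separation says precisely that the chords $\{a,b\}$ and $\{c,d\}$ do not cross, and two subsets sharing a vertex are automatically weakly separated. Hence a maximal weakly separated collection is a maximal non-crossing family of chords, i.e. a triangulation of the $m$-gon together with its $m$ boundary edges. The number of triangulations of a convex $m$-gon is the Catalan number $C_{m-2}$, which under the paper's convention equals $C_{i+1}$ when $m = i+3$; this computes the order of the exchange graph.

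Next I would compute the interior size using Theorem~\ref{card}. The shift permutation has $A(\pi) = 0$ (equivalently, since $\mathcal{M}_{\mathcal{I}_m}$ is all of ${[m] \choose 2}$, the cardinality formula recovers Scott's bound \cite{Scott}), so every maximal weakly separated collection over $\mathcal{I}_m$ has cardinality $2(m-2)+1 = 2m-3$. Since the $m$ boundary sets $\{j,j+1\}$ are distinct, $|\mathcal{I}_m| = m$, whence $i(\mathcal{I}_m) = (2m-3) - m = m-3$, matching the count of $m-3$ interior diagonals of a triangulation. Taking $m = i+3$ produces an exchange graph of interior size $i$ and order $C_{i+1}$, which gives $M(i) \ge C_{i+1}$.

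The argument is routine once the correspondence is in place, so there is no genuinely hard obstacle; the two points requiring care are verifying that $A(\pi) = 0$ for the shift permutation, so that Theorem~\ref{card} returns the full cardinality $2m-3$, and confirming the Catalan indexing, so that the $m$-gon triangulation count $C_{m-2}$ lines up with $C_{i+1}$ under the substitution $m = i+3$. Both reduce to the classical description of the maximal weakly separated collections of ${[m] \choose 2}$ as triangulations of a polygon.
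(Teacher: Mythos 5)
Your construction is exactly the one the paper uses: it takes $\mathcal{I} = (\{1,2\},\{2,3\},\ldots,\{i+3,1\})$ and cites the correspondence with triangulations of an $(i+3)$-gon, which is precisely your $\mathcal{I}_{i+3}$ with the shift permutation. Your write-up just supplies the details (the chord/crossing dictionary, $A(\pi)=0$, and the interior-size count $2m-3-m=m-3$) that the paper leaves implicit, so the proposal is correct and essentially identical in approach.
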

\begin{proof}
We can construct an exchange graph with interior size $\mathcal{C}$ that has order $C_{i+1}$ by taking $\mathcal{I} = (\left\{1, 2 \right\}, \left\{2, 3 \right\},\ldots,\left\{i+2, i+3 \right\}, \left\{i+3, 1 \right\}).$ This correspond to triangulations of a $(i+3)$-gon.
\end{proof}
We now discuss further motivation for Conjecture~$\ref{cat}$:
\begin{remark}
Theorem~$\ref{chare}$ proves Conjecture~$\ref{cat}$ for $i \le 4$. The results in Table~$\ref{fullchar}$ motivates us to hypothesize that the only Grassmann necklaces with interior size $c$ and order $C_{i+1}$ are Grassmann necklaces in the equivalence class of $$\mathcal{I} = (\left\{1, 2 \right\}, \left\{2, 3 \right\},\ldots,\left\{i+2, i+3 \right\}, \left\{i+3, 1 \right\}).$$

In the case of triangulations, the interior size is equal to the number of mutatable subsets at each step. For other Grassmann necklaces, the interior size is greater than the number of mutatable sets at some steps, since some interior sets are immutatable in some maximal weakly separated collections.
\end{remark}
We also note some consequences of Conjecture~$\ref{cat}$:
\begin{remark}
If Conjecture~$\ref{cat}$ is true, the following result would follow. Consider the following Grassmann necklace: $$\displaystyle \mathcal{I} = (\left\{1,\ldots,k\right\}, \left\{2,\ldots,k+1\right\},\ldots,\left\{n,\ldots,k-1\right\}).$$ It would follow that
\[|\mathpzc{G}^{\mathcal{I}}| \le C_{k(n-k)+2-n}.\]
This bound would provide a partial answer to Problem 36 in \cite{suhoh}.
\end{remark}
\section{Proof of Theorem~$\ref{cycle}$ and Theorem~$\ref{tree}$}
Theorem~$\ref{cycle}$ and Theorem~$\ref{tree}$ provide a full characterization of very-mutation-friendly exchange graphs in the special cases of cycles and trees. In Section 7.1, we prove relevant properties of mutation-friendly exchange graphs as well as an important property of Grassmann necklaces in $\mathscr{C}_i$. In Section 7.2, we prove Theorem~$\ref{tree}$. In Section 7.3, we prove Theorem~$\ref{cycle}$.

\subsection{Important Properties of Mutation-Friendly Exchange Graphs and $\mathscr{C}_i$}
In Section 7.1.1, we discuss relevant properties involving cycles and $C$-constant graphs of mutation-friendly exchange graphs. In Section 7.1.2, we discuss an important property that relates the exchange graphs of Grassmann necklaces in $\mathscr{C}_i$ with the exchange graphs of Grassmann necklaces in $\mathscr{C}_{i+1}$.
\subsection{Mutation-Friendly Exchange Graphs}
We prove the following lemma involving the existence of a cycle in mutation-friendly exchange graphs:
\begin{lemma}
\label{cyc4}
Given a mutation-friendly, not prime exchange graph $\mathpzc{G}^{\mathcal{I}}$ with interior size $\ge 2$, we know that there 
exists a $\mathcal{C}$-constant graph $\mathpzc{G}^{\mathcal{I}}(\mathcal{C})$ of co-dimension $2$ that is a cycle with $4$ vertices.
\end{lemma}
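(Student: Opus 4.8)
The plan is to reduce the statement to the direct-product structure of Lemma~\ref{dirproduct} and then realize the desired $4$-cycle as the product of two independent single mutations. Since $\mathpzc{G}^{\mathcal{I}}$ is not prime, its decomposition set has at least two elements, so Lemma~\ref{dirproduct} gives
$$\mathpzc{G}^{\mathcal{I}} \cong \square_{j=1}^{|\mathcal{D}^{\mathcal{I}}|} \mathpzc{G}^{\mathcal{D}^{\mathcal{I}}(j)},$$
and, because $\mathpzc{G}^{\mathcal{I}}$ is mutation-friendly, each factor $\mathpzc{G}^{\mathcal{D}^{\mathcal{I}}(j)}$ is mutation-friendly. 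A mutation-friendly factor of positive interior size has order at least $2$, hence contains a set that is mutatable in some maximal weakly separated collection of that factor; the associated co-dimension-$1$ $\mathcal{C}$-constant graph is then a single edge (the graph $B$) by Theorem~\ref{char01}, and the mutatable set has a \emph{unique} mutation partner since a mutatable set admits exactly one square move.

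First I would show that at least two of the factors have positive interior size. This is the delicate point. We have $i(\mathcal{I}) = \sum_j i(\mathcal{D}^{\mathcal{I}}(j)) \ge 2$ and at least two factors, but a priori the interior size could be concentrated in a single factor while the others contribute trivially (interior size $0$, i.e.\ the one-vertex graph $A$, which would vanish from the product). I would rule this out by arguing that under the mutation-friendly hypothesis no decomposition piece can have interior size $0$: such a piece is a triangle or complete-graph region all of whose sets are necklace sets and which encloses no mutatable set, and a rigid region of this kind cannot appear as a separate component of the adjacency structure of a mutation-friendly $\mathcal{I}$, whose interior sets must all be mutatable in some collection. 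Granting this, the not-prime hypothesis forces two factors $j_1 \neq j_2$ of positive interior size.

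With two such factors fixed, I would choose a maximal weakly separated collection $\mathcal{V} \in \mathpzc{G}^{\mathcal{I}}$ whose $j_1$-coordinate contains a set $X_1$ mutatable within factor $j_1$ (with unique partner $Y_1$) and whose $j_2$-coordinate contains a set $X_2$ mutatable within factor $j_2$ (with unique partner $Y_2$). Such a $\mathcal{V}$ exists because the factors vary independently in the direct product, so the two coordinates can be prescribed separately. Since $X_1$ and $X_2$ lie in different factors, the corresponding mutations in $\mathpzc{G}^{\mathcal{I}}$ affect disjoint sets and commute.

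Finally I would set $\mathcal{C} = \mathcal{V} \setminus \{X_1, X_2\}$, a weakly separated collection containing $\mathcal{I}$ of co-dimension $|\mathcal{V}| - |\mathcal{C}| = 2$. The maximal weakly separated collections over $\mathcal{I}$ containing $\mathcal{C}$ are exactly the four collections obtained by restoring $X_1$ or $Y_1$ together with $X_2$ or $Y_2$ (the completion in each deficient factor being forced into the two-vertex graph $B$), and a mutation edge joins two of them precisely when they differ in one coordinate. Hence $\mathpzc{G}^{\mathcal{I}}(\mathcal{C}) \cong B \square B$ is a $4$-cycle of co-dimension $2$, as required. The main obstacle is the second step—showing that the mutation-friendly hypothesis forces at least two factors of positive interior size (equivalently, that trivial decomposition pieces do not occur)—since once two genuine factors are available, the $4$-cycle follows routinely from the product structure and the uniqueness of the square-move mutation.
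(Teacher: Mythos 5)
Your overall route is the same as the paper's: invoke Lemma~\ref{dirproduct} to write $\mathpzc{G}^{\mathcal{I}} \cong \square_{j} \mathpzc{G}^{\mathcal{D}^{\mathcal{I}}(j)}$ with each factor mutation-friendly, and then, once two factors of positive interior size are available, delete one mutatable set from each to obtain $B \square B$ as a co-dimension-$2$ $\mathcal{C}$-constant graph. That second half is fine (the paper's proof is a one-sentence appeal to exactly this dichotomy, and your use of Theorem~\ref{char01} together with the independence of the two coordinates correctly produces the $4$-cycle). The problem is the step you yourself flag as delicate.

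Your justification for that step --- that a decomposition piece of interior size $0$ cannot occur because the interior sets of a mutation-friendly $\mathcal{I}$ must all be mutatable somewhere --- is vacuous for precisely the pieces you need to exclude: a piece of interior size $0$ contains no interior sets, and the mutation-friendly condition constrains only the sets of $\mathcal{M}_{\mathcal{I}}$ outside $\mathcal{I}$, so it says nothing about such a piece. Worse, the intermediate claim appears to be false. Take the glued necklace of the pentagon $34512$ and the triangle $312$, with decorated permutation $\pi = 346125$ and Grassmann necklace $(\{1,2,5\},\{2,3,5\},\{3,4,5\},\{4,5,6\},\{1,5,6\},\{1,2,6\})$. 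The only non-consecutive quasi-adjacent pair is $\{1,2,5\},\{1,5,6\}$, so the decomposition set has two elements --- a pentagon piece of interior size $2$ and the triangular face $\{1,2,5\},\{1,5,6\},\{1,2,6\}$ of interior size $0$ --- hence $\mathcal{I}$ is not prime. Its five maximal collections are $\mathcal{I}$ together with the triangulations of a pentagon, so their intersection is $\mathcal{I}$ and $\mathcal{I}$ is mutation-friendly with interior size $2$; yet $\mathpzc{G}^{\mathcal{I}}$ is a $5$-cycle, and every co-dimension-$2$ $\mathcal{C}$-constant graph other than $\mathpzc{G}^{\mathcal{I}}(\mathcal{I})$ itself has at most two vertices, so no $4$-cycle exists. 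So the gap cannot be closed as written. To be fair, the paper's own proof asserts the same dichotomy (``either isomorphic to a prime exchange graph and not mutation-friendly, or \ldots'') without argument and elides the identical point; a repair seems to require either discarding interior-size-$0$ pieces from the notion of primality or strengthening the hypothesis, not a cleverer proof of the claim as you have stated it.
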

\begin{proof}
By Lemma~$\ref{dirproduct}$, we know that a not prime exchange graph with interior size $\ge 2$ is either isomorphic to a prime exchange graph and is not mutation-friendly or has a $\mathcal{C}$-constant graph of co-dimension $2$ that is a cycle with $4$ vertices. The result follows. 
\end{proof}
We also prove the existence of a cycle in certain applicable, mutation-friendly $\mathcal{C}$-constant graphs.
\begin{lemma}
\label{nrvc}
Let $\mathpzc{G}^{\mathpzc{I}}$ be an exchange graph and let $\mathcal{W}$ be not reverse-very-connected over $\mathcal{I}$. If $\mathpzc{G}^{\mathcal{I}}(\mathcal{W})$ is applicable and mutation-friendly, then $\mathpzc{G}^{\mathcal{I}}(\mathcal{W})$ contains a cycle of length $4$.
\end{lemma}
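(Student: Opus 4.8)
The plan is to reduce the failure of the reverse-very-connected condition to a disconnectedness statement about an adjacency grouping, and then to exploit the direct-product decomposition of Lemma~\ref{diir} to manufacture a $4$-cycle. First I would unwind the definitions. To say that $\mathcal{V} \setminus \mathcal{W}$ is \emph{very-connected} in a maximal weakly separated collection $\mathcal{V}$ means two things: that $\mathpzc{AC}_{\mathcal{V}}(\mathcal{V} \setminus \mathcal{W})$ has exactly one element (connectedness), and that $\mathpzc{G}^{\mathcal{I}}(\mathcal{V} \setminus (\mathcal{V} \setminus \mathcal{W})) = \mathpzc{G}^{\mathcal{I}}(\mathcal{W})$ is applicable. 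Since we are given that $\mathpzc{G}^{\mathcal{I}}(\mathcal{W})$ is applicable, the second requirement is automatically satisfied. Hence the hypothesis that $\mathcal{W}$ is \emph{not} reverse-very-connected forces, for some (and by the uniformity remark preceding the definition, every) $\mathcal{V} \in \mathpzc{G}^{\mathcal{I}}(\mathcal{W})$, that $\mathpzc{AC}_{\mathcal{V}}(\mathcal{V} \setminus \mathcal{W})$ consists of at least two connected components.

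Next I would fix such a $\mathcal{V}$ and apply Lemma~\ref{diir}, whose applicability hypothesis is exactly met. This yields
\[
\mathpzc{G}^{\mathcal{I}}(\mathcal{W}) \cong \square_{\mathcal{W}' \in \mathpzc{AC}_{\mathcal{V}}(\mathcal{V} \setminus \mathcal{W})} \mathpzc{G}^{\mathcal{I}}(\mathcal{V} \setminus \mathcal{W}'),
\]
a direct product with at least two factors by the previous paragraph. Moreover, since $\mathpzc{G}^{\mathcal{I}}(\mathcal{W})$ is mutation-friendly, Lemma~\ref{diir} guarantees that each factor $\mathpzc{G}^{\mathcal{I}}(\mathcal{V} \setminus \mathcal{W}')$ is mutation-friendly. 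Each such factor has co-dimension $|\mathcal{V}| - |\mathcal{V} \setminus \mathcal{W}'| = |\mathcal{W}'| \ge 1$, since every $\mathcal{W}'$ is a nonempty connected component.

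The key observation is then that each factor has at least one edge. Indeed, a mutation-friendly $\mathcal{C}$-constant graph of co-dimension $\ge 1$ cannot be a single vertex: a single vertex $\mathcal{V}_0$ would be the full intersection of all its collections, forcing $\mathcal{V}_0 = \mathcal{V} \setminus \mathcal{W}'$, which is impossible as $\mathcal{V}_0$ strictly contains $\mathcal{V} \setminus \mathcal{W}'$ when the co-dimension is positive. Thus each factor has $\ge 2$ vertices, and being connected (every $\mathcal{C}$-constant graph is connected), it contains at least one edge. Selecting two distinct factors $\mathpzc{G}_1, \mathpzc{G}_2$, an edge $u_1 u_2$ in $\mathpzc{G}_1$, an edge $v_1 v_2$ in $\mathpzc{G}_2$, and any fixed base vertices in the remaining factors, the four vertices obtained by ranging the $\mathpzc{G}_1$- and $\mathpzc{G}_2$-coordinates over $\{u_1, u_2\} \times \{v_1, v_2\}$ form a $4$-cycle in the direct product, by the definition of $\square$ (the Cartesian product of two edges is a square). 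Transporting this cycle back through the isomorphism produces a cycle of length $4$ in $\mathpzc{G}^{\mathcal{I}}(\mathcal{W})$, as desired. I expect the main obstacle to be the first paragraph: carefully justifying that, under applicability, the only way very-connectedness can fail is through disconnectedness of the adjacency grouping, so that the product has genuinely at least two factors; the remaining steps are a routine consequence of Lemma~\ref{diir} and the structure of the Cartesian product.
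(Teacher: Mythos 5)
Your proposal is correct and follows essentially the same route as the paper: reduce the failure of reverse-very-connectedness (given applicability) to $|\mathpzc{AC}_{\mathcal{V}}(\mathcal{V}\setminus\mathcal{W})|>1$, then invoke Lemma~\ref{diir} to write $\mathpzc{G}^{\mathcal{I}}(\mathcal{W})$ as a direct product of at least two mutation-friendly factors and extract a $4$-cycle. The paper leaves the final step implicit ("the desired result follows from Lemma~\ref{diir}"), whereas you spell out why each factor has an edge; that is a useful elaboration but not a different argument.
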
 
\begin{proof}
Let $\mathcal{W}$ be not reverse-very-connected over $\mathcal{I}$ and suppose that $\mathpzc{G}^{\mathcal{I}}(\mathcal{W})$ is applicable. Let $\mathcal{V}$ be in $\mathpzc{G}^{\mathcal{I}}(\mathcal{W})$. By definition, we know that 
\[|{\mathpzc{AC}}_{\mathcal{V}}(\mathcal{V} \setminus \mathcal{W})| > 1.\] Since $\mathpzc{G}^{\mathcal{I}}(\mathcal{W})$ is mutation-friendly, the desired result follows from Lemma~$\ref{diir}$. 
\end{proof}
We also prove the following lemma involving the $C$-constant graphs of very-mutation-friendly exchange graphs:
\begin{lemma}
\label{whee}
Given a very-mutation-friendly exchange graph $\mathpzc{G}^{\mathcal{I}}$ with interior size $i$, there exists a weakly separated collection $\mathcal{C}$ over $\mathcal{I}$ such that the $\mathcal{C}$-constant graph $\mathpzc{G}^{\mathcal{I}}(\mathcal{C})$ is mutation-friendly, has co-dimension $i-1$, and is isomorphic to a very-mutation-friendly exchange graph of co-dimension $i-1$.
\end{lemma}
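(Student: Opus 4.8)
The plan is to select $\mathcal{C}$ to be the last collection $\mathcal{W}_{i-1}$ in a chain witnessing that $\mathcal{I}$ is very-mutation-friendly, and then to recognize the resulting $\mathcal{C}$-constant graph as a very-mutation-friendly exchange graph. Write $\mathcal{W}_1 \supset \mathcal{W}_2 \supset \cdots \supset \mathcal{W}_{i-1} \supset \mathcal{W}_i = \mathcal{I}$ using Definition~\ref{vmf}. Taking $\mathcal{C} = \mathcal{W}_{i-1}$, property (3) of Definition~\ref{vmf} gives $|\mathcal{C}| = i + |\mathcal{I}| - (i-1) = |\mathcal{I}| + 1$, so $\mathpzc{G}^{\mathcal{I}}(\mathcal{C})$ has co-dimension $i-1$; and property (4) gives that $\mathpzc{G}^{\mathcal{I}}(\mathcal{C})$ is applicable and mutation-friendly. (The degenerate cases $i \le 1$ are immediate: one takes $\mathcal{C}$ to be a single maximal collection, whose $\mathcal{C}$-constant graph is the one-vertex exchange graph of interior size $0$.) So it remains only to prove that this particular $\mathpzc{G}^{\mathcal{I}}(\mathcal{C})$ is isomorphic to a \emph{very}-mutation-friendly exchange graph of interior size $i-1$.

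First I would decompose $\mathpzc{G}^{\mathcal{I}}(\mathcal{C})$ using the direct-product machinery. Fix $\mathcal{V} \in \mathpzc{G}^{\mathcal{I}}(\mathcal{C})$. By Lemma~\ref{diir}, $\mathpzc{G}^{\mathcal{I}}(\mathcal{C}) \cong \square_{\mathcal{W} \in \mathpzc{AC}_{\mathcal{V}}(\mathcal{V} \setminus \mathcal{C})} \mathpzc{G}^{\mathcal{I}}(\mathcal{V} \setminus \mathcal{W})$, with each factor mutation-friendly. Each $\mathcal{W}$ in the adjacency grouping is very-connected in $\mathcal{V}$ (it is connected by construction, and applicability of $\mathpzc{G}^{\mathcal{I}}(\mathcal{C})$ supplies the remaining condition), so Lemma~\ref{isokey} identifies $\mathpzc{G}^{\mathcal{I}}(\mathcal{V} \setminus \mathcal{W})$ with the exchange graph $\mathpzc{G}^{\mathcal{J}_{\mathcal{W}}}$ of the relabeled Grassmann necklace $\mathcal{J}_{\mathcal{W}}$ obtained from $\mathpzc{PT}_{\mathcal{V}}(\mathcal{A}^{\mathcal{V}}(\mathcal{W}))$. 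Crucially, applying Proposition~\ref{vmfproperty} with $j = i-1$ (so that $\mathcal{W}_j = \mathcal{C}$) shows each $\mathcal{J}_{\mathcal{W}}$ is very-mutation-friendly. Finally, Lemma~\ref{dirproduct} produces a single mutation-friendly Grassmann necklace $\mathcal{J}$, of interior size $\sum_{\mathcal{W}} i(\mathcal{J}_{\mathcal{W}}) = |\mathcal{V} \setminus \mathcal{C}| = i-1$, with $\mathpzc{G}^{\mathcal{J}} \cong \square_{\mathcal{W}} \mathpzc{G}^{\mathcal{J}_{\mathcal{W}}} \cong \mathpzc{G}^{\mathcal{I}}(\mathcal{C})$.

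The hard part will be upgrading $\mathcal{J}$ from mutation-friendly to \emph{very}-mutation-friendly: Lemma~\ref{dirproduct} only transfers the mutation-friendly property across a gluing, and Lemma~\ref{vmfdirproduct} supplies only the reverse (decomposition) direction. I would close this gap by constructing a witness chain for $\mathcal{J}$ explicitly, interleaving the witness chains of the factors $\mathcal{J}_{\mathcal{W}}$: peel the factors down to their necklaces one at a time, removing a single interior set at each step, so that each intermediate constraint collection $\mathcal{W}'_l$ over $\mathcal{J}$ has $|\mathcal{W}'_l| = (i-1) + |\mathcal{J}| - l$. Under the direct-product structure, the $\mathcal{C}$-constant graph $\mathpzc{G}^{\mathcal{J}}(\mathcal{W}'_l)$ factors as a product whose entries are the already-free full exchange graphs of peeled factors, a single partially-peeled factor's $\mathcal{C}$-constant graph, and frozen one-vertex graphs for the untouched factors; each entry is applicable and mutation-friendly because each $\mathcal{J}_{\mathcal{W}}$ is very-mutation-friendly. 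The remaining technical point, that a direct product of applicable, mutation-friendly $\mathcal{C}$-constant graphs is itself applicable and mutation-friendly, follows from the observations that the intersection of all collections in a product is the product of the intersections (giving mutation-friendliness) and that quasi-adjacency paths to the boundary are confined to the individual glued tilings (giving applicability). This chain then verifies Definition~\ref{vmf} for $\mathcal{J}$, completing the proof. An alternative that avoids the interleaving is to transport the truncated chain $(\mathcal{W}_1, \ldots, \mathcal{W}_{i-2})$ directly through the isomorphism of Theorem~\ref{link}, but that route instead requires checking that applicability is preserved under the strand-relabeling, which is the same essential difficulty.
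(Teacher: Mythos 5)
Your proposal is correct and follows essentially the same route as the paper: choose $\mathcal{C} = \mathcal{W}_{i-1}$ from the witness chain, decompose via the adjacency grouping and Lemma~\ref{diir}, identify each factor with an exchange graph via Lemma~\ref{isokey}, and invoke Proposition~\ref{vmfproperty} to get very-mutation-friendliness of each factor's relabeled necklace. The ``hard part'' you isolate -- upgrading the glued necklace from mutation-friendly to very-mutation-friendly by interleaving the factors' witness chains, peeling one factor at a time -- is exactly the paper's Proposition~\ref{sillydp}, whose proof is the same interleaving construction you describe.
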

We require the following proposition in the proof: 
\begin{prop}
\label{sillydp}
If the Grassmann necklaces $\mathcal{I}^1$, $\mathcal{I}^2$, \ldots, $\mathcal{I}^i$ are very-mutation-friendly, then $\square_{j=1}^i \mathcal{I}^j$ is very-mutation-friendly.
\end{prop}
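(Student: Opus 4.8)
The plan is to reduce to the case of two factors and then construct by hand the descending chain of $\mathcal{C}$-constant graphs demanded by Definition~\ref{vmf}. Since the graph direct product is associative and $\mathpzc{G}^{\mathcal{I}^1 \square \cdots \square \mathcal{I}^i} \cong \square_{j=1}^{i}\mathpzc{G}^{\mathcal{I}^j}$ by repeated application of Proposition~\ref{dpp}, an induction on $i$ reduces the claim to showing that $\mathcal{I}\square\mathcal{J}$ is very-mutation-friendly whenever $\mathcal{I}$ and $\mathcal{J}$ are. Write $a=i(\mathcal{I})$ and $b=i(\mathcal{J})$, so $i(\mathcal{I}\square\mathcal{J})=a+b$ by Proposition~\ref{dpp}. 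First I would record that $\mathcal{I}\square\mathcal{J}$ is mutation-friendly: both $\mathcal{I}$ and $\mathcal{J}$ are mutation-friendly (being very-mutation-friendly), and the glue of two mutation-friendly necklaces is mutation-friendly, as used in the proof of Lemma~\ref{dirproduct}.

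The key tool will be a $\mathcal{C}$-constant analogue of Proposition~\ref{dpp}: for weakly separated collections $\mathcal{A}\supseteq\mathcal{I}$ and $\mathcal{B}\supseteq\mathcal{J}$,
\[ \mathpzc{G}^{\mathcal{I}\square\mathcal{J}}(\square(\mathcal{A},\mathcal{B})) \cong \mathpzc{G}^{\mathcal{I}}(\mathcal{A}) \square \mathpzc{G}^{\mathcal{J}}(\mathcal{B}), \]
with the glued $\mathcal{C}$-constant graph mutation-friendly (resp. applicable) if and only if both factors are. The isomorphism follows exactly as in Proposition~\ref{dpp}: under the bijection $\square(\mathcal{V}_1,\mathcal{V}_2)\leftrightarrow(\mathcal{V}_1,\mathcal{V}_2)$, containing $\square(\mathcal{A},\mathcal{B})$ is equivalent to $\mathcal{V}_1\in\mathpzc{G}^{\mathcal{I}}(\mathcal{A})$ and $\mathcal{V}_2\in\mathpzc{G}^{\mathcal{J}}(\mathcal{B})$, and a mutation away from $\square(\mathcal{A},\mathcal{B})$ is a mutation in exactly one factor away from $\mathcal{A}$ or $\mathcal{B}$; the intersection of all vertices then splits as the glue of the two factorwise intersections, giving mutation-friendliness. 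The genuinely new point, and the step I expect to be the main obstacle, is applicability: I would argue that a set of $\square(\mathcal{A},\mathcal{B})$ coming from $\mathcal{A}$ is quasi-adjacent-connected to the $\mathcal{I}$-part of $\mathcal{I}\square\mathcal{J}$ via the chain supplied by applicability of $\mathpzc{G}^{\mathcal{I}}(\mathcal{A})$, and symmetrically for $\mathcal{B}$, and that these chains survive gluing along the single common edge. The care needed here is precisely that of Remark~\ref{appcondreq}, ensuring no set of either summand becomes trapped off the boundary curve after the glue.

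Granting this gluing lemma, I would exhibit the chain explicitly. Let $\mathcal{A}_1\supset\cdots\supset\mathcal{A}_{a-1}$ be the chain witnessing that $\mathcal{I}$ is very-mutation-friendly, with the convention $\mathcal{A}_a=\mathcal{I}$ (as in Proposition~\ref{vmfproperty}), and let $\mathcal{B}_1\supset\cdots\supset\mathcal{B}_{b-1}$ (with $\mathcal{B}_b=\mathcal{J}$) be the chain for $\mathcal{J}$. Fix a maximal collection $\mathcal{V}_J\in\mathpzc{G}^{\mathcal{J}}(\mathcal{B}_1)$ and define
\[ \mathcal{W}_j = \begin{cases} \square(\mathcal{A}_j,\mathcal{V}_J) & 1\le j\le a,\\ \square(\mathcal{I},\mathcal{B}_{j-a}) & a< j\le a+b-1. \end{cases} \]
This first reduces the $\mathcal{I}$-factor down to $\mathcal{I}$ while holding the $\mathcal{J}$-factor at the single-vertex graph $\mathpzc{G}^{\mathcal{J}}(\mathcal{V}_J)$, and then reduces the $\mathcal{J}$-factor. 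A direct count using $|\square(\mathcal{A},\mathcal{B})|=|\mathcal{A}|+|\mathcal{B}|-2$ (gluing identifies the two boundary sets of the common edge) gives $|\mathcal{W}_j|=(a+b)+|\mathcal{I}\square\mathcal{J}|-j$; the chain is nested because $\mathcal{A}_{j+1}\subseteq\mathcal{A}_j$, $\mathcal{B}_1\subseteq\mathcal{V}_J$, and $\mathcal{B}_{j+1}\subseteq\mathcal{B}_j$; and $\mathcal{W}_{a+b}=\square(\mathcal{I},\mathcal{J})=\mathcal{I}\square\mathcal{J}$.

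Finally, the gluing lemma makes each $\mathpzc{G}^{\mathcal{I}\square\mathcal{J}}(\mathcal{W}_j)$ applicable and mutation-friendly. For $j\le a$ it is isomorphic to $\mathpzc{G}^{\mathcal{I}}(\mathcal{A}_j)$ (the $\mathcal{J}$-factor being a single vertex), which is applicable and mutation-friendly by hypothesis for $j<a$ and equals $\mathpzc{G}^{\mathcal{I}}$ for $j=a$; for $j>a$ it is isomorphic to $\mathpzc{G}^{\mathcal{I}}\square\mathpzc{G}^{\mathcal{J}}(\mathcal{B}_{j-a})$, a product of applicable, mutation-friendly pieces. This verifies all four conditions of Definition~\ref{vmf}, so $\mathcal{I}\square\mathcal{J}$ is very-mutation-friendly, completing the inductive step and hence the proposition. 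The only delicate ingredient is the applicability half of the gluing lemma; everything else is a bookkeeping check paralleling Proposition~\ref{dpp} and Lemma~\ref{dirproduct}.
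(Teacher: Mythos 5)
Your proof is correct and follows essentially the same route as the paper: the paper also builds the descending chain by processing the factors one at a time, holding the already-reduced factors at their necklaces and the untouched factors at fixed maximal collections (transported via the mapping functions $f^j$ rather than your $\square(\cdot,\cdot)$ notation), and then asserts that each stage is applicable and mutation-friendly. Your version merely packages the multi-factor construction as an induction on two factors and is somewhat more explicit than the paper about why applicability survives the gluing.
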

\begin{proof}
Let $\mathcal{I} = \square_{j=1}^i \mathcal{I}^j$. WLOG, for each $1 \le j \le i$, assume that $\mathcal{D}^{\mathcal{I}}(j) = \mathcal{I}^j$. Let $f^j$ be the mapping function that takes $\mathcal{D}^{\mathcal{I}}(j)$ to $\mathcal{D*}^{\mathcal{I}}(j)$. 
Suppose $\mathcal{I}^j$ has interior size $i^j$. that since $\mathcal{I}_j$ is very-mutation-friendly, there exists a maximal weakly separated collection $\mathcal{W}^j \in \mathpzc{G}^{\mathcal{I}^j}$ and weakly separated  that satisfy:  
\[\mathcal{W}^j = \mathcal{W}^j_{0} \supsetneq \mathcal{W}^j_1 \supsetneq \mathcal{W}^j_2 \supsetneq \ldots \supsetneq \mathcal{W}^j_{i^j-1} \supsetneq \mathcal{I}^j \]
such that $\mathpzc{G}^{\mathcal{I}^j}(\mathcal{W}^j_l)$ is applicable and mutation-friendly for $1 \le l \le i^j$. 

We construct a sequence $(\mathcal{W}_1, \mathcal{W}_2, \ldots, \mathcal{W}_{\sum_{j=1}^i i_j})$ of weakly separated collections over $\mathcal{I}$ as follows. For $1 \le j \le i$, if $\sum_{j=1}^i i_{j-1} \le p < \sum_{j=1}^i i_j$, then 
\[\mathcal{W}_p = \cup_{l=1}^{j-1} (f^l(\mathcal{I}^l)) \cup f^j(\mathcal{W}^j_{p - \sum_{j=1}^i i_{j-1}}) \cup_{l=j+1}^i f^l(\mathcal{W}^l). \]
This satisfies the requirements of Definition~$\ref{vmf}$.
\end{proof}
\begin{proof}[Proof of Lemma~$\ref{whee}$]
Let $\mathcal{C} = \mathcal{W}_{i-1}$ using the notation in Definition~$\ref{vmf}$. By definition, we know that $\mathpzc{G}^{\mathcal{I}}(\mathcal{C})$ is mutation-friendly and applicable. Suppose that $\mathpzc{AC}_{\mathcal{V}}(V \setminus \mathcal{C})$ has $j$ weakly separated collections. Let $\mathcal{J}_l$ be the relabeled Grassmann necklace of the boundary curve of
\[\mathpzc{PT}_{\mathcal{V}}(\mathcal{A}^{\mathcal{V}}(\mathpzc{AC}_{\mathcal{V}}(V \setminus \mathcal{C})(l)))\] for $1 \le l \le j$. Then by Proposition~$\ref{vmfproperty}$, we know that $\mathcal{J}_l$ is very-mutation-friendly. By Lemma~$\ref{dirproduct}$, we know that 
\[\square_{l=1}^j \mathpzc{G}^{\mathcal{J}_l} \cong \mathpzc{G}^{\square_{l=1}^j \mathcal{J}_l}. \]
By Proposition~$\ref{sillydp}$, we know that $\square_{l=1}^j \mathcal{J}_l$ is also very-mutation-friendly. By Lemma~$\ref{isokey}$, we know that
\[\mathpzc{G}^{\square_{l=1}^j \mathcal{J}_l} \cong \mathpzc{G}^{\mathcal{I}} \] as desired.
\end{proof}
\subsection{Equivalence Classes $\mathscr{C}_i$}
We use the following notation to discuss $\mathscr{C}_i$.

Consider $\mathcal{I} \in \mathscr{C}_i$. Figure~$\ref{ptree}$ shows an element of $\mathscr{PT}^{*}(\mathpzc{G}^{\mathscr{C}_i})$ for $i=2,3,4$. Note that the adjacency graph $\mathpzc{AG}^V$ is a path with $i-1$ vertices for all $V \in \mathpzc{G}^{\mathcal{I}}$. Denote the sets in the adjacency graph, $\mathpzc{PT}_{\mathcal{I}}[1]$, $\mathpzc{PT}_{\mathcal{I}}[2]$,..,$\mathpzc{PT}_{\mathcal{I}}[i-1]$ (in order from left to right along $\mathpzc{AG}^V$). The exchange graph $\mathpzc{G}^{\mathcal{I}}$ is a path with $i$ vertices. We label the maximal weakly separated collections in $\mathpzc{G}^{\mathcal{I}})$ as $\mathpzc{PT}_{\mathcal{I}}(1)$, $\mathpzc{PT}_{\mathcal{I}}(2)$,$\ldots$, $\mathpzc{PT}_{\mathcal{I}}(k)$ so that
\begin{enumerate}
\item{In $\mathpzc{PT}_{\mathcal{I}}(1)$, the set $\mathpzc{PT}_{\mathcal{I}}[1]$ is the only mutatable set.}
\item{In $\mathpzc{PT}_{\mathcal{I}}(i)$, the set $\mathpzc{PT}_{\mathcal{I}}[i-1]$ is the only mutatable set.}
\item{In $\mathpzc{PT}_{\mathcal{I}}(j)$ for $1 < j < i$, the sets $\mathpzc{PT}_{\mathcal{I}}[j-1]$ and $\mathpzc{PT}_{\mathcal{I}}[j]$ are the only mutatable sets.}
\end{enumerate}
For $j = 1, i$, we let $\mathpzc{PT}_{\mathcal{I}}[j, END]$ be the set in the Grassmann necklace that is adjacent to $\mathpzc{PT}_{\mathcal{I}}[j]$ and not adjacent to $\mathpzc{PT}_{\mathcal{I}}[l]$ for any $l \neq j$.

Since $\mathscr{PT}^{*}(\mathcal{I}) \cong \mathscr{PT}^{*}(\mathcal{J})$ for all $\mathcal{I}, \mathcal{J} \in \mathscr{C}_i$, we denote the set of plabic tilings (without set labels) as $\mathscr{PT}^{*}(\mathscr{C}_i).$

We prove the following lemma that relates $\mathscr{C}_i$ to $\mathscr{C}_{i+1}$:
\begin{lemma}
\label{cyc45}
Let $\mathpzc{G}^{\mathcal{I}}$ be a very-mutation-friendly exchange graph with interior size $i$. Suppose there is an applicable $\mathcal{C}$-constant graph $\mathpzc{G}^{\mathcal{I}}(\mathcal{C})$ of co-dimension $i-1$ such that $\mathcal{C}$ is a reverse-very-connected weakly separated collection over $\mathcal{I}$ and $\mathscr{PT}^{*}(\mathpzc{G}^{\mathcal{I}}(\mathcal{C})) \cong \mathscr{PT}^{*}(\mathscr{C}_i)$ for some $i \ge 2$. If $\mathpzc{G}^{\mathcal{I}}$ does not have a $\mathcal{C}$-constant graph of co-dimension $2$ that is a cycle of length $4$ or $5$, then $\mathcal{I}$ is part of the equivalence class $\mathscr{C}_{i+1}$.
\end{lemma}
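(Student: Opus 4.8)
The plan is to reduce first to the prime case, then to pin down the shape of the plabic tiling of $\mathcal{I}$, and finally to read off its decorated permutation. Since $\mathpzc{G}^{\mathcal{I}}$ is mutation-friendly with interior size $i \ge 2$, Lemma~\ref{cyc4} shows that if $\mathcal{I}$ were not prime then $\mathpzc{G}^{\mathcal{I}}$ would contain a $\mathcal{C}$-constant graph of co-dimension $2$ that is a $4$-cycle, contradicting the hypothesis. Hence $\mathcal{I}$ is prime, so the adjacency graph $\mathpzc{AG}^{\mathcal{V}}$ is connected for every $\mathcal{V} \in \mathpzc{G}^{\mathcal{I}}$. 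Because $\mathcal{C}$ is reverse-very-connected we have $\mathcal{I} \subseteq \mathcal{C}$, and since $\mathpzc{G}^{\mathcal{I}}(\mathcal{C})$ has co-dimension $i-1$ it follows that $|\mathcal{C}| = |\mathcal{I}| + 1$; write $\mathcal{C} = \mathcal{I} \cup \{V_0\}$. Fix $\mathcal{V} \in \mathpzc{G}^{\mathcal{I}}(\mathcal{C})$. By Lemma~\ref{adjcluqua} the collection $\mathcal{A}^{\mathcal{V}}(\mathcal{V}\setminus\mathcal{C})$ forms an interior-reduced plabic tiling whose interior sets are exactly $\mathcal{V}\setminus\mathcal{C}$; by Lemma~\ref{isokey} together with the hypothesis $\mathscr{PT}^{*}(\mathpzc{G}^{\mathcal{I}}(\mathcal{C})) \cong \mathscr{PT}^{*}(\mathscr{C}_i)$, its relabeled boundary Grassmann necklace lies in $\mathscr{C}_i$. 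In particular these interior sets form, in $\mathpzc{AG}^{\mathcal{V}}$, a path $\mathpzc{PT}_{\mathcal{I}}[1] - \cdots - \mathpzc{PT}_{\mathcal{I}}[i-1]$.

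The central step is to show that $V_0$ extends this path at one end, so that $\mathpzc{AG}^{\mathcal{V}}$ is itself a path on $i$ vertices. Since $\mathcal{I}$ is prime, $V_0$ is adjacent in $\mathcal{V}$ to at least one $\mathpzc{PT}_{\mathcal{I}}[j]$. I would argue by contradiction: if $V_0$ were adjacent to two of them, or to a single non-endpoint $\mathpzc{PT}_{\mathcal{I}}[j]$ with $1 < j < i-1$, then $\mathpzc{AG}^{\mathcal{V}}$ would either contain a cycle or a vertex of degree $\ge 3$. Freeing $V_0$ together with the relevant neighbouring interior set, i.e. passing to the co-dimension $2$ graph $\mathpzc{G}^{\mathcal{I}}(\mathcal{V}\setminus\{V_0,\mathpzc{PT}_{\mathcal{I}}[j]\})$, then produces, by the interior-size-$2$ classification of Table~\ref{mainchare}, either the square $B \square B$ or the pentagon $D$ --- that is, a $\mathcal{C}$-constant graph of co-dimension $2$ that is a $4$-cycle or a $5$-cycle, which is exactly what the hypothesis forbids. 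Hence $V_0$ is adjacent to precisely one $\mathpzc{PT}_{\mathcal{I}}[j]$, and that set is an endpoint of the path; up to relabeling, $\mathpzc{AG}^{\mathcal{V}}$ is the path on $i$ vertices obtained by appending $V_0$ to $\mathpzc{PT}_{\mathcal{I}}[1]$.

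Consequently the full plabic tiling of $\mathcal{V}$ is the caterpillar strip with $i$ interior sets whose adjacency graph is this path, and $\mathcal{I}$ is exactly the sequence of sets on its boundary. It then remains to check that appending one cell at the end of an $\mathscr{C}_i$-strip yields the boundary necklace of $\mathscr{C}_{i+1}$ rather than some other class with a superficially similar tiling: using the explicit boundary of $\mathscr{C}_i$ together with the extreme sets $\mathpzc{PT}_{\mathcal{I}}[1,END]$ and $\mathpzc{PT}_{\mathcal{I}}[i-1,END]$, I would read off the decorated permutation of $\mathcal{I}$ from the boundary of the extended strip and verify, up to the rotation, reflection, and inverse operations of Section 3.1.3, that it coincides with $\pi_{i+1}$. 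Since $\mathcal{I}$ is prime, this permutation determines its equivalence class directly, giving $\mathcal{I} \in \mathscr{C}_{i+1}$ as desired.

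I expect the main obstacle to be the contradiction step of the second paragraph: carefully confirming that each non-path attachment of $V_0$ genuinely localizes to a co-dimension $2$ subconfiguration forced to be a $4$- or $5$-cycle --- in particular that the two freed sets remain simultaneously mutatable and either independent (yielding $B \square B$) or arranged in a true pentagon (yielding $D$). The secondary difficulty is the bookkeeping of the final paragraph, namely distinguishing $\mathscr{C}_{i+1}$ from any other class sharing the caterpillar tiling shape, which is where the explicit form of $\pi_{i+1}$ and the equivalence operations of Section 3.1.3 must be used.
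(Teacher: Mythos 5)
Your overall scaffolding matches the paper's: reduce to the prime case via Lemma~\ref{cyc4}, use the hypothesis to get the path-shaped strip of interior sets, show the extra set $V_0$ must attach at one end of that path, and then identify the resulting necklace as $\mathscr{C}_{i+1}$. But the central step --- ruling out a non-endpoint attachment of $V_0$ --- is where your argument has a genuine gap, and you have correctly sensed it. Your claim is that if $V_0$ attaches to a middle vertex $\mathpzc{PT}_{\mathcal{I}}[j]$ (or to two interior sets), then the co-dimension~$2$ graph $\mathpzc{G}^{\mathcal{I}}(\mathcal{V}\setminus\{V_0,\mathpzc{PT}_{\mathcal{I}}[j]\})$ is forced to be a $4$- or $5$-cycle. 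Nothing forces this: by Table~\ref{maincharc} a co-dimension~$2$ constant graph may also be a path with $1$, $2$, or $3$ vertices ($A$, $B$, $C$), and when $V_0$ is \emph{adjacent} to the freed neighbour the two mutations are not independent, so you do not automatically get $B\square B$; a degree-$3$ vertex or a short cycle in $\mathpzc{AG}^{\mathcal{V}}$ does not translate into a forbidden cycle in a constant graph. You also fix an arbitrary $\mathcal{V}\in\mathpzc{G}^{\mathcal{I}}(\mathcal{C})$, whereas the argument needs a specific choice.

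The paper's mechanism is different and is what you are missing. It chooses $\mathcal{V}$ so that $V$ is \emph{mutatable} in $\mathcal{V}$, and uses the converse of your implication: if $V$ and another mutatable set $W$ are \emph{not} adjacent, then $\mathpzc{G}^{\mathcal{I}}(\mathcal{V}\setminus\{V,W\})\cong B\square B$ is a forbidden $4$-cycle, so $V$ must be adjacent to \emph{every} mutatable interior set. In the configuration $\mathpzc{PT}_{\mathcal{J}}(j)$ with $1<j<i$ there are two mutatable interior sets and no boundary position adjacent to both, which kills the middle case outright; for $j=1$ or $i$ the paper then computes $|\mathcal{A}^{\mathcal{V}}(\{V,W\})|\in\{7,8\}$, rules out $7$ because it would force a prime very-mutation-friendly exchange graph with $n=5$ and interior size $2$, i.e.\ a $5$-cycle, and uses $|\mathcal{A}^{\mathcal{V}}(V)|=4$ (mutatability) to pin $V$ to the $\mathpzc{PT}_{\mathcal{J}}[j,END]$ position. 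These counting steps, which your final paragraph replaces with an unexecuted ``read off the permutation'' plan, are what actually distinguish $\mathscr{C}_{i+1}$ from other attachments. To repair your proof you would need to import the mutatable choice of $\mathcal{V}$ and the non-adjacent-mutatable-pair $\Rightarrow$ $4$-cycle observation; as written, the contradiction in your second paragraph does not go through.
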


\begin{figure}
\caption{}
\label{ptree}
\includegraphics[scale=0.3]{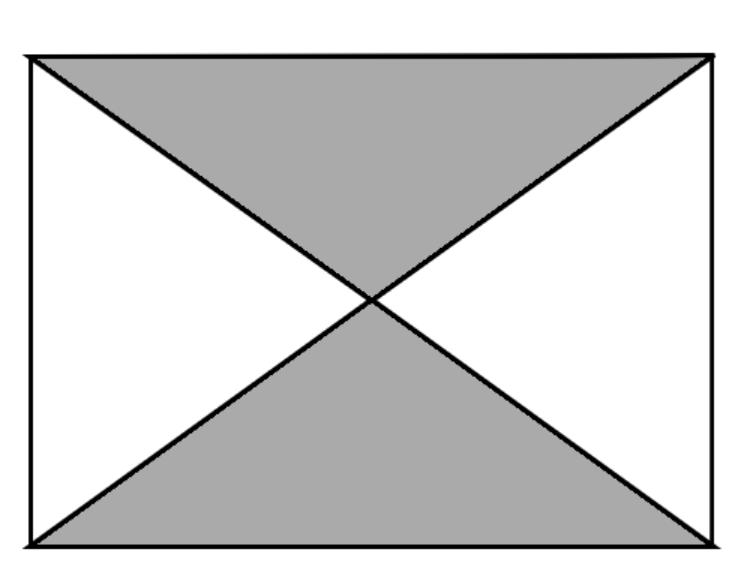}
\includegraphics[scale=0.3]{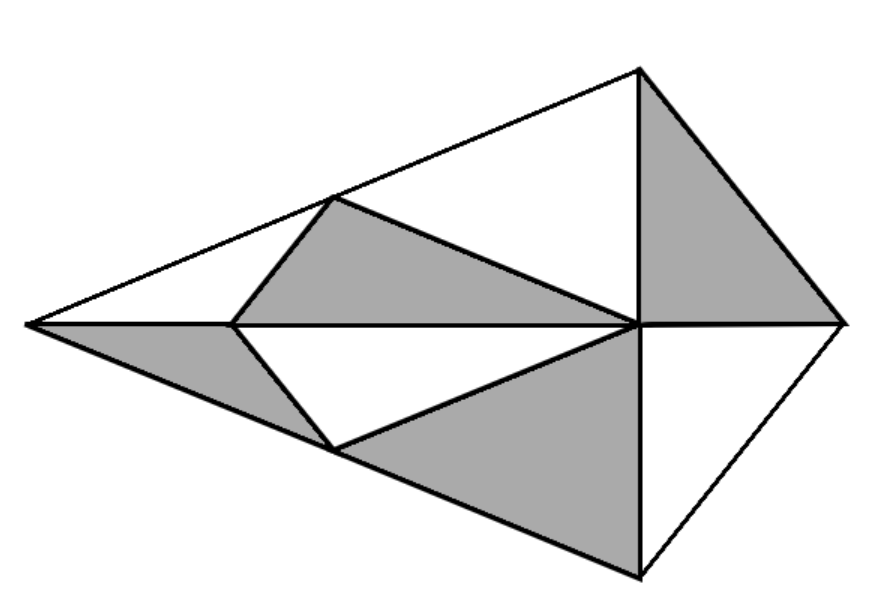}
\includegraphics[scale=0.3]{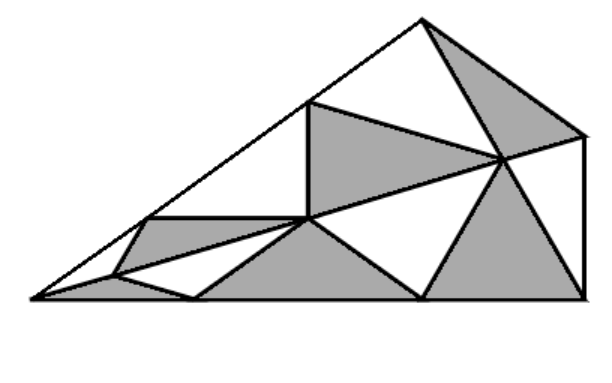}
\end{figure}

\begin{figure}
\caption{}
\label{onepl}
\includegraphics[scale=0.7]{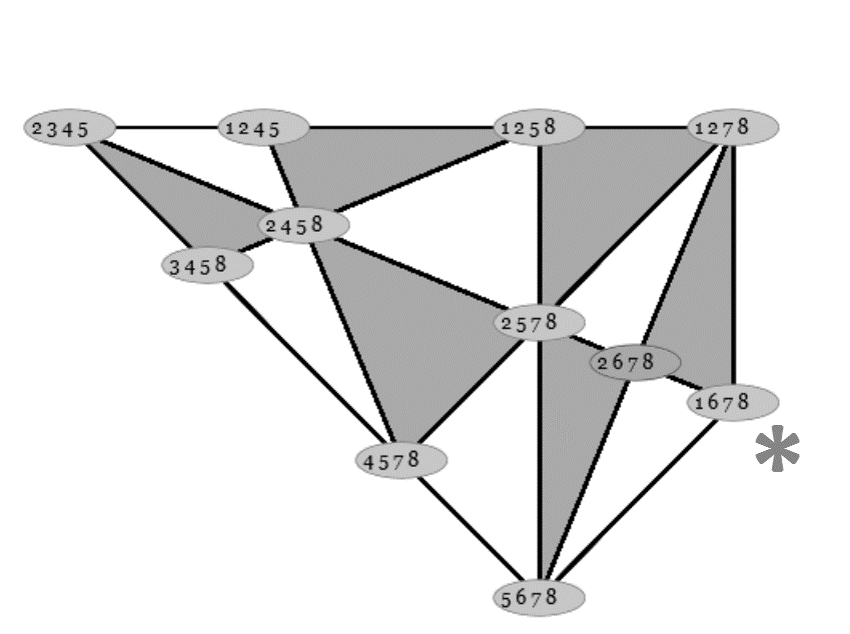}
\end{figure}

\begin{figure}
\caption{}
\label{twopl}
\includegraphics[scale=0.7]{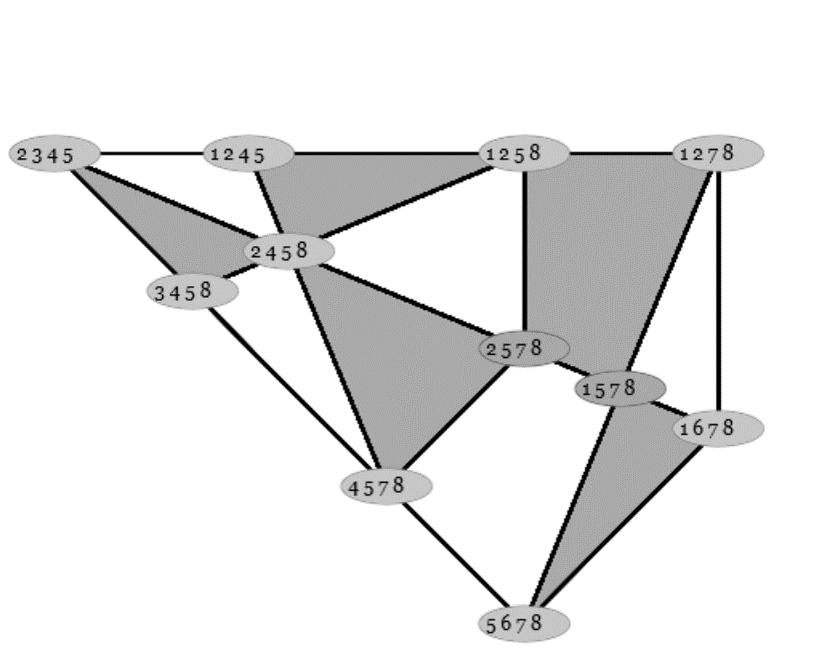}
\end{figure}
\begin{figure}
\caption{}
\label{pltree3}
\includegraphics[scale = 0.7]{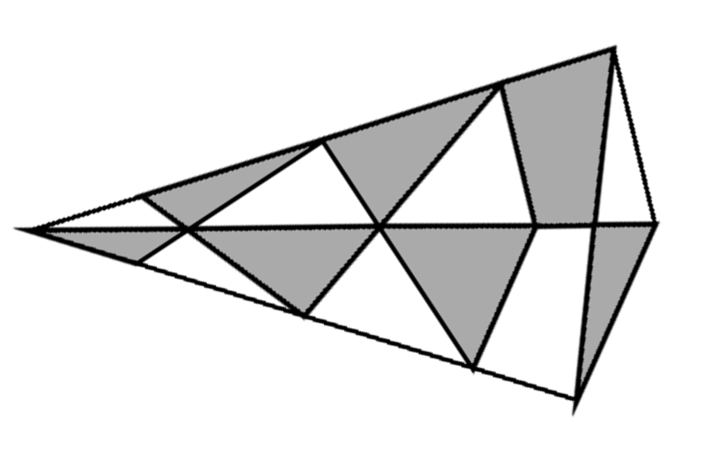}
\end{figure}
\begin{proof}
Assume that very-mutation-friendly exchange graph $\mathpzc{G}^{\mathcal{I}}$ does not have a $\mathcal{C}$-constant graph of co-dimension $2$ that is a cycle of length $4$ or $5$. By Lemma~$\ref{cyc4}$, this means that $\mathpzc{G}^{\mathcal{I}}$ must be prime. Suppose that $\mathpzc{G}^{\mathcal{I}}$ has an applicable, mutation-friendly $\mathcal{C}$-constant graph $\mathpzc{G}^\mathcal{I}(\mathcal{I} \cup \left\{r\right\})$ such that $\mathscr{PT}^{*}(\mathpzc{G}^\mathcal{I}(\mathcal{I} \cup \left\{r\right\}) \cong \mathscr{PT}^{*}(\mathpzc{G}^{\mathscr{C}_i})$ for some $i$. Let $\mathcal{J}$ be a Grassmann necklace in $\mathcal{C}_i$. Consider $\mathcal{V} \in \mathpzc{G}^{\mathcal{I}}(\mathcal{I} \cup \left\{ V \right\})$ such that $V$ is mutatable, and let $\mathcal{W} = \mathcal{A}^{\mathcal{V}}(\mathcal{V} \setminus (\mathcal{I} \cup \left\{V \right\}))$. Then $\mathpzc{PT}^{*}_{\mathcal{V}}(\mathcal{W}) \cong \mathpzc{PT}^{*}_{\mathcal{J}}(j)$ for some $1 \le j \le i$.

As we present the general proof, for ease of readability, we also consider the running example where $\mathcal{J}$ has decorated permutation $38761254$.

We will show that $j = 1$ or $k$. In the example, we consider $\mathpzc{PT}^{*}_{\mathcal{J}}(j)$ to be the plabic tiling shown in Figure~$\ref{onepl}$. Assume for sake of contradiction that $1 < j < i$. In the example, we consider $\mathpzc{PT}^{*}_{\mathcal{J}}(j)$ to be the plabic tiling in Figure~$\ref{twopl}$. The set $V$ must be adjacent to the sets corresponding to $\mathpzc{PT}_{\mathcal{J}}[j-1]$ and $\mathpzc{PT}_{\mathcal{J}}[j]$ in $\mathpzc{PT}^{*}_{\mathcal{J}}(j)$. In the example, these are the sets $\left\{2, 5, 7, 8 \right\}$ and $\left\{1, 5, 7, 8 \right\}$, the two mutatable faces. (If $V$ is not adjacent to a set $W$ corresponding to one of them, then the $\mathcal{C}$-constant graph $\mathpzc{G}^{\mathcal{I}}(\mathcal{V} \setminus \left\{V, W\right\})$ will be a cycle of length $4$.) This leaves no spots for $V$.

Hence, $j = 1$ or $i$. We know that $V$ must be adjacent to the set $W$ corresponding to $\mathpzc{PT}_{\mathcal{J}}[1]$ if $j=1$ or corresponding to $\mathpzc{PT}_{\mathcal{J}}[k]$ if $j= i$. In the example, the set $W$ corresponds to $\left\{2, 6, 7, 8 \right\}$ as shown in Figure~$\ref{onepl}$. (If $V$ is not adjacent to $W$, then the $\mathcal{C}$-constant graph $\mathpzc{G}^{\mathcal{I}}(\mathcal{V} \setminus \left\{V, W\right\})$ will be a cycle of length $4$.) Notice that $|\mathcal{A}^{\mathcal{V}}(\left\{V, W \right\})| \in \{7,8\}$. However, if $|\mathcal{A}^{\mathcal{V}}(\left\{V, W \right\})| = 7$, then $\mathpzc{G}^{\mathcal{I}}(\mathcal{V} \setminus \left\{V, W\right\})$ would be isomorphic to a very-mutation-friendly, prime exchange graph with $n=5$ and interior size $2$, which by Table~$\ref{fullchar}$ is a cycle with $5$ vertices. Hence, we know that $|\mathcal{A}^{\mathcal{V}}(\left\{V, W \right\})| = 8$.

We will now show that $V$ must be $\mathpzc{PT}_{\mathcal{J}}[j, END]$, that is, it must be in the starred location in Figure~$\ref{onepl}$. Assume for sake of contradiction that $V$ corresponds to some other set in the Grassmann necklace besides $\mathpzc{PT}_{\mathcal{J}}[l, END]$. Since $V$ is mutatable, this means that $|\mathcal{A}^{\mathcal{V}}(V)| = 7$, which is a contradiction. In the example, suppose that $V$ corresponds to the set $\left\{1, 2, 7, 8 \right\}$ in Figure~$\ref{onepl}$ and $W$ corresponds to the set $\left\{2, 6, 7, 8 \right\}$. Notice that $\mathcal{A}^{\mathcal{V}}(\left\{V, W \right\})$ would consist of $\left\{2, 5, 7, 8 \right\}$, $\left\{1, 2, 7, 8 \right\}$, $\left\{1,6,7,8\right\}$, $\left\{5,6,7,8\right\}$, $\left\{2,6,7,8\right\}$, and two more sets adjacent to $\left\{1,2,7,8\right\}$.

Hence, $V$ is in the starred location with $|\mathcal{A}^{\mathcal{V}}(\left\{V, W \right\}| = 8$. This means that $\mathpzc{PT}^{*}(\mathcal{V}) \cong \mathpzc{PT}^{*}_{\mathcal{I}}(m)$ for $m=1,2,i,i+1$, where $\mathcal{I}$ is a Grassmann necklace in $\mathcal{C}_{i+1}$. In the example, this is shown in Figure~$\ref{pltree3}$. This means that $\mathcal{I}$ is part of $\mathscr{C}_{i+1}$.
\end{proof}

\subsection{Proof of Theorem~$\ref{tree}$}
We prove Theorem~$\ref{tree}$.
\begin{proof}[Proof of Theorem~$\ref{tree}$]
We notice right away that non prime, very-mutation-friendly exchange graphs are not trees by Lemma~$\ref{cyc4}$. We prove that prime, very-mutation-friendly exchange graphs with interior size $i$ are trees if and only if their Grassmann necklaces are a part of the equivalence class $\mathscr{C}_i$ (which yield paths). We proceed by induction on the interior size of $\mathpzc{G}^{\mathcal{I}}$.

The base case is when $i(\mathpzc{G}^{\mathcal{I}})=0$ or $1$. By Table~$\ref{fullchar}$, the only possible associated decorated permutations corresponding to $\mathcal{I}$ are $321$ and $3412$ as desired.

We assume that prime, very-mutation-friendly exchange graphs with interior size $i$ are trees if and only if their Grassmann necklaces are a part of the equivalence class $\mathscr{C}_i$ (which yield paths). Now, we consider a prime, very-mutation-friendly exchange graph $\mathpzc{G}^{\mathcal{I}}$ with interior size $i+1$ that is a tree. By Lemma~$\ref{whee}$, we know that there exists a mutation-friendly $\mathcal{C}$-constant graph $\mathpzc{G}^{\mathcal{I}}(\mathcal{C})$ of co-dimension $i$ that is isomorphic to a very-mutation-friendly exchange graph $\mathpzc{G}^{\mathcal{J}}$ with interior size $i$. This graph must be a tree too, so by the induction hypothesis, $\mathcal{J}$ is part of the equivalence class $\mathscr{C}_i$. By Lemma~$\ref{nrvc}$, we know that $\mathcal{C}$ must be reverse-very-connected. Since a tree does not have any cycles, by Lemma~$\ref{cyc45}$, $\mathcal{I}$ is part of the equivalence class $\mathscr{C}_{i+1}$.
\end{proof}
\subsection{Proof of Theorem~$\ref{cycle}$}
We will use the following lemmas in our proof of Theorem~$\ref{cycle}$:
\begin{lemma}
\label{cycprime}
Let $\mathpzc{G}^{\mathcal{I}}$ be a very-mutation-friendly and prime exchange graph. If $i(\mathpzc{G}^{\mathcal{I}}) > 2$, then $\mathpzc{G}^{\mathcal{I}}$ is not a single cycle.
\end{lemma}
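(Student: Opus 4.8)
The plan is to argue by contradiction. Suppose $\mathpzc{G}^{\mathcal{I}}$ is a prime, very-mutation-friendly exchange graph that is a single cycle and has interior size $i > 2$. The strategy is to strip off one interior dimension, recognize the resulting codimension-$(i-1)$ subgraph as a path, and then feed this into Lemma~\ref{cyc45} to force $\mathcal{I}$ into the class $\mathscr{C}_{i+1}$ --- which by Theorem~\ref{tree} makes $\mathpzc{G}^{\mathcal{I}}$ a path, contradicting that it is a cycle.

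First I would apply Lemma~\ref{whee} to obtain a weakly separated collection $\mathcal{C}$ with $\mathcal{I} \subsetneq \mathcal{C}$ such that $\mathpzc{G}^{\mathcal{I}}(\mathcal{C})$ is applicable and mutation-friendly, has codimension $i-1$, and is isomorphic to a very-mutation-friendly exchange graph $\mathpzc{G}^{\mathcal{J}}$ of interior size $i-1 \ge 2$. The key graph-theoretic observation is that a proper vertex-induced subgraph of a cycle is a linear forest, so any connected proper induced subgraph is a path. Because $\mathcal{I}$ is mutation-friendly, the intersection of all maximal collections is exactly $\mathcal{I} \subsetneq \mathcal{C}$, so not every vertex of $\mathpzc{G}^{\mathcal{I}}$ contains $\mathcal{C}$; hence $\mathpzc{G}^{\mathcal{I}}(\mathcal{C})$ is a proper induced subgraph of the cycle, and being connected it is a path. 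Thus $\mathpzc{G}^{\mathcal{J}}$ is a path, so by Theorem~\ref{tree} we get $\mathcal{J} \in \mathscr{C}_{i}$. Since $\mathpzc{G}^{\mathcal{I}}(\mathcal{C})$ is applicable, mutation-friendly, and (being a path) contains no $4$-cycle, the contrapositive of Lemma~\ref{nrvc} shows $\mathcal{C}$ is reverse-very-connected over $\mathcal{I}$; Lemma~\ref{isokey} then upgrades $\mathcal{J} \in \mathscr{C}_i$ to $\mathscr{PT}^{*}(\mathpzc{G}^{\mathcal{I}}(\mathcal{C})) \cong \mathscr{PT}^{*}(\mathscr{C}_i)$.

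It remains to check the last hypothesis of Lemma~\ref{cyc45}, namely that $\mathpzc{G}^{\mathcal{I}}$ has no codimension-$2$ $\mathcal{C}$-constant graph that is a cycle of length $4$ or $5$. Here I would reuse the same observation: any codimension-$2$ collection $\mathcal{C}'$ satisfies $|\mathcal{C}'| = |\mathcal{I}| + i - 2 > |\mathcal{I}|$ since $i > 2$, so by mutation-friendliness $\mathcal{C}'$ is not contained in every maximal collection and $\mathpzc{G}^{\mathcal{I}}(\mathcal{C}')$ is a proper induced subgraph of the cycle; hence it is a path and cannot be a $4$- or $5$-cycle. With all hypotheses verified, Lemma~\ref{cyc45} yields $\mathcal{I} \in \mathscr{C}_{i+1}$, so by Theorem~\ref{tree} the graph $\mathpzc{G}^{\mathcal{I}}$ is a path on $i+1 \ge 4$ vertices. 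A path is not a single cycle, the desired contradiction.

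I expect the main obstacle to be cleanly verifying the hypotheses of Lemma~\ref{cyc45} --- in particular the reverse-very-connectedness of $\mathcal{C}$ and the plabic-tiling identification $\mathscr{PT}^{*}(\mathpzc{G}^{\mathcal{I}}(\mathcal{C})) \cong \mathscr{PT}^{*}(\mathscr{C}_i)$ --- together with keeping the interior-size and codimension indices consistent as one descends from $\mathpzc{G}^{\mathcal{I}}$ to $\mathpzc{G}^{\mathcal{J}}$. The elementary fact that proper induced subgraphs of a cycle are linear forests is the glue that converts the ``single cycle'' assumption into the path hypotheses required by Lemmas~\ref{nrvc} and \ref{cyc45}; once that is in place, the rest is bookkeeping on top of the already-established machinery.
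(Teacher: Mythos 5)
Your proposal is correct and follows essentially the same route as the paper: apply Lemma~\ref{whee} to get a co-dimension-$(i-1)$ mutation-friendly $\mathcal{C}$-constant graph, observe it must be a path since it sits inside a cycle, invoke Lemma~\ref{nrvc} for reverse-very-connectedness and Theorem~\ref{tree} to place $\mathcal{J}$ in $\mathscr{C}_i$, and then use Lemma~\ref{cyc45} to force $\mathcal{I} \in \mathscr{C}_{i+1}$, contradicting the cycle assumption. You are in fact somewhat more careful than the paper in spelling out why a connected proper induced subgraph of a cycle is a path and in verifying the no-$4$-or-$5$-cycle hypothesis of Lemma~\ref{cyc45}, which the paper leaves implicit.
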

\begin{proof}
Assume for sake of contradiction that there exists a prime, very-mutation-friendly exchange graph $\mathpzc{G}^{\mathcal{I}}$ with  $i(\mathpzc{G}^{\mathcal{I}}) > 2$ that is a single cycle. By Lemma~$\ref{whee}$, we know that there exists a mutation-friendly $\mathcal{C}$-constant graph $\mathpzc{G}^{\mathcal{I}}(\mathcal{C})$ of co-dimension $i(\mathpzc{G}^{\mathcal{I}}) - 1$ that is isomorphic to a very-mutation-friendly exchange graph $\mathpzc{G}^{\mathcal{J}}$ with interior size $i(\mathpzc{G}^{\mathcal{I}}) -1$. Since $\mathpzc{G}^{\mathcal{I}}$ is a cycle, $\mathpzc{G}^{\mathcal{J}}$ must be a path. By Lemma~$\ref{nrvc}$, we know that $\mathcal{C}$ must be reverse-very-connected. 

By Theorem~$\ref{tree}$, we know that $\mathcal{J}$ must be in the equivalence class $\mathscr{C}_{i}$. By Lemma~$\ref{cyc45}$, we know that $\mathcal{I}$ must be in the equivalence class $\mathscr{C}_{i+1}$, which means that $\mathpzc{G}^{\mathcal{I}}$ is a path. This is a contradiction.
\end{proof}
\begin{lemma}
\label{cycnotprime}
If $\mathpzc{G}^{\mathcal{I}}$ be a very-mutation-friendly, non-prime exchange graph that is a single cycle, then $\mathpzc{G}^{\mathcal{I}}$ has $4$ vertices and $\mathcal{I}$ is in the equivalence class with $351624$.
\end{lemma}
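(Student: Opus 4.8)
The plan is to reduce the statement to the direct-product decomposition of Lemma~\ref{dirproduct} and then run a short degree count. Since $\mathpzc{G}^{\mathcal{I}}$ is not prime, $|\mathcal{D}^{\mathcal{I}}| = k \ge 2$, and Lemma~\ref{dirproduct} gives
\[\mathpzc{G}^{\mathcal{I}} \cong \square_{j=1}^{k} \mathpzc{G}^{\mathcal{D}^{\mathcal{I}}(j)},\]
where by Lemma~\ref{vmfdirproduct} each relabeled factor $\mathpzc{G}^{\mathcal{D}^{\mathcal{I}}(j)}$ is again very-mutation-friendly, and is prime by construction of the decomposition set. So it suffices to understand when such a Cartesian product is a single cycle.

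First I would show that no factor is trivial, i.e. each $\mathpzc{G}^{\mathcal{D}^{\mathcal{I}}(j)}$ has at least two vertices. A factor of order $1$ would be an exchange graph of interior size $0$, forcing its relabeled necklace to lie in the class of $312$ and hence forcing the block $\mathcal{D*}^{\mathcal{I}}(j)$ to be a triangle of the form $(\{1,2\},\{2,3\},\{1,3\})$. But such a triangle violates condition (2) of Definition~\ref{modgras}, since $|I_{3}\setminus I_1|=1$ and the differences $1,2,3$ are cyclically ordered, so it cannot occur as a modified-like Grassmann necklace in a decomposition set. Thus every factor is connected with at least two vertices, so every vertex of every factor has degree $\ge 1$; in particular the product has at least $4$ vertices, which already rules out the degenerate cycles on $1$ or $2$ vertices.

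Now comes the degree count. In a Cartesian product the degree of $(v_1,\dots,v_k)$ is $\sum_{j} \deg_{\mathpzc{G}^{\mathcal{D}^{\mathcal{I}}(j)}}(v_j)$. A single cycle is connected and $2$-regular, so every product vertex has degree exactly $2$; together with the previous paragraph (each summand $\ge 1$ and $k \ge 2$) this forces $k = 2$ and each summand to be $1$ at every vertex. A connected $1$-regular graph is $K_2$, which is the graph $B$, so both factors equal $B$ and
\[\mathpzc{G}^{\mathcal{I}} \cong B \square B \cong C_4,\]
a cycle on $4$ vertices. To identify the equivalence class, note that each factor $\mathpzc{G}^{\mathcal{D}^{\mathcal{I}}(j)} \cong B$ is a prime very-mutation-friendly exchange graph of order $2$, hence of interior size $1$; by Table~\ref{fullchar} (equivalently the base case in the proof of Lemma~\ref{nnn}) the only such relabeled necklace is the one in the class of $3412$. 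Since $\mathcal{I}$ is recovered by gluing the two blocks $\mathcal{D*}^{\mathcal{I}}(1)$ and $\mathcal{D*}^{\mathcal{I}}(2)$, and $3412 \square 3412 = 351624$, the necklace $\mathcal{I}$ lies in the equivalence class of $351624$.

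The step I expect to require the most care is the exclusion of trivial factors: the argument collapses cleanly to $C_4$ precisely because interior-size-$0$ blocks (triangles) are forbidden as modified-like Grassmann necklaces, and without this one could append $312$-type factors so that the degree count no longer pins down $k$ or the class. A secondary point needing justification is that the final gluing yields \emph{exactly} $351624$ rather than merely some gluing of two squares; this follows because the choice of gluing edge is immaterial up to the rotation, reflection, and inverse operations that generate the equivalence class, so the class of the glued necklace is determined by the classes of the two blocks.
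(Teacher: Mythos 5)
Your proof is correct, and it shares the paper's overall skeleton: decompose via Lemma~\ref{dirproduct}, use Lemma~\ref{vmfdirproduct} to make every factor very-mutation-friendly, identify each factor as a $3412$-necklace of interior size $1$, and glue to obtain $351624$. The difference lies in how the key step is established. The paper obtains ``four vertices'' by citing Lemma~\ref{cyc4}: a non-prime mutation-friendly exchange graph contains a vertex-induced $\mathcal{C}$-constant subgraph that is a $4$-cycle, and an induced $4$-cycle inside a graph that is itself a single cycle forces the whole graph to be that $4$-cycle; it then simply asserts that ``this is only the case when'' the graph is a product of two $2$-vertex paths. You replace both steps with a degree count on the Cartesian product: a connected $2$-regular product with $k \ge 2$ nontrivial connected factors forces $k = 2$ and each factor $1$-regular, hence $K_2$, so the graph is $B \square B \cong C_4$. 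This buys you an actual proof of the assertion the paper leaves unargued, and it also makes explicit something the paper never addresses in this lemma, namely that no block of the decomposition can be trivial; your observation that an interior-size-$0$ block would be a single face whose consecutive differences are cyclically ordered, violating condition (2) of Definition~\ref{modgras}, is precisely the issue that the proof of Lemma~\ref{cyc4} quietly sidesteps with its ``isomorphic to a prime exchange graph and not mutation-friendly'' caveat. The remaining divergence is cosmetic: you identify the order-$2$ factors via Table~\ref{fullchar} and the base case of Lemma~\ref{nnn}, where the paper routes through Theorem~\ref{tree}; both give the class of $3412$, and your closing remark that the class of the glued necklace depends only on the classes of the blocks is the right justification for landing exactly on $351624$.
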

\begin{proof}
Consider a very-mutation-friendly, non-prime exchange graph $\mathpzc{G}^{\mathcal{I}}$ that is a single cycle. By Lemma~$\ref{cyc4}$, we know that the cycle must have $4$ vertices. This is only the case when the graph is a direct product of two paths each with $2$ vertices so that the $\mathcal{D}^{\mathcal{I}}$ has two Grassmann necklaces. By Lemma~$\ref{vmfdirproduct}$, both $\mathcal{D}^{\mathcal{I}}(1)$ and $\mathcal{D}^{\mathcal{I}}(2)$ are very-mutation-friendly. By Theorem~$\ref{tree}$, we know that $\mathcal{D}^{\mathcal{I}}(1)$ and $\mathcal{D}^{\mathcal{I}}(2)$ both must have interior size $1$. Thus, $\mathcal{I}$ must be in the equivalence class containing the permutation $351624$.
\end{proof}

We now prove Theorem~$\ref{cycle}$.
\begin{proof}[Proof of Theorem~$\ref{cycle}$]
Suppose that $\mathpzc{G}^{\mathcal{I}}$ is a cycle. First, we consider the case where $\mathpzc{G}^{\mathcal{I}}$ is prime and very-mutation-friendly. By Lemma~$\ref{cycprime}$, it suffices to consider $i(\mathpzc{G}^{\mathcal{I}}) \le 2$. By Table~$\ref{fullchar}$, we know that $\mathcal{I}$ must be part of the equivalence class $312$ (cycle with $1$ vertex), $3412$ (cycle with $2$ vertices), or $34512$ (cycle with $5$ vertices).

Now, we consider the case in which $\mathpzc{G}^{\mathcal{I}}$ is very-mutation-friendly, but not prime. Then by Lemma~$\ref{cycnotprime}$, $\mathcal{I}$ is in the equivalence class $351624$ (cycle with $4$ vertices).
\end{proof}
\section{Acknowledgments}
I would like to thank Miriam Farber (MIT) for her incredibly helpful guidance and insight. I would also like to thank Professor Alexander Postnikov and the MIT-PRIMES program for suggesting this project, and Professor David Speyer for correcting an error in an earlier version of the paper. In addition, I would like to thank Pavel Galashin (MIT) for his helpful software available at \url{http://math.mit.edu/~galashin/plabic.html} which was used to generate the plabic tilings and plabic graphs in this paper.

\clearpage

\bibliographystyle{plain}
\bibliography{bibliography}
\appendix
\section{Full Characterization}
Table~$\ref{fullchar}$ is a full characterization of all possible exchange graphs of very-mutation-friendly, prime Grassmann necklaces with interior size $\le$ 4. There is at least one decorated permutation from each equivalence class, and it is possible that multiple decorated permutations from the same equivalence class are listed.
\begin{table}
\caption{}
\label{fullchar}
\begin{tabular}{||c | c | c  | c||}
\hline
Interior Size & Equivalence Class & Exchange Graph Order & Exchange Graph\\ [0.5 ex]
\hline
\hline
0 & 312 & 1 & A\\
\hline \hline
1 & 3412 & 2 & B\\
\hline \hline
2 & 365124 & 3 & C\\
\hline
2 & 34512 & 5 & D\\
\hline
\hline
3 & 38761254 & 4 & E\\
\hline
3 & 38517246 & 5 & F\\
\hline
3 & 38571426 & 5 & F\\
\hline
3 & 38617425 & 5 & F\\
\hline
3 & 3576214 & 5 & F\\
\hline
3 & 3756124 & 7 & G\\
\hline
3 & 356124 & 10 & H\\
\hline
3 & 345612 & 14 & I\\
\hline
\hline
4 & 3(10)98712654 & 5 & J\\
\hline
 4 & 37682154 & 6 & K\\ \hline
4 & 3(10)96182574 & 7 & L\\
\hline
4 & 3(10)96815274 & 7 & L\\
\hline
4 & 3(10)97185264 & 7 & L\\
\hline
 4 & 397618254 & 7 & L\\ \hline
 4 & 36587214 & 7 & M\\ \hline
4 & 3(10)57941628 & 8 & N \\
\hline
4 & 3(10)51829647 & 8 & N\\
\hline
4 & 3(10)51729468 & 8 & N\\
\hline
4 & 3(10)51792648 & 8 & N \\
\hline
4 & 3(10)81794625 & 8 & N\\
\hline
4 & 3(10)69741528 & 8 & N \\
\hline
 4 & 395871426 & 8 & N \\ \hline
 4 & 397618425 & 8 & N\\ \hline
 4 & 379628415 & 8 & N\\ \hline
 4 & 36872154 & 8 & N\\ \hline
4 & 3(10)98714625 & 9 & O \\
\hline
4 & 3(10)98741526 & 9 & O \\
\hline
4 & 3(10)96184527 & 9 & O \\
\hline
4 & 3(10)96815427 & 9 & O\\
\hline
4 & 3(10)97185426 & 9 & O\\
\hline
4 & 3(10)97841625 & 9 & O\\
\hline
4 & 3(10)61982574 & 9 & O\\
\hline
4 & 3(10)61895274 & 9 & O\\
\hline
4 & 3(10)81729564 & 9 & O\\
\hline
 4 & 375129(10)648 & 9 & O\\ \hline
 4 & 39(10)6284517 & 9 & O\\ \hline
 4 & 4(10)62895173 & 9 & O\\ \hline
\end{tabular}
\end{table}
\begin{table}
\begin{tabular}{||c | c | c | c ||}
\hline
Interior Size & Equivalence Class & Exchange Graph Order & Exchange Graph\\ [0.5 ex]
\hline \hline
 4 & 398671254 & 9 & O\\ \hline
 4 & 395871264 & 9 & O\\ \hline
4 & 3(10)71985264 & 9 & O\\
\hline
4 & 3(10)58149627 & 9 & P\\
\hline
 4 & 395718426 & 9 & P\\ \hline
 4 & 35872146 & 9 & P\\ \hline
 4 & 395784126 & 10 & Q \\ \hline
 4 & 396874125 & 10 & Q\\ \hline
 4 & 397681524 & 10 & Q\\ \hline
 4 & 48672153 & 10 & Q\\ \hline
 4 & 37862145 & 10 & Q\\ \hline
 4 & 395618247 & 11 & R\\ \hline
 4 & 34768215 & 11 & R\\ \hline
 4 & 395681427 & 12 & S\\ \hline
 4 & 396178425 & 12 & S\\ \hline
 4 & 395178246 & 12 & S\\ \hline
 4 & 34761825 & 12 & S\\ \hline
 4 & 35871426 & 12 & S\\ \hline
 4 & 36872415 & 12 & S \\ \hline
 4 & 38761245 & 13 & T\\ \hline
 4 & 38671254 & 13 & T\\ \hline
 4 & 38571246 & 14 & U\\ \hline
 4 & 34(10)1895276 & 15 & V\\ \hline
4 & 3(10)91782564 & 15 & V\\
\hline
4 & 3(10)91784526 & 15 & V\\
\hline
4 & 3(10)96714528 & 15 & V\\
\hline
4 & 3(10)96781524 & 15 & V\\
\hline
4 & 3(10)96784125 & 15 & V\\
\hline
4 & 3(10)97815624 & 15 & V\\
\hline
4 & 3(10)97845126 & 15 & V \\
\hline
4 & 3(10)51789264 & 15 & V\\
\hline
4 & 3(10)51289467 & 15 & V\\
\hline
4 & 3(10)51892674 & 15 & V\\
\hline
4 & 3(10)56794128 & 15 & V\\
\hline
4 & 3(10)61789524 & 15 & V\\
\hline
4 & 3(10)67945128 & 15 &V \\
\hline
4 & 3(10)71895624 & 15 & V\\
\hline
4 & 3(10)71289564 & 15 & V\\
\hline
 4 & 365129(10)478 & 15 & V\\ \hline
 4 & 34(10)8719562 & 15 & V\\ \hline
 4 & 398671425 & 15 & V \\ \hline
 \end{tabular}
\end{table}
\begin{table}
\begin{tabular}{||c | c | c | c ||}
\hline
Interior Size & Equivalence Class & Exchange Graph Order & Exchange Graph\\ [0.5 ex]
\hline \hline
 4 & 348172956 & 15 & V\\ \hline
 4 & 346189527 & 15 & V \\ \hline
 4 & 396178254 & 15 & V\\ \hline
 4 & 38617245 & 15 & W\\ \hline
 4 & 38671425 & 15 & W \\ \hline
 4 & 34867125 & 16 & X\\ \hline
 4 & 34871256 & 16 & X\\ \hline
 4 & 3567214 & 17 & Y\\ \hline
 4 & 38567124 & 19 & Z1\\ \hline
 4 & 3576124 & 20 & Z2\\ \hline
 4 & 34(10)1789265 & 25 & Z3\\ \hline
 4 & 34(10)1789526 & 25 & Z3\\ \hline
 4 & 34(10)1289567 & 25 & Z3\\ \hline
 4 & 34(10)1892675 & 25 & Z3\\ \hline
 4 & 34(10)1895627 & 25 & Z3\\ \hline
4 & 34(10)8195672 & 25 & Z3\\ \hline
 4 & 34(10)9815672 & 25 & Z3\\ \hline
 4 & 34(10)6789512 & 25 & Z3\\ \hline
4 & 34(10)7895612 & 25 & Z3\\ \hline
 4 & 4(10)17895623 & 25 & Z3\\ \hline
 4 & 349178256 & 25 & Z3 \\ \hline
 4 & 349781562 & 25 & Z3 \\ \hline
4 & 349678152 & 25 & Z3 \\ \hline
 4 & 34718256 & 25 & Z3\\ \hline
 4 & 34617825 & 25 & Z3\\ \hline
 4 & 3467125 & 26 & Z4 \\ \hline
 4 & 456123 &  34 & Z5\\ \hline
 4 & 3456712 & 42 & Z6\\ \hline
 \end{tabular}
\end{table}

Table~$\ref{allfigures}$ defines the undirected graphs $A, B, C \ldots Y, Z_1, Z_2 \ldots Z_6$. The table either displays an embedding of the graph or the adjacency information for each vertex.
\begin{table}
\caption{}
\label{allfigures}
\begin{tabular}{||c | c ||}
\hline
Name & Graph [Figure or Adjacency Information] \\ [0.5 ex]
\parbox[c]{1em}{A} & \parbox[c]{1em}{\includegraphics[scale = 0.4]{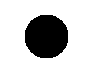}}\\ 
\parbox[c]{1em}{B} & \parbox[c]{1em}{\includegraphics[scale = 0.4]{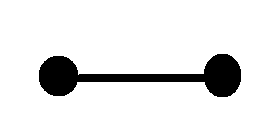}}\\ 
\parbox[c]{1em}{C} & \parbox[c]{1em}{\includegraphics[scale = 0.4]{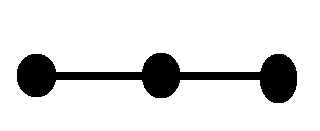}}\\
\parbox[c]{1em}{D} & \parbox[c]{1em}{\includegraphics[scale = 0.4]{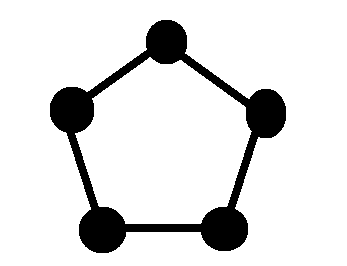}}\\
\parbox[c]{1em}{E} & \parbox[c]{1em}{\includegraphics[scale = 0.4]{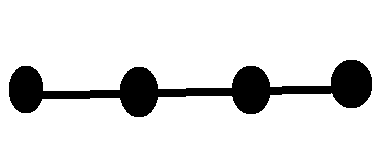}}\\
\parbox[c]{1em}{F} & \parbox[c]{1em}{\includegraphics[scale = 0.4]{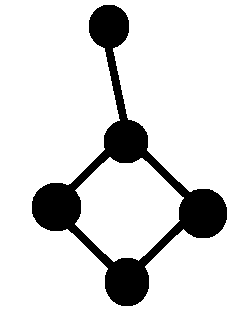}}\\
\parbox[c]{1em}{G} & \parbox[c]{1em}{\includegraphics[scale = 0.4]{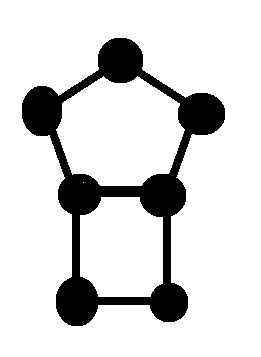}}\\
\parbox[c]{1em}{H} & \parbox[c]{1em}{\includegraphics[scale = 0.4]{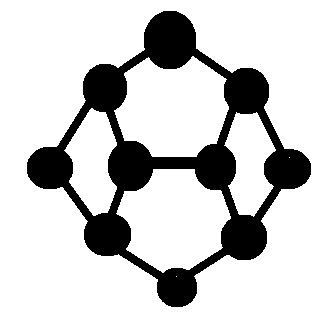}}\\
\parbox[c]{1em}{I} & \parbox[c]{1em}{\begin{align*} 
1 & \rightarrow 2,3,4 \\ 
2 & \rightarrow 5,6,1 \\ 
3 & \rightarrow 7,8,1 \\ 
4 & \rightarrow 6,9,1 \\ 
5 & \rightarrow 10,7,2 \\ 
6 & \rightarrow 11,4,2 \\ 
7 & \rightarrow 12,3,5 \\ 
8 & \rightarrow 13,9,3 \\ 
9 & \rightarrow 14,4,8 \\ 
10 & \rightarrow 12,11,5 \\ 
11 & \rightarrow 14,6,10 \\ 
12 & \rightarrow 13,7,10 \\ 
13 & \rightarrow 8,14,12 \\ 
14 & \rightarrow 9,11,13 \\
 \end{align*}} \\ \hline
 \end{tabular}
 \end{table}
\begin{table}
\begin{tabular}{||c | c ||}
\hline
Name (cont.) & Graph [Figure or Adjacency Information] (cont.) \\ [0.5 ex]
\parbox[c]{1em}{J} & \parbox[c]{1em}{\includegraphics[scale = 0.4]{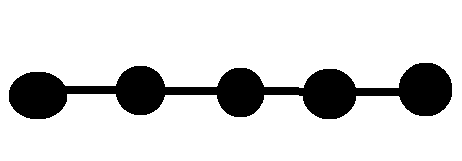}}\\
\parbox[c]{1em}{K} & \parbox[c]{1em}{\includegraphics[scale = 0.4]{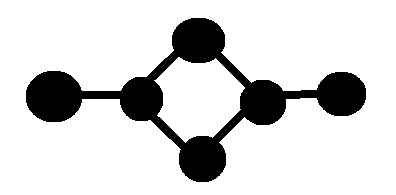}}\\
\parbox[c]{1em}{L} & \parbox[c]{1em}{\includegraphics[scale = 0.4]{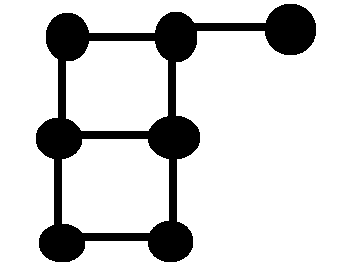}}\\
\parbox[c]{1em}{M} & \parbox[c]{1em}{\includegraphics[scale = 0.4]{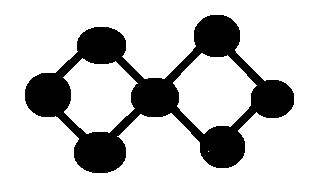}}\\
\parbox[c]{1em}{N} & \parbox[c]{1em}{\includegraphics[scale = 0.4]{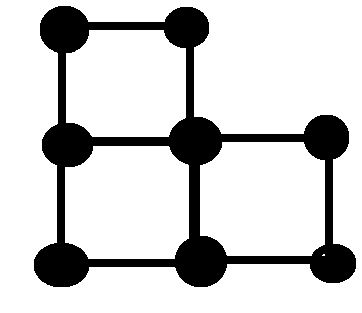}}\\
\parbox[c]{1em}{O} & \parbox[c]{1em}{\includegraphics[scale = 0.4]{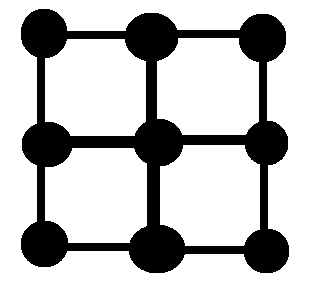}}\\
\parbox[c]{1em}{P} & \parbox[c]{1em}{\begin{align*} 
1 & \rightarrow 2 \\ 
2 & \rightarrow 3,4,1,5 \\ 
3 & \rightarrow 6,2,7 \\ 
4 & \rightarrow 6,8,2 \\ 
5 & \rightarrow 7,8,2 \\ 
6 & \rightarrow 4,9,3 \\ 
7 & \rightarrow 9,5,3 \\ 
8 & \rightarrow 9,5,4 \\ 
9 & \rightarrow 8,7,6 \\
 \end{align*}} \\
\parbox[c]{1em}{Q} & \parbox[c]{1em}{\includegraphics[scale = 0.4]{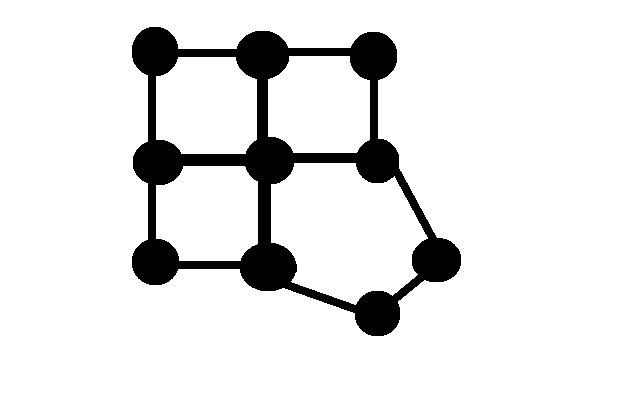}}\\ 
\parbox[c]{1em}{R} & \parbox[c]{1em}{\begin{align*} 
1 & \rightarrow 2,3,4 \\ 
2 & \rightarrow 5,1,6 \\ 
3 & \rightarrow 5,7,8,1 \\ 
4 & \rightarrow 6,8,1 \\
\end{align*}} \\ \hline 
\end{tabular}
\end{table}
\begin{table}
\begin{tabular}{||c | c ||}
\hline
Name (cont.) & Graph [Figure or Adjacency Information] (cont.) \\ [0.5 ex]
\parbox[c]{1em}{R (cont.)} & \parbox[c]{1em}{\begin{align*}
5 & \rightarrow 3,9,2 \\ 
6 & \rightarrow 9,4,2 \\ 
7 & \rightarrow 10,3 \\ 
8 & \rightarrow 9,11,4,3 \\ 
9 & \rightarrow 8,6,5 \\ 
10 & \rightarrow 11,7 \\ 
11 & \rightarrow 8,10 \\
 \end{align*}} \\
\parbox[c]{1em}{S} & \parbox[c]{1em}{\begin{align*} 
1 & \rightarrow 2,3,4 \\ 
2 & \rightarrow 5,6,1 \\ 
3 & \rightarrow 5,7,1 \\ 
4 & \rightarrow 6,8,1 \\ 
5 & \rightarrow 9,3,2 \\ 
6 & \rightarrow 10,4,2 \\ 
7 & \rightarrow 9,8,3 \\ 
8 & \rightarrow 10,4,7 \\ 
9 & \rightarrow 11,7,10,5 \\ 
10 & \rightarrow 12,8,6,9 \\ 
11 & \rightarrow 9,12 \\ 
12 & \rightarrow 10,11 \\
 \end{align*}} \\
\parbox[c]{1em}{T} & \parbox[c]{1em}{\begin{align*} 
1 & \rightarrow 2,3 \\ 
2 & \rightarrow 4,1,5 \\ 
3 & \rightarrow 4,6,1 \\ 
4 & \rightarrow 7,3,8,2 \\ 
5 & \rightarrow 8,2 \\ 
6 & \rightarrow 9,10,3 \\ 
7 & \rightarrow 9,11,4 \\ 
8 & \rightarrow 11,5,4 \\ 
9 & \rightarrow 12,6,7 \\ 
10 & \rightarrow 12,6 \\ 
11 & \rightarrow 13,8,7 \\ 
12 & \rightarrow 10,13,9 \\ 
13 & \rightarrow 11,12 \\
 \end{align*}} \\ \hline
 \end{tabular}
 \end{table}
 \begin{table}
\begin{tabular}{||c | c ||}
\hline
Name (cont.) & Graph [Figure or Adjacency Information] (cont.) \\ [0.5 ex]
\parbox[c]{1em}{U} & \parbox[c]{1em}{\begin{align*} 
1 & \rightarrow 2,3,4 \\ 
2 & \rightarrow 5,1,6 \\ 
3 & \rightarrow 5,7,8,1 \\ 
4 & \rightarrow 6,8,1 \\ 
5 & \rightarrow 3,9,2 \\ 
6 & \rightarrow 9,4,2 \\ 
7 & \rightarrow 10,3 \\ 
8 & \rightarrow 9,11,4,3 \\ 
9 & \rightarrow 12,8,6,5 \\ 
10 & \rightarrow 13,11,7 \\ 
11 & \rightarrow 14,8,10 \\ 
12 & \rightarrow 14,9 \\ 
13 & \rightarrow 10,14 \\ 
14 & \rightarrow 11,12,13 \\
 \end{align*}} \\
\parbox[c]{1em}{V} & \parbox[c]{1em}{\begin{align*} 
1 & \rightarrow 2,3,4 \\ 
2 & \rightarrow 5,6,7,1 \\ 
3 & \rightarrow 5,8,1 \\ 
4 & \rightarrow 6,9,1 \\ 
5 & \rightarrow 10,11,3,2 \\ 
6 & \rightarrow 12,13,4,2 \\ 
7 & \rightarrow 11,13,2 \\ 
8 & \rightarrow 10,9,3 \\ 
9 & \rightarrow 12,4,8 \\ 
10 & \rightarrow 14,8,12,5 \\ 
11 & \rightarrow 14,5,7 \\ 
12 & \rightarrow 15,9,6,10 \\ 
13 & \rightarrow 15,6,7 \\ 
14 & \rightarrow 10,15,11 \\ 
15 & \rightarrow 12,13,14 \\
 \end{align*}} \\
 \parbox[c]{1em}{W} & \parbox[c]{1em}{\begin{align*} 
1 & \rightarrow 2,3,4 \\ 
2 & \rightarrow 5,6,1 \\ 
3 & \rightarrow 7,8,1 \\ 
4 & \rightarrow 6,8,9,1 \\ 
\end{align*}} \\ \hline
\end{tabular}
\end{table}
\begin{table}
\begin{tabular}{||c | c ||}
\hline
Name (cont.) & Graph [Figure or Adjacency Information] (cont.) \\ [0.5 ex]
\parbox[c]{1em}{W (cont.)} & \parbox[c]{1em}{\begin{align*} 
5 & \rightarrow 10,7,2 \\ 
6 & \rightarrow 10,11,4,2 \\ 
7 & \rightarrow 12,3,5 \\ 
8 & \rightarrow 12,13,4,3 \\ 
9 & \rightarrow 14,13,4 \\ 
10 & \rightarrow 15,12,6,5 \\ 
11 & \rightarrow 15,14,6 \\ 
12 & \rightarrow 8,10,7 \\ 
13 & \rightarrow 9,8 \\ 
14 & \rightarrow 9,11 \\ 
15 & \rightarrow 11,10 \\
 \end{align*}} \\
 \parbox[c]{1em}{X} & \parbox[c]{1em}{\begin{align*} 
1 & \rightarrow 2,3,4 \\ 
2 & \rightarrow 5,6,1 \\ 
3 & \rightarrow 7,8,1 \\ 
4 & \rightarrow 6,8,1 \\ 
5 & \rightarrow 9,10,7,2 \\ 
6 & \rightarrow 10,4,2 \\ 
7 & \rightarrow 11,12,3,5 \\ 
8 & \rightarrow 12,4,3 \\ 
9 & \rightarrow 13,11,5 \\ 
10 & \rightarrow 14,12,6,5 \\ 
11 & \rightarrow 15,7,9 \\ 
12 & \rightarrow 16,8,10,7 \\ 
13 & \rightarrow 15,14,9 \\ 
14 & \rightarrow 16,10,13 \\ 
15 & \rightarrow 16,11,13 \\ 
16 & \rightarrow 12,14,15 \\ 
 \end{align*}} \\
 \parbox[c]{1em}{Y} & \parbox[c]{1em}{\begin{align*} 
1 & \rightarrow 2,3 \\ 
2 & \rightarrow 4,5,1,6 \\ 
3 & \rightarrow 7,8,1 \\ 
4 & \rightarrow 9,7,2,10 \\ 
5 & \rightarrow 9,11,2 \\ \end{align*}} \\ \hline
\end{tabular}
\end{table}
\begin{table}
\begin{tabular}{||c | c ||}
\hline
Name (cont.) & Graph [Figure or Adjacency Information] (cont.) \\ [0.5 ex]
 \parbox[c]{1em}{Y (cont.)} & \parbox[c]{1em}{\begin{align*}
6 & \rightarrow 10,11,2 \\ 
7 & \rightarrow 3,12,4 \\ 
8 & \rightarrow 12,3 \\ 
9 & \rightarrow 5,13,4 \\ 
10 & \rightarrow 13,14,6,4 \\ 
11 & \rightarrow 13,6,5 \\ 
12 & \rightarrow 15,8,14,7 \\ 
13 & \rightarrow 16,11,10,9 \\ 
14 & \rightarrow 17,10,12 \\ 
15 & \rightarrow 17,12 \\ 
16 & \rightarrow 17,13 \\ 
17 & \rightarrow 14,16,15 \\
 \end{align*}} \\
 \parbox[c]{1em}{Z1} & \parbox[c]{1em}{\begin{align*} 
1 & \rightarrow 2,3,4 \\ 
2 & \rightarrow 5,6,7,1 \\ 
3 & \rightarrow 6,8,1 \\ 
4 & \rightarrow 7,9,1 \\ 
5 & \rightarrow 10,11,2 \\ 
6 & \rightarrow 12,13,3,2 \\ 
7 & \rightarrow 11,14,4,2 \\ 
8 & \rightarrow 13,9,3 \\ 
9 & \rightarrow 14,4,8 \\ 
10 & \rightarrow 15,12,5 \\ 
11 & \rightarrow 16,7,5 \\ 
12 & \rightarrow 17,6,10 \\ 
13 & \rightarrow 18,8,14,6 \\ 
14 & \rightarrow 19,9,7,13 \\ 
15 & \rightarrow 17,16,10 \\ 
16 & \rightarrow 19,11,15 \\ 
17 & \rightarrow 18,12,15 \\ 
18 & \rightarrow 13,19,17 \\ 
19 & \rightarrow 14,16,18 \\ 
 \end{align*}} \\
 \parbox[c]{1em}{Z2} & \parbox[c]{1em}{\begin{align*} 
1 & \rightarrow 2,3 \\ 
2 & \rightarrow 4,5,1,6 \\
\end{align*}} \\ \hline
\end{tabular}
\end{table}
\begin{table}
\begin{tabular}{||c | c ||}
\hline
Name (cont.) & Graph [Figure or Adjacency Information] (cont.) \\ [0.5 ex]
 \parbox[c]{1em}{Z2 (cont.)} & \parbox[c]{1em}{\begin{align*} 
3 & \rightarrow 4,7,1 \\ 
4 & \rightarrow 8,3,9,2 \\ 
5 & \rightarrow 10,11,2 \\ 
6 & \rightarrow 9,11,2 \\ 
7 & \rightarrow 12,13,3 \\ 
8 & \rightarrow 12,14,10,4 \\ 
9 & \rightarrow 14,6,4 \\ 
10 & \rightarrow 15,16,5,8 \\ 
11 & \rightarrow 16,6,5 \\ 
12 & \rightarrow 17,15,7,8 \\ 
13 & \rightarrow 17,7 \\ 
14 & \rightarrow 18,16,9,8 \\ 
15 & \rightarrow 19,10,12 \\ 
16 & \rightarrow 20,11,14,10 \\ 
17 & \rightarrow 19,13,18,12 \\ 
18 & \rightarrow 20,14,17 \\ 
19 & \rightarrow 20,15,17 \\ 
20 & \rightarrow 16,18,19 \\ 
 \end{align*}} \\
 \parbox[c]{1em}{Z3} & \parbox[c]{1em}{\begin{align*} 
1 & \rightarrow 2,3,4,5 \\ 
2 & \rightarrow 6,7,8,1 \\ 
3 & \rightarrow 9,10,11,1 \\ 
4 & \rightarrow 7,10,12,1 \\ 
5 & \rightarrow 8,11,13,1 \\ 
6 & \rightarrow 14,15,9,2 \\ 
7 & \rightarrow 14,16,4,2 \\ 
8 & \rightarrow 15,17,5,2 \\ 
9 & \rightarrow 18,19,3,6 \\ 
10 & \rightarrow 18,20,4,3 \\ 
11 & \rightarrow 19,21,5,3 \\ 
12 & \rightarrow 16,20,13,4 \\ 
13 & \rightarrow 17,21,5,12 \\ 
14 & \rightarrow 22,18,7,6 \\ 
15 & \rightarrow 23,19,8,6 \\ 
16 & \rightarrow 22,12,17,7 \\ 
17 & \rightarrow 23,13,8,16 \\ 
\end{align*}} \\ \hline
\end{tabular}
\end{table}
\begin{table}
\begin{tabular}{||c | c ||}
\hline
Name (cont.) & Graph [Figure or Adjacency Information] (cont.) \\ [0.5 ex]
 \parbox[c]{1em}{Z3 (cont.)} & \parbox[c]{1em}{\begin{align*} 
18 & \rightarrow 24,10,14,9 \\ 
19 & \rightarrow 25,11,15,9 \\ 
20 & \rightarrow 24,12,21,10 \\ 
21 & \rightarrow 25,13,11,20 \\ 
22 & \rightarrow 24,16,23,14 \\ 
23 & \rightarrow 25,17,15,22 \\ 
24 & \rightarrow 20,22,25,18 \\ 
25 & \rightarrow 21,23,19,24 \\ 
 \end{align*}} \\
 \parbox[c]{1em}{Z4} & \parbox[c]{1em}{\begin{align*} 
1 & \rightarrow 2,3,4 \\ 
2 & \rightarrow 5,6,7,1 \\ 
3 & \rightarrow 5,8,1 \\ 
4 & \rightarrow 6,9,1 \\ 
5 & \rightarrow 10,11,3,2 \\ 
6 & \rightarrow 12,13,4,2 \\ 
7 & \rightarrow 11,14,2 \\ 
8 & \rightarrow 10,15,9,3 \\ 
9 & \rightarrow 12,16,4,8 \\ 
10 & \rightarrow 17,8,12,5 \\ 
11 & \rightarrow 18,7,5 \\ 
12 & \rightarrow 19,9,6,10 \\ 
13 & \rightarrow 20,14,6 \\ 
14 & \rightarrow 21,7,13 \\ 
15 & \rightarrow 22,16,8 \\ 
16 & \rightarrow 23,9,15 \\ 
17 & \rightarrow 22,19,18,10 \\ 
18 & \rightarrow 24,21,11,17 \\ 
19 & \rightarrow 23,20,12,17 \\ 
20 & \rightarrow 25,13,21,19 \\ 
21 & \rightarrow 26,14,18,20 \\ 
22 & \rightarrow 23,24,15,17 \\ 
23 & \rightarrow 25,16,19,22 \\ 
24 & \rightarrow 26,18,22 \\ 
25 & \rightarrow 20,26,23 \\ 
26 & \rightarrow 21,24,25 \\ 
 \end{align*}} \\\hline
 \end{tabular}
\end{table}
\begin{table}
\begin{tabular}{||c | c ||}
\hline
Name (cont.) & Graph [Figure or Adjacency Information] (cont.) \\ [0.5 ex]
 \parbox[c]{1em}{Z5} & \parbox[c]{1em}{\begin{align*} 
1 & \rightarrow 2,3,4 \\ 
2 & \rightarrow 5,6,1,7 \\ 
3 & \rightarrow 8,9,10,1 \\ 
4 & \rightarrow 11,9,12,1 \\ 
5 & \rightarrow 13,8,2,14 \\ 
6 & \rightarrow 13,11,15,2 \\ 
7 & \rightarrow 14,15,2 \\ 
8 & \rightarrow 3,16,5 \\ 
9 & \rightarrow 17,4,3 \\ 
10 & \rightarrow 16,18,3 \\ 
11 & \rightarrow 19,4,6 \\ 
12 & \rightarrow 19,20,4 \\ 
13 & \rightarrow 6,21,5 \\ 
14 & \rightarrow 21,22,7,5 \\ 
15 & \rightarrow 21,23,7,6 \\ 
16 & \rightarrow 24,10,22,8 \\ 
17 & \rightarrow 25,20,18,9 \\ 
18 & \rightarrow 26,27,10,17 \\ 
19 & \rightarrow 28,12,23,11 \\ 
20 & \rightarrow 29,27,12,17 \\ 
21 & \rightarrow 30,15,14,13 \\ 
22 & \rightarrow 31,14,16 \\ 
23 & \rightarrow 32,15,19 \\ 
24 & \rightarrow 26,31,16 \\ 
25 & \rightarrow 29,17,26 \\ 
26 & \rightarrow 33,18,24,25 \\ 
27 & \rightarrow 33,18,20 \\ 
28 & \rightarrow 29,19,32 \\ 
29 & \rightarrow 20,33,28,25 \\ 
30 & \rightarrow 32,31,21 \\ 
31 & \rightarrow 34,22,30,24 \\ 
32 & \rightarrow 34,23,30,28 \\ 
33 & \rightarrow 27,34,26,29 \\ 
34 & \rightarrow 31,32,33 \\ 
 \end{align*}} \\ \hline
 \end{tabular}
\end{table}
\begin{table}
\begin{tabular}{||c | c ||}
\hline
Name (cont.) & Graph [Figure or Adjacency Information] (cont.) \\ [0.5 ex]
 \parbox[c]{1em}{Z6} & \parbox[c]{1em}{\begin{align*} 
1 & \rightarrow 2,3,4,5 \\ 
2 & \rightarrow 6,7,8,1 \\ 
3 & \rightarrow 9,10,11,1 \\ 
4 & \rightarrow 7,12,13,1 \\ 
5 & \rightarrow 8,11,14,1 \\ 
6 & \rightarrow 15,16,9,2 \\ 
7 & \rightarrow 17,18,4,2 \\ 
8 & \rightarrow 16,19,5,2 \\ 
9 & \rightarrow 20,21,3,6 \\ 
10 & \rightarrow 22,23,12,3 \\ 
11 & \rightarrow 21,24,5,3 \\ 
12 & \rightarrow 25,26,4,10 \\ 
13 & \rightarrow 18,27,14,4 \\ 
14 & \rightarrow 19,28,5,13 \\ 
15 & \rightarrow 29,20,17,6 \\ 
16 & \rightarrow 30,21,8,6 \\ 
17 & \rightarrow 31,25,7,15 \\ 
18 & \rightarrow 32,13,19,7 \\ 
19 & \rightarrow 33,14,8,18 \\ 
20 & \rightarrow 34,22,9,15 \\ 
21 & \rightarrow 35,11,16,9 \\ 
22 & \rightarrow 36,10,25,20 \\ 
23 & \rightarrow 37,26,24,10 \\ 
24 & \rightarrow 38,28,11,23 \\ 
25 & \rightarrow 39,12,17,22 \\ 
26 & \rightarrow 40,27,12,23 \\ 
27 & \rightarrow 41,13,28,26 \\ 
28 & \rightarrow 42,14,24,27 \\ 
29 & \rightarrow 34,31,30,15 \\ 
30 & \rightarrow 35,33,16,29 \\ 
31 & \rightarrow 39,32,17,29 \\ 
32 & \rightarrow 41,18,33,31 \\ 
33 & \rightarrow 42,19,30,32 \\ 
34 & \rightarrow 36,35,20,29 \\ 
35 & \rightarrow 38,21,30,34 \\ 
 \end{align*}} \\ \hline
 \end{tabular}
\end{table}
\begin{table}
\begin{tabular}{||c | c ||}
\hline
Name (cont.) & Graph [Figure or Adjacency Information] (cont.) \\ [0.5 ex]
 \parbox[c]{1em}{Z6 (cont.)} & \parbox[c]{1em}{\begin{align*} 
36 & \rightarrow 37,22,39,34 \\ 
37 & \rightarrow 23,40,38,36 \\ 
38 & \rightarrow 24,42,35,37 \\ 
39 & \rightarrow 40,25,31,36 \\ 
40 & \rightarrow 26,41,39,37 \\ 
41 & \rightarrow 27,32,42,40 \\ 
42 & \rightarrow 28,33,38,41 \\ 
 \end{align*}} \\ \hline
\end{tabular}
\end{table}

\end{document}